\date{}
\title{Dynamic concentration of the triangle-free process}
\author{Tom Bohman \thanks{Department of Mathematical Sciences, Carnegie Mellon
University, Pittsburgh, PA 15213, USA. Email: {\tt tbohman@math.cmu.edu}.
Research supported in part by NSF grants DMS-1001638 and DMS-1100215.}
\and Peter Keevash \thanks{Mathematical Institute,
University of Oxford, Oxford, UK. Email: keevash@maths.ox.ac.uk.
Research supported in part by
ERC grants 647678 and 239696 
and EPSRC grant EP/G056730/1.} }
\theoremstyle{plain}
\newtheorem{theo}{Theorem}[section] 
\newtheorem{lemma}[theo]{Lemma}
\theoremstyle{definition}
\newtheorem{defn}[theo]{Definition}
\newtheorem{rem}[theo]{Remark}
\newcommand{\mc}[1]{\mathcal{#1}}
\newcommand{\mb}[1]{\mathbb{#1}}
\newcommand{\nib}[1]{\noindent {\bf #1}}
\newcommand{\brac}[1]{\left( #1 \right)}
\newcommand{\bracsq}[1]{\left[ #1 \right]}
\newcommand{\bfl}[1]{\left\lfloor #1 \right\rfloor}
\newcommand{\sub}{\subseteq}
\newcommand{\subn}{\subsetneq}
\newcommand{\wt}{\widetilde}
\newcommand{\es}{\emptyset}
\newcommand{\sm}{\setminus}
\newcommand{\eps}{\varepsilon}
\newcommand{\aA}{\alpha}
\newcommand{\bB}{\beta}
\newcommand{\gG}{\gamma}
\newcommand{\dD}{\delta}
\newcommand{\DD}{\Delta}
\newcommand{\tT}{\vartheta} 
\newcommand{\TT}{\Theta}
\newcommand{\Ss}{\Sigma}
\newcommand{\GG}{\Gamma}
\newcommand{\OO}{\Omega}
\newcommand{\Var}{\text{Var}}
\newcommand{\I}[1]{%
 \IfEqCase{#1}{%
 {0}{I_{\text{ext}}}
 {1}{I_{\text{glo}}}
 {2}{I_{\text{con}}}
 {3}{I_{\text{stk}}}
 }%
}%
\def\COMMENT#1{}
\def\qed{\hfill $\Box$}
\begin{document}
\maketitle

\begin{abstract}
The triangle-free process begins with an empty graph on $n$ vertices and iteratively adds edges chosen uniformly at random subject to the constraint that no triangle is formed.  We determine the asymptotic number of edges in the maximal triangle-free graph at which the triangle-free process terminates. We also bound the independence number of this graph, which gives an improved lower bound on the Ramsey numbers $R(3,t)$:\ we show $R(3,t) > (1/4-o(1))t^2/\log t$, which is within a $4+o(1)$ factor of the best known upper bound. Our improvement on previous analyses of this process exploits the self-correcting nature of key statistics of the process. Furthermore, we determine which bounded size subgraphs are likely to appear in the maximal triangle-free graph produced by the triangle-free process:\ they are precisely those triangle-free graphs with density at most 2.
\end{abstract}

\section{Introduction}

Constrained random graph processes provide both an interesting class of random graph models and a natural source for constructions in graph theory. Although the dependencies introduced by the constraints make such processes difficult to analyse, the evidence to date suggests that they are particularly useful for producing graphs of interest for certain extremal problems.  Here we consider the triangle-free random graph process, which is defined by sequentially adding edges, starting with the empty graph, chosen uniformly at random subject to the constraint that no triangle is formed. Formally, let $G(0)$ be the empty graph on $n$ vertices. At stage $i$ we have a graph $G(i)$; we denote its edge set by $E(i)$, and let $O(i)$ be the set of pairs $xy$ that are {\em open}, in that $G(i) \cup \{xy\}$ has no triangle. We obtain $G(i+1)$ from $G(i)$ by adding a uniformly random pair from $O(i)$.

This process was introduced by Bollob\'as and Erd\H{o}s (see \cite{BR}), and first analysed by Erd\H{o}s, Suen and Winkler \cite{ESW}, using a differential equations method introduced by Ruci\'nski and Wormald \cite{RW} for the analysis of the constrained graph process known as the `d-process'. One motivation for their work was that their analysis of the triangle-free process led to the best lower bound on the Ramsey number $R(3,t)$ known at that time. The Ramsey number $R(s,t)$ is the least number $n$ such that any graph on $n$ vertices contains a complete graph with $s$ vertices or an independent set with $t$ vertices. In general, very little is known about these numbers, even approximately. The upper bound $R(3,t) = O(t^2/\log t)$ was obtained by Ajtai, Koml\'os and Szemer\'edi \cite{AKS}, but for many years the best known lower bound, due to Erd\H{o}s \cite{E}, was $\OO(t^2/\log^2 t)$. The order of magnitude was finally determined by Kim \cite{K}, who showed that $R(3,t) = \OO(t^2/\log t)$. He employed a semi-random construction that is loosely related to the triangle-free process, thus leaving open the question of whether the triangle-free process itself achieves this bound; this was conjectured by Spencer \cite{Sp0} and proved by Bohman \cite{B}. There is now a large literature on the general $H$-free process, obtained by replacing `triangle' by any fixed graph $H$ in the definition; see \cite{BK, BR2, OT, P1, P2, P3, Wa1, Wa2, Wa3, Wz1, Wz2, Wz3}. However, the theory is still very much in its early stages:\ we conjectured that our lower bound for $H$ strictly 2-balanced, given in \cite{BK}, gives the correct order of magnitude for the length of the process, but so far this has only been proved for some special graphs
(cycles \cite{P2,P3,Wa2}, $K_4$ \cite{Wa3} and the diamond \cite{P1}).

In this paper, we specialise to the triangle-free process, where we can now give an asymptotically optimal analysis. Our improvement on previous analyses of this process exploits the self-correcting nature of key statistics of the process.  For a treatment of self-correction in a simpler context see \cite{BFLold}.  The methods that we use to establish self-correction of the triangle-free process build on the ideas used recently by Bohman, Frieze and Lubetzky \cite{BFL} for an analysis of the triangle-removal process. Furthermore, the results of this paper have also been obtained independently and simultaneously by Fiz Pontiveros, Griffiths and Morris \cite{theother}; their proof also exploits self-correction, but is different to ours in some important ways (particularly in the methodologies for establishing self-correction and the analysis of the early part of the process, and also including many subtle differences, such as the definitions of the ensemble of key statistics that can be mutually controlled throughout the process).

Let $G$ be the maximal triangle-free graph at which the triangle-free process terminates.
\begin{theo} \label{edges}
With high probability, every vertex of $G$ has degree $(1+o(1)) \sqrt{\tfrac{1}{2}n\log n}$.
Thus the number of edges in $G$ is $ \left(\tfrac{1}{2\sqrt{2}}+o(1) \right) (\log n)^{1/2} n^{3/2}$ 
with high probability.
\end{theo}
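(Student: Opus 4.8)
The plan is to carry out the differential-equations method in its self-correcting form: to track the number of open pairs together with the vertex degrees and a family of auxiliary ``extension'' statistics, to show that all of these stay within narrow envelopes around explicit trajectories for as long as the process runs through a ``bulk'' phase, and then to read off the degrees at the termination time, which this analysis pins down.

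First I would fix a continuous parameter $t = i\,n^{-3/2}$, so that $G(i)$ has $tn^{3/2}$ edges and average degree $\bar d(t) = 2tn^{1/2}$, and identify the heuristic trajectories. The central statistic is $Q(i) = |O(i)|$; since a pair $uv$ remains open exactly while $u$ and $v$ have no common neighbour, modelling $G(i)$ as pseudorandom suggests $Q(i) \approx \tbinom n2\, q(t)$ with $q(t) = e^{-4t^2}$. Indeed $Q(i) - Q(i+1) = 1 + Z_i$, where $Z_i$ counts the open pairs closed by the new edge, and a short computation gives $\mathbb E[Z_i \mid \mathcal F_i] = (1+o(1))\,2\bar d(t)\,q(t)$ in the bulk phase; this expectation is governed by a codegree-type extension statistic, whose own drift is governed by further extension statistics, so one is led to introduce, for each small rooted graph $H$, the statistic counting extensions of the roots to a copy of $H$ realised by open pairs and/or edges in the prescribed pattern, together with its conjectured value. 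In the same spirit, for each vertex $v$ I would track the degree $d_v(i)$, whose one-step drift is exactly $Q_v(i)/Q(i)$, where $Q_v(i)$ is the number of open pairs at $v$ --- itself such an extension statistic, with $Q_v(i) \approx n\,e^{-2i d_v(i)/n^2}$ --- so $d_v$ and $Q_v$ are tracked jointly.

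The heart of the proof is to show that all these statistics remain within their envelopes throughout the bulk phase, taken to be where $Q \gg n^{3/2}$. Rather than the classical one-sided ``the error can only grow'' estimate, I would exploit \emph{self-correction}: whenever a tracked variable $X$ strays above (resp.\ below) its trajectory $x(t)$, its computed drift turns negative (resp.\ positive) and drives it back --- for instance a vertex of above-average degree accrues open pairs more slowly, and a larger-than-expected $Q$ causes more pairs to close. Concretely, for suitably chosen slowly-shrinking envelope functions $g^{\pm}$ one shows that $X(i) - x(t) - g^{+}(t)$ and $x(t) - g^{-}(t) - X(i)$ are supermartingales, and applies a Freedman-type concentration inequality, controlled by bounds on the one-step changes and on the summed conditional variances; self-correction is exactly what allows $g^{\pm}$ to be small enough to remain meaningful to the end of the process, whereas the classical error term blows up well before then. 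Since the drift estimate for each statistic presupposes the concentration of the others, the whole family must be handled at once, through a single stopping time at which \emph{any} envelope inequality first fails; showing that this stopping time occurs w.h.p.\ only after the process has essentially ended --- by balancing step-size, variance and the strength of self-correction simultaneously across the hierarchy --- is where I expect the main difficulty to lie.

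Finally I would treat the endgame and extract the statement. From $Q(i) \approx \tbinom n2 e^{-4t^2}$ and $\bar d(t) = 2tn^{1/2}$, the expected per-step decrease of $Q$, namely $1 + (1+o(1))\,2\bar d(t)\,q(t)$, is dominated by its second term while $2\bar d(t)q(t) \gg 1$ and falls to $1+o(1)$ once $Q$ has dropped to $\Theta\!\big(n^{3/2}/\sqrt{\log n}\big)$, which occurs at $t^{\ast} = (1+o(1))\tfrac{1}{2\sqrt 2}\sqrt{\log n}$, where $\bar d(t^{\ast}) = (1+o(1))\sqrt{\tfrac12 n\log n}$ and still every $Q_v(i) \gg 1$. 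Below that point $Q$ decreases by essentially $1$ per step, so it --- and hence the process --- reaches $0$ within a further $n^{3/2+o(1)} = o(n^{3/2}\sqrt{\log n})$ steps, during which no degree moves by more than $o(\sqrt{n\log n})$; and the process cannot terminate earlier, since throughout the bulk phase $Q$ is concentrated and bounded away from $0$ and no $Q_v$ has yet collapsed. Evaluating the maintained estimate $d_v(i) = (1+o(1))\bar d(t)$ at termination shows that w.h.p.\ every vertex of $G$ has degree $(1+o(1))\sqrt{\tfrac12 n\log n}$, and summing over the $n$ vertices gives $|E(G)| = \tfrac12\sum_v d_v = \big(\tfrac{1}{2\sqrt2}+o(1)\big)(\log n)^{1/2}n^{3/2}$.
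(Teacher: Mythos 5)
Your outline for the lower bound and the bulk-phase concentration matches the paper's strategy closely: track $Q$, vertex degrees and a hierarchy of extension variables via supermartingale estimates with self-correcting envelopes, apply Freedman's inequality, and use a single stopping time over the whole ensemble. That much is fine.

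The gap is in your ``endgame'' step, which is what must supply the matching \emph{upper} bound on degrees. You propose to continue the tracking to the time $t^{\ast}$ where $\hat q \approx n^{-1/2}/\sqrt{\log n}$, at which point each open degree $Q_v(t^{\ast}) \approx n^{1/2}/\sqrt{\log n} = o(\sqrt{n\log n})$, and then observe that a vertex can gain at most $Q_v(t^{\ast})$ further edges. But the envelopes you describe (and the ones the paper uses) carry a relative error of order $\hat q^{-1/2}n^{-1/4}$ for the codegree-type variables, and that quantity is already $\gg 1$ at $t^{\ast}$; in other words, the tracking breaks down well before $t^{\ast}$. This is exactly why the paper stops at $t_{\max}$, defined by $\hat q(t_{\max}) = n^{-1/2+\eps}$ for a fixed small $\eps>0$. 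At $t_{\max}$ the open degree is $\approx n^{1/2+\eps}$, which is far larger than $\sqrt{n\log n}$, so the ``remaining open pairs at $v$'' argument does not give the required $o(\sqrt{n\log n})$ bound on further degree growth. To bridge the interval from $t_{\max}$ to termination, the paper instead uses a separate union-bound argument (Section~7.3): for each vertex $x$, potential final neighbourhood $A$, and additional set $C$ of $\eps'\sqrt{n\log n}$ vertices that are still open to $x$ at $t_{\max}$, it bounds the probability that $A\cup C$ is independent and all of $C$ is open to $x$, using estimates on the number of open pairs in and between small sets (Lemmas~\ref{opens}, \ref{opens-bip}); this shows whp no such triple exists. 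Some device of this kind is indispensable, and your sketch omits it, so the final claim ``no degree moves by more than $o(\sqrt{n\log n})$'' is unsupported.
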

\noindent
We also obtain the following bound on the size of any independent set in $G$.
\begin{theo} \label{indep}
With high probability, $G$ has independence number at most $(1+o(1)) \sqrt{2n\log n}$.
\end{theo}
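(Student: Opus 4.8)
To prove Theorem~\ref{indep} I would fix $\eps>0$ and show that with high probability $G$ contains no independent set of size exactly $m := \lceil (1+\eps)\sqrt{2n\log n}\,\rceil$; since a larger independent set would contain one of size $m$, this suffices. The point is that a vertex set $U$ is independent in $G$ precisely when it ``survives'' every stage, meaning that the pair added at stage $i$ never lies inside $U$. Write $Q(i) = |O(i)|$, and for $|U|=m$ let $Y_U(i)$ denote the number of pairs of $O(i)$ that lie inside $U$; then the conditional probability, given $G(i)$, that the pair added at stage $i$ falls inside $U$ is exactly $Y_U(i)/Q(i)$. Hence, for any $i^*$ below the (random) length of the process,
\[
\Pr\bracsq{\,U\text{ independent in }G\,}
\;\le\; \mb E\bracsq{\,\prod_{i<i^*}\brac{1-Y_U(i)/Q(i)}\,}
\;\le\; \mb E\bracsq{\,\exp\brac{-\sum_{i<i^*}Y_U(i)/Q(i)}\,} ,
\]
and the plan is to show that the exponent typically exceeds $(1+\eps/4)\log\binom nm$, so that summing this over the $\binom nm$ choices of $U$ still gives $o(1)$.

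To control the exponent I would use the dynamic concentration estimates behind Theorem~\ref{edges}: whp $Q(i) = (1+o(1))\,\hat q(t)\binom n2 =: Q^+(i)$, where $\hat q(t) = e^{-(4+o(1))t^2}$ and $i = tn^{3/2}$, and the process has length $(\tfrac1{2\sqrt 2}+o(1))(\log n)^{1/2}n^{3/2}$. The natural trajectory for $Y_U$ is $Y_U(i)\approx \hat q(t)\binom m2$: when an edge is added, the expected number of pairs of $O(i)\cap\binom U2$ that it closes is proportional to $Y_U(i)$, with the same constant of proportionality --- roughly $\tfrac{4d(i)}{n^2}$, where $d(i)\approx 2tn^{1/2}$ is the typical degree --- that governs the decay of $Q(i)$, so $Y_U(i)/Q(i)$ stays close to $\binom m2/\binom n2$. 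Granting the lower bound $Y_U(i)\ge (1-o(1))\hat q(t)\binom m2 =: y^-(i)$ together with the bound on $Q$, each summand equals $(1-o(1))\binom m2/\binom n2$, which to leading order is independent of $i$; summing over the $\Theta(n^{3/2}(\log n)^{1/2})$ stages and using $\log\binom nm = (1+o(1))\tfrac12 m\log n$, the critical size at which the exponent balances $\log\binom nm$ comes out to exactly $m_0 = \sqrt{2n\log n}$. Taking $m = (1+\eps)m_0$ scales the full sum by $(1+\eps)^2$ but $\log\binom nm$ by only $(1+o(1))(1+\eps)$, so there is room to stop at a fixed fraction $i^* < i_{\mathrm{end}}$ of the process at which $\sum_{i<i^*} y^-(i)/Q^+(i)$ already exceeds $(1+\eps/4)\log\binom nm$ while $\hat q(t)$, and hence $y^-(i^*)\approx Y_U(i^*)$, is still polynomially large in $n$.

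The crux, and what I expect to be the main obstacle, is the uniform lower bound $Y_U(i)\ge y^-(i)$ for all $i\le i^*$: this must hold for a fixed $U$ of size $m$ with failure probability $o\brac{\binom nm^{-1}}$ --- note that $\binom nm = \exp\brac{\Theta(\sqrt n(\log n)^{3/2})}$ --- so that the union bound over the $\binom nm$ sets $U$ goes through. Crude martingale concentration cannot deliver this: a single added edge may close up to $\Theta(\sqrt{n\log n})$ pairs inside $U$, so over $\Theta(n^{3/2}(\log n)^{1/2})$ stages an Azuma- or Freedman-type bound based on worst-case increments cannot resolve deviations as small as a constant fraction of $Y_U(i) = O(n\log n)$. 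This is exactly where the self-correcting mechanism of the paper is essential: since the expected one-step decrease of $Y_U$ is proportional to $Y_U$ itself, a value dipping below the trajectory is pushed back up, and tracking a suitably renormalised deviation of $Y_U$ --- treating the rare stages with a large increment separately by means of the whp degree and codegree bounds, and exploiting this restoring drift via a supermartingale concentration argument of the kind used in \cite{BFL} --- yields the required failure probability. Granting this, the proof concludes in the usual way: $\Pr[\,\exists\text{ independent }U,\ |U|=m\,]$ is at most $\Pr[\,Q\text{ not concentrated}\,] + \binom nm\brac{o(\binom nm^{-1}) + e^{-(1+\eps/4)\log\binom nm}} = o(1)$, the first term being $o(1)$ by the analysis behind Theorem~\ref{edges}; and since $\eps>0$ was arbitrary, $\alpha(G)\le (1+o(1))\sqrt{2n\log n}$ with high probability.
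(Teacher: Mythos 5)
Your starting identity $\mb{P}[U\text{ independent}]=\mb{E}\bigl[\prod_{i}(1-Y_U(i)/Q(i))\bigr]$ and the first-moment calculation identifying $m_0=\sqrt{2n\log n}$ as the balance point are correct, and this step-by-step conditioning is indeed the engine of the paper's proof as well. The gap is in the reduction to a uniform lower bound $Y_U(i)\ge(1-o(1))\hat q\binom m2$ holding for each fixed $U$ with failure probability $o\bigl(\binom nm^{-1}\bigr)$. This is not merely hard for a self-correcting martingale to deliver; it is \emph{false} for a family of $U$'s too large to discard. If the process gives some vertex $x$ a neighbourhood of size $h$ inside $U$ (a ``hole''), those $\binom h2$ pairs are deterministically closed; near the end of the tracking window, where $\hat q\approx n^{-1/2+\eps}$ and $\hat q\binom m2=\wt{O}(n^{1/2+\eps})$, a single hole of size $h\approx n^{1/4}$ already removes a constant fraction of the trajectory. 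The probability that a fixed vertex acquires $n^{1/4}$ neighbours in a fixed set $U$ is only $\exp(-n^{1/4+o(1)})$, vastly larger than the $\exp\bigl(-\Omega(\sqrt n\,(\log n)^{3/2})\bigr)$ threshold needed to survive the union bound over $\binom nm$ sets, so such $U$ cannot be set aside. Self-correction sharpens concentration around a trajectory; it cannot rescue a bound that fails at the level of expectations. (The same obstruction is why Lemma~\ref{opens}(i) --- the paper's nearest analogue of what you need --- carries the hypothesis $\max_v \deg_S(v)\le h\le L^{-10}\psi^2\hat q s$, which you cannot certify for an arbitrary candidate $U$.)

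The paper never attempts to concentrate $Y_K$ for the whole candidate set $K$; instead it \emph{enriches the union bound}. In addition to $K$ it specifies the hole vertices $x_1,\dots,x_z$ (chosen greedily by largest new neighbourhood in $K$), the hole sizes $h_\ell$, and the selection steps of the edges $x_\ell v_{\ell j}$, and then multiplies out the step-by-step conditional probabilities for the resulting enriched event $\mc E_K$. The holes that ruin your argument now pay for themselves: each specified edge $x_\ell v_{\ell j}$ costs a factor $\sim 2/Q$ in $\mb{P}(\mc E_K)$ and in return removes a batch of forbidden open pairs. The decomposition $N_i\ge N_{iAi}+N_{iAo}+N_{iB}+N_{iK}-N_{iO}$, the split of holes into $Z_A$ and $Z_B$, and Lemmas~\ref{opens}, \ref{opens-bip}, \ref{density}, \ref{large-degs}, \ref{WxH} --- applied only to pieces of $K$ for which the requisite degree hypotheses can be certified from the hole construction --- are the bookkeeping that makes this exchange favourable while preserving the constant $\sqrt 2$. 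To repair your outline you would need to incorporate some version of this hole decomposition rather than attempt the uniform bound on $Y_U$.
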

\noindent
An immediate consequence is the following new lower bound on Ramsey numbers.
The best known upper bound is $R(3,t) < (1+o(1))t^2/\log t$, due to Shearer \cite{Sh}.
\begin{theo} \label{ramsey}
$R(3,t) > \left( \tfrac{1}{4}-o(1) \right) t^2/\log t$.
\end{theo}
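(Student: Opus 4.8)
The plan is to deduce this directly from Theorem~\ref{indep}, via the standard correspondence between triangle-free graphs of small independence number and lower bounds for $R(3,t)$; note that Theorem~\ref{edges} plays no role here. Fix $\eps > 0$. By Theorem~\ref{indep} there is a function $f(n) = o(1)$ such that, for every sufficiently large $n$, with positive probability the terminal graph $G$ of the triangle-free process on $n$ vertices satisfies $\alpha(G) \le (1 + f(n))\sqrt{2 n \log n}$; since $G$ is by construction triangle-free, such a graph exists. An $n$-vertex graph with neither a triangle nor an independent set of size $t$ witnesses $R(3,t) > n$, so it will suffice to choose $n$ as large as possible subject to $(1 + f(n))\sqrt{2 n \log n} < t$, or equivalently $2 n \log n < (1 + f(n))^{-2} t^2$.

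First I would set $n = \bfl{(\tfrac14 - \eps)\, t^2/\log t}$. Since $n$ and $t$ are then polynomially related, $\log n = 2\log t - \log\log t + O(1) = (2 + o(1))\log t$, and therefore
\begin{equation*}
2 n \log n \;\le\; 2\brac{\tfrac14 - \eps}\frac{t^2}{\log t}\cdot (2 + o(1))\log t \;=\; \brac{1 - 4\eps + o(1)}\,t^2 .
\end{equation*}
On the other hand $(1 + f(n))^{-2} t^2 = (1 - o(1))\,t^2$, and $1 - 4\eps + o(1) < 1 - o(1)$ for all large $t$, so the displayed inequality is satisfied. Hence $R(3,t) > n \ge (\tfrac14 - \eps) t^2/\log t - 1$ for all large $t$; as $\eps > 0$ was arbitrary (and may be taken to decay slowly with $t$), this yields $R(3,t) > (\tfrac14 - o(1))\,t^2/\log t$.

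I anticipate no real difficulty: the computation is routine, and the only point that repays a moment's care is the bookkeeping of the error terms — the $o(1)$ in Theorem~\ref{indep} is a function of $n$ (hence of $t$) and must be seen to be eventually dominated by the fixed slack $4\eps$ built into the choice of $n$. It is also worth noting where the constant $\tfrac14$ comes from: because $n$ is quadratic in $t$ we have $\log n \sim 2\log t$, which costs an extra factor of $2$ relative to the naive reading of $\alpha(G) \lesssim \sqrt{2 n \log n}$ and so turns a putative $\tfrac12$ into $\tfrac14$.
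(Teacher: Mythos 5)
Your proof is correct, and it is exactly the argument the paper has in mind when it calls Theorem~\ref{ramsey} an ``immediate consequence'' of Theorem~\ref{indep} (the paper does not spell it out). The bookkeeping — $\log n = (2+o(1))\log t$ because $n = \Theta(t^2/\log t)$, which turns the $\sqrt{2n\log n}$ bound into the constant $1/4$ — is handled correctly.
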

\noindent
These results are predicted by a simple heuristic.  The graph $ G(i)$ that we get after $i$ steps of the triangle-free process should closely resemble the Erd\H{o}s-R\'enyi random graph $ G_{n,p} $ with $ i = n^2p/2 $, with the exception that $ G_{n,p} $ should have many triangles while $ G(i)$ has none.  

In addition to Theorems~\ref{edges}~and~\ref{indep} we show that this heuristic extends to all small subgraph counts; in particular, we answer the 
folklore question (brought to our attention by Joel Spencer)
of which subgraphs appear in $G$.
The \emph{density} of a graph $H$ with $V_H \ne \es$  
is $d(H) = \frac{|E_H|}{|V_H|}$.
The \emph{maximum density} $m(H)$ of $H$ is the maximum of $d(H')$
over non-empty subgraphs $H'$ of $H$.
\begin{theo} \label{subgraphs}
Let $H$ be a non-empty triangle-free graph.
\begin{enumerate}[(i)]
\item If $m(H) \le 2$ then $\mb{P}(H \sub G)=1-o(1)$.
\item If $m(H) > 2$ then $\mb{P}(H \sub G)=o(1)$.
\end{enumerate}
\end{theo}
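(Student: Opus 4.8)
\medskip
\noindent\textbf{Proof proposal.}
The two parts need rather different arguments, but both lean on the self-correcting estimates for subgraph and extension counts that are the technical core of the paper.

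For part (i), suppose $m_2(H)\le 2$; note this forces $d_2(F)\le 2$, hence $|E_F|\le 2|V_F|-3$, for every subgraph $F\sub H$ with at least $3$ vertices, and in particular $m(H):=\max_{F\sub H}|E_F|/|V_F|<2$. The plan is to track, for each subgraph $F$ of $H$ (and rooted versions, as needed for second-moment control), the number $N_F(i)$ of copies of $F$ in $G(i)$, and to invoke the concentration machinery to show $N_F(i)=(1+o(1))\,n^{|V_F|}p(i)^{|E_F|}$ with $p(i)=2i/n^2$, once this trajectory is large. The hypothesis $m_2(H)\le 2$ is exactly the condition that makes every such trajectory grow to infinity by the terminal step $i^*$: since $|E_F|\le 2|V_F|-3$ and $p(i^*)=(1+o(1))\sqrt{(\log n)/(2n)}$ by Theorem~\ref{edges}, we get $n^{|V_F|}p(i^*)^{|E_F|}\ge n^{3/2-o(1)}\to\infty$, with no intermediate subgraph forming a bottleneck. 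Hence $N_H(i^*)>0$ whp, i.e.\ $H\sub G$ whp. The borderline case is a subgraph $F$ with $d_2(F)=2$ (for instance $F=K_{3,3}$): here the count grows only slowly --- slower, in fact, than the $G_{n,p}$ prediction, cf.\ part (ii) --- but the self-correcting analysis, run keeping the ``critical'' error terms, still shows that $N_F(i^*)\to\infty$, which is all that is needed for appearance.

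For part (ii), suppose $m_2(H)>2$ and let $H^*$ be an inclusion-minimal subgraph of $H$ with $d_2(H^*)=m_2(H)$; then $H^*$ is strictly $2$-balanced and $|E_{H^*}|\ge 2|V_{H^*}|-2$, and it suffices to prove $\mb{P}(H^*\sub G)=o(1)$. The plan is a first-moment bound over the process: $\mb{P}(H^*\sub G)\le\mb{E}[N_{H^*}(i^*)]=\sum_{i<i^*}\mb{E}[\DD_i]$, where $\DD_i$ is the number of copies of $H^*$ whose last-added edge is the pair inserted at step $i+1$. Conditioning on $G(i)$, $\mb{E}[\DD_i\mid G(i)]=\sum_{xy\in E_{H^*}}X_{xy}(i)/|O(i)|$, where $X_{xy}(i)$ counts the copies of $H^*-xy$ in $G(i)$ whose image of the pair $xy$ lies in $O(i)$. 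The argument hinges on bounding $X_{xy}(i)$: one has to show it is \emph{polynomially} smaller than the naive independence guess $n^{|V_{H^*}|}p(i)^{|E_{H^*}|-1}\cdot|O(i)|/\binom n2$, by controlling the relevant open-extension variables (which interleave present edges with pairs required to be open, each open pair carrying a factor of order $|O(i)|/\binom n2=n^{-1/2+o(1)}$ near the end) via the self-correcting method. Granting such a bound, and using $|O(i)|\ge n^{3/2+o(1)}$ up to the short terminal phase, substituting $p(i)=2i/n^2$ shows that $\sum_{i<i^*}\mb{E}[\DD_i]$ is governed by the exponent $|E_{H^*}|-1-2(|V_{H^*}|-2)=(|V_{H^*}|-2)\bigl(d_2(H^*)-2\bigr)>0$, and --- after the polynomial gain --- is $o(1)$. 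Hence $N_{H^*}(i^*)=0$ whp, so $H\not\sub G$ whp.

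The crux is precisely the bound on $X_{xy}(i)$ in part (ii). The working heuristic ``$G(i)$ resembles $G_{n,p(i)}$'', which underlies the rest of the paper, genuinely \emph{fails} here: for instance $K_{3,4}$ is triangle-free and \emph{does} appear in $G_{n,p}$ at density $p(i^*)$, yet does not appear in $G$. So one cannot simply transplant $G_{n,p}$-style moment computations; the dynamic analysis is needed to show that the counts of the relevant ``critical'' configurations (those of $2$-density exactly $2$, such as $H^*$ with its last edge deleted and that pair required to be open) are polynomially suppressed in the triangle-free process. Choosing the right family of open-extension variables, pinning down their trajectories, and proving self-correcting concentration sharply enough to survive a union bound over all $\binom{n}{|V_{H^*}|}$ potential copies and all $i^*=n^{3/2+o(1)}$ steps is where essentially all the difficulty lies.
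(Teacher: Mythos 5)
Your approach to part (i) differs from the paper's: the paper simply cites \cite[Theorem~1.6(iii)]{BK} (which already established appearance of all strictly $2$-balanced $H$ with $d_2(H) \le 2$ in the early evolution of the $H$-free process), while you sketch a fresh dynamic-concentration argument. That is a genuinely different route; the paper gets (i) for free, and your version would need to deal carefully with the critical case $d_2(F)=2$, as you acknowledge.

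For part (ii) you and the paper both run a first-moment argument, but you use the wrong decomposition of the event, and this is where a real gap opens. You sum over the step at which the last edge of a putative copy of $H^*$ is selected, so each configuration contributes a single $\hat q$-factor (one pair required to be open). The paper instead examines the single time $i_{max}$ and uses the following observation: if $H' \sub G$, then at time $i_{max}$, with $J\sub E_{H'}$ the set of edges already selected, \emph{every} edge of $H'\sm J$ must be open (not merely absent) at step $i_{max}$, since each must become an edge later. This is the event $V_{J,H'}(i_{max})>0$, which is handled by Lemma~\ref{no-sparse}, and the resulting bound carries a $\hat q(t_{max})^{|E_{H'}|-|E_J|}$ factor --- one per remaining open pair, not just one total. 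That surplus of $\hat q$-factors is what makes $v_{J,H'}(t_{max})<n^{-\eps}$ under the stated assumption on $\eps$; it is exactly the ``polynomial gain'' you say you need. Crucially, Lemma~\ref{no-sparse} is a conditional union bound over the history of the process, requiring only the already-established control on the codegree variables $Y_{uv}$ from Theorem~\ref{good}; it does not require tracking any new self-correcting extension variable.

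Your proposed resolution --- showing $X_{xy}(i)$ is polynomially smaller than its scaling $n^{|V_{H^*}|}p(i)^{|E_{H^*}|-1}\hat q(i)$ --- does not work as stated. The quantity that enters your bound is $\mb E\bigl[X_{xy}(i)/|O(i)|\bigr]$, and on the good event $|O(i)|$ is essentially deterministic, so what matters is $\mb E[X_{xy}(i)]$. But the same union-bound technique proving Lemma~\ref{no-sparse} (and the estimate in Lemma~\ref{extensions}) shows $\mb E[X_{xy}(i)]$ is of the order of its scaling whenever that scaling is at least $1$; no polynomial improvement in expectation is available. If $X_{xy}(i)$ were typically far below its mean, some other event would have to compensate, and that contribution does not disappear from the first-moment sum. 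So the gain has to come from conditioning on \emph{all} remaining pairs being open simultaneously at a single time --- precisely the idea in the paper's proof, and precisely the idea missing from your sketch.
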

\noindent Thus, the small subgraphs that are likely to appear in $G$ 
are exactly the same as the triangle-free subgraphs that appear in $G_{n,p}$
when $p = \TT(n^{-1/2} \log^{1/2} n)$.

Note that the lower bound on $ R(3,t)$ given by the triangle-free process is non-constructive; for an explicit construction of a triangle-free graph on \( \Theta( t^{3/2}) \) vertices with independence number less than \(t\) see Alon \cite{a}.  Alon, Ben-Shimon and Krivelevich \cite{abk} gave a construction that can be applied to $G$ to produce a {\em regular} Ramsey \( R(3,t) \) graph, at the cost of a worse constant in the lower bound on $ R(3,t)$.

The bulk of this paper is occupied with the analysis required for the lower bound in Theorem \ref{edges}. To prove this, we in fact prove much more generally that we can `track' several ensembles of `extension variables' for most of the process; this is formalised as Theorem \ref{good}. 
The proof of Theorem \ref{good} is outlined in the next section, then implemented over the four following sections. In Section \ref{sec:couple} we present some coupling and union bound estimates that are needed throughout the paper, and also prove Theorem \ref{subgraphs}, assuming Theorem \ref{good}. In Sections \ref{sec:global}, \ref{sec:control} and \ref{sec:stack}, we prove Theorem \ref{good} via a self-correcting analysis of three ensembles of random variables.  
Section~\ref{sec:indep} is mostly occupied by the proof of Theorem \ref{indep}; it also contains the proof of the upper bound in Theorem \ref{edges}, which is similar and easier.  We conclude with some brief remarks in Section~\ref{sec:conclude}.

\section{Overview of lower bound}
\label{sec:overviewlower}

In this section we outline the proof of the lower bound in Theorem \ref{edges}.  We are guided throughout by the heuristic that $ G(i)$ should resemble $ G_{n,p} $ with $ i = n^2 p/2 $.  Before proceeding with the outline of the proof we mention a consequence of this heuristic that is central to the entire argument.  
We introduce a time parameter $t$ that is a rescaling of the number of steps $i$,
defined by \[ t=in^{-3/2}. \] 
For intuition, it is helpful to think of $t$ as a continuous parameter,
as it takes values less than $\sqrt{\log n}$, which is negligible 
compared with the polynomial scalings of the key statistics of the process.

Note that \[p = 2t n^{-1/2}.\]
We define $ Q(i) $ to be the number of open \emph{ordered} pairs in $ G(i) $.  (So $Q(i) = 2|O(i)|$.)  This variable is crucial to our understanding of the process.  We have $ Q(0) = n^2-n$, and the process ends exactly when $ Q(i)=0$.  How do we expect $ Q(i)$ to evolve?  
If $ G(i) $ resembles $ G_{n,p} $ then for any pair $ uv$ we should have
\[ \mathbb{P}( uv \in O(i)) \approx \left( 1 - p^2 \right)^{n-2}
\approx e^{ - n p^2} = e ^{-4t^2}. \]
We set $q(t) =  e ^{-4t^2} n^2$ and
expect to have
\[ Q(i) \approx q(t) \]
for most of the evolution of the process.  This is exactly what we prove.

\subsection{Strategy}
\label{sec:strategy}

We use dynamic concentration inequalities for a carefully chosen ensemble of random variables associated with the process. We aim to show $V(i) \approx v(t)$ for all variables $V$ in the ensemble, for some smooth function $v(t)$, which we refer to as the \emph{scaling} of $V$. Here $V(i)$ denotes the value of $V$ after $i$ steps of the process, and we scale time as
$ t=in^{-3/2}$. 
For each $V$ we define a \emph{tracking variable} $\mc{T}V(i)$ and aim to show that $\mc{D}V(i) = V(i) - \mc{T}V(i)$ satisfies $|\mc{D}V(i)| < \dD_V(t) v(t)$, for some error functions $\dD_V(t)$. We use $\mc{T}V(i)$ rather than $v(t)$ so that we can isolate variations in $V$ from variations in other variables that have an impact on $V$.

The improvement to earlier analysis of the process comes from `self-correction', i.e.\ the mean-reverting properties of the system of variables. We take $\dD_V(t) = f_V(t) + 2g_V(t)$, where we think of $f_V(t)$ as the `main error term' and $g_V(t)$ as the `martingale deviation term'.  We usually have $ g_V \ll f_V$, but there are some exceptions when $t$ is small and hence $f_V(t)$ is too small. We require $g_V(t)v(t)$ to be `approximately non-increasing' in $t$, in that $g_V(t')v(t') = O(g_V(t)v(t))$ for all $t' \ge t$.\footnote{There will be one exceptional type of variable, the vertex degrees, for which this does not hold.}
We define the {\em critical window} 
\[W_V(i) = [ (f_V(t) + g_V(t))v(t), (f_V(t)+ 2g_V(t))v(t)].\]
We aim to prove the {\em trend hypothesis} for $V$,
which is the following statement\footnote{
We will only give the analysis for `upper critical window', 
i.e.\ we consider $\mc{D}V(i)$ positive;
the case of $\mc{D}V(i)$ negative
can be treated in exactly the same way with reversed signs. 
We also remark that we need to `freeze' $\mc{Z}V(i)$ 
if $V$ becomes `bad' (see \eqref{eq:freeze} in Section~\ref{sec:stop}).} 
\begin{equation} \label{eq:trend}
 \mc{Z}V(i) := | \mc{D}V(i) | - \dD_V(t)v(t) 
\text{ is a supermartingale when } |\mc{D}V(i)| \in W_V(i).
\end{equation}
The trend hypothesis will follow from the \emph{variation equation} for $\dD_V(t)$, which balances the changes in $\mc{D}V(i)$ and $\dD_V(t)v(t)$. Since errors can transfer from one variable to another, each variation equation is a differential inequality that can involve many of the error functions.

We aim to track the process up to the time 
\[ t_{max} = \tfrac{1}{2} \sqrt{ (1/2 - \eps) \log n},\]
where $\eps>0$ is a constant, fixed throughout the paper,
that can be arbitrarily small.
More precisely, we will define a stopping time $I$ 
as the first step $i$ at which we have failure of various events
(defined below), which include the event that $V$ satisfies
its required bounds. It will suffice to show that 
$ I > i_{max} := t_{max}n^{3/2}$ 
with high probability.

One way that $ I \le i_{max}$ can occur is when
there exists $ i^* = I \le i_{max} $ and some variable $V$
where $\mc{D}V(i^*)$ is too large.
In this situation, $\mc{D}V$ enters $W_V$ from below at some\footnote{
We will be able to assume a certain lower bound $i'>i_V$ 
via coupling arguments given in Section \ref{sec:couple},
and also that $V$ is `good' (see Section~\ref{sec:stop}).} 
step $i'<i^*$, stays in $W_V(i)$
for $i' \le i < i^*$ then goes above $W_V(i^*)$ at step $i^*$.  
During this time $\mc{Z}V(i)$ is a supermartingale, 
with $\mc{Z}V(i'-1) \le -g_V(t')v(t')$ and $\mc{Z}V(i^*) \ge 0$,
so we have an increase of at least $g_V(t')v(t')$ against the drift of the supermartingale.
Then we use Freedman's martingale inequality \cite{F}, which is as follows. 
\begin{lemma}[Freedman] \label{Freedman}
Suppose $(X(i))_{i \ge 0}$ is a supermartingale with respect to the
filtration $\mc{F}=(\mc{F}_i)_{i \ge 0}$.  Suppose that $X(i+1)-X(i) \le B$
for all $i$ and define $V(j) = \sum_{i=1}^j \Var(X(i) \mid \mc{F}_{i-1})$.
Then for any $a, v >0$ we have
\[ \mb{P}\left( \exists i \text{ such that } X(i) \ge X(0) + a \text{ and } V(i) \le v \right)
\le \exp \brac{-\frac{a^2}{2(v+Ba)}}. \]
\end{lemma}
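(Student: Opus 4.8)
The plan is to use the classical exponential-supermartingale method (Bennett/Bernstein), the only twist being that the ``$\exists i$'' quantifier is handled by a maximal inequality rather than a single Markov bound. Write $Y(i)=X(i)-X(i-1)$, so $Y(i)\le B$ and $\mathbb{E}[Y(i)\mid\mathcal{F}_{i-1}]\le 0$, and assume without loss of generality that $X(0)=0$. For each parameter $\theta>0$ set $g(\theta)=(e^{\theta B}-1-\theta B)/B^2$ and define
\[ W(i) = \exp\bigl(\theta X(i) - g(\theta)\,V(i)\bigr). \]
The heart of the proof is the claim that $(W(i))_{i\ge0}$ is a supermartingale. Since $V(i)$ is $\mathcal{F}_{i-1}$-measurable, this reduces to the one-step estimate
\[ \mathbb{E}\bigl[e^{\theta Y(i)}\mid\mathcal{F}_{i-1}\bigr] \;\le\; \exp\bigl(g(\theta)\,\mathrm{Var}(X(i)\mid\mathcal{F}_{i-1})\bigr), \]
which follows from the elementary convexity bound $e^x \le 1 + x + (e^{\theta B}-1-\theta B)(x/\theta B)^2$ valid for all $x\le\theta B$ (the two sides agree at $x=0$ and $x=\theta B$, and their difference is convex then concave), applied with $x=\theta Y(i)$, together with $\mathbb{E}[Y(i)\mid\mathcal{F}_{i-1}]\le0$ and $1+z\le e^z$.

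Granting this, let $\tau$ be the first step $i$ at which $X(i)\ge a$ and $V(i)\le v$; the event in the statement is exactly $\{\tau<\infty\}$. Because $V$ is nondecreasing, on $\{\tau\le N\}$ we have $W(\tau)\ge\exp(\theta a-g(\theta)v)$. Optional stopping for the nonnegative supermartingale $W$ at the bounded time $\tau\wedge N$ gives $\mathbb{E}[W(\tau\wedge N)]\le W(0)=1$, while $\mathbb{E}[W(\tau\wedge N)]\ge\mathbb{E}[W(\tau)\mathbf{1}_{\tau\le N}]\ge\exp(\theta a-g(\theta)v)\,\mathbb{P}(\tau\le N)$. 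Hence $\mathbb{P}(\tau\le N)\le\exp(-(\theta a-g(\theta)v))$, and letting $N\to\infty$ yields $\mathbb{P}(\tau<\infty)\le\exp(-(\theta a-g(\theta)v))$ for every $\theta>0$.

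It remains to optimise over $\theta$. The choice $\theta=B^{-1}\log(1+aB/v)$ makes $\theta a-g(\theta)v$ equal to the Bennett exponent $(v/B^2)\bigl((1+u)\log(1+u)-u\bigr)$ with $u=aB/v$, and the standard numerical inequality $(1+u)\log(1+u)-u\ge 3u^2/(6+2u)$ turns this into $a^2/(2v+\tfrac23 Ba)\ge a^2/(2(v+Ba))$, which is the claimed bound. I expect the main obstacle to be the first paragraph, namely establishing that $W$ is genuinely a supermartingale: the one-step exponential-moment estimate must be pushed through with only the \emph{one-sided} increment bound $Y(i)\le B$ and with conditional variances (rather than conditional second moments), so that the interplay of the convexity lemma with the (non-positive) supermartingale drift is where the care is needed. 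The maximal-inequality step and the optimisation are routine.
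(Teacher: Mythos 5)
The paper only cites Freedman \cite{F} for this lemma and gives no proof, so there is no in-paper argument to compare against; what follows is a review of your argument on its own terms. The overall template you use---the exponential supermartingale $W(i)=\exp\bigl(\theta X(i)-g(\theta)V(i)\bigr)$ with $g(\theta)=(e^{\theta B}-1-\theta B)/B^2$, optional stopping at $\tau\wedge N$, monotone passage to $N\to\infty$, and optimisation of $\theta$---is the standard and correct route, and your optimisation computation is right: the choice $\theta=B^{-1}\log(1+aB/v)$ does give the Bennett exponent $(v/B^2)\bigl((1+u)\log(1+u)-u\bigr)$ with $u=aB/v$, and the numerical inequality reducing it to $a^2/(2(v+Ba))$ checks out.

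The gap is exactly where you predicted it would be, in the one-step estimate, and it is genuine: the three ingredients you cite do not deliver the claimed bound with the conditional \emph{variance}. From $e^x\le 1+x+(e^{\theta B}-1-\theta B)(x/\theta B)^2$ applied at $x=\theta Y(i)$ (which gives the term $g(\theta)Y(i)^2$), together with $\mathbb{E}[Y(i)\mid\mathcal{F}_{i-1}]\le 0$ and $1+z\le e^z$, one gets
\[
\mathbb{E}\bigl[e^{\theta Y(i)}\mid\mathcal{F}_{i-1}\bigr]\;\le\;1+g(\theta)\,\mathbb{E}\bigl[Y(i)^2\mid\mathcal{F}_{i-1}\bigr]\;\le\;\exp\bigl(g(\theta)\,\mathbb{E}[Y(i)^2\mid\mathcal{F}_{i-1}]\bigr),
\]
a bound in terms of the conditional \emph{second moment}. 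Writing $\mu_i=\mathbb{E}[Y(i)\mid\mathcal{F}_{i-1}]\le 0$, one has $\mathbb{E}[Y(i)^2\mid\mathcal{F}_{i-1}]=\mathrm{Var}(X(i)\mid\mathcal{F}_{i-1})+\mu_i^2$, so the discrepancy with the variance points the wrong way, and $W$ as you defined it (with $V$ accumulating variances) has not been shown to be a supermartingale. Concretely, retaining the $\theta\mu_i$ term one gets $1+\theta\mu_i+g(\theta)(\sigma_i^2+\mu_i^2)$, and the extra $\theta\mu_i+g(\theta)\mu_i^2$ is nonpositive only when $\mu_i\ge-\theta/g(\theta)$; for a more negative drift the quadratic majorant of $e^{\theta Y}$ is simply too lossy (even though the true $\mathbb{E}[e^{\theta Y(i)}\mid\mathcal{F}_{i-1}]$ is then small), so the chain of inequalities breaks. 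What your argument does prove is the lemma with $V(j)=\sum_{i\le j}\mathbb{E}[(X(i)-X(i-1))^2\mid\mathcal{F}_{i-1}]$, which is precisely how Freedman's original theorem is stated; the variance form in the paper is a strictly stronger statement for non-martingale supermartingales, and establishing it requires a sharper one-step argument (or a compensation argument plus a lower bound on $\mu_i$), not just discarding the drift. At minimum you should flag this, since as written the step ``which follows from\ldots'' asserts something the displayed ingredients do not give.
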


To apply Freedman's inequality, we let $\mc{F}=(\mc{F}_i)_{i \ge 0}$
be the natural filtration for the triangle-free process, 
in which each $\mc{F}_i$ consists of all events
determined by the choice of the first $i$ edges,
and we estimate 
\[\Var_V(i) :=  \Var(\mc{Z}V(i) \mid \mc{F}_{i-1}) 
\ \ \ \ \text{ and } \ \ \ \ N_V(i) := |\mc{Z}V(i+1)-\mc{Z}V(i)|.\]
Since $g_V(t)v(t)$ is approximately non-increasing 
(unless $V$ is a vertex degree variable), 
to obtain the required estimate $|\mc{D}V(i)| < \dD_V(t) v(t)$
with subpolynomial failure probability,
it suffices to have the following two bounds, which together 
we call the {\em boundedness hypothesis}:
\begin{equation}
\label{eq:boundVar}
g_V(t)^2v(t)^2 = \omega \left( \Var_V(i) (n\log n)^{3/2} \right),
\end{equation}  
\begin{equation}
\label{eq:boundN}
g_V(t)v(t) = \omega \left( N_V(i)\log n \right).
\end{equation}

The lower bound of Theorem~\ref{edges} will follow from Theorem \ref{good} below,
in which we show $I > i_{max}$ with high probability, so every variable in our
ensembles satisfies the required estimate for all $i<i_{max}$;
in particular $Q(i)>0$, so the process persists at least to step $i_{max}$.
The proof of Theorem \ref{good} is by a union bound over a polynomial number
of events, each of which has subpolynomial failure probability
(for brevity, we say these events hold `whp', meaning `with high probability').
We divide these events into four groups, which are treated successively over
the next four sections:\ firstly events not analysed by the critical window method
described in this section, and then critical window events for
three types of variables. The above discussion proves that for each variable $V$
the required critical window event holds whp under the trend and boundedness
hypotheses. For ease of reference we formulate this as a lemma,
in which $I_V$ denotes the first $i \ge i_V$ 
(the `activation step' for $V$, see Definition \ref{def:active})
at which the required estimate on $V$ fails
(we let $I_V=\infty$ if there is no such step).

\begin{lemma} \label{th+bh}
For any variable $V$ and step $i_V \ge 1$,
if $|\mc{D}V(i_V)| < \dD_V(t_V) v(t_V)$
and the trend and boundedness hypotheses  for $V$ hold
for all $i_V \le i <I$ then whp we do not have $I=I_V \le i_{max}$.
\end{lemma}


\subsection{Variables}

All definitions are with respect to the graph $G(i)$
of edges at step $i$ of the triangle-free process.  
Sometimes we use a variable name to also denote the set that it counts, e.g.\ $Q(i)$ is the number of ordered open pairs, and also denotes the set of ordered open pairs.  We usually omit $(i)$ and $(t)$ from our notation, e.g.\ $Q$ means $Q(i)$ and $q$ means $q(t)$.  We use capital letters for variable names and the corresponding lower case letter for the scaling. We express scalings using the (approximate) edge density and open pair density; these are respectively
\[ p = 2in^{-2} = 2tn^{-1/2} \ \ \text{ and } \ \ \hat{q} = e^{-4t^2}.\]

The next most important variable in our analysis, 
after the variable $ Q $ defined above,
is the variable $ Y_{uv}$ which, for a fixed pair of vertices $uv$, 
is the number of vertices $w$ such that $uw$ is an open pair and $vw$ is an edge.  
It is natural that $ Y_{uv} $ should play an important role in this analysis,
as it directly controls the evolution in the number $Q(i)$ of ordered open pairs:\ 
if $uv$ is the edge selected at step $i+1$ then
\[ Q(i) - Q(i+1) = 2(1+Y_{uv}+Y_{vu}).\]
Similarly, we have the following expression,
used throughout the paper, for the probability
(conditional on the history of the process up to step $i$)
that any particular open pair in $Q(i)$
is not open in $Q(i+1)$:
\begin{equation} \label{eq:close}
\mb{P}(uv \notin Q(i+1) \mid \mc{F}_i, uv \in Q(i))
= 2(1+Y_{uv}+Y_{vu})/Q.
\end{equation}
From the heuristics (which we will prove)
$Y \approx y = 2t\hat{q}n^{1/2}$ and $Q \approx q=\hat{q}n^2$
we can approximate edge-closure probabilities by
\begin{equation} \label{eq:close2}
4y(t)/q(t) = 8tn^{-3/2} = - n^{-3/2} \hat{q}'(t)/\hat{q}(t),
\end{equation}
which agrees with the intuition provided 
by the mean value approximation
\[ \hat{q}(t) - \hat{q}(t+n^{-3/2}) 
\approx - \hat{q}'(t) n^{-3/2}. \]

To control these variables we need to embed them in some larger ensembles 
of variables that mutually control each other.
The motivation for introducing each of 
the ensembles defined below is as follows:\ 
control of the global variables is needed
to get good control of $ Q$ 
(better than that implied by control of all $Y_{uv}$), 
control of the stacking variables is needed to
get good control of $ Y_{uv}$, and controllable variables play a
crucial role in our analysis of the stacking variables.  

\subsubsection{Global variables}
We begin with the variable that we are most interested in understanding:\ 
the number of open pairs.
We also include two other variables that will allow us 
to maintain precise control on the number of open pairs.
\begin{itemize}
\item
$Q=2|O(i)|$ is the number of ordered open pairs.
The scaling is $q=\hat{q}n^2$.
\item
$R$ is the number of ordered triples with $3$ open pairs.
The scaling is $r = \hat{q}^3 n^3$.
\item
$S$ is the number of ordered triples $abc$ where $ab$ is an edge and $ac$, $bc$ are open pairs.
The scaling is $s = p\hat{q}^2 n^3 = 2t\hat{q}^2n^{5/2}$.
\end{itemize}
We refer to $Q$, $R$ and $S$ as {\em global} variables.

\subsubsection{Controllable variables} \label{subsub:defcon}

Next we formulate a very general condition under 
which we can approximate a variable up to a
proportional error with polynomial decay.
Suppose $\GG$ is a graph, $J$ is a spanning subgraph of $\GG$ and $A \sub V_\GG$. 
We refer to $(A,J,\GG)$ as an \emph{extension}. 
Suppose that $\phi:A \to [n]$ is an injective mapping.
We define the {\em extension variables} $X_{\phi,J,\GG}(i)$ to be the number of injective maps $f:V_\GG \to [n]$ such that
\begin{enumerate}[(i)]
\item $f$ restricts to $\phi$ on $A$,
\item $f(e) \in E(i)$ for every $e \in E_J$ not contained in $A$, and
\item $f(e) \in O(i)$ for every $e \in E_\GG \sm E_J$ not contained in $A$.
\end{enumerate}
We call $(J,\GG)$ the \emph{underlying graph pair} of $X_{\phi,J,\GG}$.  
We introduce the abbreviations $V=X_{\phi,J,\GG}$,
\[n(V) = |V_\GG| - |A|, \quad e(V) = e_J - e_{J[A]},
\quad \text{ and } \quad o(V) = (e_\GG-e_J) - (e_{\GG[A]}-e_{J[A]}),\]
which are respectively the numbers of vertices, edges and 
open pairs\footnote{We hope that this will not be confusable
with our use of the `little-o' notation $o(1) \to 0$ as $n \to \infty$.}
not contained in the base of the extension.
The \emph{scaling} of $V$ is a deterministic function of
the time $t$ defined by
\[ v = x_{A,J,\GG} = n^{n(V)} p^{e(V)} \hat{q}^{o(V)},\]
i.e.\ it predicts the 
evolution of $V$ according to the heuristic that each of the 
$\sim n^{n(V)}$ injections $f:V_\GG \to [n]$ satisfying (i)
should independently satisfy (ii) for each $e \in E_J \sm E_{J[A]}$
with probability $p$ and (iii) for each $e \in E_\GG \sm E_{\GG[A]}$
with probability $\hat{q}$. This prediction is correct only if
there is no subextension that is `dense', 
in that it has scaling much smaller than $1$. 

When considering such subextensions $(B,J[B'],\GG[B'])$ 
with $A \sub B \sub B' \sub V_\GG$,
we denote the scaling by\footnote{
The letter `S' is used for scalings and stacking variables, 
but we hope that this will not lead to any confusion, 
as the use is determined by the form of the superscript.} 
\[S^{B'}_B = S^{B'}_B(J,\GG) = n^{|B'| - |B|} p^{e_{J[B']} - e_{J[B]}} \hat{q}^{(e_{\GG[B']} - e_{J[B']}) - (e_{\GG[B]} - e_{J[B]})}.\]
For example, $S^{V_\GG}_A = v$. Note that 
if $A \sub B \sub B' \sub B'' \sub V_\GG$ 
then $S^{B''}_B = S^{B''}_{B'} S^{B'}_B$.

Let $ t' \ge 1$. We say that $V$ is \emph{controllable at time $t'$} 
if $o(V)>0$ (i.e.\ at least one pair not contained in the base is open) 
and for $1 \le t \le t'$ and $A \subn B \sub V_\GG$ we have
\begin{equation} \label{eq:dD'}
S^B_A(J,\GG) \ge n^{\dD'},
\end{equation}
where $\dD'>0$ is a fixed global parameter
much smaller than $\eps$ (see \eqref{eq:epsilondelta} below
for the parameter hierarchy).

We say that $V$ is \emph{controllable} if it is controllable at time $1$.  
The {\em controllable ensemble} is the collection of controllable variables 
$ X_{\phi,J,\GG} $ such that $| V_\GG | \le M^3$,
where $M=3/\eps$ (see \eqref{eq:M} below).

\begin{rem}
The proof that we can track the controllable variables 
(up to the precision needed for our purposes) is relatively short.  
In a certain sense, our results on controllable variables can 
be viewed as a triangle-free process analogue of the concentration on subgraph
extensions in $G_{n,p}$ that follows from Kim-Vu polynomial concentration 
(see Lemma \ref{kim-vu} below).  
A similar analogue should hold for the triangle removal process, 
and the introduction of this idea would simplify the analysis 
of the triangle removal process 
recently given by Bohman, Frieze and Lubetzky \cite{BFL}.
\end{rem}

\subsubsection{Stacking variables}

In order to understand the evolution 
of the global variables $Q$, $R$ and $S$, 
we now introduce an ensemble of {\em stacking variables}.
The name of this ensemble indicates that its members 
are obtained by stacking basic building blocks,
each of which is a one-vertex extension.
We start with two such extensions
which are defined for every ordered pair $uv$.
We have already met the first, $Y_{uv}$, in our above discussion 
of the evolution of $Q$; the second, $X_{uv}$, is clearly required
for understanding the evolution of $Y_{uv}$, 
as if $w$ contributes to $X_{uv}$ and we select the edge $vw$
then $w$ will instead contribute to $Y_{uv}$. 
\begin{itemize}
\item
$Y_{uv}$ is the number of vertices $w$ such that $uw$ is an open pair and $vw$ is an edge.  The scaling is $y = 2t\hat{q}n^{1/2}$.
\item
$X_{uv}$ is the number of vertices $w$ such that $uw$ and $vw$ are open pairs. The scaling is $x = \hat{q}^2n$.
\end{itemize}
The other two building blocks are one-vertex
`degree' extensions defined for every vertex $u$.
\begin{itemize}
\item $X_u$ is the \emph{open degree} of $u$, defined as the number of vertices $ w $ such that $ u w $ is open.
The scaling is $x_1 = n \hat{q}$.
\item $Y_u$ is the \emph{degree} of $u$, defined as the number of vertices $ w $ such that $ u w $ is
an edge.  The scaling is $ y_1 = 2t n^{1/2} $.
\end{itemize}

We will define stacking variables by composing certain sequences
of such one-vertex extensions. We start by setting up notation for describing an arbitrary such variable, although we will only track a subset of the collection 
of the stacking variables, the $M$-bounded stacking variables, 
which will be defined later in the section. 

\begin{defn} \label{def:stack1}
We define\footnote{
Each symbol represents a certain extension (as described below).
We include condition (i) so that the definition makes sense
and (ii), (iii) so as to reduce the number of cases
in the analysis of stacking variables.}
a {\em symbol set} $\Ss = \{O,E,Y^O,X^O,Y^I,X^I\}$
and let $\mc{S}$ be the set of all 
non-empty finite sequences $\pi$ in $\Ss$
(i.e.\ $\pi \in \cup_{m \ge 1} \Ss^m$) such that
\begin{enumerate}[(i)] \label{en1}
\item if $E$ occurs then it only does so as the last symbol of $\pi$,
\item $\pi(1) \notin \{Y^I,X^I\}$,
\item there is no $i$ with $\pi(i)=O$ and $\pi(i+1) \in \{Y^I,X^I\}$,
except possibly in the last two positions.
\end{enumerate}
\end{defn}

For any $ \pi \in \mc{S}$ and pair of vertices $uv$
(we will only consider $uv \not\in E(i)$) we define $S^\pi_{uv}$ 
according to the following rules. At each step
there is an {\em active rung} (initially $uv$)
and a {\em last vertex} (initially $v$).
Suppose we have constructed $i-1$ steps of our stacking variable 
and that we have an active rung $xy$ with last vertex $y$.  
If $ \pi(i) = O $ (`open') then the next step is an $X_y$ extension, 
the single open pair in this extension is the new active rung,
and the new vertex is the new last vertex.
If $ \pi(i) = E $ (`edge') then the next step is an $Y_y$ extension 
and then there is no active rung:\ the variable terminates here.

Now suppose $ \pi(i) \notin \{O,E\}$; that is, suppose $ \pi(i) $ 
indicates an $X$ or $Y$ extension on the active rung. 
The superscript indicates the \emph{direction} of this extension. 
For $Y$ it determines whether we add $Y_{xy}$ or $Y_{yx}$, 
and the new open pair becomes the active rung. 
For $X$ it determines which of the two new open pairs 
becomes the active rung.  In both cases, 
a superscript of O (for `outer') indicates that the new active rung 
is incident with the last vertex, $y$, 
while a superscript of I (for `inner') indicates that 
the next active rung is not incident with $y$ 
(i.e.\ it is incident with $x$). 

We think of $S_{uv}^\pi$ as counting injections
$\psi$ from $V(S_{uv}^\pi) := \{ \aA_u, \aA_v, \aA_1, \dots, \aA_{ |\pi|} \}$
to $[n] $ such that $ \psi( \aA_u ) = u $, $ \psi( \aA_v) = v $ and each
$ \psi (\aA_j)$ is a vertex that plays the role in the extension 
defined by $\pi(j)$ for $j =1, \dots, | \pi | $,
i.e.\ $S_{uv}^\pi = X_{\phi,J,\GG}$ is the extension variable
with $V(\GG)=V(S_{uv}^\pi)$, $A=\{\aA_u,\aA_v\}$, $\phi(\aA_u)=u$,
$\phi(\aA_v)=v$ and $(J,\GG)$ is defined so that edges specified 
by the extension are mapped to edges of $G(i)$,
and likewise for open pairs.

The above distinction between `inner' and `outer'
is crucial for understanding what kind of proportional accuracy one should expect in controlling these variables. For an intuitive explanation of this phenomenon, and to clarify the meaning of the definition, we introduce a pictorial representation of stacking variables, in which we think of the vertices of the active rung as the locations of the feet of someone walking on the graph. An outer extension corresponds to moving the other foot to that moved in the previous step, whereas an inner extension corresponds to moving the same foot (the intuition in the latter case is that the variable then `sees less' of the graph and so suffers a less accurate approximation).

\begin{figure}
\begin{center}
\includegraphics[scale=0.9]{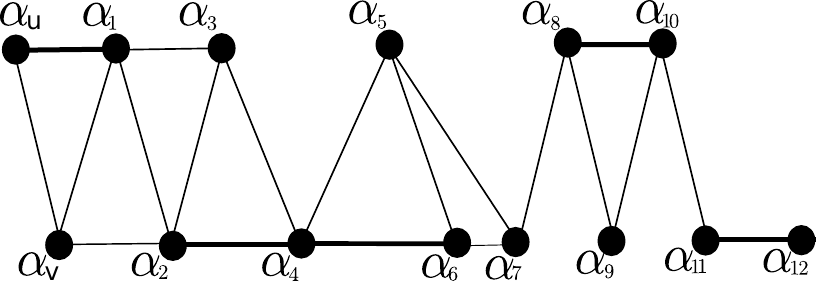}
\caption{The stacking variable $S^\pi_{uv}$ corresponding to 
$\pi = Y^O X^O X^O Y^O O Y^O X^I O O Y^O O E$. 
Thick lines represent edges and thin lines represent open pairs.  
Open pairs with one vertex in each row of the diagram are rungs. 
There are 3 triangular ladders, namely  
$ \pi[-1;4] = \pi[v;4]$, $ \pi[4;7]$ and $\pi[8;10]$,
which respectively have sets of turning points
$\{\aA_1,\aA_2,\aA_3\}$, $\{\aA_5\}$ and $\{\aA_9\}$.}
\end{center}
\end{figure}

In our pictorial representation (see Figure 1), 
we visualise $\pi$ as a horizontal strip 
of two rows (`top' and `bottom'), 
with vertex labels arranged sequentially from left to right 
according to the corresponding order in $\pi$. 
We start by assigning $\aA_u$ to the top and $\aA_v$ to the bottom.
In each step we assign the new vertex so that any pair of vertices meets 
both rows if and only if it is a rung (this uniquely defines the assignment).
The direction superscripts indicate whether the new vertex
is added to the same (I) or different (O) row to the last vertex.
Conversely, any such drawing determines a unique order $\aA_1,\dots,\aA_t$ of vertices, which we call the \emph{stacking order}, 
from which we can reconstruct $\pi$. 

We note that the vertex set of any rung is a cutset 
of the graph $\GG$ associated with $S_{uv}^\pi$.

The simplest stacking variables are those of length $1$, 
namely the building blocks $S^{X^O}_{uv} = X_{uv}$, $S^{Y^O}_{uv} = Y_{vu}$, 
$S^O_{uv} = X_v$ and $S^E_{uv} = Y_v$. The last two examples illustrate the general phenomenon that when $\pi(1) \in \{O,E\}$ we obtain an extension based at the single vertex $v$, which does not depend on $u$. While we could denote this variable more simply by $S^\pi_v$, it is convenient to have a unified notation for stacking variables that allows the effective base of the extension to have one or two vertices.

We also introduce some further terminology which is suggested by the 
faint resemblance between our drawings of stacking variables and ladders.
A {\em triangular ladder} $\pi[x;y]$ of $\pi$ is a portion of $V(S_{uv}^\pi)$
cut off by a subsequence  $x-2, \dots, y$ of consecutive positions in $\pi$ 
where $x, \dots, y$ is a maximal subsequence such that $\pi(j) \notin \{O,E\}$
for all $x \le j \le y$. (In this definition, we adopt the convention
$ \aA_u = \aA_{-1} $ and $\aA_v = \aA_0 $ so as to allow $x \in \{1,2\}$.)
If $ x <i < y$ we say that $\aA_i$ is a \emph{turning point}
if the superscript of $ \pi(i+1) $ is $O$. 
Note that if $ \aA_i$ is a turning point 
then it is in at least two rungs. 
The open pairs containing $\aA_i$ are $\aA_{i^-}\aA_i$ 
and $\aA_j\aA_i$ for all $i+1 \le j \le i^+$, 
where $i^-$ is the previous turning point
(or $x$ if there is none)
and $i^+$ is the next turning point
(or $y$ if there is none).
If $\aA_i$ is in the top row (for example) 
then $\aA_{i^-}$ and $\aA_j$ for $i+1 \le j \le i^+$ 
are consecutive along the bottom row.
We note that any stacking variable is a concatenation of some number of triangular ladders and paths of open pairs, possibly ending with a pendant edge.

We refer to an edge or open pair that is a not a rung as a {\em stringer}.

We do not track all of the stacking variables defined above;
instead, we will track a certain finite family 
(with size bounded as a function of $\eps$).
The precise definition of this family is quite subtle,
as we need to take account of both size and direction
in order to obtain an ensemble that can be controlled
mutually with the other ensembles of variables.
We will impose a bound on the length of any consecutive 
subsequence consisting only of symbols with superscript $I$
(which corresponds to the walker keeping one foot fixed).
We will also bound the {\em weight} of $\pi \in \mc{S}$,
defined by $w(\pi) = w_1(\pi) + w_2(\pi)$, where
\begin{equation} \label{eq:weight}
w_1(\pi) = |\{ i \in |\pi|: \pi(i) \in \{O,E\} \}| 
\ \text{ and } \
w_2(\pi) = |\{ i \in |\pi|: \pi(i) \in \{X^O,Y^O\} \}|.
\end{equation} 
Now we define the $M$-bounded stacking variables
that constitute our stacking ensemble.

\begin{defn} \label{def:stack2}
We say that a stacking sequence $\pi \in {\mathcal S}$
(see Definition \ref{def:stack1}) is \emph{$M$-bounded} if\footnote{
The precise form of this definition will be crucial in
Sections \ref{subsub:outer} (outer destruction)
and \ref{subsub:fan} (fan end destruction).}
\begin{enumerate}[(i)]
\item $w( \pi) \le 2M$, and $w(\pi')<2M$, where $\pi'$ 
is obtained from $\pi$ by deleting $\pi(|\pi|)$,
\item $\pi$ does not contain any consecutive subsequence
of length $M$ using only $\{X^I,Y^I\}$.
\end{enumerate}

\vspace{-0.2cm}

We let $ \mc{S}_M$ be the set of $M$-bounded stacking sequences. 
  
The {\em stacking ensemble} is the collection of all variables
of the form $S_{uv}^\pi $ where $ \pi \in \mc{S}_M$.
\end{defn}

We conclude this section with a simple observation 
on $M$-bounded stacking sequences.
\begin{lemma}
\label{note:Mlength}
If $ \pi \in \mc{S}_M$ is an $M$-bounded stacking sequence 
then the length of $\pi$ is $|\pi| < 2M^2$.
\end{lemma}
\begin{proof}
Let $ w_3(\pi)$ be the number of maximal consecutive subsequences 
of $\pi$ using only $\{ Y^I, X^I \}$. 
By Definition \ref{def:stack2}.i 
we have $ w_3(\pi) \le 2M-1$, as any two such sequences 
are separated by positions that contribute to $ w(\pi)$.  
Furthermore, by Definition \ref{def:stack2}.ii each such subsequence 
of has length at most $ M-1$. Therefore
$|\pi| \le w(\pi) + (M-1)w_3(\pi) \le 2M + (2M-1)(M-1)   < 2M^2$.  
\end{proof}

\subsection{Tracking variables} \label{deftrack}

Recall that each variable $V$ has a tracking variable $ \mc{T}V$ 
and we track the difference $ \mc{D}V = V - \mc{T}V$, so as 
to isolate variations in $V$ from other variations in $G(i)$.

The tracking variables are defined as follows. 
For the global variables we take
\[ \mc{T}Q = q,
\ \ \ \
\mc{T}R = n^3 \cdot( Q/n^2 )^3 = Q^3 n^{-3},
\ \ \ \
\mc{T}S = n^3 \cdot 2tn^{-1/2} \cdot ( Q/ n^2 )^2 = 2t n^{-3/2} Q^2. \]
Note that $\mc{T}R$ and $ \mc{T}S$ are chosen so that $ \mc{D}R$ and $ \mc{D}S$ isolate the variations in $R$ and $S$ that do
not naturally follow from the variation in $Q$.

If $V$ is a one-vertex extension with $a$ edges and $b$ open pairs 
not within its base we take
\[ \mc{T} V = n \cdot ( 2t n^{-1/2})^a \cdot( Q/n^2)^b. \]
That is, we set  $\mc{T}X_{uv} = Q^2n^{-3}$, $\mc{T}Y_{uv} = 2tn^{-3/2} Q$, $\mc{T}X_u = Qn^{-1}$, and $ \mc{T} Y_u =pn = 2tn^{1/2}$.

For the stacking variable $ S_{uv}^ \pi$ with $ |\pi| \ge 2 $ 
we have two cases,\footnote{
Section \ref{sub:stktrack} includes more discussion and motivation 
of the definition of $\mc{T} S_{uv}^\pi$.} 
depending on the form of $ \pi$. 
The first case is that
$ \pi( |\pi|-1) \neq O $ or $ \pi( |\pi|) \in \{O,E\}$. 
We write $ \pi = \pi^- \circ U $, 
where $U$ is the last element of $\pi$, 
and let \[ \mc{T} S_{uv}^\pi = S_{uv}^{\pi^-} \mc{T}U. \]
Note that this choice of $ \mc{T}S_{uv}^\pi$ isolates variations 
that are not caused by variations in $ S_{uv}^{\pi^-} $.

The second case is that $ \pi( |\pi|-1) = O $ and 
$ \pi( |\pi|) \notin \{O,E\}$ (we must have $|\pi| \ge 2$).
We write $ \pi = \pi^- O U $, where $U$ is the last element of $ \pi$, 
let $ \beta = \aA_{|\pi|-2} $ and
\[
\mc{T} S_{uv}^\pi =
\begin{cases}
\sum_{ f \in S_{uv}^{\pi^-}} X_{ f( \beta) }^2 \cdot Qn^{-2} & \text{ if } U \in \{ X^I, X^O \} \\
\sum_{ f \in S_{uv}^{\pi^-}} X_{ f( \beta) }^2 \cdot 2tn^{-1/2} & \text{ if } U = Y^I \\
\sum_{ f \in S_{uv}^{\pi^-}} X_{ f( \beta) } Y_{ f( \beta)} \cdot Qn^{-2} & \text{ if } U = Y^O,
\end{cases}
\]
recalling that
$X_a$ denotes the open degree of vertex $a$ and 
$Y_b$ denotes the degree of vertex $b$.

For a controllable variable $V$ we will only obtain fairly weak
approximations, so the precise definition of the tracking variable
is not very important; it is convenient for the calculations to isolate
the variation due to $Q$, so we let
\[ \mc{T}V = n^{n(V)} p^{e(V)} (Q n^{-2})^{ o(V) } .\]

\subsection{Error functions and activation times} \label{sec:error}

With the definitions of our variables in hand, 
we will now introduce some further notation 
and define the error functions $\dD_V$
(recall that we aim to show
$V = \mc{T}V \pm v \dD_V $ for each variable $V$
in each of the three ensembles).
Throughout the paper we fix parameters 
according to the hierarchy
\begin{equation}
\label{eq:epsilondelta}
n^{-1} \ll \dD \ll \dD' \ll \eps; 
\end{equation}
the roles of these parameters may be understood
by reference to \eqref{eq:tmax} and \eqref{eq:M}
for $\eps$, to \eqref{eq:dD'} for $\dD'$,
and to Definition \ref{def:e} for $\dD$.
Our asymptotic notation is respect to $n$,
e.g.\ $o(1)$ denotes a quantity that can be made
arbitrarily small for $n$ sufficiently large.
We track the process until the time $t_{max}$ 
at which $\hat{q}(t_{max})=n^{-1/2+\eps}$; thus
\begin{equation} \label{eq:tmax}
t_{max} = \tfrac{1}{2} \sqrt{ (1/2 - \eps) \log n}.
\end{equation} 
The constant $M$ that bounds the size of the stacking 
and controllable variables depends on $t_{max}$ through $\eps$:
we let 
\begin{equation} \label{eq:M}
M = 3/\eps.
\end{equation}
We will now define the error functions $\dD_V$.  

\begin{defn} \label{def:e}
Write\footnote{
We hope that $e$ will not be confused with the base of natural logarithms;
the exponential function is denoted by $\exp$ throughout the paper.
}
\[e(t) = \hat{q}(t)^{-1/2}n^{-1/4} \quad \text{ and } \quad  L = \sqrt{\log n}. \]
Our error functions take the form $\dD_V = f_V + 2g_V$, where\footnote{
We defer the definitions of $c_V$ and $\tT$ to Definition \ref{def:c}.}
\[f_V(t) = c_V \phi_V(t), \ \ 
g_V(t) = c_V \phi_V(t) \cdot  \tT(t) L^{-1} (1+t^{-e(V)})
\ \text{ and } \]
\[ \phi_V = \begin{cases} 
e  \ \ \text{ if } V \text{ is a stacking variable}, \\
e^2 \ \ \text{ if } V \text{ is a global variable}, \\
e^\dD \ \ \text{ if } V \text{ is a controllable variable}. 
\end{cases} \] 
\end{defn}

The behaviour of the error functions in Definition \ref{def:e}
is mainly determined by the functions $\phi_V$,
and can be understood without reference to the deferred definitions
of $c_V$ and $\tT$, as the $c_V$ are `constants'
(i.e.\ independent of time; they are polylogarithmic in $n$)
and the function $\tT(t)$ is bounded by 
constants (depending on $\eps$, but not on $n$).
We introduce $\tT$ and the $t^{-e(V)}$ term in $g_V(t)$
to handle some technicalities that arise for $t=o(1)$
(which is not the most significant regime of the process,
but nevertheless exhibits slightly different behaviour from
the later regime, so our proof must account for this difference).
When $t = \Omega(1)$ we have $ g_V = O( L^{-1} f_V) = o(f_V)$,
whereas if $t=o(1)$ with sublogarithmic decay and $e(V)>0$
then we have $ g_V \gg f_V$. The point of the $t^{-e(V)}$ term
is that the dominant term in $vg_V$ as $t \to 0$ 
does not contain a power of $t$. 

The intuition for taking $\phi_V=e$ for stacking variables
is that they include the variables $Y_{uv}$, 
which have scaling $y=2t\hat{q}n^{1/2}=2te^{-2}$,
and which one cannot expect to control to proportional 
error better than $y^{-1/2}$. 
Thus $e$ is a natural reference point
for discussing approximations.  
We note for future reference that 
\begin{equation} \label{eq:e}
e \ \text{ increases from } \
e(0)=n^{-1/4} \ \text{ to } \
e(t_{max}) = n^{-\eps/2},
\end{equation}
so $e$ always has sublogarithmic decay in $n$.
The notation $L=\sqrt{\log n}$ will be
convenient as we always have $t \le t_{max} < L$.
We also note for future reference that the density $\hat{q}$ 
of open pairs is always much large than the density $p$ of edges:
we have
\begin{equation} \label{eq:q/p}
\hat{q}/p = e^{-2}/2t > n^\eps/2L > n^{\eps/2}.
\end{equation}
We take $\phi_V=e^2$ for the global variables so that 
for these variables we can neglect `product' error terms 
arising from applications of Lemma \ref{lem:pat} below.
This is well within the theoretical limit on the accuracy
for $Q$, namely $q^{-1/2} = e^{-1} n^{-3/4} \ll e^{-2}$;
the `extra room' will be helpful 
in the coupling arguments in Section \ref{sec:couple} 
for establishing the required estimates for small $t$.
For the controllable variables we only require accuracy
that decays sublogarithmically, so we take $\phi_V=e^\dD$,
where for $\dD$ we recall 
the parameter hierarchy \eqref{eq:epsilondelta}.

The constants $ c_V$ that appear in Definition \ref{def:e}
will be chosen in Definition \ref{def:c}  to establish
the trend hypotheses 
(i.e.\ to show that each $ \mc{Z} V$ is a supermartingale).  
We will see that approximation errors migrate in a complex fashion
between the variables and so these choices are quite delicate.  
As we treat each ensemble of variables in turn during the next three 
sections we will derive inequalities that these constants must 
(and do) satisfy in order for the trend hypothesis to hold:
see the `variation equations'
\eqref{eq:cR}, (\ref{eq:varQ}),  (\ref{eq:varR}),  
(\ref{eq:varS}), \eqref{eq:varcon},
(\ref{eq:simplecond}), (\ref{eq:fanendcond1}) and (\ref{eq:fanendcond2}).

We think of the $c_V$'s as `constant' as they do not depend on time
(they are all polylogarithmic in $n$).
We specify them now in advance of the analysis, but we will 
keep the notation general so that it is clear how to choose the constants.
We also define the function $\tT(t)$ used above.
Note that the constants for the stacking variables are chosen very carefully,
so that they {\em decrease} as the length of $\pi$ increases
(corresponding to more accurate approximations for longer extensions),
which will be important in Section \ref{subsub:simple} (simple destruction),
and there is a more substantial decrease for each 
occurrence of $O$ or $E$ (counted by $w_1(\pi)$),
which will be important in Section \ref{subsub:fan} (fan end destruction).
There is also an adjustment for the case $\pi=O$,
as our argument for controlling degree extensions requires 
a slightly smaller constant for open degree extensions 
(see Section \ref{sec:vxdeg}). 

\begin{defn} \label{def:c}
For all controllable variables we take $c_V = 1$.
For the global variables we take
\[ c_R = L^{40}, \quad c_S = 2 L^{40}, \quad c_Q = 4 L^{40}. \]
For a stacking variable $V = S^\pi_{uv} $,
recalling $w_1(\pi)$ from \eqref{eq:weight}, we set
\[  c_V = c_\pi = L^{15} 9^{ 4M^2 - |\pi| - M w_1(\pi)} (2.2)^{-1_{\pi=O}}. \]
Let $K = M^6 = (3/\eps)^6$ and  $\tT: [0,\infty] \to [1,\infty]$ 
be any increasing smooth function such that
\[ \tT(t) = e^{Kt} \ \text{ for } \ 0 \le t \le 1,  
\ \ \sup_{t \ge 0} |\tT(t)| \le 2e^K 
\ \text{ and } \
\sup_{t \ge 0} (|\tT'(t)|+|\tT''(t)|) < \infty.\]
\end{defn}

Recalling from Definition \ref{def:stack2} and Lemma \ref{note:Mlength}
that $w(\pi) \le 2M$ and $|\pi| < 2M^2$, we see 
that $L^{15} \le c_V \le L^{15} 9^{4M^2}$
for any $V = S^\pi_{uv} $.

Next we define the
`activation step' $i_V$ at which we start tracking a variable $V$ 
using the martingale arguments in Section \ref{sec:strategy}
(before then we will use the coupling arguments of Section \ref{sec:couple}).
Our definition is uniform across all $V$ bar one technical exception
in which the activation step is slightly later than one might expect.

\begin{defn} \label{def:active}
For any variable $V$, the {\em activation step} $i_V$ is the 
smallest $i \ge n^{5/4}$ for which $g_V(t) \le L^{-1}$,
except that if $V$ is a stacking variable with $e(V)=1$
we let $i_V = n^{1.26}$. 

The {\em activation time} is  $t_V = i_V n^{-3/2}$. 
\end{defn}

In the following lemma we give some estimates for the activation steps
of various variables; we also show that all error functions are $o(1)$
after activation, and justify our earlier informal assertion
that the functions $vg_V$ are approximately non-increasing
(unless $V$ is a vertex degree variable).
The notation $\wt{\TT}$ denotes approximation 
up to a factor polylogarithmic in $n$.

\begin{lemma} \label{lem:tV}
Let $V$ be any variable in any ensemble with $o(V)>0$
(i.e.\ not a vertex degree).
\begin{enumerate}[(i)]
\item If $e(V)=0$ or $V=S$ then $t_V=n^{-1/4}$.
\item If $V$ is a stacking variable with $e(V)>1$
then $t_V = \wt{\TT}(n^{-1/4e(V)})$.
\item If $V$ is a controllable variable with $e(V)>0$
then $t_V = \wt{\TT}(n^{-\dD/4e(V)})$.
\item $\dD_V = o(1)$ for all $t \ge t_V$.
\item $v(t)g_V(t) = O( v(t') g_V(t'))$ whenever $t \ge t'$.
\end{enumerate}
\end{lemma}

\begin{proof}
For (i), we first note that if $e(V)=0$
then $g_V=\wt{\TT}(\phi_V)$.
We have $\phi_V \le e^\dD < n^{-\eps\dD/2} \ll L^{-1}$ 
by \eqref{eq:e}, so by definition
$i_V = n^{5/4}$, i.e.\ $t_V=n^{-1/4}$.
Also, $g_S(t) = \wt{\TT}(e^2)(1+t^{-1}) = \wt{\TT}(n^{-1/2} t^{-1})$
for $t \le 1$, so $g_S(n^{-1/4}) = \wt{\TT}(n^{-1/4}) \ll L^{-1}$,
giving $t_S = n^{-1/4}$, as required.

For (ii), we have $g_V(t) = \wt{\TT}(e)(1+t^{-e(V)})
= \wt{\TT}(n^{-1/4}t^{-e(V)})$ for $t \le 1$, which
hits $L^{-1}$ at some $t_V = \wt{\TT}(n^{-1/4e(V)})$;
we obtain (iii) similarly from $g_V(t) = \wt{\TT}(e^\dD)(1+t^{-e(V)})$.

For (iv), we note that
$f_V(t_V) = O(Lg_V(t_V))(1+t_V^{-e(V)})^{-1}$.
If $e(V)=0$ then $f_V(t_V) = O(L e^\dD ) = o(1)$.
Otherwise, as $g_V(t_V) \le L^{-1}$ by definition of $t_V$,
(i--iii) give $f_V(t_V) = O(Lg_V(t_V))(1+t_V^{-e(V)})^{-1}
= O(t_V^{e(V)}) = \wt{O}(n^{-\dD/4}) = o(1)$.
The estimate for $t \ge t_V$ follows 
as $f_V(t)$ and $g_V(t)/\tT(t)$ are decreasing in $t$,
and $\tT(t)$ is bounded by $2e^K = O(1)$ by Definition \ref{def:c}.

Finally, to see (v) we write $h(t) = v(t)g_V(t) 
= \TT(n^{n(V)} \hat{q}^{o(V)} L^{-1}c_V \phi_V (1+t^{e(V)}))$,
then note that $h(t) = \TT(h(0))$ for $t=O(1)$,
and as $o(V)>0$ there is some $t_0=O(1)$ 
such that $h'(t)<0$ for $t>t_0$.
\end{proof}

\subsection{Stopping times and the main technical result} \label{sec:stop}

In this section we formulate our main result regarding 
the stopping time $I$ (mentioned above) that
provides the lower bound in Theorem \ref{edges}.
For convenience in breaking up the proof into sections,
we define 
\[ I  = \min\{\I0,\I1,\I2,\I3\}  \] 
in terms of $4$ other stopping times defined below,
which are in turn analysed over the next $4$ sections.
Each of these stopping times is defined as the first step
at which certain good events fail 
(or $\infty$ if there is no such step).
The stopping time $\I0$ controls 
various events that we think of as `external' 
to the main martingale strategy of 
critical window events in Section \ref{sec:strategy}.
The other stopping times control critial window events
for each of the three ensembles:
$\I1$ controls global variables,
$\I2$ controls controllable variables and
$\I3$ controls stacking variables.
We start by defining these critical window stopping times
in terms of stopping times $J_V$ and $I_V$ associated
to each variable $V$ as follows.

\begin{defn} \label{def:IV}
Consider any variable $V$ in any ensembles, 
and write $V=X_{\phi,J,\GG}$ (see Section~\ref{subsub:defcon})
for some extension $(A,J,\GG)$.
 
We say that $V$ is {\em bad} (at step $i$) 
if there is an edge $e=\phi(x)\phi(y)$ 
of $ G(i) $ with $x,y$ in $A$ 
and some $w \in V_\GG \sm A$ such that
$\GG$ contains $xw$ and $yw$, 
and at least one of them is in $J$.

If $V$ is not bad we call it {\em good}.

We let $J_V$ be the smallest\footnote{
See Definition \ref{def:active} for $i_V$ (the `activation step').}
$i \ge i_V$ such that $V$ is bad
(or $\infty$ if there is no such time).

We let the stopping time $I_V$ be the smallest $i$ with $i_V < i < J_V$ 
such that $|\mc{D}V(i)| > \dD_V(t) v(t)$ 
(or $\infty$ if there is no such time).

We let $\I1$, $\I2$ and $\I3$ be the respective minima of $I_V$ 
over all variables $V$ in the global, controllable, and stacking ensembles.   
\end{defn}

We note that the global variables are always good.
We also note if some $V$ is bad then $V=0$,
as a copy of $V$ would require either a triangle in $G(i)$
(which does not exist in the triangle-free process!)
or a triangle containing two edges in $G(i)$ and one open pair
(which contradicts the definition of `open').
For example, if $uv$ is an edge then $Y_{uv}=0$.
On the other hand, if $uv$ is closed (not an edge or open)
then we do track $Y_{uv}$; this will be important
e.g.\ for \eqref{claim:SC} in the proof of Theorem~\ref{edges}.

As indicated earlier, for the actual definition of the variable
$\mc{Z}V(i)$ appearing in the trend hypothesis of Section \ref{sec:strategy}
we `freeze' it at step $J_V$, as follows:

\begin{equation} \label{eq:freeze}
\mc{Z}V(i) = \begin{cases}
| \mc{D}V(i) | - \dD_V(t)v(t) \ \text{ if } \ i<J_V \\
\mc{Z}V(J_V-1) \ \text{ if } \ i \ge J_V.
\end{cases}
\end{equation}

While the stopping times $\I1, \I2, \I3$ are the main subject of the proof, 
we will also need some additional information about the evolution of the process,
which will be captured by the `external' stopping time $\I0$. 
This includes properties of $ G(i)$ for $ i < n^{5/4} $, 
sharper estimates on $Q$ and $Y_{uv}$ for $ i < i_Y $, 
crude estimates for a broad class of extension variables,
and control of vertex degrees (which cannot be treated by
the general strategy applied to all other variables).

\begin{defn} \label{def:I0}
We let the stopping time $ \I0$ be the first step $i$ 
at which $ G(i) \notin \mc{G}_i$
(or $\infty$ if there is no such step), 
where $\mc{G}_i$ is the `good event' 
that the following estimates hold:
\begin{enumerate}[(i)]
\item $Q(i)/q(t)$, $X_u/x_1(t)$ and $X_{uv}/x(t)$
are $1 \pm O(t^2)$ for every vertex $u$ and pair $uv$,
whenever $n^{-0.49} \le t \le 0.01$,
\item $Y_{uv}(i)/y(t)$ and $Y_u(i)/y_1(t)$
are $1 \pm O(L^8 t^2) \pm O(t^{-0.4}n^{-0.2})$
for every vertex $u$ and non-edge $uv$, 
whenever $n^{-0.49} \le t \le 0.01$,
\item $Z_{uv}(i) \le L^4 $ for all pairs $uv$, 
where the {\em codegree} 
$ Z_{uv}(i)$ is the number of vertices  
adjacent to both $u$ and $v$ in $G(i)$,
\item For every extension $(A,J,\GG)$ 
on at most $M^3$ vertices
and all injections $\phi: A \to [n]$ we have 
$ X_{ \phi, J, \GG}(i) < L^{4 | V_\GG |} 
 \max_{A \subseteq B \subseteq V_\GG } S_B^{V_\GG}(i)$, 
\item For every extension $(A,J,\GG)$ on $M^3+1$ vertices 
such that $ S_B^{V_\GG} \le  y/{L^7} $ 
for all $ A \sub B \sub V_\GG$
and all injections $\phi: A \to [n]$ we have 
$ X_{ \phi, J, \GG}(i) < L^{4 | V_\GG |} 
 \max_{A \subseteq B \subseteq V_\GG } S_B^{V_\GG}(i)$,
\item no good controllable variable $V=X_{\phi,J,\GG}$
has $|\mc{D}V(i)| > n^{-\dD^2} v(t)$ 
for any $n^{-1/4} \le t \le t_V$ such that
$S^B_A > n^{\dD'}$ for all $A \subn B \sub V_\GG$,
\item $Y_u(i) = (1 \pm \dD_{Y_1}(t)) y_1(t)$
for every vertex $u$.
\end{enumerate}
\end{defn}

To aid intuition, we make some remarks on the use of the various
properties in the definition of $\mc{G}_i$. The error terms from
$Q$ and $Y_{uv}$ are ubiquitous throughout the calculations,
and the tighter control expressed by (i) and (ii) 
handles some technical difficulties that arise for small $t$;
a similar motivation applies to (vi).
We include (vii) in $\mc{G}_i$ as the vertex degrees
cannot be treated by the same method used for the other variables.
Combining (i) and (ii) with the martingale estimates
for $Q$ and $Y_{uv}$ after their activation steps,
we obtain the following bounds that hold 
for all $n^{5/4} \le i < I$.
We emphasise that we will often use without further comment 
the facts that the approximation errors $\dD_Q^*$ and $\dD_Y^*$ 
for $Q$ and $Y_{uv}$ have sublogarithmic decay
and $\dD_Y^*=O(\dD_Y)$ for all $i \ge n^{5/4}$.
\begin{align} \label{eq:e*}
& \text{For } n^{5/4} \le i < I \text{ we have }
Q(i) = (1 \pm \dD_Q^*)q(t) \text{ and }
Y_{uv}(i) = (1 \pm \dD_Y^*)y(t) \text{ if } uv \notin E(i), \\
& \text{ where } \dD_Q^* \le \dD_Q, \ \dD_Q^* = O(t^2), \
 \dD_Y^* \le 2\dD_Y \text{ for } i \ge i_Y 
 \text{ and } \dD_Y^* = O(L^8 t^2) + O(t^{-0.4}n^{-0.2}). \nonumber
\end{align}

The intuition for the codegree variable $Z_{uv}$ in (iii) is that 
it should scale in expectation like $p^2 n = 2t < \sqrt{\log n}$,
so whp will be at most polylogarithmic.
An important application is that
\begin{equation} \label{eq:dfid}
\text{For any two open pairs } e \text{ and } e'
\text{ at most } L^4 
\text{ open pairs can simultaneously close both}. 
\end{equation} 
We think of \eqref{eq:dfid} as `destruction fidelity',
as it will allow us to approximate the number of possibilities
for a set of destruction events by a sum over each event.
To see that \eqref{eq:dfid} follows from (iii),
we can assume that $e$ and $e'$ share a vertex
(otherwise at most $2$ pairs can close both),
say $e=xu$ and $e'=xv$, and then the required
bound is immediate from $Z_{uv} \le L^4$.
The bound on $Z_{uv}$ is similar to those
in (iv) and (v), but we state and prove it separately
to emphasise its importance and because its proof
is much simpler than those of the general statements.

Conditions (iv) and (v) in $\mc{G}_i$
both give the same estimate
(under different hypotheses) for general extensions.
This estimate is quite crude, in that it exceeds by a 
polylogarithmic factor $L^{4 | V_\GG |}$ the 
`worst-case expectation estimate'
$\max_{A \subseteq B \subseteq V_\GG } S_B^{V_\GG}(i')$
(our union bounds cannot rule out the event that $\phi$ 
extends to some embedding of $(B,J,\GG)$,
which we would then expect to have 
$S_B^{V_\GG}$ extensions).
This polylogarithmic loss makes it ineffective 
when verifying trend hypotheses, 
but it is easily absorbed
when verifying boundedness hypotheses.
This will be crucial for the controllable ensemble,
where we recall that we imposed the size restriction
$|V_\GG| \le M^3$, so condition (v) enables us to verify
the boundary case $|V_\GG| = M^3$
(this idea makes our treatment of extensions significantly
simpler than that in \cite{theother}).

Now we state our main result on the triangle-free process.

\begin{theo}\label{good}
With high probability $I > i_{max} := t_{max}  n^{3/2}$.
\end{theo}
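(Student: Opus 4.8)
\textbf{Proof proposal for Theorem~\ref{good}.}
The plan is to run the self-correcting martingale argument outlined in Section~\ref{sec:overviewlower} for all three ensembles simultaneously. Suppose for contradiction that $I \le i_{max}$. By the definition of $I$ there is a good variable $V$ and a step $i^* = I = I_V \le i_{max}$ at which $|\mc{D}V(i^*)| > e_V(t^*) v(t^*)$. Since $V$ is good on all of $[i_V, i^*]$, and since $|\mc{D}V(i_V)| = 0$ while $e_V v$ starts out of order $\Theta(1) \cdot v$ (because $g_V(t_V) \le L^{-1}$ but $f_V, g_V$ are bounded below by the definition of $i_V$), the difference variable must first enter the critical window $W_V(i)$ from below at some step $i' < i^*$, remain in $W_V(i)$ throughout $[i', i^*]$, and exit above it at step $i^*$. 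During this interval $\mc{Z}V(i) = |\mc{D}V(i)| - e_V(t) v(t)$ is a supermartingale by the trend hypothesis, with $\mc{Z}V(i') \le -g_V(t')v(t')$ and $\mc{Z}V(i^*) \ge 0$; thus $\mc{Z}V$ makes an upward jump of at least $g_V(t')v(t')$ against the drift. Applying Freedman's inequality (Lemma~\ref{Freedman}) with $a = g_V(t')v(t')$, $B = N_V(t)$, and $v = Var_V(t) \cdot i_{max}$, the boundedness hypothesis $g_V^2 v^2 \ge Var_V (n\log n)^{3/2}$ and $g_V v \ge N_V \log n$ forces the failure probability to be $\exp(-\Omega(\log^2 n))$, i.e.\ subpolynomial; taking a union bound over the polynomially many variables and the $O(i_{max})$ choices of entry time $i'$ still gives $o(1)$, contradiction.

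So the real content is to establish, for every variable in every ensemble and all $t_V \le t \le t_{max}$, (a) the trend hypothesis when $|\mc{D}V| \in W_V$, (b) the boundedness hypothesis, and (c) the base case: that the coupling estimates of Section~3 already confine $V$ away from its critical window for $t \le t_V$ (equivalently, that we may start the differential-equation tracking at $i_V$ with $\mc{D}V(i_V)$ safely inside, which is immediate since $\mc{D}V(i_V)$ is bounded by the coupling error, chosen $\ll e_V(t_V) v(t_V)$). Step (a) is where the self-correction is used: one computes the one-step expected change $\mathbb{E}[\mc{D}V(i+1) - \mc{D}V(i) \mid \mc{F}_i]$ by enumerating which extensions counted by $V$ are destroyed or created when the random edge is added, compares it with the derivative of the tracking variable $\mc{T}V$ (for which the definitions of $\mc{T}Q, \mc{T}R, \mc{T}S$, the one-vertex $\mc{T}V$, and the stacking $\mc{T}S^\pi_{uv}$ were engineered precisely so that the ``naive'' first-order terms cancel), and shows that the residual drift, when $|\mc{D}V|$ sits in the upper critical window, points downward. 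This reduces to verifying the \emph{variation equation} $e_V' v + e_V v' \ge (\text{drift coefficient in terms of } \{e_W v_W / v\})$, a differential inequality linking $e_V$ to the error functions $e_W$ of the variables $W$ that feed into $V$; the polylogarithmic constants $c_V$ (in particular the geometric decay $9^{4M^2 - |\pi| - M w_1(\pi)}$ for stacking variables and the hierarchy $c_R < c_S < c_Q$) are chosen so this inequality closes with room to spare, the $\tT$-factor absorbing the small-$t$ technicalities and the $(1 + t^{-e(V)})$ factor in $g_V$ keeping $vg_V$ approximately non-increasing.

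Step (b) is more routine: $N_V(t)$ is bounded by the maximum number of extensions counted by $V$ through a single pair — controlled because $V$ is good (no triangle or cherry-on-an-open-pair through the new edge) and, for stacking variables, because $M$-boundedness caps the length at $< 2M^2$ and the no-$M$-fan / no-$OX^I$ conditions cap the number of extensions at a fixed polylogarithmic times a power of $n$; and $Var_V(t) \le N_V(t) \cdot \mathbb{E}|\mc{Z}V(i+1) - \mc{Z}V(i)|$ together with the one-step expectation bound from (a). One then checks the two displayed inequalities hold for $t \le t_{max}$ using $\hat q(t_{max}) = n^{-1/2+\eps}$ and the explicit scalings.

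The main obstacle is step (a) for the stacking variables, and within that the case $\pi(|\pi|-1) = O$, $\pi(|\pi|) \notin \{O, E\}$, where $\mc{T}S^\pi_{uv}$ is the awkward sum $\sum_{f \in S^{\pi^-}_{uv}} X_{f(\beta)}^2 \cdot (\cdots)$. Here the drift analysis must simultaneously track how the inner sum's terms $X_{f(\beta)}$ self-correct (via the codegree/open-degree estimates) and how the partner-pair structure interacts with the $OX^I/OY^I$ exclusion, and one must show that errors transferred \emph{into} $S^\pi_{uv}$ from the shorter variable $S^{\pi^-}_{uv}$ and from the degree variables $X_{f(\beta)}, Y_{f(\beta)}$ are strictly dominated by $e_{S^\pi} v_{S^\pi}$ — which is exactly what the geometric gap between $c_{\pi^-}$ and $c_\pi$, and the factor $9^{-M}$ lost at each $O$ or $E$, are there to guarantee. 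Making the bookkeeping of these transferred errors precise across the full concatenation of triangular ladders and open-pair paths, uniformly in $\pi \in \mc{S}_M$, is the heart of the argument and is what occupies Section~5; the global ensemble (Section~4) is a simpler instance of the same scheme, and the controllable ensemble (Section~6) is handled by the much cruder Kim--Vu-type bound for which only $S^B_A \ge n^\dD$ is needed.
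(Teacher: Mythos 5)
Your proposal follows essentially the same strategy as the paper, which distributes the proof across Lemma~\ref{lem:beginning} (base case via coupling to the Erd\H{o}s--R\'enyi process), Theorem~\ref{mainensemble} (global ensemble), Lemma~\ref{controllable} (controllable ensemble), and Theorem~\ref{stacking} (stacking ensemble), each verifying the trend and boundedness hypotheses for its ensemble and applying Freedman's inequality together with a union bound over variables and entry times. One small slip: $|\mc{D}V(i_V)|$ is not $0$ but is only bounded by $(f_V+g_V)v$ (that is the content of Lemma~\ref{lem:beginning}, which requires a genuine coupling argument, not triviality) --- though your parenthetical remark in the second paragraph gets this right.
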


The lower bound in Theorem \ref{edges} follows from Theorem \ref{good}.  
To see this, we note that if $I > i_{max}$
then $I_Q > i_{max}$, so the process persists
until time $t_{max} = \tfrac{1}{2} \sqrt{ (1/2 - \eps) \log n}$,
and $\I0 > i_{max}$, so by Definition \ref{def:I0}.vii  
all vertex degrees at time $t_{max}$ are  
$(1 \pm \dD_{Y_1}(t_{max})) y_1(t_{max}) 
= (1+o(1))2t_{max}n^{1/2}$.

We will prove Theorem \ref{good} over the next four sections,
in which we in turn bound the probabilities of the events
$\{ I=\I0 \le i_{max} \}$ (Theorem \ref{Gformal}),
$\{ I=\I1 \le i_{max} \}$ (Theorem \ref{mainensemble}),
$\{ I=\I2 \le i_{max} \}$ (Theorem \ref{controllable})
and $\{ I=\I3 \le i_{max} \}$ (Theorem \ref{stacking});
in combination these theorems imply Theorem \ref{good}.

Note that if $I \le i_{max}$ then 
either $ G(I) \not\in \mc{G}_I$ or there is some $V$ 
such that $I=I_V \le i_{max}$, i.e.\ $|\mc{D}V(I)|$ is too large 
and $V$ is good at step $I$.  We emphasize that, 
since we can restrict our attention to $ i < I$, 
we may assume $ \mc{G}_i$ and  $|\mc{D}V(i)| \le \dD_V(t) v(t)$ 
for all good variables $V$ when verifying the trend and boundedness hypotheses.

\subsection{Some calculations and further notation}

We will employ the following useful lemma extensively to estimate sums of products.  
The proof given here is due to Patrick Bennett.
\begin{lemma}[Product Lemma]
\label{lem:pat}
Suppose $x$, $y$, $ (x_i)_{i\in I} $ and $ (y_i)_{i \in I}$ 
are real numbers such that
$|x _i - x| \le \dD$ and $|y_i-y| < \eps$
for all $i \in I$.  Then we have
\[ \left| \sum_{i \in I} x_i y_i -  \frac{1}{|I|} \left( \sum_{i \in I} x_i \right)
\left( \sum_{i \in I} y_i \right) \right|  \le  2 |I| \dD \eps \]
\end{lemma}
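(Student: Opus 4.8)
The plan is to reduce the statement to a crude bound on a covariance-type quantity, the only real idea being to center the two sequences so that the unknown reference values $x$ and $y$ drop out. First I would write $x_i = x + a_i$ and $y_i = y + b_i$, so that the hypotheses become $|a_i| \le \dD$ and $|b_i| < \eps$ for all $i \in I$. Setting $n = |I|$, a direct expansion gives
\[ \sum_{i \in I} x_i y_i = nxy + x\sum_{i \in I} b_i + y \sum_{i \in I} a_i + \sum_{i \in I} a_i b_i, \]
while
\[ \frac1n\brac{\sum_{i \in I} x_i}\brac{\sum_{i \in I} y_i} = nxy + x\sum_{i \in I} b_i + y \sum_{i \in I} a_i + \frac1n\brac{\sum_{i \in I} a_i}\brac{\sum_{i \in I} b_i}. \]
Subtracting, all terms involving $x$ and $y$ cancel and the difference on the left-hand side of the lemma is exactly $\sum_{i \in I} a_i b_i - \tfrac1n\brac{\sum_{i \in I} a_i}\brac{\sum_{i \in I} b_i}$, i.e.\ $n$ times the empirical covariance of $(a_i)$ and $(b_i)$.

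Second, I would bound the two remaining terms separately and crudely. We have $\bsize{\sum_{i \in I} a_i b_i} \le \sum_{i \in I} |a_i||b_i| \le n\dD\eps$, and $\bsize{\tfrac1n\brac{\sum_{i \in I} a_i}\brac{\sum_{i \in I} b_i}} \le \tfrac1n (n\dD)(n\eps) = n\dD\eps$. Adding these yields the claimed bound $2n\dD\eps = 2|I|\dD\eps$. (Equivalently, one can use the identity $n\sum_{i \in I} a_i b_i - \brac{\sum_{i \in I} a_i}\brac{\sum_{i \in I} b_i} = \tfrac12\sum_{i,j \in I}(a_i - a_j)(b_i - b_j)$ together with $|a_i - a_j| \le 2\dD$ and $|b_i - b_j| \le 2\eps$ to reach the same constant; I would present whichever is shorter to write.)

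There is essentially no obstacle here: the statement is elementary once one centers the sequences, and the factor $2$ is generous enough that no care is needed in the estimates. In practice the lemma will be applied with $(x_i)$ and $(y_i)$ taken to be extension counts based at various vertices, all of which are close to a common scaling, so $\dD$ and $\eps$ will be small relative to $x$ and $y$ and the error term $2|I|\dD\eps$ will be of lower order than the main term $\tfrac1{|I|}\brac{\sum_i x_i}\brac{\sum_i y_i}$; the lemma is stated in this clean form precisely so that this bookkeeping can be invoked uniformly throughout the later sections.
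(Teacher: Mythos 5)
Your proof is correct and is essentially the same as the paper's: both center the sequences at $x$ and $y$, reduce the quantity in question to the two terms $\sum_i (x_i-x)(y_i-y)$ and $|I|(\bar x - x)(\bar y - y)$, and bound each by $|I|\dD\eps$. The only difference is cosmetic (you introduce $a_i, b_i$ explicitly; the paper carries the expressions through directly).
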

\begin{proof}
The triangle inequality gives
$$\displaystyle \left| \sum_{i \in I} (x_i - x)(y_i - y) \right| \leq |I|\dD\eps.$$
Rearranging this inequality gives
\begin{equation*}
\begin{split}
\sum_{i \in I} x_i y_i  & =  x\sum_{i \in I}y_i + y \sum_{i \in I} x_i
- |I|xy \pm |I|\dD \eps \\
& = \frac{1}{|I|} \left( \sum_{i \in I} x_i \right)
\left( \sum_{i \in I} y_i \right)  - |I| \left(\frac{1}{ |I|} \sum_{i \in I} x_i  - x\right)
\left(\frac{1}{ |I|} \sum_{i \in I} y_i  - y\right) \pm |I|\dD \eps.
\end{split}
\end{equation*}
\end{proof}
\noindent
The following notation and conventions 
that are used throughout the paper.
\begin{itemize}
\item We use compact notation for one-step differences, 
writing $\DD_i(F) = F(i+1)-F(i)$ for any sequence $F(i)$
and $\DD_i(f) = f( t+ n^{- 3/2}) - f(t)$ for any function $f(t)$. 
\item The `O-tilde' notation $f = \wt{O}(g)$ 
means $|f| \le (\log n)^A |g|$ for some absolute constant $A$.
\item `whp' means `with high probability';
all such statements will have subpolynomial failure probability,
which will justify us taking a polynomial number of them in union bounds.
\item We reiterate that we denote the vertex set by $[n]=\{1,\dots,n\}$.
\end{itemize}

We conclude this section by estimating the one-step
differences for variable scalings $v$ and error terms $v\dD_V$
(recall Definitions \ref{def:e} and \ref{def:c}). 
To interpret the latter formula,
note that in the main term we have factored out the scaling $v\dD_V$
and the approximate probability $8tn^{-3/2}$
(see \eqref{eq:close2}) of closing 
any given open pair at step $t$; a crucial feature 
of the trend hypothesis calculations later will be the 
self-correction of open pairs in $V$
that cancels the $o(V)$ term.
We let $P_V$ denote the power of $e$ in $\dD_V$,
i.e.\ $P_V$ equals $2$, $1$ or $\dD$ according
as $V$ is global, stacking or controllable.

\begin{lemma} \label{lem:seconds}
For any variable $V$ in any ensemble and $t \ge n^{-1/4}$ we have
\[ \DD_i(v) = v' n^{-3/2} + O(v) n^{-5/2}, \text{ and } \]
\[ \DD_i(v\dD_V) 
 = \left( \tfrac{e(V)}{8t^2} - o(V) \right) \dD_V v \cdot 8tn^{-3/2} 
  + \dD'_V vn^{-3/2} + O(\dD_V v) n^{-5/2}, \text{ where } \]
\[ \dD'_V \ge 4tP_V \dD_V + (\tT'/\tT - e(V)t^{-1}) 2g_V. \]
\end{lemma}

\begin{proof}
By Taylor's Theorem, for any smooth function $h(t)$ we have
\[ \DD_i(h) = h'(t)n^{-3/2} + O(n^{-3} |h''(t')|),
\ \text{ where } \ t < t' < t + n^{-3/2}. \]
We apply this first with $h=v$, 
which has the form  $v(t) = a(t)e^{b(t)}$, 
where $a$ and $b$ are polynomials in $t$
and $b$ has degree at most 2,
so satisfies $v'/v = O(t+t^{-1}) = O(n^{1/4})$
and $v''/v = O(t^2+t^{-2}) = O(n^{1/2})$ for $t \ge n^{-1/4}$;
this gives the first estimate.
For the second, we recall that
$v = n^{n(V)} p^{e(V)} \hat{q}^{o(V)}$,
so \[ v'/v = e(V)/t - 8to(V).\]
Applying Taylor's Theorem to $h = v\dD_V$,
as $h'/h = v'/v + \dD'_V/\dD_V$
the main term in the second estimate
is equal to $h'(t)n^{-3/2}$,
so it remains to show 
$|h''(t')| = O(n^{1/2}) \dD_V v$ for $t \le t' \le t+n^{-3/2}$.
To see this, we recall that $\dD_V = f_V + 2g_V$,
where $vf_V$ and $vg_V/\tT$ 
both have the form $a(t)e^{b(t)}$ as above, 
so $(vf_V)'' = O(t^2 + t^{-2}) vf_V = O(n^{1/2}) v\dD_V$,
$(vg_V/\tT)' = O(t + t^{-1}) vg_V/\tT = O(n^{1/4}) v\dD_V$ and
$(vg_V/\tT)'' = O(t^2 + t^{-2}) vg_V/\tT = O(n^{1/2}) v\dD_V$.
Recalling that $\tT'$ and $\tT''$ are bounded
(see Definition \ref{def:c}) we deduce 
$(vg_V)'' = (vg_V/\tT)'' \tT + 2 (vg_V/\tT)' \tT'
+ \tT'' = O(n^{1/2}) v\dD_V$, as required.
The bound on $\dD'_V$ follows from $f'_V/f_V = 4tP_V$
and $g'_V/g_V = 4tP_V + \tT'/\tT 
- e(V)t^{-1}(1+t^{e(V)})^{-1}$.
\end{proof}

\section{Coupling and union bounds} \label{sec:couple}

In this section we gather two types of estimates that can be made without using dynamic concentration, namely coupling and union bounds.
The two key applications of these arguments are
(i) showing that whp every variable $V$ in each of three ensembles
obeys its required estimates at its activation step $i_V$
(see Lemma \ref{lem:beginning}), and
(ii) showing that whp the stopping time $\I0$
of Definition \ref{def:I0} controlling
the good event $\mc{G}_i$ does not occur by step $i_{max}$.
We state the latter as the main theorem of this section.
 
\begin{theo}\label{Gformal}
With high probability we do not have $I=\I0 \le i_{max}$.
\end{theo}

Theorem~\ref{Gformal} follows by combining various lemmas
proved in this section showing that each of the defining properties 
of the event $\mc{G}_i$ in Definition \ref{def:I0} hold whp; specifically, 
properties (i), (ii) and (vi) are in Lemma \ref{lem:beginning},
(iii) in Lemma \ref{zsmall}, (iv) in Lemma \ref{extensions},
(v) in Lemma \ref{pre-extensions}
and (vii) in Lemma \ref{lem:vdeg}.

\subsection{Extension variables in $G_{n,p}$}

Our coupling arguments will compare extension variables in
the triangle-free process $G(i)$ with extension variables 
in the Erd\H{o}s-R\'enyi random graph $G_{n,p}$.
In this subsection we briefly review some well-known
theory of the latter.
Suppose $J$ is a graph and $A \sub V_J$. 
We refer to $(A,J)$ as an \emph{extension}. 
Given an injective map $\phi:A \to [n]$, 
where $[n]$ is the vertex set of $ G_{n,p}$, we let
$X^{ER}_{\phi,J}$ be the number of injective maps $f:V_J \to [n]$ 
such that $f$ restricts to $\phi$ on $A$ and 
$f(e)$ is an edge of $G_{n,p}$ for every $e \in J \sm J[A]$.
Thus $X^{ER}_{\phi,J}$ is formally defined in the same way as
the extension variable $X_{\phi,J,J}$ on $G(i)$
(see Section~\ref{subsub:defcon}), but we emphasise
that $X^{ER}_{\phi,J}$ is defined on $ G_{n,p}$, not on $G(i)$.

The following definition and accompanying lemma describe how
a general extension can be naturally decomposed into a series of
extensions that are `strictly balanced', in that they do not have
any `dense subextension'.

\begin{defn}
Given $A \sub B \sub B' \sub V_J$ we define the \emph{scaling} 
$$S^{B'}_B = S^{B'}_B(J) = n^{|B'| - |B|} p^{e_{J[B']}-e_{J[B]}}.$$
We say that $(A,J)$ is \emph{strictly balanced} (in $G_{n,p}$) 
if $S^{V_J}_B < 1$ for all $A \subn B \subn V_J$.
The \emph{extension series} (in $G_{n,p}$) for $(A,J)$, 
denoted $(B_0,\dots,B_d)$, is constructed by the following rule.
We let $B_0 = A$. For $i \ge 0$, 
if $(B_i,J)$ is not strictly balanced then we choose $B_{i+1}$
to be a minimal set $C$ with $B_i \subn C \subn V_J$ 
that minimises $S^C_{B_i}$; otherwise we
choose $B_{i+1}=V_J$, set $d=i+1$ and terminate the construction.  
\end{defn}

\begin{lemma} \label{lem:extGnp}
Let $(A,J)$ be an extension and $(B_0,\dots,B_d)$
be its extension series in $G_{n,p}$. Then
\begin{enumerate}[(i)]
\item if $A \sub B \sub B' \sub B'' \sub V_J$ 
then $S^{B''}_B =  S^{B'}_B S^{B''}_{B'}$,
\item if $A \sub B \sub V_J$ and $C \sub V_J \sm B$ 
then $S^{B \cup C}_{A \cup C} \le S^B_A$,
\item each extension $(B_i,J[B_{i+1}])$ is strictly balanced,
\item $S^{B_{i+1}}_{B_i} \ge 1$ for $i>0$.
\end{enumerate}
\end{lemma}

\begin{proof}
Statements (i) and (ii) are clear.
For (iii), we cannot have $S^{B_{i+1}}_B \ge 1$ 
for some $B_i \subn B \subn B_{i+1}$, as then 
$S^B_{B_i} = S^{B_{i+1}}_{B_i}/S^{B_{i+1}}_B 
\le S^{B_{i+1}}_{B_i}$ contradicts minimality of $B_{i+1}$.
For (iv), suppose for contradiction that 
$S^{B_{i+1}}_{B_i} < 1$ for some $i>0$.
If $i+1<d$ then $S^{B_{i+1}}_{B_{i-1}} <  S^{B_i}_{B_{i-1}}$
contradicts the definition of $B_i$.
On the other hand, if $i+1=d$ we will obtain 
a contradiction by showing that $(B_{i-1},J)$
is strictly balanced (so the extension series
should have terminated with $B_i=V_J$).

To see this, consider any
$B_{i-1} \subn B \subn V_J$ and write 
$B^\cup = B \cup B_i$, $B^\cap = B \cap B_i$.
By strict balance of $(B_i,J)$
we have $S^{V_J}_{B^\cup} \le 1$,
with equality only if $B^\cup=V_J$
(as $S^{B_{i+1}}_{B_i} < 1$).
By (ii) and strict balance of $(B_{i-1},J[B_i])$ we have
$S^{B^\cup}_B \le S^{B_i}_{B^\cap} \le 1$,
with equality only if $B^\cap = B_i$.
At least one of these inequalities is strict,
so $S^{V_J}_B = S^{B^\cup}_B S^{V_J}_{B^\cup} < 1$.
This contradiction completes the proof.
\end{proof}

Next we quote the following general extension estimate
of Kim and Vu  \cite[Theorem 4.2.4]{kimvu} 
in a weakened form that suffices for our purposes.
\begin{lemma} \label{kim-vu}
For any $\aA>0$ there is $\bB>0$ so that
for any extension $(A,J)$ with $S^B_A > n^\aA$ 
for all $ A \subsetneq B \sub V_J$ in $ G_{n,p}$ 
whp $X^{ER}_{\phi,J} = (1 \pm n^{-\bB}) S^{V_J}_A$ 
for all injections $\phi:A \to [n]$.
\end{lemma}

We also require a weaker estimate that can be applied to sparse extensions,
as given by the following union bound lemma.
We include a brief proof as it illustrates a method 
we will also use for similar estimates in the triangle-free process.
We recall that $L=\sqrt{\log n}$.

\begin{lemma}\label{Gnpbound}
If $(A,J)$ is strictly balanced in $G_{n,p}$ then whp 
$X^{ER}_{\phi,J} < L^{4|V_J \sm A|}  \max \{S^{V_J}_A,1\}$ 
for all injections $\phi:A \to [n]$.
\end{lemma}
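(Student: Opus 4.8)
The plan is to prove Lemma~\ref{Gnpbound} by a first-moment (union bound) argument, exploiting the hypothesis that $(A,J)$ is strictly balanced so that the worst subextension is the whole of $V_J$. Fix an injection $\phi:A \to [n]$, and let $W = V_J \setminus A$, with $|W| = k$. For a set $U$ of vertices of $W$, consider the induced sub-extension $(A, J[A \cup U])$; write $X_U$ for the number of ways to extend $\phi$ over $U$ so that all edges of $J[A\cup U]$ not inside $A$ are present in $G(n,p)$. The quantity we want to bound is $X_{\phi,J} = X_W$. The key observation is that for any $U$ we have $\mathbb{E}[X_U] = S^{A\cup U}_A(J) \le \max\{S^{V_J}_A, 1\}$: this is immediate if $S^{A\cup U}_A \le 1$, and otherwise it follows from strict balancedness applied to the set $B = A \cup U$, since $S^{V_J}_{A \cup U} < 1$ forces $S^{A \cup U}_A = S^{V_J}_A / S^{V_J}_{A\cup U} > S^{V_J}_A$, hence $S^{A\cup U}_A \le \max\{S^{V_J}_A,1\}$ is what we need in the reverse direction --- so I should instead argue directly that the \emph{final} term dominates, i.e.\ $S^{A\cup U}_A \le \max\{S^{V_J}_A, 1\}$ for all $U$; this is a standard consequence of the definition of the extension series together with strict balancedness and I would state it as a short preliminary claim.

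With that claim in hand, the main step is a union bound over the number of extensions of size exactly $j = |U|$ that could realize a value of $X_U$ as large as the claimed bound. Rather than bounding $X_W$ directly, I would build $W$ up one vertex at a time along an arbitrary ordering $w_1, \dots, w_k$, and argue that whp, for every $\phi$ and every prefix $U_j = \{w_1,\dots,w_j\}$, we have $X_{U_j} < L^{4j}\max\{S^{A\cup U_j}_A, 1\}$. The case $j=0$ is trivial. For the inductive step, condition on the (whp) event that $X_{U_{j-1}}$ is already bounded; then $X_{U_j}$ is a sum, over the $X_{U_{j-1}}$ ways to place $U_{j-1}$, of the number of choices of $w_j$ adjacent in the prescribed way to $A \cup U_{j-1}$. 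Each such single-vertex count is a binomial-type random variable with mean $n p^{d_j}$ where $d_j$ is the number of $J$-neighbours of $w_j$ in $A \cup U_{j-1}$; a Chernoff/Markov bound (with a large deviation factor absorbed into one power of $L$) gives that whp none of the polynomially-many relevant single-vertex counts exceeds $L^4 \max\{np^{d_j},1\}$. Multiplying through, $X_{U_j} \le X_{U_{j-1}} \cdot L^4 \max\{np^{d_j},1\} \le L^{4(j-1)}\max\{S^{A\cup U_{j-1}}_A,1\}\cdot L^4\max\{np^{d_j},1\}$, and since $S^{A\cup U_j}_A = S^{A\cup U_{j-1}}_A \cdot np^{d_j}$ one checks $\max\{S^{A\cup U_{j-1}}_A,1\}\max\{np^{d_j},1\} \le \max\{S^{A\cup U_j}_A,1\} \cdot \max\{S^{A\cup U_{j-1}}_A, 1\}$ --- which is not quite what I want, so I would instead restructure the induction so that the multiplicative error lands cleanly: track the bound $X_{U_j} < L^{4j}\max\{S^{A\cup U_j}_A,1\}$ only when the maxima behave multiplicatively, i.e.\ when every prefix scaling is $\ge 1$; the strictly balanced hypothesis combined with the extension-series claim guarantees we can choose the vertex order so that the prefix scalings are monotone, which makes the telescoping of maxima valid.

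The union bound bookkeeping: for fixed $j$ there are at most $n^j$ placements of $U_j$ and at most $n^{|A|} \cdot \binom{n}{|A|} \le n^{O(1)}$ choices of $\phi$, a constant number of extension shapes, and the single-vertex-count bad events each fail with probability at most $n^{-\omega(1)}$ by Chernoff (here is where $L^4 = (\log n)^2$ in the exponent beats the $n^{O(1)}$ union bound — since a Chernoff bound for a binomial with mean $\mu$ exceeding $L^4\max\{\mu,1\}$ gives failure probability $\exp(-\Omega(L^4)) = n^{-\omega(1)}$ when $\mu \ge 1$, and $\exp(-\Omega(L^4\log(L^4))) $ when $\mu \ll 1$, both subpolynomial). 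Taking the union over all $j \le |V_J|$ (a constant) and all $\phi$ completes the argument, since at $j = k$ we get $X_{\phi,J} = X_W < L^{4|V_J|}\max\{S^{V_J}_A,1\}$ for all $\phi$ simultaneously.

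The main obstacle I anticipate is purely organizational rather than deep: making the one-vertex-at-a-time induction interact correctly with the $\max\{\cdot,1\}$ in the statement, so that the accumulated polylogarithmic factor is exactly $L^{4|V_J|}$ and no worse. The clean way to handle this, which I would adopt, is to process the vertices of $W$ in the order given by the extension series $(B_0,\dots,B_d)$ of $(A,J)$: within each block $B_{i}\setminus B_{i-1}$ the scaling only decreases below $1$ at the very end (by minimality in the definition of the series), so on each block the relevant single-vertex means can be grouped and the maxima telescope correctly, and strict balancedness ensures the final block scaling $S^{V_J}_{B_{d-1}} < 1$, after which one more application of the coupling/Chernoff step finishes. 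Everything else — the Chernoff estimate, the counting of configurations, the subpolynomial failure probabilities — is routine and parallels the union-bound arguments used later for the triangle-free process itself.
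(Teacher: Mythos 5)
Your one-vertex-at-a-time induction has a genuine gap, and the patch you propose (ordering by the extension series) cannot close it: when $(A,J)$ is strictly balanced the extension series is just $(B_0,B_1)=(A,V_J)$, so it supplies no useful intermediate ordering. More to the point, the preliminary claim you lean on --- that $S^{A\cup U}_A \le \max\{S^{V_J}_A,1\}$ for every $U \subseteq V_J\setminus A$ --- is \emph{false} for strictly balanced extensions once $S^{V_J}_A>1$. Take $A=\{a_1,a_2\}$, $V_J=\{a_1,a_2,b,c\}$, $J=K_4$ minus the edge $a_1a_2$ (so $e_J=5$), and $p=n^{-0.35}$. Then each one-vertex addition has $S^{V_J}_{B}=np^3=n^{-0.05}<1$, so $(A,J)$ is strictly balanced, while $S^{V_J}_A=n^2p^5=n^{0.25}>1$ and yet $S^{A\cup\{b\}}_A=np^2=n^{0.3}>S^{V_J}_A$ (by symmetry the same for $c$, so no ordering avoids the overshoot). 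Your induction then delivers $X_{\phi,J}\le L^8\,\max\{np^2,1\}\,\max\{np^3,1\}=L^8 n^{0.3}$, while the target is $L^8 n^{0.25}$; the deficit is a genuine polynomial factor, exactly $\prod_j\max\{np^{d_j},1\}/\max\{\prod_j np^{d_j},1\}$, which no reshuffling of the vertex order removes.

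The underlying reason the Chernoff-on-single-vertex-counts approach fails here is that it controls each step in isolation and cannot detect the crucial negative correlation: almost none of the $\approx np^2$ heavy intermediate vertices (common neighbours of $\phi(a_1),\phi(a_2)$) are extendable to a full copy, but the stepwise product bound pays as though the worst-case vertex count recurs on top of the largest prefix. The paper's proof avoids this by a two-part argument you would need to adopt. First, it bounds the number of extensions that are pairwise disjoint \emph{outside} $\phi(A)$ by a direct first-moment/union bound over $s$-tuples; this computes the full product $\prod_j np^{d_j}=S^{V_J}_A$ in one shot, giving $s\le L^4\max\{S^{V_J}_A,1\}$ whp. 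Second, it argues by induction on $|V_J|-|A|$ that any fixed extension is met outside $\phi(A)$ by only $O(L^{4(|V_J|-|A|)-1})$ others: each such intersecting extension restricts, on its intersection $B$ with $A\subsetneq B\subsetneq V_J$, to one of $O(1)$ patterns, and by strict balance $S^{V_J}_B<1$, so the inductive bound for $(B,J)$ is purely polylogarithmic. Combining these two steps yields the lemma; the disjointness trick is the missing idea in your proposal.
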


\begin{proof}
First we note that for any fixed $f:V_J \to [n]$ restricting to $\phi$ on $A$ 
we have $\mb{P}(f \in X^{ER}_{\phi,J}) = p^{e_J - e_{J[A]}}$. 
Next we estimate the probability that there are
$s$ extensions in $X^{ER}_{\phi,J}$ that are disjoint outside of $\phi(A)$. 
An upper bound is $s!^{-1} (n^{v_J-|A|})^s \cdot (p^{e_J - e_{J[A]}})^s 
< (3s^{-1}S^{V_J}_A)^s$, which is subpolynomial for $s = L^4 \max \{S^{V_J}_A,1\}$.

Now we show the statement of the lemma by induction on $|V_J \sm A|$.
The base case $|V_J \sm A|=1$ holds by the bound on disjoint extensions.  
Now suppose $|V_J \sm A| > 1$.  We consider a maximal collection $C$ 
of extensions disjoint outside of $A$. 
As shown above, whp $|C| \le s = L^4 \max \{S^{V_J}_A,1\}$.
By maximality, any extension $\phi' \in X^{ER}_{\phi,J}$
intersects some extension $\phi^* \in C$ outside of $A$.  
By strict balance and the induction hypothesis,
for any $\phi^*$ the number of choices for $\phi'$ 
is at most $ 2^{|V_J|} L^{4(|V_J \sm A|-1)} < L^{4|V_J \sm A|-1}$.
Therefore  $X^{ER}_{\phi,J} < L^{4|V_J \sm A|-1} |C| 
< L^{4|V_J \sm A|} \max \{S^{V_J}_A,1\}$.
\end{proof}

We deduce the following estimate on general extensions.

\begin{lemma} \label{gnp-general}
For any extension $(A,J)$ whp 
$X^{ER}_{\phi,J} < L^{4|V_J \sm A|} \max_{A \sub B \sub V_J} S^{V_J}_B$ 
for all $\phi$.
\end{lemma}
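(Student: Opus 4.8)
The plan is to reduce to Lemma~\ref{Gnpbound} along the extension series $(B_0,\dots,B_d)$ of $(A,J)$ defined above, where $B_0=A$ and $B_d=V_J$. The key observation is that each step of the series is strictly balanced: for $0\le i\le d-1$ the extension $(B_i,J[B_{i+1}])$ satisfies $S^{B_{i+1}}_C<1$ for all $B_i\subn C\subn B_{i+1}$. When $i=d-1$ this is exactly the termination condition; when $i<d-1$, recall $B_{i+1}$ was chosen as a \emph{minimal} set minimising $S^C_{B_i}$ over $B_i\subn C\subn V_J$, so if some $C$ with $B_i\subn C\subn B_{i+1}$ had $S^{B_{i+1}}_C\ge 1$ then $S^C_{B_i}=S^{B_{i+1}}_{B_i}/S^{B_{i+1}}_C\le S^{B_{i+1}}_{B_i}$, forcing $C$ to be a minimiser as well, hence $S^{B_{i+1}}_C=1$, which contradicts the minimality of $B_{i+1}$.

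I would also record that $S^{B_{i+1}}_{B_i}\ge 1$ for every $i\ge 1$ (only the first step can be sparse). Suppose not. If $(B_i,J)$ is not strictly balanced then $B_{i+1}\subn V_J$ is a legal candidate at the previous step, and $S^{B_{i+1}}_{B_{i-1}}=S^{B_i}_{B_{i-1}}S^{B_{i+1}}_{B_i}<S^{B_i}_{B_{i-1}}$ contradicts the minimality that defined $B_i$. If $(B_i,J)$ is strictly balanced then $B_{i+1}=V_J$; picking a witness $B$ with $B_{i-1}\subn B\subn V_J$ and $S^{V_J}_B\ge 1$ (one exists since $(B_{i-1},J)$ is not strictly balanced) gives $S^B_{B_{i-1}}\le S^{V_J}_{B_{i-1}}=S^{B_i}_{B_{i-1}}S^{V_J}_{B_i}<S^{B_i}_{B_{i-1}}$, again contradicting minimality.

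With these facts in hand, apply Lemma~\ref{Gnpbound} to each of the $d$ strictly balanced steps $(B_{i-1},J[B_i])$, using the sharper bound $X_{\psi,J[B_i]}<L^{4(|B_i|-|B_{i-1}|)}\max\{S^{B_i}_{B_{i-1}},1\}$ that its proof establishes by induction on $|V_J|-|A|$ (the stated form, with exponent $4|V_J|$, is too lossy to telescope). A union bound over the $O(1)$ steps makes all of these hold whp, simultaneously for all injections $\psi:B_{i-1}\to[n]$. Now fix an injection $\phi:A\to[n]$ and decompose an embedding $f$ counted by $X_{\phi,J}$ into its chain of restrictions $f|_{B_1},\dots,f|_{B_d}=f$: since a valid embedding restricts to valid partial embeddings and conversely, $X_{\phi,J}\le\prod_{i=1}^{d}\max_\psi X_{\psi,J[B_i]}$. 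The $L$-exponents telescope to $L^{4(|B_d|-|B_0|)}=L^{4(|V_J|-|A|)}\le L^{4|V_J|}$, and by the second fact $\max\{S^{B_i}_{B_{i-1}},1\}=S^{B_i}_{B_{i-1}}$ for $i\ge 2$, so the product of the remaining factors equals $S^{V_J}_A$ if $S^{B_1}_A\ge 1$ and equals $S^{V_J}_{B_1}$ otherwise --- in either case at most $\max_{A\sub B\sub V_J}S^{V_J}_B$. Multiplying the two estimates gives $X_{\phi,J}<L^{4|V_J|}\max_{A\sub B\sub V_J}S^{V_J}_B$ whp for all $\phi$.

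The only real work is the bookkeeping for the extension series, and in particular the second fact $S^{B_{i+1}}_{B_i}\ge 1$ for $i\ge 1$ --- this is what makes the product of the $\max\{\cdot,1\}$ terms collapse to a single scaling $\max_{A\sub B\sub V_J}S^{V_J}_B$. The rest is a routine union bound and a telescoping product; the one subtlety is to invoke the refined exponent $4(|V_J|-|A|)$ from the proof of Lemma~\ref{Gnpbound} rather than its (weaker) statement.
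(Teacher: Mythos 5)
Your proposal is correct and follows exactly the route the paper takes: apply Lemma~\ref{Gnpbound} to each step of the extension series and use that $S^{B_{i+1}}_{B_i}\ge 1$ for $i\ge 1$ so the $\max\{\cdot,1\}$ factors collapse to a single scaling. The paper gives this as a one-sentence proof; you have correctly filled in the omitted verifications (strict balance of each step $(B_{i-1},J[B_i])$, the fact $S^{B_{i+1}}_{B_i}\ge 1$ for $i\ge1$, and the need to use the refined exponent $4(|V_J|-|A|)$ from the proof of Lemma~\ref{Gnpbound} for the telescoping to work).
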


\begin{proof}
Let $(B_0,\dots,B_d)$ be the extension series in $G_{n,p}$ for $(A,J)$. 
By Lemma \ref{lem:extGnp}.iii we can apply Lemma \ref{Gnpbound} bound
to each step of the extension series, so whp for each $0 \le i < d$
and injection $\phi_i:B_i \to [n]$ we have
$X^{ER}_{\phi_i,J[B_{i+1}]} < L^{4|B_{i+1} \sm B_i|}  
\max \{S^{B_{i+1}}_{B_i},1\}$.
Thus for any injection $\phi:A \to [n]$ we have
$X^{ER}_{\phi,J} < \prod_{i=0}^{d-1}
L^{4|B_{i+1} \sm B_i|}  \max \{S^{B_{i+1}}_{B_i},1\}$.
By Lemma  \ref{lem:extGnp}.iv we have 
$S^{B_{i+1}}_{B_i} \ge 1$ for $i \ge 1$,
so $X^{ER}_{\phi,J} < L^{4|V_J|}
\max \{S^{B_1}_{B_0},1\} S^{V_J}_{B_1}
= L^{4|V_J|} \max \{ S^{V_J}_{B_0}, S^{V_J}_{B_1}\}$.

It remains to show that this bound 
is identical to that claimed by the lemma.
To see this, consider any $A \sub B \sub V_J$ 
and write $B^\cup = B \cup B_1$, $B^\cap = B \cap B_1$  
and $S^{V_J}_B = S^{B^\cup}_B S^{V_J}_{B^\cup}$.
Then $S^{B^\cup}_B \le S^{B_1}_{B^\cap} 
\le \max \{S^{B_1}_{B_0},1\}$ and 
$S^{V_J}_{B^\cup} \le S^{V_J}_{B_1}$
by Lemma \ref{lem:extGnp}, as required.
\end{proof}

\subsection{Coupling estimates}

In this subsection we estimate our variables for small $t$ 
by coupling the triangle-free process $G(i)$ inside
the Erd\H{o}s-R\'enyi random graph process $ER(n,j)$,
which is defined in the same way as $G(i)$
but without the condition of being triangle-free,
i.e.\ we consider a uniform random order of the set of pairs in $[n]$ 
and let the edge-set of $ER(n,j)$
consist of the first $j$ pairs in this order.
The coupling is defined by rejecting any pair in $ER(n,j)$ that is closed, 
in that it forms a triangle with previous (non-rejected) edges. 
Thus after $j$ steps the selected edges form the triangle-free process 
$G(i)$ after $i$ steps, where $j-i$ edges were rejected. 
The number of rejected edges is bounded by the number of 
triangles in $ER(n,j)$; call this $T(j)$. 

The intuition (made precise in Lemma \ref{lem:coupler} below) is that 
for small $t$ few edges are rejected, so variables in $G(i)$ 
are well-approximated by corresponding variables in $ER(n,j)$.
This allows us to side-step technical difficulties that arise for small $t$
when implementing the main martingale strategy of Section \ref{sec:strategy}
(i.e.\ that powers of $t$ in the error functions blow up for small $t$,
and in any case we have to exclude very small $t$ to obtain concentration).
We will see in the calculations below that the coupling gives us the required
bounds up to $t=n^{-1/4}$ (and beyond in some cases), which explains our
choice of activation step $i_V$ in Definition \ref{def:active} above.

A well-known paradigm of Random Graphs is that 
the random graph $ER(n,j)$ of fixed size is very similar 
to the usual binomial model $G_{n,p_j}$
where edges are chosen independently
with probability $p_j = j/\binom{n}{2}$;
the following lemma makes this statement precise.
\begin{lemma}[Lemma~1.2 in \cite{alan}]
\label{alan}
Let $ \mc{P} $ be any graph property and $ p_j = j/\binom{n}{2} $ 
where $ j = j(n) \to \infty$ and $\binom{n}{2} -j \to \infty$.  
Then for $n$ sufficiently large
\[ \mb{P}( ER(n,j) \in \mc{P}) \le 10 j^{1/2} \mb{P}(  G_{n,p_j} \in \mc{P}). \]
\end{lemma}

We will view $j=j(i)$ as a random variable on the probability space
of the coupling of $G(i)$ and $ER(n,j)$, which is equal to the number
of steps of the Erd\H{o}s-R\'enyi process $ER(n,j)$ 
that are revealed in order to obtain $i$ edges 
in the coupled triangle-free process $G(i)$.
We can approximate $j(i)$ and so estimate variables in $G(i)$
by those in $G_{n,p}$ as follows.

\begin{lemma} \label{lem:coupler}
If $i=tn^{3/2}$ with $t \in (n^{-0.49},0.01)$
then whp $i \le j(i) < (1+O(t^2)) i$.
Thus for any extension $(A,J,\GG)$ 
and injection $\phi:A \to [n]$
whp $X_{\phi,J,\GG} \le X^{ER}_{\phi,J}$ 
in $G_{n,p'}$ with $p' = (1+O(t^2))p$.
\end{lemma}

\begin{proof}
By definition of the coupling we have $0 \le j-i \le T(j)$, 
where $T(j)$ is the number of triangles in $ER(n,j)$.
As $t>n^{-0.49}$, by Lemmas~\ref{kim-vu}~and~\ref{alan} whp 
$T(j) < 2p_j^3 n^3 < 20 (j/n)^3$.
We deduce $j < 2i$, as at step $2i$ we have seen 
at least $2i - 20(2i/n)^3 = (1-80t^2) 2i > i$ 
edges of the triangle-free process
(using $t<0.01$). Thus $T(j) = O(t^2) i$, 
which gives the first statement.

To see the second, note that $X_{\phi,J,\GG}$
is bounded deterministically (via the coupling)
by $X^{ER}_{\phi,J}$ in $ER(n,j(i))$,
and by Chernoff bounds on the number of edges
in $G_{n,p'}$ we can include $G_{n,p'}$ in the coupling
(`tripling'?) so that whp $ER(n,j(i)) \sub G_{n,p'}$.
\end{proof}

Having established the coupling, we now turn to its application,
which is to show that any good variable $V$ is not in or beyond
its critical interval at its activation step $i_V$ when we begin 
its martingale analysis; 
this is the final statement of the next lemma
(we also include some stronger bounds required for
the event $\mc{G}_i$ in Definition \ref{def:I0},
and a stronger statement for stacking variables).
We require these bounds as earlier steps
are not covered by the martingale analysis:
we recall from Definition \ref{def:IV} that the stopping time $I_V$
is the smallest $i$ with $i_V < i < J_V$ 
such that $|\mc{D}V(i)| > \dD_V(t) v(t)$ 
(or $\infty$ if there is no such time).
We can assume $V$ is good
by definition of $J_V$ (also in Definition \ref{def:IV}).
For convenience, we recall the estimates 
on $t_V = i_V n^{-3/2}$ given in Lemma \ref{lem:tV}:
if $e(V)=0$ or $V=S$ then $t_V=n^{-1/4}$, otherwise
$t_V = \wt{\TT}(n^{-1/4e(V)})$ if $V$ is a stacking variable 
or $t_V = \wt{\TT}(n^{-\dD/4e(V)})$ if $V$ is a controllable variable.
 
\begin{lemma} \label{lem:beginning}
With high probability 
\begin{enumerate}[(i)]
\item $V(i) = (1 \pm O(t^2))v(t)$ for any good variable $V$
with $e(V)=0$ and $n^{-0.49} \le t \le 0.01$,
\item $Y_{uv}(i)/y(t)$ and $Y_u(i)/y_1(t)$
are $1 \pm O(L^8 t^2) \pm O(t^{-0.4}n^{-0.2})$
for every vertex $u$, non-edge $uv$ 
and $n^{-0.49} \le t \le 0.01$,
\item no good controllable variable $V=X_{\phi,J,\GG}$
has $|\mc{D}V(i)| > n^{-\dD^2} v(t)$ 
for any $n^{-1/4} \le t \le t_V$ such that
$S^B_A > n^{\dD'}$ for all $A \subn B \sub V_\GG$,
\item no good stacking variable $V$
has $|\mc{D}V(i)| > (f_V(t)+ g_V(t)) v(t)$ 
for any $n^{-1/4} \le t \le t_V$,
\item no good variable $V$ has 
$|\mc{D}V(i_V)| > (f_V(t_V)+ g_V(t_V)) v(t_V)$.
\end{enumerate}
\end{lemma}

\begin{proof}
For (i), we first estimate the maximum degree $\DD(i)$ of $G(i)$.
By Lemma \ref{lem:coupler}, we can bound $\DD(i)$ whp
by the maximum degree in $G_{n,p'}$ with  
$p' = (1+O(t^2))p = O(p)$, so whp $\DD = O(pn) = O(tn^{1/2})$.
Thus any vertex is incident to $O(pn)$ edges 
and $O(pn)^2 = O(t^2) n$ closed pairs.
Now consider any variable $V$ with $e(V)=0$,
and recall that $v(t) = n^{n(V)} \hat{q}(t)^{o(V)}$,
where $\hat{q}(t) =  e^{-4t^2} = 1 - O(t^2)$.
We have $n^{n(V)} \ge V(0) \ge V(i) 
\ge v(t) - O(t^2) n \cdot n^{n(V)-1}$,
so $V(i) = (1+O(t^2))v(t)$, as required.
This also proves (v) for such variables;
indeed, we have $t_V = n^{-1/4}$,
so $\mc{D}V(i_V) = O(n^{-1/2}) v(t)$ and
$f_V(n^{-1/4}) + g_V(n^{-1/4}) 
\ge f_R(n^{-1/4}) + g_R(n^{-1/4})
= \TT(L^{40} n^{-1/2}) 
\gg \mc{D}V(i_V)/v(t)$.

For (ii), consider any non-edge $uv$
and $n^{-0.49} < t < 0.01$.
By Lemma \ref{lem:coupler} we can bound $Y_{uv}$ whp 
above by the degree $d(v)$ of $v$ in $G_{n,p'}$ 
with $p' = (1+O(t^2))p$. By Chernoff bounds whp
$d(v) = (1+O(t^2)) pn \pm (pn)^{0.6}$,
where $d(v)/y(t) = 1 + O(t^2) + O(tn^{1/2})^{-0.4}$
as $y(t) = (1+O(t^2)) pn$ and $pn=2tn^{1/2}$.
We can bound $Y_{uv}$ below whp by
$d^{ER}(v) - T(v) - P_3(uv)$, where $d^{ER}(v)$ 
is the degree of $v$ in $ER(n,j(i))$,
and $T(v)$, $P_2(v)$ are the numbers
of triangles containing $v$
and paths of length $3$
from $u$ to $v$, both in $G_{n,p'}$
(a bound on the same quantities in $ER(n,j(i))$).
By Lemma \ref{gnp-general}, noting that $pn > n^{0.01}$,
we can bound $T(v)$ and $P_2(v)$ 
by $L^8 \max\{ p^3 n^2, 1\} = O(L^8 t^2) y$,
which gives the stated estimate for $Y_{uv}$.
The argument for $Y_u$ is the same, 
except that there is no $P_3(uv)$ term.

For (iii), we have already shown the required bounds
when $e(V)=0$, so we can assume $e(V)>0$.
By Lemma \ref{lem:coupler} (which applies as 
$n^{-1/4} \le t \le t_V 
= \wt{\TT}(n^{-\dD/4e(V)}) < 0.01$) 
we can bound $V(i)$ whp above by $X_{\phi,J}$ 
in $G_{n,p'}$ with $p' = (1+O(t^2))p $.
As $S^B_A > n^{\dD'}$ for all $A \subn B \sub V_\GG$
by Lemma \ref{kim-vu} we have 
$X_{\phi,J} = (1 \pm n^{-2\dD^2}) v(t)$, say,
as $\dD \ll \dD' \ll \eps$ and $e(V) < M^2 = 9\eps^{-2}$.
For a lower bound on $V(i)$, we consider for each pair
$xy$ in $V_J$ not contained in $A$ how it can
prevent extensions in $X_{\phi,J}$ from being counted in $V$
(we do not need to consider $xy \sub A$, as such edges 
either make $V$ bad or have no effect on $V$).
We let $J+xy$ be obtained from $J$ by adding $xy$ as an edge
and $J*xy$ be obtained from $J$ by adding a new vertex $z$
adjacent to both $x$ and $y$. Then we can bound $V(i)$ whp below 
by $X_{\phi,J} - \sum_{xy} X_{\phi,J+xy} - \sum_{xy} X_{\phi,J*xy}$.

We will bound both $X_{\phi,J+xy}$ and $X_{\phi,J*xy}$
by $n^{-2\dD^2} v$. To see this bound for $X_{\phi,J+xy}$, 
note that $S_A^{V_J}(J+xy) = pv $ and 
for any $A \subn B \sub V_J$ that
$S_B^{V_J}(J+xy) \le  S_B^{V_J}(J) = v/S^B_A < n^{-\dD'} v$,
so $X_{\phi,J+xy} < n^{-2\dD^2} v$ by Lemma \ref{gnp-general}.
A similar argument applies to $X_{\phi,J*xy}$
(also using $t = \wt{O}(n^{-\dD/4e(V)})$),
so $V(i) = (1 \pm 4n^{-2\dD^2})v(t)$.
As $\mc{T}V(i) = v(t)(Q/q)^{o(V)} 
= (1+O(t^2))v(t) = (1 \pm n^{-2\dD^2})v(t)$,
this gives (iii). As $g_V(t_V)=L^{-1}$ by definition,
this also proves (v) for controllable variables.

For (iv), we may assume $e(V)>0$.
As $\dD_V(t) = \OO(\dD_V(t_V))$
for $n^{-1/4} \le t \le t_V$ it suffices to show 
$|\mc{D}V(i)|/v(t) = o(\dD_V(t_V))$.
Applying (i) and (ii) to each step 
in the stacking order of $V$, noting that only $O(1)$ 
choices are forbidden at each step due to using a vertex 
already used by a previous step, we obtain 
$V(i)/v(t) = 1 \pm O(L^8 t^2) \pm O(t^{-0.4}n^{-0.2})$.
Similarly, the tracking variable $\mc{T}V$ 
satisfies the same estimate for $\mc{T}V(i)/v(t)$, 
so $|\mc{D}V(i)|/v(t) < O(L^8 t^2) + O(t^{-0.4}n^{-0.2})$.
This satisfies the desired bound,
as if $e(V) \ne 1$ we have $\dD_V(t_V)=\wt{\TT}(1)$
and $t_V = \wt{\TT}(n^{-1/4e(V)})$,
so $|\mc{D}V(i)|/v(t) = \wt{O}(n^{-1/2e(V)} + n^{-0.1})$
or if $e(V)=1$ (see Definition \ref{def:active})
we have $\dD_V(t_V)=\wt{\TT}(n^{-0.05})$ and $t_V = n^{-0.24}$,
so $|\mc{D}V(i)|/v(t) = O(n^{-0.1})$.
This proves (iv) and (v) for stacking variables.

For (v), the only remaining case is $V=S$,
for which we recall $t_S=n^{-1/4}$.
We have $S(n^{5/4}) \le 2n^{5/4} \cdot n = 2n^{9/4}$,
as each triple counted by $S$ determines an ordered edge and a vertex.
We do not count such triples if the other pairs are closed or edges,
so $S(n^{5/4}) \ge 2n^{9/4} - 2P_2 - 2P_3$, where $P_\ell$ 
is the number of paths of length $\ell$ in $G_{n,p'}$ with 
$p' = O(n^{-3/4})$ (using Lemma \ref{lem:coupler}).
As $G_{n,p'}$ whp has degrees $O(n^{1/4})$ we have
$S(n^{5/4}) = 2n^{9/4} \pm O(n^{7/4})$, 
which is well within the desired bound
$(f_S(n^{-1/4})+ g_S(n^{-1/4})) s(n^{-1/4}) = \wt{\TT}(n^2)$.
\end{proof}

\subsection{Union bounds}

In this subsection we adapt the argument of Lemmas~\ref{Gnpbound}~and~\ref{gnp-general} to give a crude bound on general extension variables that holds throughout the triangle-free process. Along the way, we prove Theorem~\ref{subgraphs}, assuming Theorem~\ref{good}.  We start with the simplest instance of this argument, 
which is bounding the codegree $Z_{uv}(i)$
of any two vertices $u$ and $v$ in $ G(i)$.

\begin{lemma} \label{zsmall}
Whp for every non-edge $uv$,
if $i'-1<I$ then $ Z_{uv}(i') \le L^4 $.
\end{lemma}

\begin{proof}
At any step $i \le i'$ the edge added at step $i$
completes a path of length two between $u$ and $v$ 
with probability $ ( Y_{uv} + Y_{vu}) Q^{-1}$.
We can bound this probability by $O(y/q) = O(Ln^{-3/2})$
for $t \ge 1$ or by $O(y(1)/q(0)) = O(n^{-3/2})$ for $t \le 1$.
Taking a union bound over all subsets of $L^4$ steps 
at which we might increment $ Z_{uv}$, the probability that 
$Z_{uv}$ reaches $L^4$ by step $i'$ is at most
$\binom{ i_{max} }{ L^4} O\left( L n^{-3/2}  \right)^{L^4}  
= O \left( L^{-2} \right)^{L^4}$.
\end{proof}

We need some further notation and terminology 
for general extensions in the triangle-free process,
which mirrors that used previously for extensions in the Erd\H{o}s-R\'enyi process.
We say that $(A,J,\GG)$ is \emph{strictly balanced at time $t$} 
if $S^{V_\GG}_B < 1$ for all $A \subn B \subn V_\GG$.
The \emph{extension series at time $t$} for $(A,J,\GG)$, 
denoted $(B_0,\dots,B_d)$, is constructed by the following rule.
We let $B_0 = A$. For $i \ge 0$, if $(B_i,J,\GG)$ is not strictly balanced 
then we choose $B_{i+1}$ to be a minimal set $C$ with $B_i \subn C \subn V_\GG$ 
that minimises $S^C_{B_i}$; otherwise we choose $B_{i+1}=V_\GG$, 
set $d=i+1$ and terminate the construction.

In Lemma \ref{extensions} we will give a general estimate 
for extension variables in the triangle-free process.
First we illustrate the argument in the following lemma, 
which shows that sparse graph pairs do not appear; 
this is the main tool needed for the proof of Theorem \ref{subgraphs}.
Here we take $A=\es$, write $V_{J,\GG} = X_{\phi,J,\GG}$,
where $\phi$ is the unique map from $\es$ to $[n]$,
and $v_{J,\GG} = S^{V_\GG}_\es(J,\GG)$. 

\begin{lemma} \label{no-sparse}
Suppose $v_{J,\GG}(t') < n^{-c}$ for some $c>0$ and time $t'$.
Then the probability that $\mc{G}_{i'}$ holds, $i'-1<I$ 
and $V_{J,\GG}(i')>0$ is at most $2n^{-c}$.
\end{lemma}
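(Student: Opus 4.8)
The plan is to mimic the union-bound / disjoint-extension argument from the proof of Lemma~\ref{Gnpbound}, but now carried out inside the triangle-free process and conditioned on the good event $\mc{G}_i$ (which, via Lemma~\ref{qysmall}, lets us treat $Q(i)$ as essentially $q(t)$ and the relevant one-vertex extension probabilities as essentially their scalings). First I would reduce to the strictly balanced case: if $(J,\GG)$ is not strictly balanced at time $t'$, pass to a subset $B$ with $A \subn B \subn V_\GG$ minimising $S^{V_\GG}_B$; since $V_{J,\GG}(i')>0$ forces the induced substructure on $B$ to embed, and that substructure still has scaling at most $n^{-c}$ (actually smaller), an induction on $|V_\GG|$ lets me assume $(J,\GG)$ itself is strictly balanced at $t'$. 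Actually it is cleaner to run the extension series $(B_0,\dots,B_d)$ at time $t'$ directly, but for a first statement the inductive reduction suffices.

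So assume $(J,\GG)$ strictly balanced at $t'$. The heart of the matter is to bound, for a fixed candidate injection $f:V_\GG \to [n]$, the probability that $f$ is a valid embedding at step $i'$, i.e.\ that all required pairs are edges and all required open pairs are open at that step. I would reveal the edges of $f(J)$ one at a time in the order the process adds them: at the step where the process is asked to add the $k$-th such edge, the conditional probability it gets added is at most $1/|O(i)| = 2/Q(i) = (1+o(1)) 2\hat q^{-1} n^{-2}$ on $\mc{G}_i$; multiplying over the $e_J$ edges of $J$ not inside $A$ gives a factor $\wt{O}(\hat q^{-1}n^{-2})^{e_J}$. For the open-pair requirements, each pair $xy$ with $f(xy)$ required open at step $i'$ contributes a factor $\mb{P}(f(xy)\in O(i')) = (1+o(1))\hat q$ — but these are not independent of the edge-revealing, so I would instead argue as in Lemma~\ref{qysmall}: couple to $ER(n,j)$, and bound the probability that a given pair is open by the probability it has no common neighbour, which is $(1+o(1))\hat q$, with the various openness events handled by a further union bound over the few ways openness can fail (a path of length two through the pair), each of which is a sparse extension controlled by Lemma~\ref{gnp-general}/Lemma~\ref{kim-vu}. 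The upshot is $\mb{P}(f \text{ valid at } i') = \wt{O}(p^{e_J}\hat q^{e_\GG - e_J}) = \wt{O}(v_{J,\GG}\, n^{-n(V)})$ up to the $\mc{G}_i$ conditioning.

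Now I would run the disjoint-extensions / induction argument verbatim from Lemma~\ref{Gnpbound}: the expected number of families of $s$ embeddings pairwise disjoint outside $\phi(A)=\es$ is at most $s!^{-1}(n^{|V_\GG|})^s \cdot \wt{O}(v_{J,\GG} n^{-|V_\GG|})^s = (\wt{O}(s^{-1} v_{J,\GG}))^s$, which for $v_{J,\GG}(t') < n^{-c}$ is already $o(n^{-c})$ at $s=1$; by Markov, whp there is \emph{no} embedding disjoint from $\es$, i.e.\ no embedding at all, with failure probability $\wt{O}(v_{J,\GG}) \le 2n^{-c}$ once $n$ is large. (The strict-balance step only enters if one wants the cleaner polynomial bound for non-sparse pieces along an extension series; for the present statement the one-shot $s=1$ bound does the job, and strict balance lets the induction on $|V_\GG|$ in the reduction go through since every proper non-empty subset keeps scaling at most that of the minimiser.) The step I expect to be the main obstacle is making the openness-probability estimate honest: the events ``$f(e)\in O(i')$'' depend on the entire history of the process and on which pairs got rejected, so the cleanest route is to do the whole computation in the $ER(n,j)$ coupling for the relevant (small) range of $i'$ exactly as in Lemma~\ref{qysmall}, bounding $\mb{P}(\text{all required pairs open, all required edges present})$ by first intersecting with the edge events and then, on that event, union-bounding over the at most $\wt{O}(n)$ potential ``openness-destroying'' length-two paths per open pair, each contributing a genuinely sparse extension. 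Everything else is the routine $s!$-bound union argument already spelled out for $G(n,p)$.
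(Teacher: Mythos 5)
There is a genuine gap. You correctly identify the central difficulty --- bounding $\mb{P}(f \text{ valid at } i')$ when the openness events $f(e)\in O(i')$ depend on the whole history --- and propose to resolve it by "doing the whole computation in the $ER(n,j)$ coupling exactly as in Lemma~\ref{qysmall}." But that coupling only approximates $G(i)$ by $G(n,p_0)$ when the number of rejected (triangle-forming) edges is negligible relative to $j$; this requires $j = o(n^{3/2})$, i.e.\ $t' = o(1)$. The lemma is applied in the proof of Theorem~\ref{subgraphs} at $t' = t_{max} = \Theta(\sqrt{\log n})$, where the coupling is useless. (In the paper, the $ER$ coupling is invoked only for $t' \le L^{-1}$, and Lemma~\ref{qysmall}'s use of the coupling is likewise confined to $t \le n^{-0.24}$.) So your plan does not produce the factor $\hat{q}(t')^{|\GG\sm J|}$ for the openness requirements in the range that matters, and no amount of union-bounding over length-two paths inside $ER(n,j)$ fixes this, since the object you'd be union-bounding in is not a good model of $G(i)$ there.

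The paper's actual argument is different in kind: rather than trying to estimate the probability a fixed $f$ is a valid embedding "in one shot," it fixes $f$ together with the \emph{selection steps} $i_e$ at which each edge $f(e)$ is to appear, and then multiplies, over every step $i\le i'$, the conditional probability that the edge chosen at step $i$ is compatible with this event. At a selection step this contributes $2/Q(i_e)\approx 2q(t_e)^{-1}$; at a non-selection step it contributes $1-N_i/Q$, where $N_i\approx(|\GG\sm J|+k_i)\cdot 4y$ is the number of ordered open pairs whose selection would close one of the $|\GG\sm J|+k_i$ pairs that must stay open (here $k_i$ counts edges not yet selected, and the $4y$ comes from the $Y$-variable control on $\mc{G}_i$). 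Summing $\exp\{-\sum N_i/Q\}$ telescopes to exactly the desired $\prod_e 2n^{-2}\cdot\hat{q}(t')^{|\GG\sm J|}$. This step-by-step conditional decomposition both handles the dependence between edge-placement and openness honestly and works for all $t'\le t_{max}$; it is the idea your sketch is missing. (Two smaller remarks: the disjoint-extensions/$s!$ machinery and the reduction to strict balance are not needed here, since one is only bounding a probability of existence, not a count --- and indeed the paper sums directly over $n^{|V_\GG|}$ injections and $(i')^{|J|}$ selection-time tuples; and your final $\wt{O}(v_{J,\GG})$ bound has spurious polylog factors, whereas the telescoping gives the clean $(1+o(1))v(t')$ needed to land under $2n^{-c}$.)
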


\begin{proof}
For $t' \le L^{-1}$ we appeal to the coupling with the Erd\H{o}s-R\'enyi random graph process. By Lemma \ref{lem:coupler} it suffices to estimate the probability that $J$ appears in $G_{n,j}$, where $j=(1+o(1))i'$. The expected number of copies of $J$ is at most $2n^{-c}$, so the required bound follows from Markov's inequality. Thus it suffices to consider $t' \ge L^{-1}$. 

To estimate $\mb{P}(V_{J,\GG}(i')>0)$, we take a union bound of events, where we specify the injection $f:V_\GG \to [n]$, and for $e \in J$ we specify the 
{\em selection step} $i_e$ at which the process selects the edge $f(e)$. 
Fix some choice and let $\mc{E}$ be the specified event. 

For each $i \le i'$ we estimate the probability that the selected edge is compatible with $\mc{E}$. At a selection step $i = i_e$ the selected edge is specified, so the probability is $2/Q(i_e) = (1+o(1))2q(t_e)^{-1}$, where $t_e = n^{-3/2} i_e$
(the approximation of $Q$ by $q$ holds on $\mc{G}_{i'}$ and $i'-1<I$).

For other $i$, the required probability is $1 - N_i/Q$, 
where $N_i$ is the number of ordered open pairs 
that cannot be selected at step $i$ on $\mc{E}$.
If $i$ is a selection step we write $N_i=0$. Therefore
\begin{equation} \label{eq:nosparse}
\mb{P}(\mc{E} \wedge \mc{G}_{i'}) 
\le \prod_{e \in J} (1+o(1))2q(t_e)^{-1} \cdot \prod_{i=1}^{i'} (1 - N_i/Q).
\end{equation}

Now we estimate $N_i$ when $i$ is not a selection step. For $i < L^{-1} n^{3/2}$ we use the trivial estimate $N_i \ge 0$, so suppose $i \ge L^{-1} n^{3/2}$. Suppose there are $k_i$ choices of $e \in J$ with $i_e>i$. Then there are $|\GG \sm J| + k_i$ open pairs that must not become closed, namely the open pairs of $f(\GG \sm J)$ and the $k_i$ pairs of $f(J)$ that have yet to be selected as edges. 
We recall from \eqref{eq:dfid} that by property (iii) of $\mc{G}_{i'}$ 
only $O(L^4) = o(y)$ choices of $e_i$ can close more than one such open pair.

As $\mc{G}_{i'}$ holds and $i'-1<I$,
by \eqref{eq:e*} all $Y$-variables are $(1+o(1))y$, 
so we obtain $N_i = (1+o(1))(|\GG \sm J| + k_i)\cdot 4y$. 
Thus for $i \ge L^{-1}n^{3/2}$ we can write 
$1 - N_i/Q \le 1-(1+o(1))(A_i+B_i)$, where 
\[ A_i = |\GG \sm J| \cdot 8tn^{-3/2} = |\GG \sm J|  \cdot 8in^{-3}
\ \text{ and } \ 
B_i = k_i \cdot 8in^{-3}.  \] 
This holds for all $i$ if we set $A_i=B_i=0$ for $i < L^{-1} n^{3/2}$.

We estimate each factor by
$1 - (1+o(1))(A_i+B_i) \le \exp \{ -(1+o(1))A_i \} 
\exp \{ -(1+o(1))B_i \}$ and bound separately
the contributions from all $A_i$ and from all $B_i$.
The contribution from all $A_i$ is
\begin{align*}
\exp \left\{ - \sum_{i=1}^{i'} (1+o(1)) A_i \right\}
& = \exp \left\{ -(1+o(1)) |\GG \sm J| 
\sum_{i=L^{-1} n^{3/2}}^{i'} 8in^{-3} \right\} \\
& = (1+o(1)) \exp \left\{ - |\GG \sm J| \cdot  4 (i')^2 n^{-3} \right\} \\
& = (1+o(1)) e^{ - 4 (t')^2 |\GG \sm J|} = (1+o(1)) \hat{q}(t')^{|\GG \sm J|},
\end{align*}
since $\sum_{i=1}^{L^{-1} n^{3/2}} in^{-3} < L^{-2} = o(1)$.
The contribution from all $B_i$ is 
\begin{align*}
\exp \left\{ -\sum_{i=1}^{i'} (1+o(1)) B_i \right\}
& = \exp \left\{ - (1+o(1)) \sum_{e \in J} \sum_{i=L^{-1} n^{3/2}}^{i_e} 8in^{-3} \right\} \\
& = \prod_{e \in J} (1+o(1)) \hat{q}(t_e).
\end{align*}
Substituting in \eqref{eq:nosparse} we obtain
\[\mb{P}(\mc{E} \wedge \mc{G}_{i'}) 
\le (1+o(1)) \hat{q}(t')^{|\GG \sm J|} 
\prod_{e \in J} 2\hat{q}(t_e)/q(t_e)
= (1+o(1)) \hat{q}(t')^{|\GG \sm J|} (2n^{-2})^{|J|} .\]
Summing over at most $n^{|V_\GG|}$ choices for $f$ and $(i')^{|J|}$ choices for the selection steps, we estimate 
$\mb{P} \left( \left\{V_{J,\GG}(i')>0 \right\} 
\wedge \mc{G}_{i'} \right) < (1+o(1)) v(t') < 2n^{-c}$.
\end{proof}

\vskip3mm

\nib{Proof of Theorem \ref{subgraphs}.}
Statement (i) is immediate from \cite[Theorem 1.6(iii)]{BK}.
For (ii), fix $H' \sub H$ with $d(H')>2$.
By choosing the global parameter $\eps>0$ sufficiently small
we can assume  $|E_{H'}|(1/2-\eps) > |V_{H'}| + \eps$.
Note that if $H \sub G$ then $V_{J,H'}(i_{max})>0$ 
for some spanning subgraph $J$ of $H'$,
i.e.\ there is some potential embedding $\phi$ of $H'$ 
that survives until step $i_{max}$, in that some subgraph
$\phi(J)$ is selected by the triangle-free process,
and the remaining subgraph $\phi(H' \sm J)$ remains open,
so that it is available for the remainder of the process
(which we do not analyse).
We have \[v_{J,H'}(t_{max}) 
= n^{|V_{H'}|} p^{|E_J|} \hat{q}(t_{max})^{|E_{H'}|-|E_J|}
= n^{|V_{H'}| - |E_J|/2 - (1/2-\eps)(|E_{H'}|-|E_J|)} < n^{-\eps}.\]
Thus the result follows from Theorem \ref{good} and Lemma \ref{no-sparse}.
\qed

\vskip3mm

We now turn to a key lemma which includes the union bound arguments
that are most significant for the whole proof:
it implies property (v) of Definition \ref{def:I0}
and will also be used in the proof of Lemma~\ref{extensions},
which implies property (iv) of Definition \ref{def:I0}.

\begin{lemma} \label{pre-extensions}
For any extension $(A,J,\GG)$ with $|V_\GG|=O(1)$,
if $S_B^{V_\GG} < y/L^7 $ for all $ A \sub B \sub V_\GG$ at step $i'$
then whp we do not have $I=\I0=i'$ due to some $\phi$ with
$X_{\phi,J,\GG}(i') \ge L^{4|V_\GG \sm A|} 
\max_{A \sub B \sub V_\GG} S^{V_\GG}_B$.
\end{lemma}

\begin{proof}
As in the proof of Lemma \ref{no-sparse},
it suffices to consider $t' \ge L^{-1/2}$, 
as for smaller $t'$ we can simply appeal to the coupling with the  
Erd\H{o}s-R\'enyi random graph process (Lemma \ref{lem:coupler}) 
and apply the bound from Lemma \ref{gnp-general}. 
Furthermore, the general case of the lemma follows 
from the case that $(A,J,\GG)$ is strictly balanced,
by applying it to each step of the extension series
(in the same way that Lemma \ref{gnp-general} 
followed from Lemma \ref{Gnpbound}).
We will therefore only consider the case that
$ (A, J, \GG)$ is strictly balanced. 

We argue by induction on $|V_\GG \sm A|$.
Similarly to the proof of Lemma~\ref{Gnpbound},
we first estimate the probability that there is 
a set of $s$ extensions $\{f_1,\dots,f_s\}$ 
in $V(i') := X_{\phi,J,\GG}(i')$ that are disjoint outside of $\phi(A)$,
where $s = \max\{ L^4, 6 \max_{A \sub B \sub V_\GG} S_B^{V_\GG} \}$.

Our method for estimating this probability 
is similar to the argument of Lemma~\ref{no-sparse},
but now we consider $s$ embeddings simultaneously. 
We take a union bound of events in which we specify $f_1,\dots,f_s$, 
and for each $1 \le j \le s$ and $e \in J \sm J[A]$ 
we specify the {\em selection step} $i_{j,e}$ 
at which the process selects the edge $f_j(e)$. 
Fix some choice and let $\mc{E}$ be the specified event. 

For each $i \le i'$ we estimate the probability 
that the selected edge $e_i$ is compatible with $\mc{E}$. 
At a selection step $i = i_{j,e}$ the selected edge is specified, 
so the probability is $2/Q(i_{j,e}) = (1+o(1))2q(t_{j,e})^{-1}$, 
where $t_{j,e} = n^{-3/2} i_{j,e}$. For other $i$, 
the required probability is $1 - N_i/Q$, 
where $N_i$ is the number of ordered open pairs 
that cannot be selected at step $i$ on $\mc{E}$.
If $i$ is a selection step we write $N_i=0$. 
Then we estimate
\[\mb{P}(\mc{E} \wedge \mc{G}_{i'}) \le \prod_{j=1}^s \prod_{e \in J \sm J[A]} (1+o(1))2q(t_{j,e})^{-1} \cdot \prod_{i=1}^{i'} (1 - N_i/Q).\]

Now we estimate $N_i$ when $i$ is not a selection step, 
assuming that we are in the event $ \mc{G}_i$ and $i<I$. 
For $i < L^{-1/2} n^{3/2}$ we use the trivial estimate $N_i \ge 0$, 
so suppose $i \ge L^{-1/2} n^{3/2}$. 
Suppose there are $k_i$ choices of $(j,e)$ with $i_{j,e}>i$. 
Then there are $o(V)s + k_i$ open pairs that must not become closed, 
namely the $o(V)$ open pairs specified by each $f_1,\dots,f_s$ 
and the $k_i$ pairs that have yet to be selected as edges
(these pairs are distinct by disjointness 
of $f_1,\dots,f_s$ outside $\phi(A)$.)
By \eqref{eq:dfid} the number of choices of the selected edge $e_i$ 
that close more than one such open pair is $O(s^2 L^4) = o(syL^{-2})$, 
as by assumption on $\max_B S_B^{V_\GG}$ and choice of $s$
we have $s < y(t') L^{-7} < y(t) L^{-6.5}$. 

As $i<I$, by \eqref{eq:e*} all $Y$-variables are $(1+o(1))y$, 
so $N_i = (1+o(1))(o(V)s + k_i)\cdot 4y$. 
Similarly to the proof of  Lemma~\ref{no-sparse}, we write
\[ 1 - N_i/Q \le 1-(1+o(1))(A_i+B_i)
\le \exp \{ -(1+o(1))A_i \} \exp \{ -(1+o(1))B_i \}, \]
where $A_i=B_i=0$ for $i < L^{-1/2} n^{3/2}$
and otherwise $A_i = o(V)s \cdot 8tn^{-3/2}$ 
and $B_i = k_i \cdot 8tn^{-3/2}$. As before,
we estimate separately all $A_i$ terms 
and all $B_i$ terms to obtain
\[  \exp \left\{ - \sum (1+o(1)) A_i \right\} 
\le \left[ (1+o(1)) \hat{q}(t')^{o(V)}\right]^s, \text{ and } \]
\[  \exp \left\{ - \sum (1+o(1)) B_i \right\} 
\le \prod_{j=1}^s \prod_{e \in J \sm J[A]} 
(1+o(1)) \hat{q}(t_{j,e}), \text{ so } \]
 \[\mb{P}( \mc{E} \wedge \mc{G}_{i'}) 
 \le \hat{q}(t')^{o(V)s} \prod_{j=1}^s 
 \brac{(1+o(1)) \prod_{e \in J \sm J[A]} 2n^{-2}}.\]
Summing over at most $s!^{-1} n^{n(V)s}$ choices for $f_1,\dots,f_s$ 
and $(i')^{e(V) s}$ choices for the selection steps,  
the probability that such $f_1,\dots,f_s$ exist 
is at most $s!^{-1} [(1+o(1))v(t')]^s < (3s^{-1}v(t'))^s$, 
which is subpolynomial.

The required bound on $X_{\phi,J,\GG}(i')$ follows from this estimate
by induction as in the proof of Lemma \ref{Gnpbound}. 
(The base case $|V_J \sm A|=1$ holds by the bound on disjoint extensions,
and for $|V_J \sm A| > 1$ the bound follows by considering a maximal 
collection $C$ of extensions disjoint outside of $A$
-- we have just shown whp $|C| \le s$ --
noting by strict balance and the induction hypothesis 
that at most $L^{4|V_J \sm A|-1}$ embeddings intersect
some embedding in $C$ outside of $\phi(A)$.)
This completes the proof when $(A,J,\GG)$ is strictly balanced,
and as noted above, the general case follows by applying this 
to each step of the extension series.
\end{proof}

\begin{lemma} \label{extensions}
For any extension $(A,J,\GG)$ with $|V_\GG| \le M^3$,
whp we do not have $I=\I0=i$ due to some $\phi$ with
$X_{\phi,J,\GG} \ge L^{4|V_\GG \sm A|} 
\max_{A \sub B \sub V_\GG} S^{V_\GG}_B$.
\end{lemma}

\begin{proof}
By bounding each step of the extension series
we can assume that $(A,J,\GG)$ is strictly balanced.
If $S_A^{V_\GG}(t) < n^{\dD'}$ then the required bound
follows from Lemma~\ref{pre-extensions}. On the other hand,
if $S_A^{V_\GG}(t) \ge n^{\dD'}$ then $X_{\phi,J,\GG}$
is controllable at time $t$, so the required bound 
follows from $i < \I2$.
\end{proof}

\subsection{Vertex degrees} \label{sec:vxdeg}

Recall that we cannot apply our general strategy to vertex degree variables,
as $g_{Y_1}(t) y_1(t)$ is not approximately non-increasing.  
We conclude this section with a separate (much simpler) 
argument for these variables, which establishes 
property (vii) of $\mc{G}_i$ in Definition \ref{def:I0}.

\begin{lemma} \label{lem:vdeg}
whp we do not have $\I0=i'$ due to some $uv$ 
with $ |Y_u(i') - y_1(t') | \ge \dD_{Y_1}(t') y_1(t')$.
\end{lemma}

\begin{proof}
For each $1 \le i \le i'$, the probability that 
we choose an edge incident to $u$ is \[ \frac{2 X_u(i)}{Q(i)} 
= \frac{(1 \pm \dD_{X_1})2 x_1}{(1 \pm \dD_Q)q} 
= \Big( 1 \pm (1+o(1))\dD_{X_1} \Big) \frac{2}{n}.\]
By coupling, we can bound $Y_u(i')$ by
sums $\Ss^\pm$ of independent Bernoulli random variables 
with probabilities $(1 \pm 2\dD_{X_1})2/n$.
Now we recall from Definition \ref{def:c} that 
$c_{Y_1} = 2.2c_{X_1}$, and note that $f_{Y_1} = 2.2 f_{X_1}$ 
and $g_{Y_1}/g_{X_1} = 2.2(1+t^{-1})/2 > 1.1$,
so $\dD_{ Y_1} > 1.1\dD_{X_1}$.
Thus on the event $ |Y_u(i') - y_1 | \ge \dD_{Y_1} y_1 $ 
one of $\Ss^\pm$ deviates from its mean $(1 + o(1)) 2tn^{1/2}$
by more than $\dD_{Y_1} y_1/100 > L^{13} n^{1/4}$. 
By Chernoff bounds, whp this does not occur for any vertex $u$.
\end{proof}

\section{Global Ensemble} \label{sec:global}

In this section we prove that the global variables 
have the desired concentration, assuming that this is the case  
for all ensembles at earlier times. 
Recall that the global variables are 
the number $Q(i)$ of ordered open pairs in $ G(i)$, 
the number of ordered triples $R(i)$
where all the pairs within the triple are open, 
and the number $ S(i) $ of ordered triples $abc$ 
such that $ab$ is an edge while $bc$ and $ac$ are open pairs. 
The global variables have scalings  $q = \hat{q} n^2$, 
$r= \hat{q}^3 n^3 $ and $ s = 2t  \hat{q}^2n^{5/2} $.
Recall that we track each variable $V$ relative to 
a {\em tracking random variable} $\mc{T} V$ to isolate variations in $V$ 
from variations in other variables that might have an impact on $V$.
We use the tracking variables
\[\mc{T}Q = q, \quad \mc{T}R = Q^3 n^{-3}, \quad \text{ and } \quad
\mc{T}S = 2t n^{-3/2} Q^2. \]
(Note that the tracking variable for $Q$ is a deterministic function.)

We show that the {\em difference random variables}
\[ \mc{D}V = V - \mc{T}V \]
for $V \in \{Q,R,S\}$ are all small throughout the process.
Recall that $\I1$ is the minimum of the stopping times $I_V$ 
over all variables $V$ in the global ensemble,
i.e.\ the first time at which some global variable $V$
(is good and) fails to satisfy $|\mc{D}V| \le \dD_V v$.
(Global variables are automatically good,
so we can ignore that part of the definition.)
The following theorem bounds the probability that 
we reach the universal stopping time $I$ before step $ i_{max} $ 
because a global variable $V$ fails to satisfy 
the required bounds $|\mc{D}V| \le \dD_V v$.  
\begin{theo}\label{mainensemble}
With high probability we do not have $I=\I1 \le i_{max}$.
\end{theo}

We prove Theorem~\ref{mainensemble} using the strategy
described in Section~\ref{sec:strategy}.  
We divide the argument into three parts,
in which we respectively bound the one-step expected changes in the difference variables,  determine variation equations that suffice to
establish the trend hypothesis, and verify the boundedness hypothesis.

\subsection{One-step changes in the difference variables} 

In this subsection, for each variable $V$ in the Global Ensemble, we give
an upper bound on the one-step expected change in the difference variable,
conditional on the history of the process, i.e.\
\[ \mb{E} [ \DD_i \mc{D}V \mid \mc{F}_i ] 
= \mb{E}[ \mc{D}V(i+1) - \mc{D}V(i) \mid \mc{F}_i], \]
under the assumption that $V$ is in its upper critical window, i.e.\
\[(f_V+g_V) v < \mc{D}V < (f_V+2g_V) v.\]
Recall that we can assume $n^{5/4} \le i < I$, 
so we can apply the estimates from $\mc{G}_i$
in Definition \ref{def:I0} and the bounds 
$V =  \mc{T}V \pm \dD_V v$ for any
variable $V$ if $i_V \le i \le J_V$ (i.e.\ if $V$ is good and activated).  
To illustrate later calculations, which are often more complicated 
than those in this section, as we proceed we will indicate 
how certain specific calculation
are instances of a more general framework.

We will consider the effect of each open pair 
and edge in the structure counted by $V$ separately;
the final expression is then obtained by linearity of expectation.
When an open pair in a copy of the structure counted by $V$ 
is chosen or closed, we say that the copy is {\em destroyed}.
We balance the change in $V$ due to destructions 
with the change in $\mc{T}V$ due to the change in $Q$.
(The case $V=Q$ is handled differently as $Q$ is tracked 
relative to the deterministic function $q$.)
Adding the edge $e_{i+1}$ can also create new copies of the structure counted 
by $V$ in which $e_{i+1}$ plays the role of one of the edges in the structure; 
then we say that a copy of $V$ is {\em created}
(for global variables this only applies to $V=S$).
The change in $V$ that comes from creations is balanced 
with the change in $t$ in $ \mc{T}V$.

We begin with destructions. The main point to note in these calculations is 
that the assumption that $V$ is in its critical window gives 
a self-correction term of $- 8tf_V vn^{-3/2}$ for each open pair, 
which will cancels with a corresponding $8tf_V vn^{-3/2}$ 
term from the change in $\dD_V v$;
this arises from the critical window excess of $f_V v$ 
in $V$ relative to $\mc{T} V$, recalling from \eqref{eq:close}
that in each such `excess copy' of $V$ the corresponding open pair
becomes closed with probability about $8tn^{-3/2}$.

\subsubsection{$Q$:\ simple destructions} 

We will show the following estimate for
the expected one-step change in $Q$.

\begin{lemma} \label{lem:Qdestruct}
If $n^{5/4} \le i < I$ and $Q \ge (1+f_Q+g_Q)q$ then
\[\mb{E} [ \DD_i(\mc{D}Q) \mid \mc{F}_i ] 
\le -(f_Q+g_Q - (1+o(1))\dD_S) 8tqn^{-3/2}.\]
\end{lemma}

For the variable $Q$ there is another variable $S$ in our ensemble
that counts situations when some open pair counted by $Q$ is closed. 
We call destructions of this form {\em simple destructions}.  
(We will see examples of this type again in Section~\ref{sec:stack} 
where we treat the stacking variables.) 

\begin{proof}
Each triple in $S$ contains $4$ ordered open pairs, 
each of which would decrease $Q$ by $2$ ordered pairs 
if selected as the edge at step $i$, and by symmetry in $S$
we count each of these possibilities twice.
The selected edge itself also removes $2$ ordered open pairs, so
\begin{equation}
\label{eq:Qstar}
 \mb{E} [\DD_i Q \mid \mc{F}_i] = - 2 - 4S/Q. 
\end{equation}
Recalling from Lemma \ref{lem:seconds} that 
$\DD_i(q) = -8t qn^{-3/2} + O(qn^{-5/2})$, we calculate
\begin{align*}
\mb{E} [ \DD_i(\mc{D}Q) \mid \mc{F}_i ] 
& = \mb{E} [ \DD_i(Q) - \DD_i(q) \mid \mc{F}_i ] \\
& = -( 2  + 4S/Q) + 8t q n^{-3/2} \pm O(qn^{-5/2}) \\
& = - 8t Q n^{-3/2} \pm (8+O(\dD_Q)) \dD_S t n^{1/2} \hat{q} 
+ 8t q n^{-3/2} \pm O(1) \\
& \le -(f_Q+g_Q - (1+o(1))\dD_S) 8tqn^{-3/2}.
\end{align*}
In the third estimate we used
$S = (1 \pm \dD_S) \mc{T}S = (1 \pm \dD_S) 2tn^{-3/2}Q^2$
and $Q = (1 \pm \dD_Q)q$, which are valid as 
$n^{5/4} = i_S = i_Q \le i < I$.
In the last line we used $\mc{D}Q = Q-q \ge (f_Q+g_Q)q$ 
when $Q$ is in its upper critical window, 
and $\dD_S \ge g_S \ge c_S L^{-1} e^2 t^{-1}$,
where $c_S = 2L^{40}$ (see Definition \ref{def:c}), so
$t\dD_S qn^{-3/2} \ge L^{-1} c_S e^2 qn^{-3/2} = L^{-1} c_S \gg 1$.
\end{proof} 

\subsubsection{$R$:\ product destructions}

We will show the following estimate for
the expected one-step change in $R$.

\begin{lemma} \label{lem:Rdestruct} 
If $n^{5/4} \le i < I$ and $R \ge (1+f_R+g_R)\mc{T}R$ then
\[\mb{E} [ \DD_i(\mc{D}R) \mid \mc{F}_i ] 
\le \left[ - (3+o(1)) ( f_R + g_R + O(\dD_Y \dD_X) 
+ O(t^{-1} e^2) + O(L^{16} t^2 n^{-1/2}) \right] 8trn^{-3/2}.\]
\end{lemma}

The destructions for $R$ are not simple destructions, 
as no variable in our ensembles counts ways 
in which triples counted by $R$ are destroyed.
Instead, we will apply the Product Lemma (Lemma \ref{lem:pat}).
For clarity we will write out the calculation separately for $R$ and $S$
(in later sections we will be more efficient by introducing extra notation
that unifies all cases).

\begin{proof}
To estimate the expected change, we first 
recall from \eqref{eq:close} that any pair $\aA\bB \in Q(i)$ 
becomes closed with probability $2(1+Y_{\aA\bB}+Y_{\bB\aA})/Q$.
Noting that closing $\aA\bB$ reduces $R$ by $3X_{\aA\bB}$, we write
\[ \mb{E}[\DD_i(R) \mid \mc{F}_i] = - \sum_{\aA \bB \in Q } 
 2Q^{-1}(1 + Y_{\aA \bB} + Y_{\bB \aA}) \cdot 3X_{\aA \bB}
 + \mb{E}[F_i(R) \mid \mc{F}_i],\]
where $F_i(R)$ is a `destruction fidelity' correction term
to remove overcounting of triples in $R$ for which the selected edge
closes two open pairs in the triple. 
Thus $\mb{E}[F_i(R) \mid \mc{F}_i] = F^*/Q$, where $F^*$ is the number of 
ordered quadruples where two adjacent pairs are edges and the other 
four pairs are open. As $i<\I0$, by property (iv) of $\mc{G}_i$ in
Definition \ref{def:I0} we have 
$F^* < L^{16} n^4 p^2 \hat{q}^4 = 4L^{16} t^2 \hat{q} r$, so
\begin{equation} \label{eq:Rfidel}
\mb{E}[F_i(R) \mid \mc{F}_i] = O(L^{16} t^2 r/n^2).
\end{equation}
Next, noting that
\[ \sum_{\aA \bB \in Q} Y_{\aA \bB} = S 
\ \ \   \text{ and } \ \ \ 
\sum_{ \aA \bB \in Q} X_{ \aA \bB} = R, \]
we estimate the main term using the Product Lemma as
\begin{equation} \label{eq:Rprod}
- \sum_{\aA \bB \in Q } 
 6Q^{-1}(1 + Y_{\aA \bB} + Y_{\bB \aA}) X_{\aA \bB}
 = - 12SRQ^{-2} \pm O(\dD_Y y \dD_X x) \pm O(x),
\end{equation}
where as $n^{5/4} \le i < I$ we have the estimates $X = (1 + O(\dD_X))x$ 
and all $Y$-variables are 
$(1 \pm \dD_Y^*)y = (1+O(\dD_Y))y$ from \eqref{eq:e*}.
The important point to observe regarding the product error term 
is that $\dD_X$ and $t \dD_Y$ are $\wt{O}(e)$,
whereas $\dD_R$ is $\wt{O}(e^2)$, so the error term
is negligible for appropriate choices
of the polylogarithmic constants $c_X$, $c_Y$ and $c_R$
(see Definition \ref{def:c}).

Next we consider the expected change in the tracking variable
$\mc{T}R = Q^3 n^{-3}$. We have 
\begin{equation} \label{eq:Rtrack0}
\DD_i(\mc{T}R) = Q(i+1)^3 n^{-3} - Q(i)^3 n^{-3}
= 3\DD_i(Q) Q^2 n^{-3} + H_i(R),
\end{equation}  
where $H_i(R)$ is a `higher order' term correcting
for the linear approximation of the difference in $Q^3$,
and as $\DD_i(Q) = O(y)$ we have 
$H_i(R) = (3\DD_i(Q)^2 Q + \DD_i(Q)^3)n^{-3} 
= O(t^2 r n^{-3})$.
By \eqref{eq:Qstar} we have
\begin{equation} \label{eq:Rtrack}
\mb{E}[\DD_i(\mc{T}R) \mid \mc{F}_i] 
= - 12SQ^{-2} \mc{T}R + O(t^2 r n^{-3}). 
\end{equation}

Combining \eqref{eq:Rfidel}, \eqref{eq:Rprod} 
and \eqref{eq:Rtrack} gives
\begin{align*}
& \mb{E} [ \DD_i(\mc{D}R) \mid \mc{F}_i ] 
  = \mb{E} \left[ \DD_i(R) - \DD_i(\mc{T}R) \mid \mc{F}_i \right] \\
&  =  - \sum_{\aA \bB \in Q} 
 6Q^{-1}(1 + Y_{\aA \bB} + Y_{\bB \aA}) X_{\aA \bB}
 - 3Q^2 n^{-3} \mb{E}[\DD_i(Q) \mid \mc{F}_i] 
 + \mb{E} [F_i(R) - H_i(R) \mid \mc{F}_i ] \\
& =   - 12SQ^{-2} R \pm O(\dD_Y y \dD_X x) + O(x)
 + 12SQ^{-2} \mc{T}R + O(L^{16} t^2 r/n^2)  \\
& = - ( 1 \pm (3+o(1))\dD_S) 8tn^{-3/2} \mc{D}R 
\pm O( \dD_Y \dD_X ) trn^{-3/2} \pm O(r/q) + O(L^{16} t^2 r/n^2) \\
& \le \left[ - (3+o(1)) (f_R+g_R) + O(\dD_Y \dD_X) 
+ O(t^{-1} e^2) + O(L^{16} t^2 n^{-1/2}) \right] 8trn^{-3/2}. 
\end{align*}
An important point to note in the above calculation is that
the same factor $12SQ^{-2}$ appears with $R$ and $\mc{T}R$,
and that we approximate $S$ by $(1 \pm \dD_S)\mc{T}S$ only after using 
the critical window bound $\mc{D}R = R-\mc{T}R \ge (f_R+g_R)r$;
thus the fact that our approximation of $S$ is weaker than that of $R$
does not cause any difficulty in this calculation for $R$.
\end{proof}

\subsubsection{$S$:\ product destructions and creations}

For $S$ we have both creations and destructions,
so we will now elaborate on how we group the calculations 
for each edge of a structure (we could gloss over this for $R$,
as it has $3$ indistinguishable edges, but it will be important
for most other variables, including $S$).
Recall that $S$ is the number of ordered triples $abc$
where $ab$ is an edge and $ac,bc$ are open pairs. We write 
\[\DD_i(S) = \DD_i(S^{12}) + \DD_i(S^{13}) + \DD_i(S^{23}),\]
where we think of $123$ as labelling each such $abc$,
and each $\DD_i(S^e)$ is the change in $S$ due to $e$,
i.e.\ $\DD_i(S^{12})$ is the number of triples $abc$ in $S$
created due to $ab$ being the edge selected at step $i$,
$-\DD_i(S^{13})$ is the number of triples $abc$ in $S$
destroyed due to $ac$ being selected or closed at step $i$,
and similarly for $-\DD_i(S^{23})$. 

Usually, we would also include
a `fidelity' term $F_i(S)$ in this decomposition of changes by edges,
reflecting the fact that the selected edge might affect more 
than one pair in a triple counted by $S$, but in fact this is not 
possible, so we can set $F_i(S)=0$. Indeed, if selecting the 
edge $ab$ creates a triple $abc$ in $S$ then by definition of $S$
it does not close $ac$ or $bc$, and a triple $abc$ cannot be destroyed
by some edge $e_i$ that simultaneously closes $ac$ and $bc$, 
as this would require $e_i=cd$ such that $ad$ and $bd$ are edges,
but then $abd$ would be a triangle, which is impossible.

We also decompose the change in the tracking variable $\mc{T}S$ 
into terms that we assign to the different parts of the calculation
corresponding to each of the edges in $S$. 
Recalling that $\mc{T}S = 2tn^{-3/2} Q^2$, 
we have $\DD_i(\mc{T}S) 
= 2(t+n^{-3/2})n^{-3/2}(Q + \DD_i(Q))^2
- 2tn^{-3/2} Q^2$, which we write as 
\[ \DD_i(\mc{T}S) = \DD_i(\mc{T}S^{12}) 
+ \DD_i(\mc{T}S^{13}) + \DD_i(\mc{T}S^{23}) + H_i(S),\]
where $\DD_i(\mc{T}S^{12}) 
= 2n^{-3} Q^2 = \mc{T}S / tn^{3/2}$ and
$\DD_i(\mc{T}S^{13}) = \DD_i(\mc{T}S^{23}) 
= 2tn^{-3/2} \DD_i(Q) Q = \tfrac{\DD_i Q}{Q} \mc{T}S$,
with the higher-order correction term
$H_i(S) = 2n^{-3}(2\DD_i(Q)Q + \DD_i(Q)^2)
+ 2tn^{-3/2} \DD_i(Q)^2 = O(yqn^{-3}) + O(tn^{-3/2}y^2)
= O(s n^{-3} ) + O(t^2 s n^{-3})$.

Now we show the calculations for the change
$\DD_i(\mc{D}S^{13}) := \DD_i(S^{13}) - \DD_i(\mc{T}S^{13})$
(the one with $23$ instead of $13$ is the same);
these are product destructions very similar to those for $R$.

\begin{lemma} \label{lem:Sdestruct} 
If $n^{5/4} \le i < I$ and $S \ge \mc{T}S + (f_S+g_S)s$ then
\[\mb{E} [ \DD_i(\mc{D}S^{13}) \mid \mc{F}_i ] 
\le \left[ - (1+o(1)) ( f_S+g_S) + O(\dD_Y^2) 
+ O(t^{-1} e^2) \right] 8tsn^{-3/2}.\]
\end{lemma}

\begin{proof}
Similarly to the proof of Lemma \ref{lem:Rdestruct}, we calculate
\begin{align*}
& \mb{E} [ \DD_i(\mc{D}S^{13}) \mid \mc{F}_i ]  = 
\mb{E} \left[ \DD_i(S^{13}) 
- \frac{\DD_i(Q)}{Q} \mc{T}S \mid \mc{F}_i \right] \\
& = - \sum_{\aA \bB \in Q } 2Q^{-1}(1 + Y_{\aA \bB} + Y_{\bB \aA}) Y_{\aA \bB} 
+ (2 + 4SQ^{-1}) Q^{-1} \mc{T}S \\
& = - 4SQ^{-2} S \pm  O(\dD_Y y)^2 + O(y) + 4SQ^{-2} \mc{T}S \pm O(s/q) \\
& = ( 1 \pm (1+o(1))\dD_S) 8tn^{-3/2} \mc{D}S 
\pm O( \dD_Y^2) tsn^{-3/2} \pm O(s/q) \\
& \le \left[ - (1+o(1)) ( f_S+g_S) + O( \dD_Y^2) + O(t^{-1} e^2) \right] 8tsn^{-3/2}.
\end{align*}
\end{proof}

Finally, we turn to creations, which among the global variables 
occur only for $S$.

\begin{lemma} \label{lem:Screate} 
If $n^{5/4} \le i < I$ then $\mb{E} [ \DD_i(\mc{D}S^{12}) \mid \mc{F}_i ] 
\le (1+o(1)) \tfrac{\dD_R}{8t^2} 8tsn^{-3/2}$.
\end{lemma}

\begin{proof}
We have $\mb{E} [ \DD_i(S^{12}) \mid \mc{F}_i ] = 2R/Q$, 
as for each triple $abc$ in $R$, with probability $2/Q$
the edge $e_{i+1}$ selected at step $i+1$ falls in position
$ab$ and turns $abc$ into a triple in $S$. Thus
\begin{align*}
& \mb{E} [ \DD_i(\mc{D}S^{12}) \mid \mc{F}_i ] 
 = \mb{E} [ \DD_i(S^{12}) - t^{-1}n^{-3/2} \mc{T}S \mid \mc{F}_i ] \\
& = 2Q^{-1} ( \mc{T}R \pm  \dD_R r ) -  t^{-1}n^{-3/2} \mc{T}S \\
& = \pm 2 \dD_R r Q^{-1} 
 = \pm (1+o(1)) t^{-1} \dD_R sn^{-3/2}.
\end{align*}
Note that there is no self-correction in creation,
but this term will be negligible as our approximation
of $R$ is better than that of $S$.
\end{proof}

\subsection{Trend Hypothesis and Variation Equations}

For each variable $V$ in the Global Ensemble 
we consider the sequence of random variables
\[ \mc{Z}V(i) = \mc{D}V - v \dD_V. \]
The following lemma establishes the trend hypothesis,
i.e.\ that this sequence is a supermartingale
when $V$ is in its upper critical window.
During the proof we will derive the Variation Equations,
which give conditions on the constants $c_V$ 
under which the trend hypothesis holds;
we will see that these conditions are satisfied
by the choices in Definition \ref{def:c}.

\begin{lemma} \label{lem:globaltrend}
For each  $V \in \{Q,R,S\}$, 
if $n^{5/4} \le i < I$ and $\mc{D}V > (f_V+g_V)v$ 
then $\mb{E} [ \DD_i \mc{Z}V \mid \mc{F}_i]  \le 0$.
\end{lemma}

\begin{proof}
We begin by gathering together the 
relevant creation and destruction calculations 
from the previous subsections; these are obtained by combining 
Lemmas \ref{lem:Qdestruct}, \ref{lem:Rdestruct}, \ref{lem:Sdestruct}
and \ref{lem:Screate}.
\begin{eqnarray*}
\mb{E} [ \DD_i \mc{D} Q \mid \mc{F}_i ] 
&\le& - \brac{ f_Q+g_Q - (1+o(1))\dD_S )} 8t qn^{-3/2}, \\
\mb{E} [ \DD_i \mc{D} R \mid \mc{F}_i ] &\le& - (1+o(1)) \bracsq{ 3(f_R+g_R) 
- O(\dD_Y \dD_X) - O(t^{-1} e^2)} 8t rn^{-3/2}, \\
\mb{E} [ \DD_i \mc{D} S \mid \mc{F}_i ] &\le& - (1+o(1)) \bracsq{ 2(f_S+g_S) - \tfrac{\dD_R}{8t^2} - O(\dD_Y^2) - O(t^{-1} e^2)} 8t sn^{-3/2}.
\end{eqnarray*}
For $R$ we have omitted the fidelity term in \eqref{eq:Rfidel};
this is valid as $F_i(R) = O(L^{16} t^2 r n^{-2}) = o(g_R) trn^{-3/2}$,
where we recall from Definition \ref{def:c} that
\begin{equation} \label{eq:cR}
c_R = L^{40} \gg L^{20} \text{ (say)}.
\end{equation}
Next we consider the change in $v\dD_V$.
From Lemma \ref{lem:seconds} we have
\[ \DD_i(v\dD_V) 
= \left( \tfrac{e(V)}{8t^2} - o(V)  \right) \dD_V v \cdot 8tn^{-3/2} 
+ \dD'_V vn^{-3/2} + O(\dD_V v) n^{-5/2}. \]
Recalling that $\dD_V = f_V + 2g_V$, 
we see that we can cancel the $8t o(V) f_V vn^{-3/2}$ term 
that occurs both in $\DD_i(\dD_V v)$ 
and in $\mb{E} [ \DD_i \mc{D} V \mid \mc{F}_i ] $; 
this is the self-correction that is fundamental to the analysis.

Thus we obtain
\begin{eqnarray*}
\mb{E} [ \DD_i \mc{Z} Q \mid \mc{F}_i ] &\le& - \brac{ \frac{ \dD'_Q}{8t} + o(f_Q) - (1+o(1))( g_Q + \dD_S )}  8t qn^{-3/2}, \\
\mb{E} [ \DD_i \mc{Z} R \mid \mc{F}_i ] &\le& - \brac{ \frac{ \dD'_R}{ 8t} + o(f_R) - (1+o(1))  3g_R + O(\dD_Y \dD_X)  - O(t^{-1} e^2) } 8t rn^{-3/2}, \\
\mb{E} [ \DD_i \mc{Z} S \mid \mc{F}_i ] &\le&
  - \brac{ \frac{\dD'_S}{8t} + \frac{\dD_S}{8t^2} + o(f_S) - (1+o(1)) ( 2g_S + \tfrac{\dD_R}{8t^2}) + O(\dD_Y^2)  - O(t^{-1} e^2) } 8t sn^{-3/2}.
\end{eqnarray*}
Recall that our error functions have the form $\dD_V = f_V + 2g_V$, where
\[  f_V = c_V e^2 \ \  \text{ and } \ \ 
g_V = c_V \tT L^{-1} (1+t^{-e(V)}) e^2
\ \ \text{ if } \ V \in \{Q,R,S\}. \]
We now show that these error functions grow quickly enough 
for each of these sequences to be supermartingales 
(i.e.\ the $ \dD_V'$ term will be dominant in each case).  
We stress that the $ t \ll 1 $ regime behaves a bit differently 
from the rest of the process in the estimates that follow.
For each global variable in turn we apply the bound on $\dD'_V$
from Lemma \ref{lem:seconds}, i.e.\
\[ \dD'_V \ge 8t \dD_V + (\tT'/\tT - e(V)t^{-1}) 2g_V. \]
For $Q$ we have
\begin{eqnarray*}
\mb{E} [ \DD_i \mc{Z} Q \mid \mc{F}_i ]
&\le&  - (1+o(1)) \bracsq{  ( f_Q + (\tfrac{2 \vartheta'}{8t\vartheta}  + 2)g_Q ) - ( g_Q + \dD_S ) } 8t qn^{-3/2} \\
&\le& - (1+o(1)) \bracsq{ (f_Q-f_S) + (\tfrac{\vartheta'}{4t\vartheta}  g_Q + g_Q - 2g_S ) } 8t qn^{-3/2}.
\end{eqnarray*}
Then the sequence $\mc{Z}Q$ forms a supermartingale provided 
\begin{equation}
\label{eq:varQ}
c_Q \ge 2c_S.
\end{equation}
Indeed, then the dominant terms are $-f_Q$ for $t \ge 1$
and/or $- \tfrac{\vartheta'}{4t\vartheta}  g_Q $ for $t \le 1$
(for $t \le 1$ we recall that $\tT'/\tT = (3/\eps)^6$ and note that 
the $t^{-1}$ in $\tfrac{\vartheta'}{4t\vartheta} g_Q$
matches the $t^{-1}$ in $g_S$).

Next consider $R$, where we have
\begin{eqnarray*}
\mb{E} [ \DD_i \mc{Z} R \mid \mc{F}_i ]
&\le& - (1+o(1)) \bracsq{ f_R + (\tfrac{ 2\vartheta'}{8t \vartheta} + 2) \cdot g_R  -  3g_R - O(\dD_Y \dD_X) - O(t^{-1} e^2) } 8t rn^{-3/2} \\
&\le& - (1+o(1))  \bracsq{ f_R + (\tfrac{ \vartheta' }{4t \vartheta} - 1)g_R - O(f_Y f_X) - O(g_Y f_X) } 8t rn^{-3/2},
\end{eqnarray*}
as $t^{-1} e^2 \ll t^{-1} g_R$.
Then $\mc{Z}R$ forms a supermartingale provided
\begin{equation}
\label{eq:varR}
c_R \ge L c_Yc_X,
\end{equation}
for this implies that the $ g_R \vartheta'/(4t \vartheta) $ term dominates 
for $ f_R < g_R/t  $ and that the $f_R$ term dominates otherwise.
As noted earlier, we chose powers of $e$ in the error functions
so that $\dD_R$ and the product error $t\dD_Y \dD_X$ 
are comparable up to log factors
(i.e.\ $e$ in $\dD_X$ and $\dD_Y$ and $e^2$ in $\dD_R$);
then the choice of polylogarithmic constants $c_V$ 
in Definition \ref{def:c} was such that \eqref{eq:varR} holds.

The final global variable is $S$, where we have
\begin{eqnarray*}
\mb{E} [ \DD_i \mc{Z} S \mid \mc{F}_i ]
&\le&  -  (1+o(1)) \bracsq{ (1 + \tfrac{1}{8t^2})f_S + (\tfrac{2 \vartheta'}{8t \vartheta} + 2) \cdot g_S}  8t sn^{-3/2} \\
& & \hskip1cm
   + (1+o(1)) \bracsq{ 2g_S + \tfrac{\dD_R}{8t^2} + O(\dD_Y \dD_Y)  + O(t^{-1} e^2) } 8t sn^{-3/2} \\
&\le& - (1+o(1))  \bracsq{ f_S + \tfrac{ f_S - f_R}{8t^2} + \tfrac{ \vartheta' t g_S/ \vartheta - g_R}{4t^2}
- O(f_Y^2) - O(g_Y^2) + o(g_S) } 8t sn^{-3/2}.
\end{eqnarray*}
Then $\mc{Z}S$ forms a supermartingale provided
\begin{equation}
\label{eq:varS}
c_S \ge 2c_R \ \ \ \ \text{ and } \ \ \ \ c_S \ge L c_Y^2,
\end{equation}
for this implies that the $ f_S/t^2 $ term dominates for $ t \ll 1 $ 
and the $ f_S$ term dominates otherwise.
\end{proof}

\subsection{Boundedness hypothesis}

For the boundedness hypothesis, for each $V$ in the Global Ensemble we estimate 
$\Var_V =  \Var(\mc{Z}V(i) \mid \mc{F}_{i-1})$ and $N_V = |\DD_i\mc{Z}V|$.  
Recall that it suffices to establish (\ref{eq:boundVar}) and (\ref{eq:boundN}); 
that is, it suffices to show the following lemma.

\begin{lemma} \label{lem:globalbound}
For each  $V \in \{Q,R,S\}$, if $n^{5/4} \le i < I$ then
$\Var_V = o\left( \frac{ (g_V v)^2}{L^3n^{3/2}} \right)$
and $N_V = o\left( \frac{g_V v}{L^2} \right)$. 
\end{lemma}

\begin{proof}
For convenience we replace $\mc{Z}V$ by $\mc{D}V$ in our calculations, 
as this does not change $\Var_V$ and only changes $N_V$ 
by an additive term which we can bound by $ O( n^{-5/4} v \dD_V ) $. 

For one-step variances we use the simple estimate $\Var_V \le N_V^2$
(so for the global variables we do not need the full power of Freedman's
inequality:\ it suffices to apply the Hoeffding-Azuma inequality). 

For $Q$ we have $g_Q q \ge c_Q L^{-1} n^{3/2}$, 
so it suffices to show $\Var_Q = o( c_Q^2 L^{-5} n^{3/2}) $ 
and $N_Q = o( c_Q L^{-3} n^{3/2})$. 
The change in $\mc{D}Q$ when the process chooses the edge $e_{i+1}=uv$ is
\begin{equation*}
 \DD_i \mc{D} Q = 2(Y_{uv} + Y_{vu} + 1) - \DD_i( q)  
 = 4( y \pm y \dD_Y) - 4y +O(1) =  O( y \dD_Y) = O(c_Y L n^{1/4}). 
\end{equation*}
Then $N_Q = \wt{O}(n^{1/4})$, $Var_Q = \wt{O}(n^{1/2})$,
and the required bounds hold easily.

For $R$ we have $g_R r \ge c_R L^{-1} \hat{q}^2 n^{5/2}$, so it suffices to show
$\Var_R = o \left(  c_R^2 L^{-5} \hat{q}^4 n^{7/2} \right) $ 
and $N_R = o \left( c_R L^{-3} \hat{q}^2 n^{5/2} \right)$.
Recall from \eqref{eq:Rtrack0} that
$\DD_i(\mc{T}R) = 3\DD_i(Q) Q^2 n^{-3} + H_i(R)$,
where $H_i(R) = O(t^2 r n^{-3})= \wt{O}(1)$.
On choosing $e_{i+1}=uv$ we have
\[ \DD_i R = F_i(R) - \sum_{ab \in Y_{uv} \cup Y_{vu} \cup \{uv\}} 6 X_{ab}, \]
where, as in the proof of Lemma \ref{lem:Rdestruct}, $F_i(R)$ is a 
`destruction fidelity' correction term to remove overcounting of triples 
in $R$ for which the selected edge closes two open pairs in the triple. 
We can bound $F_i(R)$ by the number of triples $uab$ counted by $R$
such that $va$ and $vb$ are edges (and similarly interchanging $u$ and $v$).
As $n^{5/4} \le i < I$, by property (iv) of $\mc{G}_i$ in Definition \ref{def:I0}
we have $F_i(R) = \wt{O}(1+n \hat{q}^3)$.
Combining these estimates gives 
\begin{equation*}
\begin{split}
\DD_i \mc{D}R & =   \DD_i R - 3 \frac{\DD_i(Q)}{Q} \mc{T}R + \wt{O}(1) \\
 & =   - \sum_{ab \in Y_{uv} \cup Y_{vu} \cup \{uv\}} 6 X_{ab}   - 2( Y_{uv} + Y_{vu} + 1) \cdot 3Q^2n^{-3}   + \wt{O}(1+n \hat{q}^3) \\
 & =    - 6 \left[  \sum_{ab \in Y_{uv} \cup Y_{vu} \cup \{uv\}} (X_{ab}   -  Q^2n^{-3}) \right]   + \wt{O} (y +n \hat{q}^3) \\
& =  O( y x \dD_X ) + \wt{O} (y+n \hat{q}^3)  
= \wt{O} ( \hat{q}^{5/2} n ^{5/4}). 
\end{split}
\end{equation*}
Then $N_R = \wt{O}(\hat{q}^{5/2} n^{5/4})$, 
$Var_R = \wt{O}(\hat{q}^{5} n^{5/2})$, 
and the required bounds hold easily.

For $S$ we have $g_S s \ge c_S L^{-1} \hat{q} n^2$, 
so it suffices to show
$\Var_S = o(  c_S^2 L^{-5} \hat{q}^2 n^{5/2}) $ 
and $N_S = o(  c_S L^{-3} \hat{q} n^2)$.
We bound the impact of creations and destructions separately,
recalling the decompositions of the change in $S$ as
$\DD_i(S) = \DD_i(S^{12}) + \DD_i(S^{13}) + \DD_i(S^{23})$,
where $\DD_i(S^{12})$ counts creations and
$\DD_i(S^{13})$, $\DD_i(S^{13})$ count destructions.
We also recall the corresponding decomposition 
of the change in the tracking variable as
$\DD_i(\mc{T}S) = \DD_i(\mc{T}S^{12}) 
+ \DD_i(\mc{T}S^{13}) + \DD_i(\mc{T}S^{23}) + H_i(S)$,
where $\DD_i(\mc{T}S^{12}) 
= 2n^{-3} Q^2 = \mc{T}S / tn^{3/2}$,
$\DD_i(\mc{T}S^{13}) = \DD_i(\mc{T}S^{23}) 
= 2tn^{-3/2} \DD_i(Q) Q = \tfrac{\DD_i Q}{Q} \mc{T}S$
and $H_i(S) = O((1+t^2)s n^{-3} )$.

On choosing $e_{i+1}=uv$, we estimate 
the destruction terms (e.g.\ that for $S^{13}$) by
\begin{align*}
 \DD_i \mc{D}S^{13} 
& = \DD_i S^{13} - 2 \frac{\DD_i(Q)}{Q} \mc{T}S 
+ O((1+t^2)s n^{-3} ) \\
& = - \sum_{ab \in Y_{uv} \cup Y_{vu} \cup \{uv\}} 
(Y_{ab} + Y_{ba}  -  2 \cdot 2tQn^{-3/2}) 
+ O((1+t^2)s n^{-3} ) \\
&  = O(y \cdot \dD_Y y) 
= \wt{O}( \hat{q}^{3/2} n^{3/4} ).
\end{align*}
For the creation term we have
\[ \DD_i \mc{D}S^{12} = \DD_i S^{12} - \mc{T}S/(tn^{3/2}) 
= 2X_{uv} - 2 Q^2n^{-3} = O(\dD_X x) 
= \wt{O}( \hat{q}^{3/2} n^{3/4} ).\]
The required bounds on $N_S$ and $Var_S$ hold easily.
\end{proof}

Having verified the trend and boundedness hypotheses
in Lemmas \ref{lem:globaltrend} and \ref{lem:globalbound},
Theorem \ref{mainensemble} now follows from
Lemmas \ref{th+bh} and \ref{lem:beginning}.

\section{The Controllable Ensemble}
\label{sec:control}

In this section we prove that all variables $V = X_{\phi,J,\GG}$
in the controllable ensemble have the desired concentration,
assuming that all variables in all ensembles
are well-behaved at earlier times. Recall that $\I2$ is the minimum of the 
stopping times $I_V$ over all variables $V$ in the controllable ensemble. 
The following theorem bounds the probability that we reach 
the universal stopping time $I$ before step $ i_{max}$ 
because some controllable variable $V$ is good
(see Definition \ref{def:IV}) but fails to satisfy 
the required bound $|\mc{D}V| \le \dD_V v$.

\begin{theo}\label{controllable}
With high probability we do not have $I=\I2 \le i_{max}$.
\end{theo}

\subsection{Preliminaries}

We start by recalling the definition of the ensemble.
We say $V=X_{\phi,J,\GG}$ is \emph{controllable at time $t'$} 
if $o(V)>0$ and for any $1 \le t \le t'$ we have
\begin{equation}
\label{eq:definecontrol}
S^B_A(J,\GG) \ge n^{\dD'} \ \ \text{ for all } \ \ A \subn B \sub V_\GG.
\end{equation}
The {\em controllable ensemble} consists of all such $V$
with $| V_\GG | \le M^3$ that are controllable at time $1$.

Next we record some preliminary observations.

\begin{lemma} \label{lem:obscon} 
Let $V$ be controllable at time $t'$. 
Then $v(t) \ge n^{\dD'}$ for $1 \le t \le t'$
and $S^B_A(J,\GG) \ge n^{\dD'}$ 
for all $A \subn B \sub V_\GG$
and $t_V \le t \le t'$. Furthermore, if
$V^+$ is obtained from $V$ by changing 
some edge to an open pair
then  $V^+$ is controllable at time $t'$.
\end{lemma}

\begin{proof}
The first inequality is immediate
from the definition with $B=V_\GG$.
The final statement holds as
for any $A \subn B \sub V_\GG$ 
we have $S^B_A(V^+)=S^B_A(V)$ 
or $S^B_A(V^+)=\hat{q}p^{-1}S^B_A(V) \ge S^B_A(V)$,
using \eqref{eq:q/p}.
For the remaining inequality,
consider any $A \subn B \sub V_\GG$.
By \eqref{eq:definecontrol} at $t=1$ we have
\[ n^{|B| - |A|} (2n^{-1/2} )^{ |J[B]| - |J[A]| } \ge n^{\dD'}, \]
so $|J[B]| - |J[A]| < 2 ( |B| - |A|)$. 
This gives the much stronger bound 
\[ n^{|B| - |A|} (n^{-1/2} )^{ |J[B]| - |J[A]| } \ge n^{1/2}, \]
so for $ t_V  \le t \le 1 $, recalling from Lemma \ref{lem:tV} 
that $t_V = \wt{\TT}(n^{-\dD/4e(V)})$, we have
$ S^B_A(J,\GG) 
= \OO \left(  n^{|B| - |A|} ( t_V n^{-1/2} )^{ |J[B]| - |J[A]| } \right) 
= \wt{\OO} \left( n^{1/2 - \dD/4} \right) > n^{1/4}$.
\end{proof}

It will be convenient to approximation $V$
by the following modified variable $V^*$
which has better behaviour for the martingale arguments. 

\begin{defn} \label{def:con*}
Consider a controllable variable $ V= X_{\phi,J,\GG}$. 
Given an injective map $f:V_\GG \to [n]$, 
we say that a pair $ab$ in $f(V_\GG)$ is \emph{$f$-open} 
if there is no vertex $c$ such that $ac$, $bc$ are edges 
and $c \notin f(V_\GG)$; note that it is the last condition 
that distinguishes the definition from that of `open'.
Let $V^* = X^*_{\phi,J,\GG}(i)$ be defined 
in the same way as $X_{\phi,J,\GG}(i)$,
except that pairs that are required to be open in $X_{\phi,J,\GG}(i)$ 
are only required to be $f$-open in $X^*_{\phi,J,\GG}(i)$.
\end{defn}

We will apply our usual martingale strategy to show whp
$V^* = (1 \pm \dD_{V^*}) v$ for $i_V \le i < I$, 
where $\dD_{V^*} = \dD_V-g_V/2 = f_V + 3g_V/2$; we recall
\[ e = \hat{q}^{-1/2} n^{-1/4}, \ \ \ 
f_V = e^\dD \ \ \  \text{ and } \ \ \ 
 g_V = \tT L^{-1} (1+t^{-e(V)}) e^{\dD}. \]
This will suffice in combination with the following
straightforward approximation of $V$ by $V^*$.

\begin{lemma} \label{lem:VV}
If $i_V \le i < I$ then $V = V^* \pm g_Vv/2$.
\end{lemma}

\begin{proof}
Fix $e \in \binom{V_\GG}{2} \sm \GG$ 
with $e$ not contained in the base $A$.
Let $J^e = J \cup \{e\}$ and $\GG^e = \GG \cup \{e\}$. 
We bound $|V-V^*|$ by the sum over all 
such $e$ of $X_{\phi,J^e,\GG^e}$.
As $i<I$, by property (iv) of Definition \ref{def:I0} 
we have $X_{\phi,J^e,\GG^e} \le L^{4|V_\GG|} 
S^{V_\GG}_A(J^e,\GG^e)/S^B_A(J^e,\GG^e)$,
where $B$ is chosen to minimise $S^B_A(J^e,\GG^e)$. 
For any $ A \subseteq B \subseteq V_\GG$,
if $B=A$ then $S^{B}_A(J^e,\GG^e)=1$;
otherwise, by controllability
$S^{B}_A(J^e,\GG^e) \ge p S^{B}_A(J,\GG) \ge p n^{\dD'}$.
As $ S^{V_\GG}_A(J^e,\GG^e) = p v$, it follows that
$X_{\phi,J^e,\GG^e} \le L^{ 4|V_\GG|} v n^{-\dD} \ll g_V v$,
as $ e^\dD > n^{ -\dD/4} $.
\end{proof}

\subsection{Decomposition by pairs}
\label{sec:decomp}

We decompose the one-step change in $V^*$ as 
\[ \DD_i(V^*) = \sum_{e \in \GG \sm \GG[A]} \DD_i(V^e) \pm F_i(V^*),\]
where each $\DD_i(V^e)$ accounts for the change in $V$ due to $e$,
as follows. If $e \in J$ then, letting $V^+$ be obtained from $V$
by changing $e$ from an edge to an open pair, $\DD_i(V^e)$ is the
number of embeddings $f \in (V^+)^*$ such that $f(e)$ is the edge
$e_{i+1}$ selected at step $i+1$.
If $e \in \GG \sm J$ then $-\DD_i(V^e)$ is the number 
of embeddings $f \in V^*$ which are destroyed at step $i+1$
by $f(e)$ not remaining $f$-open.
The fidelity term $F_i(V^*)$ is to correct for embeddings $f \in V^*$
where $f(e)$ is affected for more than one $e$ simultaneously.
Note that by definition of `$f$-open' this cannot occur for creation,
i.e.\ if $f(e)=e_{i+1}$ for some $e \in J \sm J[A]$;
thus $F_i(V^*)$ accounts for embeddings $f \in V^*$
where $f(e)$ becomes not $f$-open for more than one $e \in \GG \sm \GG[A]$.
This requires the selected edge $e_{i+1}$ to be $xy$ 
for some $x \in f(V_\GG)$ such that $y$ is a common neighbour
of some pair $u,v$ in $f(V_\GG)$.
As $i<I$, by property (iii) of Definition \ref{def:I0} 
all codegrees are $O(L^4)$, so
\begin{equation} \label{eq:confidel}
\mb{E} [ F_i(V^*) \mid \mc{F}_i ] = O(L^4) v/q.
\end{equation}

We also decompose the one step change in the tracking variable as
\[ \DD_i(\mc{T}V^*) = \sum_{e \in \GG \sm \GG[A]} \DD_i(\mc{T}V^e) \pm H_i(V^*),\]
where $\DD_i(\mc{T}V^e)$ is $\mc{T}V/(tn^{3/2})$ if $e$ is an edge
or $- \frac{\DD_i Q}{Q} \mc{T}V$ if $e \in \GG \sm J$ if $e$ is open,
and the higher-order correction term is 
\begin{equation} \label{eq:conhi}
H_i(V^*) = O((tn^{3/2})^{-1} + Q^{-1}\DD_i Q)^2 \mc{T}V 
= O(t^2+t^{-2}) n^{-3} v.
\end{equation}
Our calculations for the trend and boundedness hypotheses
will consider separately each
$\DD_i(\mc{D}V^e) := \DD_i(V^e) - \DD_i(\mc{T}V^e)$.

\subsection{One-step expected changes}

Here we estimate the one-step expected change in $V^*$ when
it is in its upper critical window.

\begin{lemma} \label{lem:DV*}
If $i_V \le i < I$ and $\mc{D}V^* > (f_V+g_V) v$ then 
\[  \mb{E} [ \DD_i(V^*) \mid \mc{F}_i ] \le  (1+o(1)) \left[
 \frac{e(V)}{8t^2} \dD_{(V^+)^*} - o(V) ( f_V + g_V - 2g_Y )
  \right] 8t v n^{-3/2}.\]
\end{lemma}

\begin{proof}
We estimate the one-step expected changes
$\mb{E} [ \DD_i(V^e) \mid \mc{F}_i ]$ 
for each $e \in \GG \sm \GG[A]$.

We start with creation, i.e.\ the case that $e \in J$ is an edge.
As for the global variables, we do not use the critical window assumption
or obtain any self-correction term in this calculation.
Writing $V^+ = X^*_{\phi,J\sm e,\GG}$, we have
\begin{equation*}
\begin{split}
\mb{E}[ \DD_i(\mc{D}V^e) \mid \mc{F}_i]
& = \mb{E} [\DD_i(V^e) - \DD_i(\mc{T}V^e) \mid \mc{F}_i] \\
& = 2Q^{-1}(V^+)^* - \mc{T}V/(tn^{3/2}) \\
& = 2Q^{-1}\mc{D}(V^+)^*  \\
&  \le (1+o(1)) t^{-1} \dD_{(V^+)^*} v n^{-3/2}.
\end{split}
\end{equation*}
In the third equality we used
$\mc{T}V/(2tn^{3/2}) = \mc{T}V^+/Q$
and in the last inequality we estimated $\mc{D}(V^+)^*$
using $i_V \le i < I$ and $i_{V^+} \le i_V$ (see Lemma \ref{lem:tV}).

Now we consider destruction, 
i.e.\ the case that $ab =e \in \GG \sm J$ is open. 
We have $\mb{E}[ \DD_i(V^e) \mid \mc{F}_i] 
= 2Q^{-1}\sum_{f \in V^*} (Y_{f(a)f(b)} + Y_{f(b)f(a)} \pm O(1))$,
where the $O(1)$ term corrects for the difference between `open'
and `$f$-open' and also for the possibility 
that $f(ab)$ may become selected rather than closed.
Then, recalling (\ref{eq:Qstar}), we have
\begin{align*}
\mb{E}[ \DD_i(\mc{D}V^e) \mid \mc{F}_i]
&= - 2Q^{-1}\sum_{f \in V} (1+ Y_{f(a)f(b)} + Y_{f(b)f(a)} \pm O(1)) 
 - \mb{E} \left[ \frac{ \DD_i(Q)\mc{T}V}{ Q} \mid \mc{F}_i \right]  \\
& = - 4Q^{-1} V ( \mc{T}Y \pm \dD_Y y)  \pm O(q^{-1}v) 
+ Q^{-1} \mc{T}V (2 + 4S/Q) \\
&= - (1 \pm  (1+o(1))\dD_Y) 8tn^{-3/2} V \pm O(q^{-1}v) 
 + (1 \pm O(\dD_S)) 8tn^{-3/2} \mc{T}V \\
&= - 8tn^{-3/2} \mc{D}V \pm  (1+o(1)) 8tn^{-3/2} \dD_Y V 
\pm O(\dD_S 8tn^{-3/2}v)  \pm O(q^{-1}v)  \\
&  \le - \left[ (1+o(1))(f_V + g_V) - 2g_Y  \right] 8t v n^{-3/2}.
\end{align*}
In the above calculation we note that we can afford
to approximate the multipliers of $V$ and $\mc{T}V$
independently as our approximations 
for controllable variables are weaker 
than those in the other ensembles.
The approximations of $Y$ and $S$
hold for all $n^{5/4} \le i<I$;
we also used $ f_Y + f_S = o( f_V) $ 
and $g_S = \wt{O}(1+t^{-1}) e^2 = o(g_V)$, which holds as
\begin{equation} \label{eq:ee/t}
(1+t^{-1}) e^2 = O(e) \text{ for } t \ge n^{-1/4}. 
\end{equation}
The lemma follows by summing 
the creation estimate over $e(V)$ edges and
the destruction estimate over $o(V)$ open pairs.
The $o(1)$ terms absorb the corrections of 
$O(L^4) v/q$ for fidelity
(see \eqref{eq:confidel})
and $O(t^2+t^{-2}) n^{-3} v$ for
higher-order terms (see \eqref{eq:conhi}),
\end{proof}

\subsection{Trend Hypothesis and Variation Equation}

The following lemma establishes the trend hypothesis,
i.e.\ that  $\mc{Z}V^* = \mc{D}V^* - (f_V + 3g_V/2)v$
is a supermartingale when $V^*$ is in its upper critical window;
we will see that this is valid under the choice $c_V=1$ 
made in Definition \ref{def:c}.

\begin{lemma} \label{lem:contrend}
If $i_V \le i < I$ and $\mc{D}V^* > (f_V+g_V) v$ 
then $\mb{E} [ \DD_i \mc{Z}V^* \mid \mc{F}_i]  \le 0$.
\end{lemma}

\begin{proof}
By Lemma \ref{lem:seconds} 
(replacing $2g_V$ by $\tfrac{3}{2}g_V$ to adjust for $V^*$) we have
\[ \DD_i(v\dD_{V^*}) 
= \left( \tfrac{e(V)}{8t^2} - o(V) \right) \dD_{V^*} v \cdot 8tn^{-3/2}
 + \dD'_{V^*} vn^{-3/2}  
 + O(\dD_{V^*} v) n^{-5/2}, \text{ where } \]
\[ \dD'_{V^*} \ge 4\dD t  \dD_{V^*} 
+ (\tT'/\tT - e(V)t^{-1}) \tfrac{3}{2}g_V. \]
Since 
\begin{equation} \label{eq:varcon}
c_V = 1
\end{equation}
for all $V$ in the Controllable Ensemble, 
we have $f_{V^{+*}}=f_{V^*}$, so
$\dD_{V^{+*}}-\dD_{V^*} = (3/2)(g_{V^+}-g_V)$.  

There is no $V^+$ term if $e(V)=0$, and otherwise
$\frac{g_{V^+}}{g_V} = \frac{t^{e(V)}+t}{t^{e(V)}+1} < 2t$, 
so by Lemma \ref{lem:DV*}
\begin{align*}
\frac{\mb{E}[ \DD_i(\mc{Z}V) \mid \mc{F}_i]}{8tvn^{-3/2}}
& \le (1+o(1)) \tfrac{e(V)}{8t^2} \dD_{V^{+*}} 
 - (1+o(1)) o(V)  ( f_V + g_V - 2g_Y ) \\
& \quad - \left[ (\tfrac{e(V)}{8t^2} - o(V)) \dD_{V^*} 
+ \tfrac{\dD'_{V^*}}{8t}\right]
+ \wt{O}( \dD_{V^*} t^{-1} n^{-1} )  \\
& \le \tfrac{e(V)}{8t^2} \cdot \tfrac{3}{2}(g_{V^+}-g_V) 
+ \tfrac{o(V)}{2} g_V   - \tfrac{1}{2} \dD f_V \\
& \quad - (\tfrac{\tT'}{8t \tT} - \tfrac{e(V)}{8t^2} + \tfrac{\dD}{2}) \cdot \tfrac{3g_V}{2}  + O(g_Y)  +o( \dD_{V^+}/t^2) + o(\dD_V) \\
& \le \tfrac{g_V}{2}( o(V) - \tfrac{3\dD}{2} + \tfrac{ 3 e(V)}{ 4 t}  - \tfrac{3 \tT'}{ 8t \tT } + o(1))
- \tfrac{1}{4} \dD f_V + O(g_Y)  +o( \dD_{V^+}/t^2).
\end{align*}
For the last inequality, we have cancellation
of two terms $\tfrac{e(V)g_V}{8t^2}$ with opposite signs,
and we used $g_{V^+} \le 2tg_V$.
Finally, $ \mb{E}[ \DD_i(\mc{Z}V) \mid \mc{F}_i] \le 0$, 
as the dominant terms are $-\tfrac{3\tT'}{16t \tT } g_V$
and/or $- \tfrac{1}{4} \dD f_V$.
\end{proof}

\subsection{Boundedness hypothesis}

For the boundedness hypothesis,  
we fix any $V=X_{\phi,J,\GG}$ in the Controllable Ensemble 
and estimate 
$\Var_{V^*} =  \Var(\mc{Z}V^*(i) \mid \mc{F}_{i-1})$ 
and $N_{V^*} = |\DD_i\mc{Z}V^*|$.  
Recall that it suffices to establish (\ref{eq:boundVar}) 
and (\ref{eq:boundN}), as in the following lemma.
We remark that the proof of the `boundary case' $|V_\GG|=M^3$ 
is quite delicate, and it is here that the details of
property (v) in Definition \ref{def:I0} are important.

\begin{lemma} \label{lem:conbound}
If $i_V \le i < I$ and $V$ is good then
$\Var_V = o\left( \frac{ (t^{-e(V)} e^\dD v)^2}{L^3n^{3/2}} \right)$
and $N_V = o\left( \frac{t^{-e(V)} e^\dD v}{L^2} \right)$. 
\end{lemma}

\begin{proof}
Recalling that we restrict our attention to $ t \ge t_V$, 
we can bound the one-step change in 
$\mc{T}V^* + (f_V + 3 g_V / 2 )v$ 
by $O((t + t^{-1})vn^{-3/2}) = \wt{O}( v n^{-5/4})$, 
which is negligible in comparison with the required estimates. 
It therefore suffices to consider changes in $V^*$ 
rather than $\mc{Z}V^*$. As in the trend hypothesis, we can
obtain these estimates as a sum over all $e \in \GG \sm \GG[A]$.
(Here we use $|V_\GG| \le M^3 = O(1)$ and the simple observation 
that if random variables $A$ and $B$ each have variance 
at most $ \sigma^2$ then $A+B$ has variance at most $4 \sigma^2$.)

Thus for each $e = \aA\bB \in \GG \sm \GG[A]$ we estimate 
$ N_e = |\DD_i V^e|$ 
and $\Var_e =  \Var(\DD_i V^e \mid \mc{F}_{i-1})$.

We start with the creation calculation, i.e.\ the case $e \in J$.
All scalings here will be with respect to the extension
$(\phi,J \sm e, \GG)$ obtained by changing $e$ to an open pair:\
e.g.\ $S_A^{V_\GG}  = v \hat{q} p^{-1}$.
Let $A' = A \cup \{\aA,\bB\}$,
where $A \sub V_\GG$ is the base of the extension.
We note that if  $\DD_i V^e \ne 0$ then 
for any $B$ with $A' \sub B \sub V_\GG$ 
the edge $e_{i+1}$ selected at step $i+1$ must fall in
some extension in $X_{\phi,(J \sm e)[B],\GG[B]}$.
We consider the `hardest' such extension:
let $S_{\rm m} = \min_{A' \sub B \sub V_\GG} S^B_{A}$.

Let $B_{\rm m}$ be some set $B$ 
achieving the minimum in this definition.
We note that
\begin{enumerate}
\item[(B1)] $S_{A'}^{B_{\rm m}} \le 1$,
\item[(B2)] $v/S_{\rm m} = \max_{B_{\rm m} \sub C \sub V_\GG} S^{V_\GG}_C$,
\item[(B3)] $S_{\rm m} \ge n^{\dD'} (\hat{q}/p)$,
\item[(B4)] $S_{\rm m} = \max_{A \sub C \sub B_{\rm m}} S^{B_{\rm m}}_C$,
\end{enumerate}
Indeed, (B1) and (B2) follow from the definition of $B_{\rm m}$,
and (B3) and (B4) from controllability of $V$.
By property (iv) of Definition \ref{def:I0}
applied to the extension from $A$ to $B_{\rm m}$
and (B4) we estimate 
\[ p_e := \mb{P}[\DD_i V^e \ne 0] 
< L^{4|V_\GG|} S_{\rm m}/q. \]
Also, applying property (iv) of Definition \ref{def:I0}
to the extensions from $A'$ to  $B_{\rm m}$ (using (B1))
and from $B_{\rm m}$ to $V_\GG$ (using (B2)), we estimate
\[ N_e < L^{ 4| V_\GG|} \cdot  L^{4|V_\GG|} S^{V_\GG}_{A}/S_{\rm m}
\le L^{8|V_\GG|} p\hat{q}^{-1}n^{-\dD'} v, \]
using (B3) for the second inequality. Then
\[ \Var_e < p_e N_e^2 
< L^{20|V_\GG|} (S_{\rm m}/q) (S^{V_\GG }_{A}/S_{\rm m})^2
= L^{20 |V_\GG|}( \hat{q}/p)^2 v^2/ ( q S_{\rm m})
< L^{20 |V_\GG|} ( 2tn^{3/2} )^{-1} n^{-\dD'} v^2.\]
Noting that creation only occurs when $e(V) \ge 1$,
these estimates are well within the required bounds,
as $e^\dD > n^{-\dD/4}$ and $\dD \ll \dD'$.

It remains to consider destruction, 
i.e.\ the case $e = \aA\bB \in \GG \sm J$.
Let $(A',J',\GG')$ be obtained from $(A,J,\GG)$ 
by `gluing a $Y$-variable on $\aA\bB$' as follows. 
Let $\gG$ be a new vertex, $V'=V_\GG \cup \{\gG\}$, 
$A' = A \cup \{\aA,\gG\}$, $J' = J \cup \{\bB\gG\}$ 
and $\GG' = \GG \cup \{\aA\gG, \bB\gG\}$
(so this definition depends on the order of $\aA$ and $\bB$).
To analyse destruction of extensions $f \in V^*$
due to closures of $e$ by selecting the edge
corresponding to $\aA\gG$, we consider extensions 
in $X_{\phi',J',\GG}$ where $\phi':A' \to [n]$
restricts to $\phi$ on $A$ and $\phi'(\aA\gG)$ 
is the edge $e_{i+1}$ added at step $i+1$.
In only considering the case that $\gG$ is a new vertex 
we make crucial use of the distinction between $V^*$ and $V$. 

As in the creation calculation we have
\[ p_e := \mb{P}[\DD_i V^e \ne 0] 
< L^{4|V_\GG|} S_{\rm m}/q, \]
where $S_{\rm m} = S^{B_{\rm m}}_{A}
= \min_{A' \sub B \sub V'} S^B_{A}$,
and all scalings are with respect to $(J',\GG')$.
We claim that 
\begin{equation} \label{eq:Sm}
S_{\rm m} \ge yn^{\dD'}. 
\end{equation}
To see this, note that if $B_{\rm m} = A \cup \{\gG\}$
then $  S_{\rm m} = \hat{q} n \ge y n^{\dD'}$.
Otherwise, we write 
$S^{B_{\rm m}}_A = S^{B_{\rm m}}_{B_{\rm m} \sm \gG} 
S^{B_{\rm m} \sm \gG}_A$. We have 
$S^{B_{\rm m}}_{B_{\rm m} \sm \gG} \ge y$ by construction of $(J',\GG')$
and $S^{B_{\rm m} \sm \gG}_A \ge n^{\dD'}$, since $V$ is controllable.
This proves the claim.

Now we claim that the magnitude of the change 
due to $e$ is bounded as 
\begin{equation} \label{eq:Ne}
N_e < 2L^{8|V'|+7} vy/S_{\rm m}.
\end{equation}
The lemma follows from this bound;
indeed, substituting \eqref{eq:Sm} gives
$N_e = \wt{O}( n^{-\dD'} v )$ and
\[\Var_e < p_e N_e^2 = \wt{O}(S_{\rm m}/q) (yv/S_{\rm m})^2
= \wt{O}(y^2 v^2 / q S_{\rm m}) 
= \wt{O}(n^{-\dD'} v^2 n^{-3/2})).\]
Thus it remains to prove \eqref{eq:Ne}.

First we note that the same argument as for creation
applies if we are not at the boundary of the ensemble,
i.e.\ if $|V_\GG| < M^3$, so $|V'| \le M^3$.  
Indeed, applying property (iv) of Definition \ref{def:I0}
to the extensions from $A'$ to  $B_{\rm m}$
and from $B_{\rm m}$ to $V_\GG$, we estimate
\[ N_e < L^{ 4|V'|} \cdot  L^{4|V'|} S^{V'}_{A}/S_{\rm m}
\le L^{8|V'|} n^{-\dD'} v, \]
using \eqref{eq:Sm} and $ S_A^{V'} = y S_{A}^V = yv$.

It remains to consider the boundary case $|V_\GG|=M^3$.
We start with those subcases in which 
we can still implement the preceding calculation.
We still have at most $L^{ 4|V'|}$ extensions 
from $A'$ to $B_{\rm m}$, using property (v) 
of Definition \ref{def:I0} if $B_{\rm m}=V'$.
Next we consider the extension series from $B_{\rm m}$
to $V'$ and let $C \subn V'$ be the set preceding $V'$.
We claim that if $\bB \in C$ then we can still implement 
the above bound using extensions on at most $M^3$ vertices,
so that property (iv) of Definition \ref{def:I0} still applies.
Indeed, writing $C^- = C \sm \{\gG\}$ we have 
\[ S^{V'}_{A}/S_{\rm m} = S^{V'}_{B_{\rm m}} 
= S^C_{B_{\rm m}} S^{V'}_C = S^C_{B_{\rm m}} S^{V_\GG}_{C^-},\]
so considering extensions from $C^-$ to $V_\GG$ we still have
at most $L^{4|V'|} S^{V'}_{A}/S_{\rm m}$
extensions from $B_{\rm m}$ to $V'$, as claimed.

Now we may assume $\bB \notin C$.
We can also assume $S^{V'}_C \ge y/L^7$, otherwise we can
still implement the previous calculation 
using property (v) of Definition \ref{def:I0}.
On the other hand, by definition of the extension series
we have $S^{V'}_C \le S^{C \cup \bB}_C \le y$,
as the extension from $C$ to $C \cup \bB$
contains the edge $\bB\gG$ and the open pair $\aA\bB$.
Thus we give up a factor of at most $L^7$ in bounding
extensions from $C$ to $C \cup \bB$ by a $Y$ variable,
and we can estimate extensions from $C \cup \bB$ to $V'$
using extensions from $C$ to $V_\GG$, 
since $S^{V'}_{C \cup \bB} = S^{V_\GG}_C$. This gives
\[ N_e < L^{ 4|V'|} \cdot L^{4|C|} S^C_{B_{\rm m}}
\cdot 2y \cdot L^{4|V' \sm C|} S^{V'}_{C \cup \bB}
< L^{8|V'|+7} S^{V'}_{A}/S_{\rm m}
\le 2L^{8|V'|+7} n^{-\dD'} v, \]
which completes the proof of the claim \eqref{eq:Ne}, 
and so of the lemma.
\end{proof}

Now that we have verified the trend and boundedness hypotheses
for $V^*$, Lemmas \ref{th+bh} and \ref{lem:beginning} show whp
$V^* = (1 \pm \dD_{V^*}) v$ for $i_V \le i < I$.
In combination with Lemma \ref{lem:VV}
this proves Theorem \ref{controllable}.

\section{Stacking ensemble} \label{sec:stack}

In this section we prove that all variables 
in the stacking ensemble have the desired concentration,
assuming that all variables in all ensembles
are well-behaved at earlier times. 
Recall that $\I3$ is the minimum of the 
stopping times $I_V$ over all variables $V$ in the stacking ensemble. 
The following theorem bounds the probability that we reach 
the universal stopping time $I$ before step $ i_{max}$ 
because some stacking variable $V$ is good
(see Definition \ref{def:IV}) but fails to satisfy 
the required bound $|\mc{D}V| \le \dD_V v$.

\begin{theo}\label{stacking}
With high probability we do not have $I=\I3 \le i_{max}$.
\end{theo}

As for the other ensembles, we will prove this theorem
by verifying the trend and boundedness hypotheses.
Throughout the section we consider some stacking variable 
$V = S_{uv}^\pi = X_{\phi,J,\GG}$, for some non-edge $uv$,
where we recall that $V(\GG)=V(S_{uv}^\pi) 
= \{ \aA_u, \aA_v, \aA_1, \dots, \aA_{ |\pi|} \}$,
$A=\{\aA_u,\aA_v\}$, $\phi(\aA_u)=u$, $\phi(\aA_v)=v$ 
and $(J,\GG)$ is defined so that edges specified 
by the extension are mapped to edges of $G(i)$,
and likewise for open pairs.
Recalling that we gave a separate argument for vertex degree 
variables in Lemma \ref{lem:vdeg}, 
we can assume $V$ is not such a variable.
Similarly to the analysis of controllable variables
(except that here we do not approximate $V$ by $V^*$),
we decompose the one-step change in $V$ as 
\[ \DD_i(V) = \sum_{e \in \GG \sm \GG[A]} \DD_i(V^e) \pm F_i(V),\]
where each $\DD_i(V^e)$ accounts for the change in $V$ due to $e$,
as follows. If $e \in J$ then, letting $V^+$ be obtained from $V$
by changing $e$ from an edge to an open pair, $\DD_i(V^e)$ is the
number of embeddings $f \in (V^+)^*$ such that $f(e)$ is the edge
$e_{i+1}$ selected at step $i+1$.
If $e \in \GG \sm J$ then $-\DD_i(V^e)$ is the number 
of embeddings $f \in V$ which are destroyed at step $i+1$
by $f(e)$ being selected or closed.
The fidelity term $F_i(V)$ corrects for embeddings $f \in V^*$
where $f(e)$ is affected for more than one $e$ simultaneously
(see Section \ref{sub:correct}).

\subsection{Subextensions of stacking variables} \label{sec:subext}

This subsection concerns certain subextensions of stacking variables 
that will be particularly important throughout this section.
For the following two special structures
we will appeal to the Controllable Ensemble for our estimates, 
and so we need to show that these extensions are indeed controllable. 
\begin{itemize}
\item Let $ (uv, J, \GG) $ be the extension corresponding 
to some stacking sequence $ \pi \in \mc{S}_M$ at the boundary 
of the ensemble, i.e.\ with $ w( \pi) = 2M$.  
The {\em backward extension} $ B_{ \pi} $ 
is the extension $ (A', J', \GG') $ with
$ A' = \{\aA_u,\aA_v,\aA_x,\aA_y\}$, 
$J' = J$ and $\GG' = \GG \sm \aA_x \aA_y $.
\item An \emph{$h$-fan} at the triple $ A=abc$
is any extension of the form $(A,J,\GG)$, where the base is $A=abc$, 
there are $h$ additional vertices $v_1,\dots,v_h$ in $V_\GG$, 
the sequence $bv_1\dots v_h c$ is a path of length $h+1$ in $\GG$, 
and $av_i \in \GG \sm J$ is open for $i \in [h]$.  
We emphasize that the pairs in the path $ bv_1\dots v_h c$ 
can be either edges or open pairs.
\end{itemize}

Both of these extensions arise from the boundary conditions
in our choice to restrict the stacking ensemble 
to $M$-bounded variables. Recalling Definition \ref{def:stack2},
we need to consider backward extensions
due to condition (i) that $w( \pi) \le 2M$ 
and fans due to condition (ii) forbidding a subsequence
of length $M$ using only $\{X^I,Y^I\}$:
in both cases there is at least one direction in which
we cannot stack $Y$ on the last rung.

Now we show that these two extensions are controllable.
We recall that $ M = 3/\eps$ 
and $ \hat{q}(t_{max}) = n^{-1/2 + \eps}$.

\begin{lemma}
\label{lem:backsies}
All $M$-fans and backward extension variables
are controllable at time $t_{max}$.
\end{lemma}

\begin{proof}
We start by considering an $M$-fan $(A,J,\GG)$. 
Among all such extensions, the minimum scaling is 
$(\hat{q}n)^M p^{M+1} > n^{ \eps M - 1/2} = n^{5/2}$,
which is achieved when the path
$bv_1\dots v_M c$ belongs entirely to $J$.
Fix $B$ with $A \subn B \sub V$ 
that minimises $S^B_A = S^B_A(J,\GG)$. 
We need to show that $S^B_A \ge n^{\dD'}$. 
As $S_A^{V_\GG} > n^{5/2}$
we can assume that $B \ne V_\GG$, 
so we can find $v_i$ in $B$ such that 
not both $v_{i-1}$ and $v_{i+1}$ are in $B$.  
(Here $v_0 = c$ and $v_{M+1} = b$.) 
Now removing $v_i$ from $B$ reduces the scaling 
by at least $y > \hat{q}n^{1/2} = n^{\eps}$, 
so by minimality we have $|B|=|A|+1$,
so $S^B_A \ge y > n^{\eps} > n^{\dD'}$ 
(recalling (\ref{eq:epsilondelta})).

Now consider (with notation as above) a backward extension 
$B_{\pi} = (A',J',\GG')$ with $w(\pi)=2M$.
We fix $B$ with $A' \subn B \sub V$ and estimate $S_{A'}^B$ 
as a sequence of single-vertex extensions.
First we consider the case that there is some $T \sub V$
disjoint from $B$ such that some component $C$ of $\GG' \sm T$
contains $\{\aA_x,\aA_y\}$, but not $\aA_u$ or $\aA_v$.
Then we consider vertices of $B \sm C$ in stacking order
and vertices of $B \cap C$ in reverse stacking order.
Each step contributes a factor of at least $y > n^{\eps}$
to the scaling, so $ S_{A'}^B > n^\eps > n^{\dD'}$.

Now we can assume there is no such $T$, which implies that
$B$ intersects every rung and contains all $ \aA_i $ 
such that $ \pi(i+1) = O $. We claim that $ |B| \ge M + 2 $.  
We note that this will imply the lemma, as estimating 
$S_{uv}^B$ by a sequence  of single-vertex extensions gives
\[ S_{A'}^B = S_{uv}^B/ (n^2 \hat{q})  
> (n^\eps)^{|B| - 2}/ n^2 
\ge (n^\eps)^{|M|}/ n^2  = n > n^{\dD'}.\]
It remains to show the claim. We bound the intersection of $B$
with the set of $2M$ vertices that contribute to $ w(\pi)$. 
Suppose $\pi$ has $i$ occurrences of the symbol $O$ 
in the sequence $ \pi(2), \dots, \pi(|\pi|-1)$ 
and $j$ occurrences of $O$ or $E$ in $ \{ \pi(1), \pi(|\pi|) \}$.  
Then there are at most $ i+1$ triangular ladders 
and $ \pi $ has $ 2M - i- j$ turning points 
(recall that the positions with the symbols 
$ X^O$ or $Y^O$ give turning points),
of which at most $2-j$ are in $A'$
(namely $ \alpha_u$ and $\alpha_{ |\pi|-1} $).
Let $T$ be the set of turning points not in $A'$,
so that $|T| \ge 2M-i-2$.
For each triangular ladder there is 
a path of rungs spanned by $T \cap L$, so we must have 
$|B \cap T \cap L| \ge \bfl{|T \cap L|/2}$. We deduce
$ |B \sm A'| \ge i + \frac{ 2M-2-i}{2} - \frac{ i+1}{2} \ge M-2$,
which proves the claim, and so the lemma.
\end{proof}

\begin{rem} \label{rem:fan}
The proof of Lemma \ref{lem:backsies} shows moreover 
that a fan of any size is controllable at any time 
at which it has scaling at least $n^{\dD'}$. 
\end{rem}

\subsection{Boundedness hypothesis}

Here we verify the boundedness hypothesis,
for which the arguments are somewhat similar 
to those given above for the controllable ensemble,
and are relatively short (the bulk of the section 
will then be occupied with verifying the trend hypothesis).
Recalling (\ref{eq:boundVar}) and (\ref{eq:boundN}),
and that $c_V \ge L^{15}$ for all $V$ in the stacking ensemble
(see Definition \ref{def:c}), 
it suffices to prove the following lemma.

\begin{lemma} \label{lem:stkbound}
If $i_V \le i<\min\{I,J_V\}$ then 
\[ N_V < (1+t^{-e(V)}) e v \ \text{ and } \
\Var_V <  n^{-3/2} ((1+t^{-e(V)}) e v)^2.\]
\end{lemma}

\begin{proof}
As in the proof of Lemma \ref{lem:conbound},
it suffices to establish the stated bounds
for each $e \in \GG \sm \GG[A]$ on $ N_e = |\DD_i V^e|$ 
and $\Var_e =  \Var(\DD_i V^e \mid \mc{F}_{i-1})$
(we do not need to take advantage of better bounds 
available on the change in the difference between 
these variables and their tracking variables).
There are two cases, according to whether $e$ 
is an open pair or an edge.

We start by considering the case that $e \in J$ is an edge. 
Let $ e = \aA_x\aA_y $ where $x < y$. 
Let $A' = A \cup \{\aA_x,\aA_y\}$ and 
$S_{\rm m} = \min_{A' \sub B \sub V} S^B_A = S^{B_{\rm m}}_A$, 
where all scalings are with respect to $(J \sm e,\GG)$. 
Noting that $S^{V_\GG}_A = \hat{q}p^{-1}v$,
as in the proof of Lemma \ref{lem:conbound},
as $i<I$ we have
\[ p_e := \mb{P}[\DD_i V^e \ne 0] < L^{4|V_\GG|} S_{\rm m}/q
\ \ \text{ and  } \ \ 
N_e < L^{8|V_\GG|} \hat{q}p^{-1}v/S_{\rm m},  \]
\[ \text{ so  } \ \  \Var_e < p_e N_e^2 
< L^{20|V_\GG|} (\hat{q}p^{-1}v)^2/(qS_{\rm m}).\]
We calculate the scaling $S_{\rm m}$ one vertex at a time.  
Each vertex contributes a factor of at least $p\hat{q}n = y$,
and $\aA_y$ contributes at least $\hat{q}^2 n  = x$, 
since the edge $\aA_x \aA_y$ was switched to an open pair 
in $ ( J \sm e, \GG) $. If $|B_{\rm m} \sm A| \ge 2$ 
we have $ S_{\rm m} \ge xy$, so  
\begin{align*} 
N_e & < L^{ 8|V'|} v p^{-1} \hat{q} / (xy) 
=  t^{-1} ev \cdot L^{ 8|V'|} (4t)^{-1} e^3
\ll t^{-1} ev \ \text{ and }   \\ 
\Var_e  & < L^{20 |V'|} ( \hat{q} p^{-1} v)^2 / ( q x y) 
= n^{-3/2}((2t)^{-1}ev)^2 \cdot y^{-1}L^{20 |V'|} 
\ll n^{-3/2}(t^{-1}ev)^2, 
\end{align*}
which are sufficient, as $e \in J$ implies $e(V) \ge 1$. 
On the other hand, if $|B_{\rm m} \sm A| = 1$, 
then $B_{\rm m} = A'$, so this corresponds to the edge
$e_{i+1}=u'v'$ added at step $i+1$ playing the role of an edge 
$e$ that creates the first $Y$-extension of $\pi$ 
(as $V$ is not a vertex degree variable).
Writing $\pi'$ for the stacking sequence obtained from $\pi$
by removing $\pi(1)$, and $V' = S^{\pi'}_{u'v'}$ for the
corresponding stacking variable based at $u'v'$ 
(which is open before we add $e_{i+1}$),
we can improve the above bounds to
$p_e \le 2x/q$ and $N_e \le V' \le 2v/y$, 
so $\Var_e \le 8 q^{-1}(t^{-1}v)^2$, which again suffices.

It remains to consider the changes due to closing some
open pair $\aA_x\aA_y = e \in \GG \sm J$
(which may be a rung or a stringer).
This is described by a structure 
where for some vertex $\gG$
we already have the edge $\aA_y \gG$ 
and then we add the edge $\aA_x \gG$.
There are two subcases according to whether $\gG$ 
belongs to $V_\GG$ or is a new vertex.
In both subcases, we consider $J' = J \cup \{\aA_y \gG\}$ 
and $\GG' = \GG \cup \{\aA_x \gG, \aA_y \gG\}$
on the vertex set $V'=V_\GG \cup \{\gG\}$
(which is $V_\GG$ if $\gG \in V_\GG$),
we let $A=\{\aA_u, \aA_v\}$, $A' = A \cup \{\aA_x,\gG\}$ 
and $S_{\rm m} = \min_{A' \sub B \sub V'} S^B_A = S^{B_{\rm m}}_A$, 
where all scalings are with respect to $(J',\GG')$.
As above, we estimate 
\[ p_e := \mb{P}[\DD_i V^e \ne 0] < L^{4|V'|} S_{\rm m}/q
\ \ \text{ and  } \ \ 
N_e < L^{8|V'|} S^{V'}_A/S_{\rm m},  \]
\[ \text{ so  } \ \  \Var_e < p_e N_e^2 
< L^{20|V'|} (S^{V'}_A)^2/(qS_{\rm m}).\]

Now consider the subcase $\gG \in V_\GG$.
We note that $S^{V'}_A \le v$.
As $S_{\rm m} \ge y$ we deduce
\[ N_e < L^{8|V_\GG|} v/y 
= ev \cdot L^{8|V_\GG|} (2t)^{-1} e \ \text{ and } \]
\[ \Var_e  < L^{20|V_\GG|} v^2/qy 
= n^{-3/2} (ev)^2 \cdot L^{20|V_\GG|} (2t)^{-1} e^2.\]
These bounds suffice unless $e(V)=0$,
in which case we obtain the required bounds
using the better bound $S^{V'}_A \le pv$,
where the factor of $p$ is due to the edge
$\aA_y \gG \in J' \sm J$.

It remains to consider the subcase $\gG \notin V_\GG$.
Then $\GG'$ is obtained from $\GG$ by adding a $Y$ extension 
on $\aA_x \aA_y$, so $S^{V'}_A = vy$. 
If $ S_{\rm m} > L^{40|V_\GG|} y^2$ (say)
then the above bounds are easily sufficient.
Estimating $S_{\rm m}$ vertex by vertex in the stacking order
we see that this holds if $|B_{\rm m } \sm A| \ge 3$ 
(when $ S_{\rm m} \ge y^3 \gg y^2$)
or if $|B_{\rm m } \sm A| = 2$ and not both steps 
from $A$ to $B_{\rm m}$ are $Y$ extensions
(this gives $ S_{\rm m} \ge xy \gg y^2$).

The remaining cases need more precise estimates
on $N_e$ and $\Var_e$ that avoid the 
polylogarithmic loss in the crude estimates above.
Consider the case that $|B_{\rm m } \sm A| = 2$
and $B_{\rm m}$ is obtained by two $Y$ extensions,
so $S_{\rm m} = y^2$.
Here we can use stacking variables to estimate $p_e$ and $N_e$, 
as $(A,B_{\rm m})$ induces the extension $S^{\pi(1) Y^I}_{uv}$, 
and $N_e \le S^{\pi'}_{\aA_y\aA_x}$, where $\pi = \pi(1) \pi'$. 
We have the better bounds $p_e < 2 S_{\rm m}/q = 2 y^2/q$ 
and $N_e < 2 S^{V'}_{A}/S_{\rm m} = 2 v/y$,
so $\Var_e < 8 v^2/ q = 8n^{-3/2} (ev)^2$, which suffices.

Now consider $ |B_{\rm m } \sm A| =1$,
so $ \alpha_x \in \{ \aA_u , \aA_v\}$
and $B_{\rm m} = \{ \aA_u, \aA_v, \gG\} $.
The extension from $A$ to $B_{\rm m}$ 
is an open degree, with scaling 
$S_{\rm m} = x_1 = \hat{q} n$,
so we estimate $p_e \le 2x_1/q = 2/n$.
To estimate $N_e$ we consider the extension 
$(A',J',\GG')$ in two steps, where in the first step
we add all vertices in the stacking order up to $\aA_y$,
and in the second step we add the remaining vertices.
Thus we bound $N_e \le \sum_{f \in V^1} V^2_f$,
where $V^1$ is a fan extension with base $A'$,
and $V^2_f$ is a stacking variable with base $f(\aA_x \aA_y)$.
The scalings $v_1$ and $v_2$ satisfy 
$v_1 v_2 = S^{V'}_{A'} = vy/x_1$.

If $V^1$ is controllable at time $t$
we obtain the required bounds 
from $N_e < 2v_1 \cdot 2v_2 = 4vy/x_1 = 8tn^{-1/2} v$
and $\Var_e < 2n^{-1}(4vy/x_1)^2 = 32t^2 n^{-2} v^2$.
Now suppose $V^1$ is not controllable at time $t$,
so $v_1 < n^{\dD'}$ by Remark \ref{rem:fan}.
If the fan has any non-base vertex besides $\aA_y$
then $v_1 \ge (\hat{q}n)^2 p^3 = (2t)^3 \hat{q}^2 n^{1/2}$,
giving $\hat{q} < t^{-3/2} n^{\dD'/2 - 1/4}$,
so $S_{\rm m} = \hat{q} n > L^{40|V_\GG|} y^2$,
and we have already completed the proof when this holds.
It remains to consider the case that the fan
is a single vertex extension from $A'$ to $\aA_y$.
Note that $v_1 \ge 1$ by definition of $B_{\rm m}$,
so $V^1 \le L^4 v_1$ (as $i<I$), giving
$N_e < L^4 v_1 \cdot 2v_2 = 2L^4 vy/x_1$;
this suffices by the same calculation
as when $V^1$ is controllable.
\end{proof}

\subsection{Tracking variables} \label{sub:stktrack}

Here we will recall and explain in more detail
the definition of the tracking variables
$\mc{T}V$ in Section \ref{deftrack}.
We also describe the pair decomposition 
of their one step changes. 
There will be two cases for $V = S^\pi_{uv}$
depending on the form of $\pi$.

\subsubsection{Standard tracking variables}

The first case, which we call \emph{standard}, is that
$ \pi( |\pi|-1) \neq O $ or $ \pi( |\pi|) \in \{O,E\}$. 
We write $ \pi = \pi^- \circ U $, 
where $U$ is the last element of $\pi$, 
and let \[ \mc{T} V = V^- \mc{T}U,
\text{ where } V^- = S_{uv}^{\pi^-}. \]
Note that this choice of $\mc{T}V$ isolates variations 
that are not caused by variations in $V^-$.

We say that a pair $e$ is \emph{terminal} if it belongs to $U$, 
i.e.\ it contains the final vertex of $V$; 
otherwise we say that $e$ is \emph{internal}.  We write 
\begin{equation}
\label{eq:breakdown}
\DD_i(\mc{T}V) = \DD_i(V^-) \mc{T}U + V^- \DD_i(\mc{T}U) 
= \sum_{e \in \GG \sm \GG[A]} \DD_i(\mc{T}V^e) + H_i(V),
\end{equation}
where similarly to \eqref{eq:conhi}
the higher-order correction term is
\begin{equation} \label{eq:stackhi}
H_i(V) = O(t^2+t^{-2}) n^{-3} v,
\end{equation}
and $\DD_i(\mc{T}V^e)$ is defined as follows.
\begin{enumerate}[(i)]
\item If $e$ is a terminal edge then 
$\DD_i(\mc{T}V^e) = \frac{\mc{T}V}{tn^{3/2}}$,
\item If $e$ is a terminal open pair then 
$\DD_i(\mc{T}V^e) = \frac{\DD_i(Q)}{Q}\mc{T}V$,
\item If $e$ is internal then 
$\DD_i(\mc{T}V^e) = \DD_i((V^-)^e) \mc{T}U$.
\end{enumerate}
Note that (iii) uses the definition of $\DD_i(V^e)$
above with $V^-$ in place of $V$.

\subsubsection{Partner tracking variables} 
\label{subsub:ptrack}

The other case, which we call \emph{partner}, 
is that $ \pi( |\pi|-1) = O $ and 
$ \pi( |\pi|) \notin \{O,E\}$.
We must have $|\pi| \ge 2$, and the vertices 
$\{\aA_{ |\pi|-2}, \aA_{ |\pi| -1}, \aA_{|\pi|} \}$
form a triangle in $V = S_{uv}^\pi$, in which at most
one pair is an edge and the other pairs are open.
We say that the open pair $ \aA_{ |\pi|-2} \aA_{ |\pi| -1} $ 
and the pair $ \aA_{ |\pi|-2} \aA_{ |\pi|} $ 
(which can be an edge or an open pair) are {\em partner pairs};
it is natural to treat them together because of the `symmetry' 
interchanging $\aA_{ |\pi| -1}$ and  $\aA_{|\pi|}$ 
(although it can be that one is an edge and the other is open).
The pair $ \aA_{ |\pi|-1} \aA_{ |\pi|} $ is still called terminal; 
its treatment is exactly as in (i)~and~(ii) above.

We emphasise that we do not consider partner pairs to be terminal, 
even though one of them uses the last vertex of $V$.
We also do not consider partner pairs to be internal.

We write $\pi = \pi^- O U$, $V^- = S_{uv}^{\pi^-}$, 
$ \beta = \aA_{|\pi|-2} $ and let
$ \mc{T}V = \sum_{f \in V^-} X_{f(\bB)} \hat{U}_f$, where
\[
\hat{U}_f =
\begin{cases}
X_{ f( \bB) } \cdot Qn^{-2} & \text{ if } U \in \{ X^I, X^O \} \\
X_{ f( \bB) } \cdot 2tn^{-1/2} & \text{ if } U = Y^I \\
Y_{ f( \bB)} \cdot Qn^{-2} & \text{ if } U = Y^O.
\end{cases}
\]
To interpret this formula, note that for each $f \in V^-$
we are approximating the number of choices for the three remaining 
edges as if they were independent events:\ for the partner pairs
we include a degree or open degree factor 
$Y_{f(\bB)}$ for an edge or $X_{f(\bB)}$ for an open pair,
and for the terminal pair we include a probability factor
of $Qn^{-2}$ for an open pair or $2tn^{-1/2}$ for an edge.

We unify the two definitions of $\mc{T}V$ by writing
\begin{equation} \label{eq:gentrack}
\mc{T}V = \sum_{f \in V^-} \mc{T}_f V, \text{ where }
\mc{T}_f V = \mc{T}U \text{ if } \pi = \pi^- U  \text{ or }
\mc{T}_f V = X_{f(\bB)} \hat{U}_f \text{ if } \pi = \pi^- O U.  
\end{equation}
We keep the same definition 
as in points (i) and (ii) above 
of $\DD_i(\mc{T}V^e)$ for terminal pairs,
and extend it to internal pairs 
(consistently with (iii) above)
and partner pairs as follows.
\begin{enumerate}[(i)]
\setcounter{enumi}{2}
\item 
If $e$ is an internal edge then 
$\DD_i(\mc{T}V^e) =  \sum_{f \in V^{-+}} I^e_f \mc{T}_f V$,
where $V^{-+}$ is obtained from $V^-$ 
by changing $e$ to an open pair and $I^e_f$ is 
the indicator of the event that $e_{i+1}=f(e)$. \\
If $e$ is an internal open pair then 
$\DD_i(\mc{T}V^e) =  \sum_{f \in V^-} I^e_f \mc{T}_f V$,
where $I^e_f$ is the indicator of the event 
that $e_{i+1}$ closes $f(e)$.
\item If $e$ is a partner edge then $\DD_i(\mc{T}V^e) = \sum_{ f \in V^-} \DD_i(Y_{ f( \beta)}) \cdot X_{f( \beta)} \cdot Qn^{-2}$. \\
If $e$ is a partner open pair then
$\DD_i(\mc{T}V^e) = \sum_{ f \in V^-} \DD_i(X_{ f( \beta)}) \hat{U}_f$.
\end{enumerate}

\subsubsection{Classification of pairs}

As in the controllable ensemble, we will verify 
the trend and boundedness hypotheses by considering separately 
$\DD_i(\mc{D}V^e) := \DD_i(V^e) - \DD_i(\mc{T}V^e)$
for each $e \in \GG \sm \GG[A]$. We will organise the trend hypothesis
by grouping together terms that use the same method of calculation,
so here we introduce some terminology to classify these terms.
We have met special cases of some of these terms earlier 
when we considered the global variables:\ 
again `simple' terms are those described 
by another variable in our ensemble,
and the `product' terms in the global variables
are analogous to the `internal' terms here.
We use the following notation:
\begin{itemize}
\item For any $y \le |\pi|$ we let $ \pi|_y$ denote the prefix of $\pi$ of length $y$. 
\item If the final symbol $\pi(|\pi|) \in \{X^I,X^O,Y^I,Y^O\}$ 
we let $\pi^o$ (the `opposite' variable) be obtained from $\pi$
by interchanging superscripts $I$ and $O$ in $\pi(|\pi|)$.
\end{itemize}
For our classification we use the same terms
internal, terminal and partner as above,
but we must pay special attention to the terminal open pairs,
which we divide into the following three subtypes
(recall that if a pair is not a rung we call it a stringer):
\begin{enumerate}[(a)]
\item
If $e$ is a rung and $\pi Y^I$ and $\pi Y^O$ are both $M$-bounded
we say that $e$ is \emph{simple}. \\
If $e$ is a stringer and $\pi^o Y^I$ and $\pi^o Y^O$ are both $M$-bounded
we say that $e$ is \emph{simple}.
\item
If $w( \pi) = 2M$ and $e$ is the terminal rung 
we say that $e$ is \emph{outer}. \\
If $ w( \pi ) = 2M-1 $, $\pi(|\pi|) = X^I$ 
and $e$ is the terminal stringer then we say that $e$ is \emph{outer}.
\item
If $e$ is not simple or outer we say that $e$ is a \emph{fan end} pair.
\end{enumerate}
To explain this classification, we note the following:
\begin{itemize}
\item Outer pairs are not simple, as adding $Y^O$ to any $\pi'$ 
with $w(\pi') = 2M$ gives a variable not in $\mc{S}_M$
(consider $\pi'=\pi$ if $e$ is the terminal rung 
or $\pi'=\pi^o$ if $e$ is the terminal stringer).
\item Fan end pairs are aptly named, as if there is a fan end pair
it follows from the definition of the $M$-bounded
stacking ensemble $\mc{S}_M$ (see Definition \ref{def:stack2})
that $\pi$ must end with an $(M-1)$-fan.
\end{itemize}

\subsection{Correction terms} \label{sub:correct}

Before starting on the main calculations for the trend hypothesis, 
here we will summarise various correction terms which are negligible
by comparison with the terms appearing in the variation equations.
Besides the higher-order corrections \eqref{eq:stackhi}
to changes in the tracking variable mentioned above,
we also have the following `injectivity' and `fidelity' corrections.

\begin{lemma}[Injectivity] \label{lem:inj}
Suppose $i<I$ and $V=X_{\phi,J,\GG}$ 
is a stacking variable 
or fan extension with $v \ge y$.
Then for any vertex $x \notin A$ (the base) 
there are $\wt{O}(t^{-1} e^2) v$ choices 
of $f \in V$ with $x \in \text{Im}(f)$.
\end{lemma}

\begin{proof}
Fix $a \in V_\GG \sm A$, let $A' = A \cup \{a\}$
and extend $\phi$ to $\phi'$ on $A$ by $\phi'(a)=x$.
It suffices to show that the stated bound
holds for $X_{\phi',J,\GG}$.
Fix $A' \sub B \sub V_\GG$ minimising $S^B_A$.
If $V$ is a stacking variable, then considering 
vertices one by one in the stacking order
we have $S^B_A \ge y$.
If $V$ is a fan then either $B=V_\GG$,
when $S^B_A = v \ge y$, or $B=A'$
(as in the proof of Lemma \ref{lem:backsies}),
so again $S^B_A \ge y$.
As $i<I$, by property (iv) of Definition \ref{def:I0}
the number of choices for $f$ is at most
$L^{4|V_\GG|} v/S^B_A$. 
The lemma follows as $y = 2te^{-2}$.
\end{proof}

\begin{lemma}[Fidelity] \label{lem:fid}
Suppose $i<I$ and $V=S^\pi_{uv}$ is good.
\begin{enumerate}[(i)]
\item There are $O(L^4 v)$ pairs $(f,xy)$ where $f \in V$
such that if $xy$ were the edge $e_{i+1}$ selected 
at step $i+1$ then at least two open pairs 
in $f$ would become closed,
\item Let $V^+$ be a stacking variable obtained 
from $V$ by changing some edge $e$ to an open pair. 
There are $\wt{O}(e^2 v^+)$ choices of $f \in V^+$ such that 
if $f(e)$ were the edge $e_{i+1}$ selected at step $i+1$ 
then some open pair in $f$ would become closed.
\end{enumerate}
\end{lemma}

\begin{proof}
Let $(uv,J,\GG)$ be the extension corresponding to $V$.

For (i), we first note that for each $f \in V$
there are only $O(1)$ choices of $xy \sub \text{Im}(f)$.
Any other $xy$ with the stated property must have 
one of its vertices in $\text{Im}(f)$, say $y$, 
and the open pairs in $f$ closed by $xy$
are of the form $ya$, $yb$ with $a,b$ in $\text{Im}(f)$
where $xa$, $xb$ are edges. As $i<I$, by property (iii) 
of Definition \ref{def:I0} the number of choices for $x$
given $f$ is at most $Z_{ab} < L^4$. This proves (i).

For (ii), note first that for such a configuration to exist
we must have $|\pi| \ge 2$, so $V$ has scaling $v \ge y^2$.
We consider the extension $(uv,J \sm e, \GG)$
corresponding to $V^+$ and any variable $V^*$ corresponding
to an extension $(uv,J^*,\GG)$ with $J^* = (J \sm e) \cup e'$ 
for some $e' \in \binom{V_\GG}{2} \sm \GG$.
It suffices to show $V^* = \wt{O}(e^2 v^+)$.

Note that $v^+ = \hat{q}p^{-1} v = (2te^2)^{-1} v$,
so $e^2 v^+ = (2t)^{-1} v > 1$, 
and $v^* = pv^+ =  \hat{q} v$.
Fix $uv \sub B \sub V_\GG$ minimising $S^B_{uv}$,
taking scalings with respect to $(uv,J^*,\GG)$.
If $e' \not\sub B$ or $B=uv$ then $S^B_{uv} \ge 1$,
as the scaling is the same as in $V^+$,
so by property (iv) of Definition \ref{def:I0}
we have $V^* = \wt{O}(v^*) = \wt{O}(e^2 v^+)$.
If $|B| \ge 4$ we have $S^B_{uv} \ge y^2$,
so $V^* = \wt{O}(v^+/y^2) = \wt{O}(e^2 v^+)$. 

The remaining case is that $|B|=3$ and $B = uv \cup e'$.
Write $B = \{u,v,\aA_j\}$. We cannot have $j=1$,
as $e' \notin \GG$ would then imply $\pi(1) \in \{O,E\}$,
so the assumption of the lemma could not hold:
selecting $f(e')$ as an edge for such $e'$ 
cannot close any other pair in $f$.
Thus $\aA_j$ is adjacent in $\GG$ to at most one of $u,v$,
so $S^B_{uv} \ge pn$, giving 
$V^* = \wt{O}(v^*/pn) = \wt{O}(e^2 v^+)$.
\end{proof}

\subsection{Creation}

Now we will estimate the one-step expected changes
$\mb{E} [ \DD_i(V^e) \mid \mc{F}_i ]$ 
for each $e \in J \sm J[A]$, 
according to the classification of pairs described above.
As for the other ensembles,
the error terms for creation are not as significant
as those for destruction, 
and the calculations do not require self-correction 
or use the fact that $V$ is in its critical window.
We do use $i_V \le i < I$.
Note that we do not include in these calculations
the fidelity corrections (see Lemma \ref{lem:fid}.ii).

\subsubsection{Terminal creation}

Suppose that $e$ is the terminal edge of $\pi$. 
Then $\pi( |\pi|)$ is $E$, $Y^I$ or $Y^O$, 
and if $ \pi( |\pi|) = Y^O$ then $ \pi( |\pi|-1) \neq O$ 
(otherwise $e$ would be partner).
Let $V^+$ be the variable obtained by changing $e$ to an open pair, 
i.e.\ replacing $Y$ by $X$ in $U = \pi( |\pi|)$. 
Then $\mb{E}[ \DD_i( V^e) \mid \mc{F}_i] = 2Q^{-1}V^+$.
For the tracking variable, we note that 
$\DD_i(\mc{T}V^e) = \frac{\mc{T}V}{tn^{3/2}} 
= 2Q^{-1} \mc{T}V^+$ (whether $V$ is standard or partner). 
As $v^+ = v \cdot \frac{ \hat{q} n^{1/2}}{ 2t}$
and $Q=(1+o(e))q$ for $i_V \le i < I$ we have
\begin{align*}
& \mb{E} [\DD_i(\mc{D}V^e) \mid \mc{F}_i]
 = \mb{E} [\DD_i(V^e) - \DD_i(\mc{T}V^e) \mid \mc{F}_i] 
 = 2Q^{-1} \mc{D}V^+ 
 = \pm  (1+o(e)) t^{-1} \dD_{V^+}  v n^{-3/2}. 
\end{align*}

\subsubsection{Partner creation}

Suppose that $e=\aA_x \aA_y$ with $x<y = |\pi|$ 
is the partner edge of $\pi$. We must have
$x=|\pi|-2$, $y=|\pi|-1$ and $\pi = \pi^- O Y^O$. 
In this case, we recall that the tracking variable is
$\mc{T}V = \sum_{f \in V^-} X_{f(\aA_x)} Y_{ f( \aA_x)} Qn^{-2}$,
where $V^- = S_{uv}^{\pi^-}$.
We let $V^+$ be obtained from $V$ by changing $e$ to an open pair.
Then $\mb{E} [\DD_i(V^e) \mid \mc{F}_i] = 2Q^{-1} V^+$.
We also recall that
$\mc{T}V^+ = \sum_{f \in V^-} X_{f(\aA_x)}^2 Qn^{-2}$
and $\DD_i(\mc{T}V^e) = \sum_{ f \in V^-} 
\DD_i(Y_{ f( \aA_x)}) \cdot X_{f( \aA_x)} \cdot Qn^{-2}$,
so $\mb{E} [\DD_i(\mc{T}V^e) \mid \mc{F}_i] = 2Q^{-1} \mc{T}V^+$.
Thus we obtain the same estimate as in terminal creation for
$\mb{E} [\DD_i(\mc{D}V^e) \mid \mc{F}_i]$.

Note that the definition of the tracking variables
isolates variations in $V$ from those in $V^-$,
which is crucial in this calculation:
we cannot afford the larger error term $\dD_{V^-}$.

\subsubsection{Internal creation}

Suppose that $e=\aA_x \aA_y$ with $ x<y < |\pi|$ 
is an internal edge of $\pi$ (which must be a stringer).
Let $V^+$ be obtained from $V$ by changing $e$ to an open pair.
Then $\mb{E}[ \DD_i( V^e) \mid \mc{F}_i] = 2Q^{-1}V^+$.
For the tracking variable,
we recall from \eqref{eq:gentrack} that
$\mc{T}V = \sum_{f \in V^-} \mc{T}_f V$ and
$\DD_i(\mc{T}V^e) =  \sum_{f \in V^{-+}} I^e_f \mc{T}_f V$,
where $V^{-+}$ is obtained from $V^-$ 
by changing $e$ to an open pair and $I^e_f$ is 
the indicator of the event that $e_{i+1}=f(e)$. 
Thus $\mb{E}[ \DD_i(\mc{T}V^e) \mid \mc{F}_i] = 
 \sum_{f \in V^{-+}} \mc{T}_f V =  2Q^{-1} \mc{T}V^+ $, 
so we obtain the same estimate for
$\mb{E} [\DD_i(\mc{D}V^e) \mid \mc{F}_i]$
as in terminal and partner creation.

As for partner creation, it is crucial that $\mc{T}V$
isolates variations in $V^-$ from this calculation.

\subsection{Destruction} \label{sec:destroy}

Now we will estimate the one-step expected changes
$\mb{E} [ \DD_i(V^e) \mid \mc{F}_i ]$ 
for each $e \in \GG \sm \GG[A]$,
according to the classification of pairs described above,
assuming that $V = S^\pi_{uv}$ is in its upper critical window,
so that $\mc{D}V > (f_V+g_V) v$.
As usual, the key point is that every open pair yields 
a self-correcting term of the form $ ( f_V + g_V) 8tvn^{-3/2} $.
We remark that the calculations for terminal open pairs
will be the source of the
most significant error terms in the variation equations.

\subsubsection{Simple destruction} \label{subsub:simple}

Let $ e=\aA_x \aA_y$ be a simple rung, 
i.e.\ the last rung of $\pi$
such that $\pi Y^I$ and $\pi Y^O$ 
both belong to $\mc{S}_M$.
Write $V^I = S_{uv}^{\pi Y^I}$ and $V^O = S_{uv}^{\pi Y^O}$. 
We have 
$$\mb{E}[ \DD_i(V^e) \mid \mc{F}_i] = 2Q^{-1}\sum_{f \in V} 
(Y_{ f(\alpha_x \alpha_y) } + Y_{ f(\alpha_y \alpha_x)} \pm O(1))
= 2Q^{-1}(V^I+V^O \pm O(v)).$$ 
Note that $\mc{T}V^I = \mc{T}V^O = 2 t Q n^{-3/2} V$
and $v^I=v^O=2t \hat{q} n^{1/2}v$.
Since $\DD_i(\mc{T}V^e) = \frac{\DD_i(Q)}{Q}\mc{T}V$,
recalling (\ref{eq:Qstar}) we have
\begin{align*}
& \mb{E} [\DD_i(\mc{D}V^e) \mid \mc{F}_i] 
= \mb{E} \left[ \DD_i( V^e) - \DD_i(\mc{T}V^e) \mid \mc{F}_i \right] \\
& = - 2Q^{-1}(V^I + V^O \pm O(V)) + (2+4SQ^{-1}) Q^{-1} \mc{T}V  \\
& = - 2Q^{-1}(\mc{T}V^I + \mc{T}V^O  \pm v^I\dD_{V^I} \pm v^O\dD_{V^O}) 
+ (8t n^{-3/2} \pm 4\dD_S s q^{-2}) \mc{T}V \pm O(v/q) \\
& = - 8tn^{-3/2} \mc{D}V  
\pm (1+o(1)) 8t (\dD_{V^I}/2 + \dD_{V^O}/2 + \dD_S) vn^{-3/2} \pm O(v/q) \\
& \le - (1+o(1))( f_V + g_V - \dD_{V^I}/2 
 - \dD_{V^O}/2 - \dD_S - O(t^{-1} e^2) ) 8t vn^{-3/2}.
\end{align*}
The same calculation applies if $e$ is a simple stringer 
(using $ \pi^o$ in place of $ \pi$).  
Note that the estimates for $V^I$ and $V^O$ are valid 
even before their activation steps by Lemma \ref{lem:beginning}.iv.
The appearance of their approximation errors
$ \dD_{V^I}$ and $ \dD_{V^O}$ in this calculation indicates
why we need these errors to decrease as we increase the
length of the stacking extensions (see Definition \ref{def:c}).

\subsubsection{Internal destruction}

Suppose that $e= \aA_x \aA_y$ with $ x <y < |\pi| $ is an 
internal open pair (note that we do not include partners here).
We let $W = S^{\pi'}_{uv}$ , where
$ \pi' = \pi |_{y} $ if $e$ is a rung
or $ \pi' = \pi |_{y}^o $ if $e$ is a stringer. 

For each $ f \in W $ let $F_{f, \pi}$ count 
{\em forward extensions} from $ f $ to copies of $V$, 
i.e.\ $F_{f, \pi} = X_{ f,J, \GG}$ with $f:A \to [n]$,
where $A = \{\aA_u,\aA_v, \dots, \aA_y \}$. 

We note that $F_{f, \pi}$ is closely approximated,
up to the injectivity correction from Lemma \ref{lem:inj},
by another variable $V^f_1=S^{\pi_1}_{f(e')}$ 
in the stacking variable, where $e'$ is the active rung 
at step $y$ and $ \pi |_y \circ \pi_1 = \pi $:
we have $F_{f, \pi} = V^f_1 + \wt{O}(t^{-1} e^2)v_1$, so
\[ V = \sum_{f \in W} F_{f,\pi} 
= \sum_{f \in W} \brac{ V^f_1 + \wt{O}(t^{-1} e^2) v_1 }. \]

For the tracking variable,
we recall from \eqref{eq:gentrack} that
$\mc{T}V = \sum_{f' \in V^-} \mc{T}_{f'} V$. Similarly to above, 
we define the forward extension $F_{f,\pi^-}$
from $f \in W$ to copies of $V^-$ and approximate it by
$F_{f,\pi^-} = V^f_2 + \wt{O}(t^{-1} e^2)v_2$,
where $V^f_2=S^{\pi_2}_{f(e')}$
and $ \pi |_y \circ \pi_2 = \pi^-$. Then
\begin{align} \label{eq:id1}
\mc{T}V & = \sum_{f \in W} \sum_{f' \in F_{f,\pi^-}} \mc{T}_{f'} V
= \sum_{f \in W} \brac{ \mc{T}V^f_1 + \wt{O}(t^{-1} e^2) v_1 },
\ \text{ so } \nonumber \\
\mc{D}V & = V - \mc{T}V 
= \sum_{f \in W} \brac{ \mc{D}V^f_1 + \wt{O}(t^{-1} e^2) v_1 }.
\end{align}
Similarly, writing $I^e_f$ for the indicator of the event 
that $e_{i+1}$ closes $f(e)$, noting that
$\DD_i(V^e) =  \sum_{f' \in V} I^e_{f'}
= \sum_{f \in W} I^e_f F_{f,\pi}$ and
$\DD_i(\mc{T}V^e) =  \sum_{f' \in V'} \mc{T}_{f'} V I^e_{f'}
 = \sum_{f \in W} I^e_f
  \sum_{f' \in F_{f,\pi^-}} \mc{T}_{f'} V$, we have
\begin{equation} \label{eq:id2}
\DD_i(\mc{D}V^e) = \DD_i(V^e) - \DD_i(\mc{T}V^e) 
= - \sum_{f \in W} \brac{ \mc{D}V^f_1 + \wt{O}(t^{-1} e^2) v_1 } I^e_f. 
\end{equation}
We also note from $i_V \le i < I$ and \eqref{eq:e*} that
\begin{equation} \label{eq:id3}
 W^* :=  \sum_{f \in W} (Y_{f(xy)} + Y_{f(yx)} ) 
= (1\pm \dD_Y)2Wy. 
\end{equation}
Taking expectations of \eqref{eq:id2} and applying 
Lemma~\ref{lem:pat} (the Product Lemma) we have
\begin{align*}
& \mb{E}  [\DD_i(\mc{D}V^e) \mid \mc{F}_i] 
 = - 2Q^{-1} \sum_{f \in W} (Y_{f(xy)} + Y_{f(yx)} \pm O(1)) 
 (\mc{D}V^f_1 + \wt{O}(t^{-1} e^2) v_1 )\\
& =  - \frac{2 W^* \mc{D}V}{ QW}  
 \pm O( Q^{-1}W \cdot y \dD_Y \cdot v_1 \dD_{V_1} )  
  \pm \wt{O}(t^{-1} e^2) vtn^{-3/2} \\
& \le - (1+o(1))( f_V + g_V -  O(\dD_{V_1} \dD_Y ) 
- \wt{O}(t^{-1} e^2) ) 8tvn^{-3/2} \\
& \le - (1+o(1))( f_V + g_V - \wt{O}(t^{-1}  e^2) ) 8tvn^{-3/2}.
\end{align*}
We used the scaling identities $v = wv_1$ and $v^- = wv_2$.
In the application of the Product Lemma 
on the third line we used \eqref{eq:id1} and \eqref{eq:id3}.

The last line exhibits the same crucial feature
that we saw earlier in product destruction for global variables:
the $O(\dD_{V_1} \dD_Y )$ term is negligible,
as for small $t$ the $t^{-e(V)}$ factor in $g_V$
dominates  the $t^{-e(V_1)}$ factor in $\dD_{V_1}$,
and the $\dD_Y$ factor compensates for the larger
polylogarithmic factor in $\dD_{V_1}$.

\subsubsection{Partner destruction}

Here we consider a partner open pair $e = \aA_x \aA_y$ with $x<y$.
Recall that this means $\pi(|\pi|-1) = O$, 
$ \pi( |\pi|) \notin \{O,E\}$, 
$x=|\pi|-2$ and $y \in \{ |\pi|-1, |\pi| \}$.
Let $\pi = \pi^- O U$ and $V^- = S_{uv}^{\pi^-}$.
Recall from Section \ref{subsub:ptrack} that 
$\mc{T}V = \sum_{f \in V^-} X_{f(\aA_x)} \hat{U}_f$, where
\[
\hat{U}_f =
\begin{cases}
X_{ f( \bB) } \cdot Qn^{-2} & \text{ if } U \in \{ X^I, X^O \} \\
X_{ f( \bB) } \cdot 2tn^{-1/2} & \text{ if } U = Y^I \\
Y_{ f( \bB)} \cdot Qn^{-2} & \text{ if } U = Y^O.
\end{cases}
\]
Note that if both partner pairs are open
then the definitions of $V$ and $\mc{T}V$
are symmetric under swapping the labels 
of $\aA_{|\pi|-1}$ and $\aA_{|\pi|}$,
so we can assume $y=|\pi|-1$.
This would not have been true 
with our usual practice of using the
tracking variable $\mc{T}U$ instead of $\hat{U}_f$; 
the point is that we want the self-correction 
in this section to apply to both partner pairs.
(This property of $\mc{T}V$ for partners is also
essential for our treatment of fan extensions
in Section \ref{subsub:fan}.)
On the other hand, we can think of $\hat{U}_f$
as a proxy for $\mc{T}U$ as it is a reasonable
approximation to $U$:\ as $i<I$ we have
\[ \hat{\mc{D}}U_{f(\aA_x) z} 
:= U_{f(\aA_x) z} - \hat{U}_f 
= O((\dD_U + \dD_{\hat{U}})u), \]
where $\dD_{\hat{U}}=\dD_{Y_1}$ if $U=Y^O$,
otherwise $\dD_{\hat{U}}=\dD_{X_1}$.
Writing $u$ for the scaling of $U$, we have
\begin{align} \label{eq:pd1}
 V & = \sum_{f \in V^-} 
\sum_{z \in X_{f(\aA_x)} \sm \text{Im}(f)} 
(U_{f(\aA_x) z} + O(1)), \ \ \text{ so } \nonumber \\
\mc{D}V & = V - \mc{T}V = \sum_{f \in V^-}
\Big( O(u+x_1) + \sum_{z \in X_{f(\aA_x)}} 
\hat{\mc{D}}U_{f(\aA_x) z} \Big). 
\end{align}
Recalling $\DD_i(\mc{T}V^e) = 
 \sum_{ f \in V^-} \DD_i(X_{ f( \aA_x)}) \hat{U}_f$
and writing $I_{fz}$ for the indicator of the event
that $e_{i+1}$ closes $f(\aA_x) z$, we have
\begin{equation} \label{eq:pd2}
\DD_i(\mc{D}V^e) 
= - \sum_{f \in V^-} \Big( 
\sum_{z \in X_{f(\aA_x)}} 
(\hat{\mc{D}}U_{f(\aA_x) z} \pm O(1)) I_{fz} 
- \sum_{z \in \text{Im(f)}} O(u) I_{fz} \Big) . 
\end{equation}
Also, writing $W = \sum_{f \in V^-} X_{f(\aA_x)}$,
from $i<I$ and \eqref{eq:e*} we have
\begin{equation} \label{eq:pd3}
W^* :=  \sum_{f \in W} (Y_{f(xy)} + Y_{f(yx)} ) 
= (1\pm \dD_Y)2Wy. 
\end{equation}
Taking expectations of \eqref{eq:pd2} and applying 
Lemma~\ref{lem:pat} (the Product Lemma) we have
\begin{align*}
& \mb{E}  [\DD_i(\mc{D}V^e) \mid \mc{F}_i] 
 = \mb{E} [\DD_i(V^e) - \DD_i(\mc{T}V^e) \mid \mc{F}_i] \\
& = - 2Q^{-1} \Big[
\sum_{f \in V^-}  \sum_{z \in X_{ f( \aA_x)}} 
(Y_{f(\aA_x)z}+Y_{z f(\aA_x)} \pm O(1)) \hat{\mc{D}}U_{f(\aA_x) z} 
\Big] \pm O(x_1+u) v^- y/q  \\
& = - \frac{2 W^* \mc{D}V }{QW}
\pm O(   w q^{-1} \cdot y \dD_Y \cdot u ( \dD_U + \dD_{\hat{U}} ))  
\pm \wt{O}(e^2) vn^{-3/2} \\
& =  - (1+o(1))\mc{D}V \cdot 8t n^{-3/2} 
\pm ( O( \dD_Y  \dD_U)  + O(\dD_Y \dD_{\hat{U}}) + \wt{O}(t^{-1} e^2) ) 
tv n^{-3/2}   \\
& \le - (1+o(1))( f_V + g_V - \wt{O}(t^{-1} e^2)  ) 8tvn^{-3/2}.
\end{align*}
In the application of the Product Lemma on the third line 
we used \eqref{eq:pd1} and \eqref{eq:pd3}.
The last line is valid because the product errors
$\dD_Y  \dD_U$ and  $\dD_Y \dD_{\hat{U}}$ are $o(\dD_V)$;
this holds as $\dD_Y$ has sublogarithmic decay
and the power of $t^{-1}$ in $g_V$ is at least 
those in each of $g_U$ and $g_{\hat{U}}$.

\subsubsection{Outer destruction} \label{subsub:outer}


Let $ e=\aA_x \aA_y$ be an outer rung, 
i.e.\ $e$ is terminal and $ w( \pi) = 2M$.  
We cannot apply the same analysis as for simple destructions,
as $\pi Y^O \notin \mc{S}_M$, so instead we use backward extensions,
which are controllable by Lemma~\ref{lem:backsies}.

We let $Q'$ be the set of $ab \in Q$ 
such that $ \{a,b\} \cap \{u,v\} = \es$,
and for each $ab \in Q'$ let $B_{uvab} $ 
count backward extensions that map 
the last rung of $S^\pi_{uv}$ to the open pair $ab$;
thus $V = \sum_{ab \in Q'} B_{uvab}$.

Let $b$ and $\dD_B$ be the scaling and error function
for the backward extension. Then $b = v/q$ 
and $\dD_B = O(1+t^{-e(V)}) e^\dD$.
Note also that $Q-Q'=O(x_1)$ and 
$S = \sum_{ab \in Q} Y_{ab} 
= O(x_1 y) + \sum_{ab \in Q'} Y_{ab}$.
Recalling \eqref{eq:Qstar} and
$\DD_i(\mc{T}V^e) = \frac{\DD_i Q}{Q} \mc{T}V$,
by the Product Lemma (Lemma \ref{lem:pat}) we have
\begin{align*}
& \mb{E} [\DD_i(\mc{D}V^e) \mid \mc{F}_i] 
= \mb{E} [\DD_i(V^e) - \DD_i(\mc{T}V^e) \mid \mc{F}_i] \\
& = - 2Q^{-1} \sum_{ab \in Q' } X_{uvab} (Y_{ab}+Y_{ba} \pm O(1)) 
+ \frac{4S +2Q}{Q^2} \mc{T}V \\
& = - \frac{ 2 V}{ Q Q' } \Big( 2 S - O(yx_1) \Big) 
\pm \frac{4 Q'}{Q} \cdot y \dD_Y \cdot b \dD_B  
+ \frac{4S +2Q}{Q^2} \mc{T}V   \pm O(v/q) \\
& = - \frac{ 4S}{ Q^2} \mc{D}V  
\pm O(  \dD_Y \dD_B  + t^{-1}e^2)  tv n^{-3/2} \\
& \le - (1+o(1))( f_V + g_V 
- O({\bf1}_{ e(V)=0 } \dD_Y e^\dD) - O(t^{-1} e^2) ) 8tvn^{-3/2}.
\end{align*}
The last line used $\dD_Y \dD_B = o(g_V)$ when $e(V) \ge 1$,
which holds as $\dD_B$ has sublogarithmic decay (using $i<I$)
and the power of $t^{-1}$ in $V$ is at least that in $Y$.
Thus this term is negligible unless $e(V)=0$,
in which case we can substitute $\dD_B = O(e^\dD)$.

Note that the same estimate applies 
if $e$ is an outer stringer 
(using $ \pi^o$ in place of $ \pi$). 

\subsubsection{Fan end destruction} \label{subsub:fan}

For destruction, it remains to consider the case 
when $e=\aA_x \aA_y$ is a fan end,
i.e.\ $\pi$ ends with an $(M-1)$-fan and $e$ is the terminal rung.
We cannot apply the analysis from simple destructions,
as $\pi Y^I \notin \mc{S}_M$, so instead we use controllability
of fan extensions (see Lemma \ref{lem:backsies}).

Let $V^* = S^{\pi^*}_{uv}$, where $\pi^* = \pi|_{x} O$,
i.e.\ $V^*$ is obtained from $V$ by deleting all of the fan
except its first pair $\aA_{x-1}\aA_x$ 
and last pair $e=\aA_x \aA_y$.
Then $V = \sum_{f \in V^*} F_{f, \pi}$,
where $F_{f, \pi}$ denotes the forward extension,
which is closely approximated by the $(M-1)$-fan 
extension $V_1$ from $f(\aA_{x-1}\aA_x\aA_y)$;
by Lemma \ref{lem:inj} we have
$F_{f, \pi} = V_1 + O(t^{-1} e^2) v_1$.
We recall that $V_1$ is controllable
by Lemma~\ref{lem:backsies}.

In the calculation below for 
$\mb{E} [\DD_i(\mc{D}V^e) \mid \mc{F}_i]$
we require the following estimate for the expected closures
of the terminal open pair $\aA_x \aA_y$ in copies of $V^*$,
which are described by 
\[ V^*_{\text{close}} := 2Q^{-1} \sum_{f \in V^*} 
(Y_{f(\aA_x \aA_y)} + Y_{f(\aA_x \aA_y)}).\] 

\begin{lemma} \label{lem:Vclose}
Let $\pi^\bullet = \pi|_{x} E$,
$V^x=S^{\pi|_x}_{uv}$,
$V^\bullet=S^{\pi^\bullet}_{uv}$, 
$V^{*I} = S^{\pi^* Y^I}_{uv}$ and 
$V^{*O} = S^{\pi^* Y^O}_{uv}$. Then 
\[ V^*_{\text{close}} = 8tn^{-3/2} \Big[
V^* \pm (1+o(1)) \big( \dD_{ V^*} +  \dD_{V^\bullet} 
+ \dD_{V^{*I}} +  \dD_{V^{*O}} 
+ O(\dD_{X_1}+\dD_{Y_1})\dD_{X_1} \big) v^*/2 \Big].\]
\end{lemma}

\begin{proof}
First we emphasize that all variables defined in the statement
of the lemma are in the stacking ensemble,
and this fact makes crucial use
of Definitions \ref{def:stack1} and \ref{def:stack2}.
The point is that as non-terminal $OX^I$ and $OY^I$ 
are forbidden, the fan must start with
$ \pi(x+1) \in \{ X^O, Y^O \}$,
and also $w(\pi) \le 2M-1$ as we do not
allow a strict subsequence of weight $2M$,
so $ w(\pi|_x) \le w(\pi) - 1 \le 2M-2$.   
Now
\begin{align*}
& \sum_{f \in V^*} 
(Y_{f(\aA_x \aA_y)} + Y_{f(\aA_x \aA_y)} \pm O(1))
= V^{*I} + V^{*O} \pm O(v^*) \\
& = \mc{T}V^{*I} + \mc{T}V^{*O}
\pm (\dD_{V^{I*}}y  +  \dD_{V^{*I}}y + O(1))v^*, 
\end{align*} 
where, as $V^{*I}$ and $V^{*O}$ are both partner variables, 
by Lemma \ref{lem:pat} we have
\begin{align*}
\mc{T}V^{*I} & = \sum_{f \in V^x} X_{ f(\aA_x) }^2 \cdot 2tn^{-1/2} 
=  2tn^{-1/2} \cdot V^* V^* / V^x 
\pm O \big( tn^{-1/2} v^x ( x_1 \dD_{X_1})^2 \big)
\text{ and }\\
\mc{T}V^{*O} & = \sum_{f \in V^x} X_{ f( \aA_x) } Y_{ f( \aA_x)} \cdot Qn^{-2}
=  Q n^{-2} \cdot V^* V^\bullet / V^x 
\pm O \big( \hat{q} v^x ( x_1 \dD_{X_1}) ( y_1 \dD_{Y_1}) \big).
\end{align*}
The lemma now follows from
$V^* = \mc{T}V^* \pm \dD_{V^*} v^*$ and
$V^\bullet = \mc{T}V^\bullet \pm \dD_{V^\bullet} v^\bullet$,
where $\mc{T}V^* = Qn^{-1} V^x$
and $\mc{T}V^\bullet = 2tn^{1/2} V^x$, 
so $V^x$ cancels (this is crucial to avoid
a larger $\dD_{V^x}$ error term).
\end{proof}

Now recalling $F_{f, \pi} = V_1 + O(t^{-1} e^2) v_1$,
using $\DD_i(\mc{T}V^e) = \frac{\DD_i Q}{Q} \mc{T}V$
and  \eqref{eq:Qstar}, by Lemma \ref{lem:pat} 
\begin{align*}
& \mb{E} [\DD_i(\mc{D}V^e) \mid \mc{F}_i] 
 = - 2Q^{-1} \sum_{f \in V^*} 
 (Y_{f(\aA_x \aA_y)} + Y_{f(\aA_x \aA_y)} \pm O(1)) F_{f,\pi} 
 + \frac{4S + 2Q}{Q^2} \mc{T}V  \\
& = - V^*_{\text{close}} V/V^*   \pm O(t^{-1}e^2)v^* v_1 y/q
\pm O(  v^* q^{-1} \cdot y  \dD_Y \cdot v_1 \dD_{V_1} )  
+ \frac{4S}{Q^2} \mc{T}V \pm O(v/q)\\
& = -8tn^{-3/2} V +  (1 + (1+o(1))\dD_S) 8tn^{-3/2} \mc{T}V \\
&  \qquad \pm (1+o(1)) \tfrac{1}{2}
 \big( \dD_{ V^*} +  \dD_{V^\bullet} 
+ \dD_{V^{*I}} +  \dD_{V^{*O}} 
+ O(\dD_Y \dD_{V_1} + t^{-1}e^2 + (\dD_{X_1}+\dD_{Y_1})\dD_{X_1} ) 
\big)  8tvn^{-3/2} \\
& \le - (1+o(1)) \left( f_V + g_V 
 - \tfrac{1}{2}(\dD_{V^*}+\dD_{V^\bullet}
    +\dD_{V^{*I}}+\dD_{V^{*O}}) - \dD_S 
 - O(t^{-1} e^2) - O(  \dD_Y e^\dD )  \right) 8tv n^{-3/2}.
\end{align*}
In the third line we applied Lemma \ref{lem:Vclose}.
In the last line, similarly to the case of outer destruction, 
we note that  $(\dD_{X_1}+\dD_{Y_1})\dD_{X_1} = o(\dD_V)$,
as $\dD_{X_1} = \wt{O}(\dD_V)$ and 
$\dD_{X_1}+\dD_{Y_1}$ has sublogarithmic decay.
Similarly, if $e(V) \ge 1$ then $\dD_Y = \wt{O}(\dD_V)$
and $\dD_{V_1}$ has sublogarithmic decay, 
so $\dD_Y \dD_{V_1} =  o(\dD_V)$.
Thus the only product error is
$O(\dD_Y \dD_{V_1}) = O(  \dD_Y e^\dD )$ when $e(V)=0$.

\subsection{Trend hypothesis and variation equations}

Now we combine all the estimates in this section
to verify the trend hypothesis, i.e.\ that 
if $V$  is in its upper critical window then
$\mc{Z}V = \mc{D}V - \dD_V v$ forms a supermartingale,
given the choice of constants $c_V$
made in Definition \ref{def:c}.

\begin{lemma} \label{lem:stktrend}
If $i_V \le i < I$ and $\mc{D}V > (f_V+g_V) v$ 
then $\mb{E} [ \DD_i \mc{Z}V \mid \mc{F}_i]  \le 0$.
\end{lemma}

\begin{proof}
Throughout the proof we will measure expected changes
using the `yard stick' $8tvn^{-3/2}$,
which is an approximation for the expected change in $V$
due to destruction by some fixed open pair.
Recall that we decompose the one-step change 
in $V = X_{\phi,J,\GG}$ by its pairs $e$ as
\[ \DD_i(V) = \sum_{e \in \GG \sm \GG[A]} \DD_i(V^e) \pm F_i(V),\]
where $F_i(V)$ is a fidelity correction, 
which by Lemma \ref{lem:fid} satisfies
\[ \mb{E}[F_i(V) \mid \mc{F}_i ] 
= O(L^4 v/q) + \wt{O}(e^2 v^+/q)
= (t^{-1} + t^{-2} 1_{e(V)>0}) \wt{O}(e^2) \cdot tvn^{-3/2}. \]
Recall also that we decompose the one-step change
in the tracking variable as
\[ \DD_i(\mc{T}V)
= \sum_{e \in \GG \sm \GG[A]} \DD_i(\mc{T}V^e) + H_i(V),\]
where the higher-order correction term is
\[ H_i(V) = O(t^2+t^{-2}) n^{-3} v
 = O(n^{-5/4}) \cdot tvn^{-3/2} 
 \text{ for } n^{-1/4} \le t = O(L).\]
Besides the fidelity and higher-order terms,
the remaining contributions to
$ \mb{E} [\DD_i(\mc{D}V) \mid \mc{F}_i] 
= \mb{E} [\DD_i(V) - \DD_i(\mc{T}V) \mid \mc{F}_i] $
are obtained by summing
$ \mb{E} [\DD_i(\mc{D}V^e) \mid \mc{F}_i] 
= \mb{E} [\DD_i(V^e) - \DD_i(\mc{T}V^e) \mid \mc{F}_i]$
over all $e \in \GG \sm \GG[A]$.

There are $e(V)$ edges
each giving a creation term of
\[ \pm  (1+o(e)) t^{-1} \dD_{V^+}  v n^{-3/2}
= (1+o(e)) \tfrac{\dD_{V^+}}{8t^2} \cdot 8t v n^{-3/2} .\]
 
There are $o(V)$ open pairs
each giving a destruction term in which
the main term is a self-correction term of
\[ - (1+o(1)(f_V + g_V) 8tvn^{-3/2}.\]
For open pairs that are partner or internal
the only other error term is
$\wt{O}(t^{-1} e^2) \cdot tvn^{-3/2}$,
which we can absorb into the fidelity term.
The terminal open pairs (of which there are one or two)
contribute an additional error term, 
depending on the form of $\pi$,
which we denote by $\dD_{{\rm add}} \cdot 8tvn^{-3/2}$.

We claim the following bound:
\[ |\dD_{\rm add}| \le 0.49\dD_V   + O(  \dD_Y e^\dD ).  \]
To see this, we first suppose $\pi \ne O$
and consider each of the 
three types of terminal open pair.
 \begin{itemize}
\item
The only contribution to  $ \dD_{\rm add} $ 
from an outer open pair is $O(  \dD_Y e^\dD ) $.
\item
The contribution to  $ \dD_{\rm add} $ from a simple open pair 
is $  (1 + o(1))(\dD_{V^I}/2 + \dD_{V^O}/2 + \dD_S) $.
We can absorb $ \dD_S$
into the $O(\dD_Y e^\dD) $ term.
From Definition \ref{def:c} we have
\begin{equation}
\label{eq:simplecond}
c_{V^O}  = c_{V^I} = c_V/9, 
\end{equation} 
so $ \dD_{V^I}/2 + \dD_{V^O}/2  \le \dD_V/9 $, 
and we can bound this contribution to  $ \dD_{\rm add} $  
by $\dD_V/8 + O(  \dD_Y e^\dD ) $.
\item
The contribution to  $ \dD_{\rm add} $ from a fan end open pair is
\[ (1+o(1))( \tfrac{1}{2}(\dD_{V^*}+\dD_{V^\bullet}+\dD_{V^{*I}}+\dD_{V^{*O}}) + \dD_S ) + O(  \dD_Y e^\dD ) .\]
Again $ \dD_S = O(  \dD_Y e^\dD )$.
The sequences defining $V^*$ and $V^\bullet$ 
each have $M-1$ fewer symbols than $\pi$,
but this is compensated for by an additional `O' or `E'.
Thus Definition \ref{def:c} gives
\begin{align}
c_{V^*} & = c_{V^\bullet} = c_V/9 , \text{ and }
\label{eq:fanendcond1} \\
c_{V^{*I}} & = c_{V^{*O}} = c_V/81. 
\label{eq:fanendcond2}
\end{align}
Thus $ \tfrac{1}{2}(\dD_{V^*}+\dD_{V^\bullet}+\dD_{V^{*I}}+\dD_{V^{*O}})  
\le \dD_V/9 + \dD_V/ 81 $,
so we can bound this contribution to  $ \dD_{\rm add} $  
by $\dD_V/8 + O(  \dD_Y e^\dD ) $.
\end{itemize}
As $ V $ can have at most two terminal open pairs,
this proves the claim when $\pi \ne O$. 
If $\pi=O$ then the only contribution 
is from the simple open pair;
recalling the adjustment in Definition \ref{def:c}
we have $c_{V^O}  = c_{V^I} = 2.2c_V/9$,
so the claim also holds in this case. 

Combining all the estimates so far gives
\begin{align*}
\frac{\mb{E}[ \DD_i(\mc{D}V) \mid \mc{F}_i]}{8tvn^{3/2}} 
& \le - (1+o(1)) o(V) (f_V + g_V)
 + (1+o(e)) e(V) \frac{\dD_{V^+}}{8t^2} \\
& \quad \quad  + 0.49\dD_V   + O(  \dD_Y e^\dD )
 + (t^{-1} + t^{-2} 1_{e(V)>0}) \wt{O}(e^2).  
\end{align*}
By Lemma \ref{lem:seconds} we have
\[ \frac{\DD_i(v\dD_V)}{8tvn^{3/2}} 
\ge \brac{ \tfrac{e(V)}{8t^2} - o(V) + O(t^{-1}n^{-1}) } \dD_V 
+  \brac{ 4t\dD_V   + (\tT'/\tT - e(V)t^{-1}) 2g_V }/8t. \]
By Definition \ref{def:c},
as $V$ is not a vertex degree we have $c_V = c_{V^+}$,
so as in the proof of Lemma \ref{lem:contrend}
we have $\dD_{V^+}-\dD_V = 2(g_{V^+}-g_V)$  
and $g_{V^+} \le 2t g_V$ 
(with no $ V^+$ term if $e(V)=0$).
Thus
\begin{align*}
& \frac{\mb{E}[ \DD_i(\mc{Z}V) \mid \mc{F}_i ]}{8tvn^{-3/2}}
= \frac{\mb{E}[ \DD_i(\mc{D}V) \mid \mc{F}_i]}{8tvn^{3/2}} 
- \frac{\DD_i(v\dD_V)}{8tvn^{3/2}} \\
& \le \tfrac{e(V)}{8t^2} \cdot 2(g_{V^+}-g_V) 
 + o(V) g_V + 0.49(f_V + 2g_V)   - f_V/2  
- (\tfrac{ \tT'/\tT }{8t} - \tfrac{e(V)}{8t^2} + \tfrac{1}{2})\cdot 2g_V   \\
& \quad + O(   \dD_Y e^{\dD}) 
 + (t^{-1} + t^{-2} 1_{e(V)>0}) \wt{O}(e^2)  
 + o( \dD_{V^+} e t^{-2}) + o(\dD_V) \\
& \le g_V \left[ o(V)  +  \tfrac{e(V)}{2t}  - \tfrac{ \tT'/\tT }{ 4t} 
- \tfrac{1}{50} \right] -  \tfrac{ f_V}{100}
+ O(   \dD_Y e^{\dD}) 
 + (t^{-1} + t^{-2} 1_{e(V)>0}) \wt{O}(e^2)  
 + o( \dD_{V^+} e t^{-2}) + o(\dD_V).
\end{align*}
To conclude the proof, it remains to check that 
this final expression is negative. This holds as
$ - g_V \tT'/( 4t\tT ) $ dominates when  $  g_V/t > f_V$ 
and $ - f_V/100 $ dominates otherwise.
Here we recall that  $ \tT'/\tT = K > M^ 6$ for $ t <1$,
and also use the later activation step
(see Definition \ref{def:active}) for the case $e(V)=1$
to see that the $t^{-2} 1_{e(V)>0} \wt{O}(e^2)$
term is negligible.
\end{proof}

Having verified the trend and boundedness hypotheses, 
Theorem \ref{stacking} now follows from
Lemmas \ref{th+bh} and \ref{lem:beginning}.

\section{Independence number and upper bound} \label{sec:indep}

In this section we prove Theorem~\ref{indep} 
on the independence number
and establish the upper bound that completes 
the proof of Theorem~\ref{edges}
on the size of the final graph in the process.  
We will use union bound arguments that take advantage
of our tight control of the evolution of key parameters 
until the process is very near its end.  

We start by giving an intuitive overview
of these arguments as applied to the independence number.  
Suppose we wish to estimate the probability that some set $K$ 
of $\TT(\sqrt{ n \log n})$ vertices is independent.
At any step $i$, with corresponding time $t=in^{-3/2}$,
we would expect that $K$ contains $\approx \hat{q}(t) |K|^2 $ 
open ordered pairs. The total number of open pairs at step $i$
is $Q(i) \approx q(t) = \hat{q}(t) n^2$,
so the probability that $K$ remains
independent throughout the period
in which we track the process should be 
roughly $( 1 - |K|^2/n^2)^{i_{max}}$.
If this were true, we could estimate $Pr( \alpha(G) > k )$ by
$$\tbinom{n}{k} \left( 1 - k^2/n^2 \right)^{i_{max}}
< \exp( k\log \tfrac{en}{k} - i_{max} k^2/n^2 ),$$
which is $o(1)$ for $k > (1+o(1))\sqrt{2n\log n}$,
as required to prove Theorem~\ref{indep}.  

However, it is {\em not} true that every such $K$ 
has $\approx \hat{q}|K|^2$ open ordered pairs;
indeed, if $K$ has a large intersection with the neighbourhood 
of some vertex then $K$ contains significantly fewer open pairs.
Thus we require a much more delicate union bound calculation 
that takes into account the way in
which vertex neighbourhoods intersect $K$.

We stress that throughout this section we assume $ I > i_{max}$.  
Under this assumption, if $i \le i_{max}$ 
the good event $ \mc{G}_i$ holds and every good $V$ in the three ensembles satifies $ |V - \mc{T}V| \le \delta_V v$.  This
assumption is valid as the events in the union 
we define are all intersected with the event $ I > i_{max} $.  
Formally speaking, in Section~\ref{sec:pfindep} 
we bound the probability of the event that $ I > i_{max}$ and the 
independence number of $ G( i_{max})$  is large, 
and in Section~\ref{sec:upper} we bound the probability that 
$I > i_{max}$ and the maximum degree has the potential 
to become large in the steps that follow $ i_{max}$.

We also stress that throughout the section
`neighbour' means `neighbour in $G(i_{max})$'
and `$N(x)$' means `$N_{G(i_{max})}(x)$'.

To lighten notation in our calculations, we introduce
the following notation for the number of steps
in which we track the process and the deterministic prediction 
for the vertex degrees:
\begin{equation} \label{eq:m+d}
m = i_{max} = \tfrac{1}{2} \sqrt{1/2 - \eps}\ n^{3/2} (\log n)^{1/2}
\ \ \text{ and } \ \ 
d = 2t_{max}\sqrt{n} = 2m/n 
= \sqrt{(1/2 - \eps)n\log n}.
\end{equation}
In the course of the proof, we will control various
polylogarithmic factors using absolute constants 
\begin{equation*} \label{eq:agb}
0< \alpha < \gamma <  \beta.  
\end{equation*}
To clarify the role of these constants 
we will not substitute actual values,
but for concreteness we note that we could let
$ \alpha = 25, \gamma = 50, \beta = 600 $. 
When these polylog factors are unimportant we 
will use `tilde' notation as before:\ recall that 
$f(n) = \wt{O}(g(n))$ and $g(n) = \wt{\OO}(f(n))$ 
mean that $f(n) \le (\log n)^A g(n)$ for some absolute constant $A$.

Our proofs require some preliminary facts 
established in Section~\ref{sec:pre} 
(these are mostly density estimates for edges and open pairs).
We prove Theorem~\ref{indep} in Section~\ref{sec:pfindep},
and then apply a similar (and easier) argument 
in Section~\ref{sec:upper} to prove Theorem~\ref{edges}.

\subsection{Preliminaries}
\label{sec:pre}

This subsection contains some density estimates
for edges and open pairs, and also some more intricate
configurations that will play a crucial role 
in the argument in Section~\ref{sec:pfindep}.
These estimates will be obtained
from the critical interval method 
as described in Section~\ref{sec:strategy}.  
We start with an observation that will be used many times 
in this section to estimate the one-step variances in some
extension variable $V=X_{\phi,J,\GG}$ due to destruction. 
This will be applied as in Section~\ref{sec:decomp} 
to bound the one-step conditional variance 
$ \Var_V(i) = \Var( \mc{Z}V(i)  \mid \mc{F}_{i-1} )$
via a sum over pairs $e$
in the configuration of the change in $ \mc{Z}V$
due to the change of status of $ f(e)$.
Thus if $e=uv$ is an open pair in this configuration 
we want to estimate the one-step variance $\Var_e$ 
due to closing $f(e)$.

\begin{lemma} \label{lem:vard}
Consider any extension variable $V=X_{\phi,J,\GG}$ 
and open pair $e \in \GG \sm J$ of $\GG$.
Suppose at step $i$ that the number $N^V_e(i)$
of injections $f$ counted by $V$ destroyed 
by closing $f(e)$ is bounded as $N^V_e(i) \le N$,
for some constant $N$. Then 
\[ \Var_e := \Var( N^V_e(i)  \mid \mc{F}_{i-1} )
\le (1+o(1)) 8tn^{-3/2} NV.\]
\end{lemma}

\begin{proof}
Consider the bipartite graph $H$ 
with parts $(A,B)$, where $A$ is the set of injections 
counted by $V$, $B = Q$ is the set of ordered open pairs, 
and $f \in A$ is adjacent to $b \in B$ 
if selecting $b$ as an edge closes $f(e)$. 
By assumption $d_H(b) \le N$ for all $b \in B$. 
We also have $e(H) = 2\sum_{f \in V} (Y_{f(uv)} + Y_{f(vu)}) 
= (1+o(1)) 4yV$. Then
$\Var_e \le Q^{-1} \sum_{b \in B} d_H(b)^2 
\le (1+o(1)) q^{-1} e(H) N = (1+o(1)) 8tn^{-3/2} NV$.
\end{proof}

With this observation in hand, we turn next to 
some lemmas on counting open pairs.

\begin{defn}
For any set $S$ let $Q_S(i)$ be the number of 
ordered open pairs in $S$ at step $i$.
For any sets $A,B$ let $Q_{AB}(i)$ 
be the number of open pairs $ab$ with $a \in A$, $b \in B$ at step $i$.
\end{defn}

\begin{lemma} \label{opens}
Whp for any set $S$ of size $s$, 
step $ i \le i_{max}$ and $\psi \ge n^{-\eps/5}$,
\begin{enumerate}[(i)]
\item if $s \ge n^{1/4}$ and any vertex $x$
has $|N(x) \cap S| \le L^{-10} \psi^2 \hat{q}s$  
then $Q_S = (1 \pm \psi) \hat{q}s^2$,
\item if $s \ge L^{11} \psi^{-2} \sqrt{n}$ 
then $Q_S = (1 \pm \psi) \hat{q}s^2$,
\item if $s < 2L^{12} \sqrt{n}$ then $Q_S < L^{13} s \hat{q}\sqrt{n}$.
\end{enumerate}
\end{lemma}

\begin{proof}
First consider statements (i) and (ii). 
We use critical window analysis for $t \ge n^{-0.4}$ to prove the bound 
$Q_S = (1 \pm \delta_O) \hat{q}s^2$, where $\delta_O = (1+t/L) \psi/2$. 
This suffices as $\delta_O \le \psi$. We use the window $[(1+\delta_O-g_O)\hat{q}s^2, (1+\delta_O)\hat{q}s^2]$, where $g_O = \psi / (40L^2)$. 

First we use coupling to the Erd\H{o}s-R\'enyi process to show that 
whp $Q_S$ does not enter the critical window at $t=n^{-0.4}$.
This follows from 
the trivial upper bound $Q_S \le s^2$, and the lower bound $Q_S \ge s^2 - 5n^{0.2}s$, obtained by subtracting the number of paths of length 
$2$ starting in $S$ in the random graph.

Next we establish the trend hypothesis that 
$\mc{Z}Q_S = Q_S -  \hat{q}s^2 - \delta_O \hat{q}s^2$
 is a supermartingale while $Q_S$ is in its critical window. 
(Note that our tracking variable in this case 
is the deterministic function $ \hat{q}s^2 $.)
The expected change in $Q_S$ is $$\mb{E}[ \DD_i Q_S \mid \mc{F}_i] = - 2Q^{-1} \sum_{ab \in Q_S} (Y_{ab}+Y_{ba}+1 ) = -8tn^{-3/2} (1 \pm O(\dD_Y)) Q_S.$$ 
We also note that $\DD_i(\hat{q}s^2) = (- 8tn^{-3/2} + O(L^2n^{-3}) ) \hat{q}s^2$ and 
$\DD_i(\delta_O \hat{q}s^2) = (1+o(1)) ((L+t)^{-1} - 8t) n^{-3/2} \delta_O \hat{q}s^2$.
When $ Q_S$ is in the critical interval we have 
\begin{align*}
\mb{E}[ \DD_i \mc{Z}Q_S \mid \mc{F}_i] & \le    -8tn^{-3/2} \hat{q}s^2  (1 + \delta_O - g_O - O( \dD_Y))  \\ 
& \hskip2cm+  \hat{q} s^2 \left(  8tn^{-3/2} - O(L^2n^{-3})  -  (1+o(1)) ((L+t)^{-1} - 8t) n^{-3/2} \delta_O \right) . \\
& \le  
- 8tn^{-3/2} \hat{q}s^2 \cdot ( \delta_O - g_O  - O(\dD_Y) +  (\tfrac{1}{8t(L+t)} - 1) \delta_O ) 
\end{align*}
Using $\tfrac{\delta_O}{8t(L+t)} \ge 2g_O$ 
and $\dD_Y \le n^{-\eps/4} = o(g_O)$ by \eqref{eq:e*}, 
when $ Q_S $ is in the critical interval
we have $\mb{E}[ \DD_i \mc{Z}Q_S \mid \mc{F}_i] \le 0$,
so the trend hypothesis holds.

To complete the proof of statements (i) and (ii) 
we will apply Freedman's inequality
and take a union bound over $S$.
To account for the number $\tbinom{n}{s}$
of events in the union, it suffices to establish 
the following strengthened form 
of the bounded hypothesis  (\ref{eq:boundVar}) and (\ref{eq:boundN}),
where we write  $N_O$ and $\Var_O$ for the maximum 
one-step change and conditional variance of $Q_S$. 
\begin{equation}
\label{eq:boundVarS}
g_O(t)^2 (\hat{q}(t)s^2)^2 
= \omega \left( \Var_O(i) (n\log n)^{3/2} s \right),
\end{equation}  
\begin{equation}
\label{eq:boundNS}
g_O(t) \hat{q}(t)s^2 
= \omega \left( N_O(i) (\log n) s \right).
\end{equation}
Since $g_O = \psi / (40L^2)$, it suffices to show
$N_O \le 2L^{-10} \psi^2 \hat{q}s$, 
as by Lemma \ref{lem:vard} this also implies 
$\Var_O \le L^{-4} n^{-3/2} s^{-1} (2L^{-2}\psi \hat{q}s^2)^2$. 
To see this bound on $N_O$ we use $N_O = O(y)$ for statement (ii),
or $N_O \le |N(x) \cap S| + |N(y) \cap S|$ and our
assumption on neighbourhoods in $S$ for statement (i).

It remains to prove (iii), which is a one-sided bound 
rather than a dynamic concentration statement, but we
can still apply a modified form of the critical interval method.
Writing $F_O = (1+t/L) L^{13} s \hat{q}\sqrt{n}/2$, 
it suffices to show $ Q_S \le F_O$ for all $S$ with 
high probability. Note that the bound is trivial for $t \le 1$, 
as $s < 2L^{12} \sqrt{n}$ implies $Q_S \le s^2 < F_O$. 
For $t \ge 1$ we use critical window analysis 
with the window $[F_O-G_O, F_O]$, where $G_O = F_O / (40L^2)$.
(Here we use capital letters $F$, $G$ 
to distinguish our notation for absolute errors 
from our usual notation $f$, $g$ for relative errors.)

When $Q_S$ is in the critical window we estimate $\mb{E}[ \DD_i Q_S \mid \mc{F}_i] \le - (1 + o(1))8tn^{-3/2} (F_O - G_O)$. 
We write $\mc{Z}Q_S = Q_S - F_O$ and note that 
$F'_O =((L+t)^{-1} - 8t) F_O$. 
Again using $\tfrac{F_O}{8t(L+t)} \ge 2G_O$,
we obtain the trend hypothesis
\[\mb{E}[ \DD_i \mc{Z}Q_S \mid \mc{F}_i] \le
- (1 + o(1))8tn^{-3/2} \cdot ( F_O - G_O +  (\tfrac{1}{8t(L+t)} - 1) F_O ) 
\le 0.\]
For the boundedness hypothesis, 
accounting for the union bound as in (i) and (ii),
and noting that $\mc{Z} Q_S(i) < - G_O(t)$ at the step before
this variable enters the critical interval, it suffices to show
\begin{equation}
\label{eq:boundonesided}
G_O(t)^2= \omega \left( \Var_O(i) (n\log n)^{3/2} s \right) 
 \ \ \ \text{ and } \ \ \ G_O(t) = \omega \left( N_O(i) (\log n) s \right).
\end{equation}
We use the bound $N_O \le 2y \le L^{-9} s^{-1} G_O$. 
By Lemma \ref{lem:vard} this implies 
$$\Var_O = O\left( t n^{-3/2} \cdot  F_O \cdot L^{-9} s^{-1} G_O  \right) = O \left( L^{-6} s^{-1} G_O^2 n^{-3/2}\right),$$ 
and the desired inequalities follow.
\end{proof}


\begin{lemma} \label{opens-bip}
Suppose $r,s \ge n^{1/4}$, $\psi \ge n^{-\eps/5}$ 
and $h \le L^{-10} \psi^2 \hat{q} \min\{r,s\}$.
Then whp we have $Q_{RS} = (1 \pm \psi) \hat{q}rs$ 
for any sets $R$, $S$ of respective sizes $r$, $s$ 
such that any vertex 
that has a neighbour in one of these sets 
has at most $h$ neighbours in the other.
\end{lemma}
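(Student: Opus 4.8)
The plan is to mirror the proof of Lemma~\ref{opens}(i), tracking the bipartite open-pair count $Q_{RS}$ through a critical-window analysis, and then taking a union bound over all choices of $R$ and $S$. Fix sets $R$, $S$ of sizes $r$, $s$ with the stated degree property. Set $e_O = (1+t/L)\psi/2$ and $g_O = \psi/(40L^2)$, and aim to show $Q_{RS} = (1 \pm e_O)\hat{q}rs$ using the window $[(1+e_O-g_O)\hat{q}rs, (1+e_O)\hat{q}rs]$ (and symmetrically the lower window); since $e_O \le \psi$ this gives the claim. First I would handle small $t$ by coupling to the Erd\H{o}s--R\'enyi process: for $t \le n^{-0.4}$, the trivial bound $Q_{RS} \le rs$ together with $Q_{RS} \ge rs - (\text{paths of length }2\text{ between }R\text{ and }S)$ shows $Q_{RS}$ has not entered the critical window, using $r,s \ge n^{1/4}$ and the degree estimate from Chernoff bounds in $ER(n,j)$.

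Next I would verify the trend hypothesis for $t \ge n^{-0.4}$. When $Q_{RS}$ is in its critical window, the expected one-step change comes only from destruction of open pairs between $R$ and $S$, so
\[ \mb{E}[\DD_i Q_{RS} \mid \mc{F}_i] = -2Q^{-1}\sum_{ab \in Q_{RS}} (Y_{ab} + Y_{ba}) = -8tn^{-3/2}(1+O(e_Y)) Q_{RS}, \]
using Lemma~\ref{qysmall} to replace the $Y$-variables by $(1+o(1))y$. Combined with $\DD_i(\hat{q}rs) = (-8tn^{-3/2} + O(L^2 n^{-3}))\hat{q}rs$ and $\DD_i(e_O\hat{q}rs) = (1+o(1))((L+t)^{-1} - 8t)n^{-3/2} e_O \hat{q}rs$, and using $\tfrac{e_O}{8t(L+t)} \ge 2g_O$ and $e_Y \le n^{-\eps/4} = o(g_O)$, exactly as in Lemma~\ref{opens}, this shows $\mc{Z}Q_{RS} = Q_{RS} - \hat{q}rs - e_O\hat{q}rs$ is a supermartingale in the window.

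For the boundedness hypothesis I would bound the maximum one-step change $N_O$ of $Q_{RS}$: when an edge is selected, the open pairs between $R$ and $S$ that it closes are $Y$-type configurations, and the degree hypothesis $h \le L^{-10}\psi^2\hat{q}\min\{r,s\}$ controls how many such pairs share the newly forced path of length two, giving $N_O \le h$ (only a vertex with a neighbour in one of the sets can close pairs to the other, and it closes at most $h$ of them). Then the bipartite analogue of (\ref{vard}) — with $e(H) = (1+o(1))4y\, Q_{RS} \le (1+o(1))4y\hat{q}rs$ — yields $Var_O \le (1+o(1))8tn^{-3/2} N_O \hat{q}rs \le L^{-4} n^{-3/2} \min\{r,s\}^{-1} (L^{-2}\psi\hat{q}rs)^2$, which is the bound needed for Freedman's inequality against the critical-window gap $g_O\hat{q}rs$ (where one uses $g_O v = g_O \hat{q}rs \ge L^{-2}\psi\hat{q}rs/40$). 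Applying Freedman's inequality and taking a union bound over the at most $2^{2n}$ choices of $(R,S)$ and over the polynomially many relevant times $i'$, together with Lemma~\ref{qysmall} and the coupling step, completes the proof.

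The main obstacle is the boundedness step: one must argue carefully that the only configurations in $Q_{RS}$ that a single selected edge can destroy are governed by the $h$-bound — concretely, that closing an open pair $ab$ with $a \in R$, $b \in S$ happens only when the selected edge, together with existing edges, creates a path of length two from $a$ to $b$, so the number of such pairs is at most (number of vertices with a neighbour in $R$) times $h$ on the $S$-side, hence $N_O \le h$ after the symmetric count. Everything else is a routine transcription of the single-set argument with $s^2$ replaced by $rs$ and $s$ replaced by $\min\{r,s\}$ wherever a denominator appears.
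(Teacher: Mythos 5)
The paper omits the proof of this lemma, stating only that it is ``essentially the same'' as that of Lemma~\ref{opens}(i); your argument is precisely that intended transcription (same $e_O$, $g_O$, coupling at small $t$, trend hypothesis with self-correction against $\DD_i(\hat q rs)$, and boundedness via $N_O = O(h)$ and the degree hypothesis), so it is correct and matches the paper's approach. The only cosmetic point is that $N_O$ is really $O(h)$ rather than exactly $h$ (one must sum the four ways $\{u,v\}$ can meet $R\cup S$ with the other endpoint acting as the common neighbour), but the $L^{-10}$ slack in the hypothesis absorbs this constant.
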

\noindent
Note that Lemma~\ref{opens-bip} is simply a bipartite version of Lemma~\ref{opens}(i). The proof is essentially the same, so we omit it, 
noting that the condition on  $h$ is
needed for the boundedness hypothesis.

Next we establish some density estimates. 
\begin{defn}
For a set $S$, let $\eta_S$ denote 
the number of edges of $G(i_{max})$ in $S$.
\end{defn}
\begin{lemma} \label{density}
Whp for any set $S$ of size $s$
\begin{enumerate}[(i)]
\item if $s \ge L^{12} \sqrt{n}$ then $\eta_S < L^2n^{-1/2} s^2$,
\item if $s < 2L^{12} \sqrt{n}$ then $\eta_S < L^{15} s$.
\end{enumerate}
\end{lemma}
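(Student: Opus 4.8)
The plan is to fix a set $S$ of size $s$, track the number $E_S(i)$ of edges inside $S$ as the process runs, bound the one‑step probability that the newly added edge lies inside $S$ by means of Lemma~\ref{opens}, and then combine a Chernoff estimate with a union bound over $S$.

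I would first pass to the whp event $\mc{H}$ on which $Q(i) = (1\pm n^{-\eps/2})q(t)$ for all $i\le i_{max}$ (Lemma~\ref{qysmall}) and the relevant bound of Lemma~\ref{opens} on $Q_S(i)$ holds for every set $S$ and every $i \le i_{max}$. On $\mc{H}$ the conditional probability that step $i+1$ adds an edge inside $S$, which equals $Q_S(i)/Q(i)$, is at most a deterministic quantity $\rho_i$. When $s \ge L^{12}\sqrt{n}$ I would use Lemma~\ref{opens}(ii) with $\psi = \tfrac12$ (legitimate since $s \ge L^{12}\sqrt n \ge 4L^{11}\sqrt n$) to get $Q_S(i) \le \tfrac32\hat{q}(t)s^2$, and hence take $\rho_i = 2s^2/n^2$; when $s < L^{12}\sqrt{n}$ I would use Lemma~\ref{opens}(iii) to get $Q_S(i) < L^{13}\hat{q}(t)s\sqrt{n}$, and hence take $\rho_i = 2L^{13}sn^{-3/2}$. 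For the negligible initial range $t < n^{-0.4}$, where Lemma~\ref{opens} is not invoked, the trivial bound $Q_S(i)\le s^2$ together with $Q(i)\ge n^2/2$ gives the same $\rho_i$.

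Freezing the process at the first step at which $\mc{H}$ fails, the restriction of $E_S(i_{max})$ to $\mc{H}$ is stochastically dominated by a sum of independent Bernoulli variables with means $\rho_1,\dots,\rho_{i_{max}}$ and total mean $\mu = \sum_i \rho_i$. Since $i_{max} = t_{max}n^{3/2}$ with $t_{max} < L/2$, one has $\mu < Ls^2 n^{-1/2}$ in the first regime and $\mu < L^{14}s$ in the second. In case (i) the target $m = L^2 n^{-1/2}s^2$ satisfies $m \ge L\mu$ and also $m \ge L^{14}s$; in case (ii) the target $m = L^{15}s$ satisfies $m \ge L\mu$. Hence, by the Chernoff bound $\mb{P}(\cdot \ge m)\le (e\mu/m)^m$ for the upper tail of a sum of independent indicators of mean $\mu$, we get $\mb{P}(E_S(i_{max}) \ge m \text{ and } \mc{H}) \le (e/L)^m$ in both cases.

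Finally I would union bound over the at most $n$ choices of $s$ and the $\binom{n}{s}$ sets of each size: in case (i) the constraint $s \ge L^{12}\sqrt n$ forces $m \ge L^{14}s$, and in case (ii) $m = L^{15}s$, so in either case $\binom{n}{s}(e/L)^m \le \big(en\,(e/L)^{L^{14}}\big)^s = \big(\exp(O(\log n) - L^{14}(\log L - 1))\big)^s$, which tends to $0$ uniformly over the relevant range of $s$; summing over $s$ keeps it $o(1)$, and adding $\mb{P}(\neg\mc{H}) = o(1)$ finishes the argument. For the boundedly many tiny sets ($s \le L^{15}$ in case (ii)) the claim is immediate from $E_S \le \binom{s}{2} < sL^{15}$. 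The one point needing care is that the one‑step probability $Q_S(i)/Q(i)$ genuinely carries the damping factor $\hat{q}(t)$ — this is precisely the content of Lemma~\ref{opens}, and it is what makes $\mu$ a polynomial factor smaller than $m$; with only the trivial bound $Q_S \le s^2$ one would get $\mu = \Theta(n^{-\eps}s^2/L) \gg m$ and the argument would collapse. Everything else (the stochastic domination, the Chernoff step, and the union‑bound arithmetic) is routine and is comfortably absorbed by the polylogarithmic gaps $L^{14}$ and $L^{15}$.
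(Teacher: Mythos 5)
Your proposal is correct and follows essentially the same route as the paper: both arguments bound the one-step probability of adding an edge inside $S$ by $Q_S/Q = (1+o(1))s^2/n^2$ (case (i), via Lemma~\ref{opens}(ii)) or by $O(L^{13}sn^{-3/2})$ (case (ii), via Lemma~\ref{opens}(iii)), and then take a union bound over $S$ together with a Chernoff-type bound. The paper phrases the tail estimate as $\binom{m}{M}\rho^M$, which is exactly your $(e\mu/M)^M$ after applying $\binom{m}{M}\le (em/M)^M$; your explicit passage through the whp event $\mc{H}$ and the stochastic-domination framing simply makes the implicit conditioning in the paper's terse statement more visible. Your closing remark about why the trivial bound $Q_S\le s^2$ fails is also correct and is the right thing to flag as the crux of the lemma.
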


\begin{proof}
For (i), we estimate the probability that some such $S$ 
spans $M := L^2n^{-1/2} s^2$ edges, taking a union bound over $S$ 
and the steps at which the edges are chosen, for which 
there are $\binom{n}{s} \binom{m}{M}$ choices. 
For a specified step at time $t$, 
the probability of choosing an edge in $S$
is $Q_S(t)/Q(t) = (1+o(1))s^2/n^2$, 
using Lemma~\ref{opens}(ii) with $\psi=L^{-1/2}$. 
Thus the failure probability $p_0$ satisfies
\[ p_0 \le \binom{n}{s} \binom{m}{M} ((1+o(1))s^2/n^2)^{M}.\]
Noting that $M \ge L^{14} s$, the required
estimate $p_0=o(1)$ follows from
\begin{multline*}
\log p_0 \le O(s \log n) 
+ M \log \tfrac{ em}{M} 
+ M \log ( (1+o(1)) s^2/n^2)
= O( s \log n) + M ( O(1) - \log L ) \le - sL^{14}.
\end{multline*}

For (ii), we estimate the probability of choosing an edge in $S$ 
as $Q_S(t)/Q(t) < 2L^{13} sn^{-3/2}$ by Lemma \ref{opens}(iii). 
Then the failure probability $p_0$ satisfies
\[ p_0 \le \binom{n}{s} \binom{m}{L^{15} s} (2L^{13} sn^{-3/2})^{L^{15} s},\]
so  $s^{-1}\log p_0 \le O(\log n) + L^{15} \log \frac{2em}{L^2 n^{3/2}} \le - L^{15}$, giving $p_0=o(1)$.
\end{proof}

\medskip

Next we deduce a bound on the number of vertices 
of large degree in a given set.
For the following definition we emphasize that 
vertices in $S$ can belong to $D_d(S)$.

\begin{defn}
Let $D_d(S)$ be the set of vertices that have degree at least $d$ in $S$.  
\end{defn}
\begin{lemma} \label{large-degs}
Whp for any set $S$ of size $s$
\begin{enumerate}[(i)]
\item if $s \ge L^{12} \sqrt{n}$ and $d > 8L^2n^{-1/2}s $ 
then $|D_d(S)| < 8L^2n^{-1/2}s^2/d$,
\item if $s < L^{12} \sqrt{n}$ and $d > 4L^{15}$ 
then $|D_d(S)| < 4L^{15} s/d$.
\end{enumerate}
\end{lemma}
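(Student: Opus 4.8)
The plan is to deduce this directly from Lemma~\ref{density} by a one-line double-counting argument, since both statements already hold uniformly over all sets $S$, so no further union bound is needed.

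First I would fix a set $S$ of size $s$ and write $E_S$ for the number of edges of $G(t_{max})$ inside $S$, as in Lemma~\ref{density}. For $v \in S$ let $\deg_S(v)$ denote the degree of $v$ in the induced graph $G(t_{max})[S]$; then $\sum_{v \in S} \deg_S(v) = 2E_S$. Every vertex of $D_d(S)$ has at least $d$ neighbours in $S$ by definition, and $D_d(S) \sub S$, so
\[ d \,|D_d(S)| \le \sum_{v \in D_d(S)} \deg_S(v) \le \sum_{v \in S} \deg_S(v) = 2E_S, \]
hence $|D_d(S)| \le 2E_S/d$. Now I would simply invoke the two cases of Lemma~\ref{density}: in case~(i), where $s \ge L^{12}\sqrt{n}$, whp $E_S < L^2 n^{-1/2} s^2$, so $|D_d(S)| < 2L^2 n^{-1/2} s^2/d < 8L^2 n^{-1/2} s^2/d$; in case~(ii), where $s < L^{12}\sqrt{n}$, whp $E_S < L^{15}s$, so $|D_d(S)| < 2L^{15}s/d < 4L^{15}s/d$. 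Since Lemma~\ref{density} is a `whp' statement quantified over all $S$, the conclusion follows for all $S$ simultaneously. (The hypotheses $d > 8L^2 n^{-1/2}s$ in (i) and $d > 4L^{15}$ in (ii) are only needed to ensure the claimed bounds $8L^2 n^{-1/2}s^2/d < s$ and $4L^{15}s/d < s$, i.e.\ that they improve on the trivial bound $|D_d(S)| \le |S| = s$; they play no role in the argument above.)

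I do not expect any genuine obstacle here: the statement is an immediate corollary of Lemma~\ref{density}, and the only thing to be slightly careful about is the elementary handshake inequality relating $|D_d(S)|$, $d$ and $E_S$ (each edge inside $S$ is counted at most twice when summing induced degrees over $D_d(S)$). The constants in the conclusion are deliberately generous, so the crude bound $|D_d(S)| \le 2E_S/d$ comfortably suffices.
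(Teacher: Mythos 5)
The proposal contains a genuine gap arising from a misreading of the definition of $D_d(S)$. The paper defines $D_d(S)$ as ``the set of vertices that have degree at least $d$ in $S$'', with no requirement that these vertices lie in $S$; indeed, both later applications of the lemma (to the set $U$ of vertices open to $x$ with many neighbours in $T$, and to the set $T$ of vertices with many neighbours in $J_1$) take $D_d(S)$ to range over the whole vertex set. Your key step asserts $D_d(S) \sub S$ in order to write $\sum_{v \in D_d(S)} \deg_S(v) \le \sum_{v \in S} \deg_S(v) = 2E_S$; this inclusion is false, so the inequality does not hold, and the argument breaks down at that point. The edges you are trying to count go between $D_d(S)$ and $S$ (and $D_d(S)$ may be disjoint from $S$), so they are not controlled by $E_S$ alone.

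The paper's actual proof handles this by taking a putative subset $T \sub D_d(S)$ of exactly the forbidden size and applying Lemma~\ref{density} to the \emph{union} $S \cup T$: the hypotheses $d > 8L^2 n^{-1/2}s$ (resp.\ $d > 4L^{15}$) then ensure $|T| \le s$, so $|S \cup T| \le 2s$, and the set $S \cup T$ spans at least $d|T|/2$ edges, contradicting Lemma~\ref{density}(i) (resp.\ (ii)) applied to $S \cup T$. In particular, the hypotheses on $d$ are \emph{not} cosmetic as you claim; they are needed to keep $|S \cup T|$ of comparable size to $s$ so that the density bound gives a contradiction. Your observation that both lemmas are uniform in $S$ (so no fresh union bound is needed) is correct and is also used implicitly in the paper, but the double-counting must be done on $S \cup T$ rather than on $S$ alone.
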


\begin{proof}
For (i), suppose on the contrary that there is $T \sub D_d(S)$ of size $8L^2n^{-1/2}s^2/d$. Then $S \cup T$ is a set of size at most $2s$ that spans at least $d|T|/2 > L^2n^{-1/2}(2s)^2$ edges, 
which contradicts Lemma \ref{density}(i). 
Similarly, for (ii), if there is $T \sub D_d(S)$ of size $4L^{15} s/d$ 
then $|S \cup T| \le 2s \le 2 L^{12} \sqrt{n}$ and 
$\eta_{S \cup T} \ge d|T|/2 > 2 L^{15} s \ge L^{15} |S \cup T|$, 
which contradicts Lemma \ref{density}(ii).
\end{proof}




We conclude this preliminary subsection with an estimate for a more
involved configuration required for the proof of Lemma \ref{lem:SB},
using the constants $ 0 < \alpha < \gamma < \beta $
declared in \eqref{eq:agb}. To motivate the following definition,
we remark that it will be applied with $H \sub N(x)$,
i.e.\ the neighbourhood of $x$ in $G(i_{max})$,
which will justify the assumed bounds on degrees
and open degrees into $H$ for $a \ne x$,
and also that $H$ contains no edges.
Furthermore, it will be applied at a step $i<i_{max}$
at which $H$ only contains vertices $y$ such that 
$xy$ is open and yet to be chosen as an edge,
so there will be no edges between $x$ and $H$.

\begin{defn} 
Let $H \sub V$ and $x \in V \sm H$. 
We say $(x,H)$ is {\em neighbourly} at step $i<i_{max}$
if $G(i)$ has no edges within $H \cup \{x\}$
and for any vertex $a \ne x$
at most $L^4$ edges $ab$ with $b \in H$ and 
at most $2x = 2 \hat{q}^2n$ open pairs $ab$ with $b \in H$.
We let $W_{xH}$ denote 
the number of ordered triples $(a,b,c)$ of vertices 
such that $ax$ is open, $\{b,c\} \sub H$ and $ab$, $ac$ are edges. 
\end{defn}

\begin{lemma} \label{WxH}
Whp for every neighbourly $(x,H)$ with $|H|=h$
where $L^{ \alpha } < h < L^{- \beta} \sqrt{n}$
we have $W_{xH} < 4 L^{- \alpha} h \hat{q}\sqrt{n}$. 
\end{lemma}

\begin{proof}
We will apply the critical interval method,
although we cannot do so directly for $W_{xH}$
as the boundedness hypothesis may fail due to
vertices with large open degree into $H$;
thus we will make some subtle alterations
to the structures that we count. 

We start with some definitions. We say that a vertex $a$ is 
\emph{obese} with respect to $H$ at time $t$ 
if at least $\hat{q}\sqrt{n} L^\gamma$ pairs $ab$ with 
$b \in H$ are open. (Our extravagant nomenclature here
is explained by reference to the definition of `heavy' below.) 
For any obese vertex $a$ 
we declare some subset of the open pairs $ ab$ with 
$b \in H$ \emph{inactive} so that the active open degree 
into $H$ is $ \lfloor \hat{q}\sqrt{n} L^\gamma \rfloor$. 

 We stress that the status of an open pair 
 as active or inactive can change back and forth in the course of the 
 process, but once a  pair is chosen as an edge its status as active or inactive remains the same for the rest of 
 the process.  
 
 For $j \in \{0,1,2\}$ let $W^j_{xH}$ denote the number of ordered triples 
 $(a,b,c)$ of vertices such that $ax$ is open, $\{b,c\} \sub H$, 
 the pairs $ab$ and $ac$ are both active, and their status depends on $j$:\ 
 if $j=0$ then both are open, if $j=2$ then both are edges, 
 and if $j=1$ then $ab$ is open and $ac$ is an edge. 
 Thus $W^2_{xH}$ has the same definition as $W_{xH}$, with the additional 
 condition that $ab$ and $ac$ are active at the steps 
 they are chosen as edges.

 First we show that there is a negligible difference between $W_{xH}$ 
 and $W^2_{xH}$, and so it will suffice to bound the latter. 
 Let $O$ be the set of vertices that are obese 
 with respect to $H$ at time $t$. We claim 
 that whp for any $H$ we have
\begin{equation}
\label{eq:O}
|O| < 2hL^{13-\gamma} = o(h).
\end{equation}
 To see this, suppose on the contrary
 there is $O' \sub O$ of size $2hL^{13-\gamma}$. 
 Then $|H \cup O'| < 2h$ and $Q_{H \cup O'} \ge L^{13} h \hat{q}\sqrt{n}$.   
 However, this contradicts Lemma \ref{opens}(iii), so (\ref{eq:O}) holds. 
 
 Applying Lemma \ref{opens}(iii) again, 
 we bound the number of open pairs in $H \cup O$ 
 by $Q_{H \cup O} < L^{13} \hat{q}\sqrt{n} \cdot 3h/2$. 
 Thus the probability at any given step that we choose an edge 
 between an obese vertex and $H$ is at most $2hL^{13}n^{-3/2}$.  
 For each set $H$ let $ \mc{O}_H $ be the event that the process chooses
 at least $ h L^{15} $ edges between $H$ and obese vertices (recalling that the set of obese vertices may change as the process evolves).  
 By the union bound, the probability that any $\mc{O}_H$ holds is at most
 \[  \sum_{ h = L^ \alpha}^{ n^{1/2} L^{-\beta} } \binom{n}{h} \binom{m}{h L^{15}} (2h L^{13} n^{-3/2} )^{hL^{15}} 
 \le \sum_{ h = L^ \alpha}^{ n^{1/2} L^{-\beta} } \binom{n}{h}  \left( \frac{O(1)}{L}  \right)^{hL^{15}} = o(1) . \]
 Thus we can assume that no event $ \mc{O}_H$ holds.
 Then the degree bound for neighbourly $(x,H)$ 
 implies $W_{xH} - W^2_{xH} < hL^{19}$, which is negligible 
 by comparison with the desired bound on $W_{xH}$.

For the remainder of the proof, 
we will show $W_j \le F_j := (1+t/L)w_j/2$ for $j=0,1,2$, where
\[  w_0  := L^{-\alpha-4} hx \hat{q}\sqrt{n},
\ w_1 := L^{-\alpha-2} hy \hat{q}\sqrt{n}
\ \text{ and } \   w_2 := 4 L^{-\alpha} h \hat{q}\sqrt{n}.\]
This will suffice to prove the lemma, as we will have 
$W_{xH} < W^2_{xH} + hL^{19} < F_2 + hL^{19} < w_2$.
Similarly to the proof of Lemma~\ref{opens}(iii), 
these are one-sided bounds rather than dynamic concentration statements,
but we can still use a modified form of the critical interval method.
For $W^j_{xH}$ we use the critical windows $[F_j-G_j,F_j]$, 
where $G_j = w_j/(40L^2)$. 
 
First we claim that our variables do not enter their critical windows 
for $n^{-1/4} \le t \le 1$ (assuming $ I > i_{max}$). 
For $j=0$ this follows from the trivial bound 
$W^0_{xH} \le nh^2 \ll w_0(1)$,
recalling that $\beta$ is large compared with $\alpha$.
For $j=1$ we can bound $W^1_{xH}$ by picking $\{b,c\} \sub H$ 
then a vertex counted by $ Y_{bc}(i)$, so by \eqref{eq:e*} we obtain
$W^1_{xH} \le O(y) h^2 \ll w_1$.
For $j=2$ we bound $W^2_{xH}$ by picking $\{b,c\} \sub H$ 
then a common neighbour, for which there are at most $O(L^4)$ choices 
by Definition~\ref{def:I0}(iii), so $W^2_{xH} = O( L^4 h^2 ) \ll w_2$.
Thus the claim holds.

Next we will prove the trend hypothesis, 
i.e.\ that $\mc{Z}W^j_{xH} = W^j_{xH} - F_j$ is a supermartingale 
while $W^j_{xH}$ is in its critical window. 
Below we will analyse the contributions to 
$\mb{E}[ \DD_i \mc{Z}W^j_{xH} \mid \mc{F}_i]$ 
separately according to each of the pairs $ax$, $ab$, $ac$.
When we calculate the expected change due to closing 
of $ab$ or $ac$ we will ignore correction terms 
due to changes that do not actually occur 
when $a$ is obese and these closures simply change the status 
of some other open pair from inactive to active. 
To justify this, we first give upper bounds
on these correction terms, which we will later
see are negligible compared with the main terms.


For $ a \in O$ let $A_a$ denote the set of $b \in H$ 
such that $ab$ is open and active. By \eqref{eq:O},
the contribution to $\mb{E}[ \DD_i W_{xH}^0 ] $ 
due to closing a pair $ ab $ or $ac$ where $a$ is obese
is at most
\begin{equation}
\label{eq:obese1}
2Q^{-1} \sum_{a \in O} \sum_{b \in A_a} (Y_{ab} + Y_{ba}) |A_a|  
 \le 5y q^{-1} \cdot 2 h L^{13-\gamma} \cdot ( \hat{q} n^{1/2} L^\gamma) ^2
\ll 8t n^{-3/2} F_0 L^{-2}. 
\end{equation}
Similarly, the contributions to
$\mb{E}[ \DD_i W_{xH}^1 ] $ due to closing a pair $ac$ 
where $a$ is obese is at most
\begin{equation}
\label{eq:obese2} 
2Q^{-1} \sum_{ a \in O } \sum_{b \in A_a} (Y_{ab} + Y_{ba}) L^4
\le 5 y q^{-1} L^4 \cdot 2 h L^{13-\gamma} \cdot \hat{q} n^{1/2} L^\gamma
\ll  8t n^{-3/2} F_1 L^{-2}.
\end{equation}

In the calculation of the expected change in 
 $\mc{Z}W^j_{xH} = W^j_{xH} - F_j$ we
write $$ \DD_i( F_j) = (1+o(1))F'_jn^{-3/2} \ \ \ \text{ and } \ \ \
F'_j \ge ((L+t)^{-1} - (3-j)8 t) F_j.$$
For each open pair $\aA\bB$ we have a destruction term of 
$$ 2Q^{-1} \sum_{f \in W^j_{xH}} (Y_{f(\aA\bB)} + Y_{f(\bB\aA)} + 1) 
\ge (1+o(1))8tn^{-3/2}(F_j-G_j),$$ 
when $ W^j_{xH}$ is in the critical interval.
This gives self-correction against 
a corresponding $8tn^{-3/2}F_j$ term in $\DD_i(F_j)$. 
For each edge we have a creation term of 
$$2Q^{-1} W^{j-1}_{xH} \le (1+o(1))2q^{-1}F_{j-1},$$ 
where $2q^{-1} F_0 =L^{-2} t^{-1} n^{-3/2} F_1$ 
and $2q^{-1} F_1 = t L^{-2}n^{-3/2} F_2$.

Next we account for fidelity corrections.
As there are no edges within $H \cup \{x\}$
there is no creation fidelity term
(it is not possible to add an edge 
and simultaneously close an open pair 
in the configuration).
For destruction fidelity,
we first consider configurations for $j=0,1$
in which selecting an edge $az$ simultaneously
closes the open pairs $ab$ and $ax$.
There are at most $h$ choices for $c$,
then $2v$ choices for $a$
where $v=x$ for $j=0$ or $v=y$ for $j=1$,
then $L^4$ choices for $z$
in the common neighbourhood of $b$ and $x$,
then $2y$ choices for $b \in Y_{az}$.
This gives a correction term
$O(q^{-1} h v L^4 y) \ll 8tn^{-3/2} F_j L^{-2}$.
For $j=0$ we also need to consider
configurations in which selecting $az$
simultaneously closes $ab$ and $ac$.
There are at most $h$ choices of $b$,
then $2y$ choices of $z$ in $Y_{xb}$,
then $2x$ choices of $a$ in $X_{ab}$,
then $L^4$ choices of a neighbour $c$
of $z$ in $H$ (as $(x,H)$ is neighbourly).
This gives a correction term
$O(hyx L^4) \ll 8tn^{-3/2} F_0 L^{-2}$.
Using $\tfrac{F_j}{8t(L+t)} \ge 4G_j$, 
we obtain
\begin{align*}
\mb{E}[ \DD_i \mc{Z}W^0_{xH} \mid \mc{F}_i]
& \le - (1+o(1))8tn^{-3/2} \cdot ( 3(F_0-G_0) + (\tfrac{1}{8t(L+t)} - 3) F_0 ) \le 0. \\
\mb{E}[ \DD_i \mc{Z}W^1_{xH} \mid \mc{F}_i]
& \le - (1+o(1))8tn^{-3/2} \cdot ( 2(F_1-G_1) - \tfrac{1}{8 L^2 t^2} F_1 + (\tfrac{1}{8t(L+ t)} - 2) F_1 ) \le 0. \\
\mb{E}[ \DD_i \mc{Z}W^2_{xH} \mid \mc{F}_i]
& \le - (1+o(1))8tn^{-3/2} \cdot ( F_2-G_2 - \tfrac{1}{8L^2} F_2 + (\tfrac{1}{8t(L+t)} - 1) F_2 ) \le 0.
\end{align*} 
Note that the correction terms 
(\ref{eq:obese1}) and (\ref{eq:obese2})
for inactive edges and the fidelity terms
are indeed negligible in this calculation, 
so the trend hypothesis holds.

It remains to establish the boundedness hypothesis. 
Note that since we can restrict our attention to $t \ge 1$,
the functions $G_j$ are approximately non-increasing. 
As we are proving one-sided bounds 
with a union bound over the choice of $x$ and $H$, 
it suffices to establish the boundedness hypothesis as set forth in (\ref{eq:boundonesided}) with $h$ playing the role of $s$.  
We add an additional wrinkle here.  
Recall that Freedman's inequality (Lemma~\ref{Freedman}) 
only requires a bound on the 
{\em positive} change in the random variable in question.  
For each pair $e$ in the collection $ ax, ab, ac$ 
let $N_e^+$ bound the {\em positive} one-step change in 
$\mc{Z} W^j_{xH}$ due to the change in the status of $e$ 
and let $\Var_e$ denote the one-step variance of $ \mc{Z} W^j_{xH}$ 
that can be attributed to the change in status of $e$. 
To apply Freedman's inequality, since $G_j = w_j/(40L^2)$, 
it suffices to show 
\begin{equation}
\label{eq:suff1}
N_e^+ \le w_j/(hL^5) \ \ \ \text{ and } \ \ \ \Var_e \le w_j^2/(hL^8n^{3/2}).
\end{equation}
In some cases we will show the stronger statement 
\begin{equation}
\label{eq:suff2}
N_e < w_j/ (hL^{10}),
\end{equation}
where $ N_e$ is the absolute value 
of the one-step change in $ \mc{Z} W^j_{xH}$.
Note that (\ref{eq:suff2}) clearly implies (\ref{eq:suff1}):\ 
the bound on $N_e^+$ is immediate and the bound for $\Var_e$ 
follows by Lemma \ref{lem:vard}.

First we note that the required bounds for creation
are straightforward. Indeed, for $ W_{xH}^1$
the bound on active open degrees gives
$N_e \le \hat{q} \sqrt{n} L^\gamma \ll w_1/(hL^{10})$,
and for $ W_{xH}^2$ the assumption that $(x,H)$ is neighbourly
gives $ N_e \le L^4 \ll w_2/( hL^{10}) $.

For destruction we obtain negative changes in $ \mc{Z} W_{xH}^j$, so we only need to bound $\Var_e$. First we introduce some additional definitions. 
We say that a vertex $a$ is \emph{heavy} 
with respect to $H$ at time $t$ 
if at least $\hat{q}\sqrt{n} L^{-\gamma}$ pairs 
$ab$ with $ b \in H$ are open.  
Let $T = T_{xH}$ be the set of heavy vertices $a$ 
such that $xa$ is open.
As $|T| \hat{q}\sqrt{n} L^{-\gamma}
\le \sum_{u \in H} X_{ux} < 2xh$, we have
$$ |T| < 2hx/( \hat{q}\sqrt{n} L^{-\gamma}) =  2hL^\gamma\hat{q}\sqrt{n}. $$
Let $U$ be the set of vertices $z$ such that $ zx$ is open and $z$ has at least $\hat{q}\sqrt{n} L^{-3 \gamma}$ neighbours in $T$.
By Lemma~\ref{large-degs} we have
$$ |U| < \begin{cases} 8hL^{4\gamma+15} 
& \text{ if } |T| < L^{12} \sqrt{n}, \\
32h^2 L^{5\gamma+2} \hat{q} & \text{ otherwise}. \end{cases}$$
Here we used $\hat{q}\sqrt{n} L^{-3\gamma} > 4L^{15}$ and $\hat{q}\sqrt{n} L^{-3\gamma} > 8L^2n^{-1/2} \cdot 2hL^\gamma\hat{q}\sqrt{n}$, 
which follows from our choice 
of $ \beta $ to be large relative to $ \gamma $,
 to get the lower bounds on $d$ required for Lemma~\ref{large-degs}.

Now consider destruction for the variables $ W_{xH}^j$ for $ j=0,1$. 
We write $\DD_i W_{xH}^j = \DD_i V_1 + \DD_i V_2$, 
where $\DD_i V_1$ accounts for the change in 
$V = W_{xH}^j$ that comes from the choice of an edge $xz$ where $z \in U$, 
and $\DD_i V_2$ accounts for the rest. 
For $\DD_i V_2$ we will obtain the required bound on $\Var_e$ 
by establishing the bound (\ref{eq:suff2}) on $N_e$. 
The contribution to $N_e$ from closing $ab$ or $ ac$ is bounded by 
$ 2y \hat{q} n^{1/2} L^\gamma < w_0/(hL^{10})$ for $ W^0_{xH}$ 
(using the bound on active open degrees)
and by $ 2y L^4 < w_1/(hL^{10})$ for $ W^1_{xH}$
(as $(x,H)$ is neighbourly).
Next we consider the contribution 
from closing $ xa$ where $ a$ is not heavy. 
For $j=0$ this is at most 
$(2y) (\hat{q}\sqrt{n}L^{-\gamma})^2 
< 2 \hat{q} \sqrt{n} L^{1-2\gamma} x
< L^{-\alpha-12} x \hat{q}\sqrt{n}
= w_0/(hL^{10})$,
as $ \gamma $ is large relative to $ \alpha$.
For $j=1$, as $(x,H)$ is neighbourly, 
the contribution is at most 
$(2y)(\hat{q}\sqrt{n} L^{- \gamma}) L^4
< L^{-\alpha-12} y \hat{q}\sqrt{n} = 
w_1/(hL^{10})$, 
again as $ \gamma $ is large relative to $ \alpha$.
Now we consider the contribution from closing of pairs $ xa$ 
where $a$ is heavy. Note that we do not select $xz$ with $z \in U$,
as this case will be analysed in $\DD_i V_1$,
so this contribution is at most 
$\hat{q}\sqrt{n} L^{-3\gamma} (\hat{q}\sqrt{n} L^\gamma)^2 
= L^{-\gamma} x \hat{q}\sqrt{n} \ll w_0/(hL^{10})$ for $j=0$
(by the bound on active open degrees), 
or $\hat{q}\sqrt{n} L^{-3\gamma} (\hat{q}\sqrt{n} L^\gamma) L^4 
\ll w_1/(hL^{10})$ for $j=1$
(as $(x,H)$ is neighbourly and $t \ge 1$). 
Thus we have the required bound on $N_e$ for $\DD_i V_2$.

For $j=0,1$ it remains to bound $\Var_e$ for $\DD_i V_1$. 
The probability that an edge $xz$ with $ z \in U$ is chosen 
is at most $2|U|/q$, and the resulting change in 
$W_{xH}^j$ is at most $(2y) (\hat{q}\sqrt{n} L^\gamma)^2 $ for $ j=0$, 
or $(2y) (\hat{q}\sqrt{n} L^\gamma) L^4 $ for $ j=1 $. 
Suppose first that $ |T| < L^{12} \sqrt{n} $,
so that $|U| <  8hL^{4\gamma+15}$.
Then for $j=0$ we have $\Var_e \le 16hL^{4\gamma+15} q^{-1} (2y)^2 (\hat{q}\sqrt{n} L^\gamma)^4 = \wt{O}( h\hat{q}^5 n)$, 
which suffices to establish (\ref{eq:suff1})
as $w_0^2/(hL^8n^{3/2}) = \wt{ \Omega}(h \hat{q}^6 n^{3/2})$. 
Also, for $j=1$ we have $\Var_e \le 16hL^{4\gamma+15} q^{-1} (2y)^2 (\hat{q}\sqrt{n} L^\gamma)^2 L^8 = \wt{O}( h \hat{q}^3)$, 
which suffices as $w_1^2/(hL^8n^{3/2})
= \wt{\Omega}(h \hat{q}^4 n^{1/2})$, recalling that $t \ge 1$. 
Now suppose $ |T| \ge L^{12} \sqrt{n}$,
so that $ |U| < 32h^2 L^{5\gamma+2} \hat{q}$.
Then for $j=0$ we have 
$\Var_e \le 64h^2 L^{5\gamma+2} n^{-2} (2y)^2 (\hat{q}\sqrt{n} L^\gamma)^4 
< 256h^2 L^{9\gamma+4} \hat{q}^6 n$, 
and for $j=1$ we have $\Var_e \le 64h^2 L^{5\gamma+2} n^{-2} 
 (2y)^2 (\hat{q}\sqrt{n} L^\gamma)^2 L^8 < 256h^2 L^{7\gamma+12} \hat{q}^4$.
  As $h < L^{-\beta} \sqrt{n}$ and
  $ \beta $ is large relative to $ \alpha, \gamma$
these bounds suffice to establish (\ref{eq:suff1}).

It remains to bound $\Var_e$ for destruction of $W_{xH}^2$.
Let $W$ be the set of vertices that are open to $x$ 
and have at least two neighbours in $H$. 
Then $|W| \le \sum_{a \in H} Y_{xa} < 2yh$.
Let $U'$ be the set of vertices that are open to $x$ 
and have at least $yL^{-\gamma}$ neighbours in $W$. 
By Lemma~\ref{large-degs} we have
$$ |U'| < \begin{cases} 8h L^{\gamma+15}
 & \text{ if } |W| < L^{12} \sqrt{n} \\
32h^2 y n^{-1/2} L^{\gamma+2} & \text{ otherwise}. \end{cases}$$
Here we used $yL^{-\gamma} > 4L^{15}$ 
and $yL^{-\gamma} > 8L^2n^{-1/2} \cdot 2yh$
(as $ \beta$ is large relative to $ \gamma$) 
to get the lower bound on $d$ required for Lemma~\ref{large-degs}.
We write the destruction of $W_{xH}^2$ at step $i$ as
$ \DD_i V_1 + \DD_i V_2$, 
where $\DD_i V_1$ accounts for the change in $W_{xH}^2$ 
that comes from the choice of an edge $xz$ where $z \in U'$,
and $\DD_i V_2$ accounts for the rest.

For $\DD_i V_2$ we can obtain the required bound on $\Var_e$  
from the bound (\ref{eq:suff2}) on $N_e$; 
indeed, by definition of $U'$
and as $(x,H)$ is neighbourly,
$N_e < yL^{-\gamma} \cdot L^4 < w_2/( hL^{10}) $.
For $\DD_i V_1$, suppose first that $|W| < L^{12} \sqrt{n}$,
so that $|U'| < 8h L^{\gamma+15}$. 
We choose an edge $xz$ with $ z \in U'$ with probability 
at most $2|U|/q$, and as $(x,H)$ is neighbourly
the resulting change in 
$W_{xH}^2$ is at most $2y \cdot L^4$,
so $\Var_e < 8h L^{\gamma+15} q^{-1} (2y)^2 L^{16} 
= \wt{O} ( h y n^{-3/2})$, which suffices as 
$w_2^2/(hL^8n^{3/2}) = \wt{\Omega} (hy^2n^{-3/2})$. 
On the other hand, if $ |W| \ge L^{12} \sqrt{n}$
then $\Var_e < 32h^2 y n^{-1/2} L^{\gamma+2} q^{-1} (2y)^2 L^{16} 
< 128 h^2n^{-1/2} L^{\gamma+19}y^2 n^{-3/2}$, 
which also suffices  to establish (\ref{eq:suff1}) 
as $ \beta $ is large relative to $ \alpha, \gamma$.
\end{proof}

\subsection{Proof of Theorem~\ref{indep}.}
\label{sec:pfindep}

We will show whp 
$$\aA(G) < k := (1+3\eps) \sqrt{2n\log n}.$$
As $\aA(G) \le \aA(G(i_{max}))$, 
it suffices to bound $\aA(G(i_{max}))$.
We need to estimate the probability that 
there is an independent set $K$ of size $k$.
As discussed above, we will take a union bound 
over all such sets $K$ together with certain information 
about how neighbourhoods in $ G(i_{max}) $ intersect $ K$.

Let $K$ be a potential independent set of size $k$.  
We define a sequence of vertices $x_1,\dots,x_z$, 
where each $x_\ell$ is chosen 
to maximise the number of neighbours in $K$ 
that are not also neighbours of some $ x_j $ for $ j < \ell$. 
More precisely, the $\ell$th {\em hole} is 
$H_\ell = (N(x_\ell) \sm \cup_{\ell'<\ell} N(x_{\ell'})) \cap K$, 
where $x_\ell$ is chosen to maximise $h_\ell = |H_\ell|$,
and we recall our convention that all neighbourhoods 
are defined with respect to $ G(i_{max}) $.
We stop the sequence if there are no vertices that give more than 
$L^{2\alpha}$ new neighbours in $K$. Note that $x_\ell \notin K$ for 
$\ell \in [z]$, as $K$ is independent. We say that a hole is {\em large} 
if it has size more than $L^{-\beta} \sqrt{n}$. 
We let $Z_A$ be the set of $\ell$ such that $H_\ell$ is large, 
$$Z_B = [z] \sm Z_A, \ \ \  A = \cup_{\ell \in Z_A} H_\ell, 
\ \ \  B = \cup_{\ell \in Z_B} H_\ell, \ \ \ C = K \sm (A \cup B).$$ 
For $\ell \in Z_B$ we specify the steps of the process 
at which the edges between $ x_\ell $ and $ H_\ell$ appear.  We write 
$H_\ell = \{ v_{\ell j} : j \in [h_\ell] \}$, 
where $x_\ell v_{\ell j}$ is selected at step $i_{\ell j}$, 
and $i_{\ell j}$ is increasing in $j$. 
For $\ell \in Z_A$ we specify the entire neighbourhood of $x_\ell$ in $G(i_{max})$:\ we write $d_\ell = |N(x_\ell)|$ and 
$N(x_\ell) = \{ v_{\ell j} : j \in [d_\ell] \}$, where $x_\ell v_{\ell j}$ is selected at step $i_{\ell j}$, and $i_{\ell j}$ is increasing in $j$.  
We will estimate $\mb{P}(\mc{E})$, where $\mc{E}$ is the event that there is an independent set $K$ with some fixed choices 
of $z$; $x_\ell$ and $h_\ell$ for $\ell \in [z]$; and $d_\ell$ for $\ell \in Z_A$.  We will refer to these choices of hole sizes, vertices 
with large neighbourhoods in $K$ and vertex degrees 
as the {\em initial data} that defines $ \mc{E} $.  
Note that by Lemma~\ref{large-degs}(ii) we can assume
\begin{equation} \label{eq:ZAz}
|Z_A| < 8L^{16+\beta} \quad  \text{ and } \quad z < 4L^{15-2\alpha} k. 
\end{equation}
For $\ell \in Z_A$, $j \in [d_\ell]$ we claim that
\begin{equation}
\label{eq:recalldegrees}
i_{\ell j} = jn/2 \pm n^{3/2 - \eps/3} 
\quad \text{ and } \quad d_\ell = d \pm n^{1/2 - \eps/3},
\end{equation}
where we recall $d = 2t_{max}\sqrt{n} = 2m/n 
= \sqrt{(1/2 - \eps)n\log n}$.
To see \eqref{eq:recalldegrees}, note that if e.g.\
we had $i = i_{\ell j} < jn/2 - n^{3/2 - \eps/3} $
then we would have $Y_{x_\ell}(i) \ge j
> 2n^{-1}(i + n^{3/2 - \eps/3})
= y_1(t) + 2n^{1/2 - \eps/3}$,
which contradicts the degree bounds
$Y_u(i) = (1 \pm \dD_{Y_1}(t)) y_1(t) $ 
in the event $\mc{G}_i$ (see Definition~\ref{def:I0}).

Now, in addition to the initial data, 
we fix the independent set $K$,
the specific edges  $x_\ell v_{\ell j}$ 
and appearance times $i_{\ell j} $ 
for $ \ell \in Z_A, j \in [d_\ell] $, 
and likewise for $ \ell \in Z_B, j \in [h_\ell] $.  
We let $ \mc{E}_K$ be the event that $K$ is independent 
and all the specified edges appear at the specified steps of the process.
Thus $ \mc{E} $ is a union of events of the form $\mc{E}_K$.  

To estimate the probability of any given event $ \mc{E}_K $,
for each step $i$ we need to estimate the probability 
that the selected edge is compatible with $\mc{E}_K$, 
conditional on the history of the process. 
We say $i$ is a \emph{selection step} if $i$ is one of $i_{\ell j}$ 
for $\ell \in Z_A$, $j \in [d_\ell]$ or $\ell \in Z_B$, $j \in [h_\ell]$; then the selected edge is specified by $\mc{E}_K$, so the required 
probability is simply $2/Q = (1 \pm 2\dD_Q)2q^{-1}$. For other $i$, the required probability is $1 - N_i/Q$, where $N_i$ is the number 
of ordered open pairs that cannot be selected at step $i$ when $\mc{E}_K$ occurs. If $i=i_{\ell j}$ is a selection step write $N_i=0$. Then we estimate
\begin{equation}
\label{eq:EK}
\mb{P}(\mc{E}_K) \le \prod_{\ell \in Z_A} \prod_{j=1}^{d_\ell} (1 \pm 2\dD_Q)2q(t_{\ell j})^{-1}
\cdot \prod_{\ell \in Z_B} \prod_{j=1}^{h_\ell} (1 \pm 2\dD_Q)2q(t_{\ell j})^{-1}
\cdot \prod_{i=1}^m (1 - N_i/Q).
\end{equation}
To estimate $N_i$, we classify open pairs 
that cannot be selected at step $i$ as follows.
\begin{itemize}
\item Let $N_{iAi}$ be the number of ordered open pairs 
of the form $v_{\ell j} v_{\ell j'}$ 
for some $\ell \in Z_A$, $j,j' \in [d_\ell]$.
\item Let $N_{iAo}$ be the number of ordered open pairs 
of the form $ x_\ell y $ or $ y x_{\ell} $
where $ \ell \in Z_A$ and 
$ y \not\in N(x_\ell) \cup K \cup \{ x_1, \dots, x_z\}$.
\item Let $N_{iBi}$ be the number of ordered open pairs $ab$ such that $ B \cap ab \neq \emptyset $ and  selecting $e_i = ab$ would close an open pair of the form $x_\ell v_{\ell j}$ for $\ell \in Z_B$, $j \in [h_\ell]$.
\item Let $N_{iBo}$ be the number of ordered open pairs $ab$ such that $ B \cap ab = \emptyset $ and selecting $e_i = ab$ would close an open pair of the form $x_\ell v_{\ell j}$ for $\ell \in Z_B$, $j \in [h_\ell]$.
\item Let $N_{iK}$ be the number of ordered open pairs in $K$ that are not contained within any hole.
\end{itemize}
We refer to pairs counted by $N_{iAo}$ or $N_{iBo}$ as {\em outer}
and those counted by $N_{iAi}$ or $N_{iBi}$ as {\em inner}
(which is indicated by one of the $i's$ in the notation; the other
refers to the step $i$, which we hope will not cause confusion).
For $\ell \in Z_A$ we stress that by naming the $v_{\ell j}$'s 
we have specified all neighbours of $x_\ell$ (not only those in $K$), 
so we cannot select a pair $yx_\ell$ with $y \notin N(x_\ell)$;
we also exclude $y \in K \cup \{ x_1, \dots, x_z\}$ 
in the definition of $N_{iAo}$ to facilitate the estimate
for overcounting in Lemma \ref{lem:SO}.
For $N_{iK}$ we note that all open pairs within $K$ are forbidden
(as $K$ is independent) but again to avoid overcounting
we only include those not contained within any hole.
We write $$N_i \ge N_{iAi} + N_{iAo} + N_{iB} + N_{iK} - N_{iO},$$ 
where  $N_{iB} = N_{iBi} + N_{iBo}$ and 
$N_{iO}$ corrects for any open pairs that appear in more 
than one of the above collections. 
(We will see that the most significant source
of overcounting comes from pairs counted 
by both $ N_{iK}$ and $ N_{iBi}$.)  
We substitute 
\begin{equation}
\label{eq:splitter}
1 - N_i/Q \le \exp \left\{  - (1-2\dD_Q) q^{-1}(N_{iAi}+ N_{iAo}+ N_{iB}+N_{iK}-N_{iO}) \right\}
\end{equation}
in (\ref{eq:EK}), recalling that $\dD_Q = O(n^{-\eps/5})$, to obtain
\begin{align}
\label{eq:all1}
- \log \mb{P}(\mc{E}_K) 
& \ge S_{Ai} - T_A + S_B - T_B + S_{Ao} + S_K - S_O  \\
 & \ \ \ +  \log \frac{n^2}{2} \brac{\sum_{\ell \in Z_A} d_\ell + |B|} 
  - O( n^{1/2- \eps/5}), \text{ where } \nonumber
\end{align}
\begin{align*}
S_\mu &= \sum_{i=1}^m N_{i\mu}q^{-1}  
\ \ \ \ \ \text{ for } \mu \in \{Ai,Ao,B,K,O\}, \\
T_A &= \sum_{\ell \in Z_A} \sum_{j=1}^{d_\ell} 4t_{\ell j}^2 
\ \ \ \ \ \text { and }  \ \ \ \ \
T_B = \sum_{\ell \in Z_B} \sum_{j=1}^{h_\ell} 4t_{\ell j}^2.
\end{align*}
To estimate the terms in \eqref{eq:all1},
we start by showing in the next two lemmas
that $S_{Ai} - T_A$ and $S_B - T_B$ are negligible.
(The remaining terms will be used to balance 
the number of events in our union bound calculation.)

\begin{lemma}
\label{lem:SAi}
$ T_A - S_{Ai} < O( n^{1/2 - \eps/5}) $.
\end{lemma}

\begin{proof}
We start by giving a lower bound on $N_{iAi}$
for any $i$ that is not a selection step.
For $\ell \in Z_A$ let $j_\ell=j_\ell(i)$ 
be the value of $j \in [d_\ell]$ such that 
$i_{\ell (j-1)} \le i < i_{\ell j}$, 
where $i_{\ell 0 } := 0$,
i.e.\ $j_\ell - 1$ edges have been selected at $x_\ell$. 
Let $S_\ell = \{ v_{\ell j} \}_{j=j_\ell+1}^{d_\ell}$ 
and $s_\ell = |S_\ell| = d_\ell + 1 - j_\ell$;
thus $\{ x_\ell v : v \in S_\ell \}$ is the set of open
pairs at $x_\ell$ that will later be selected as edges.
As we consider the whole neighbourhood of $x_\ell$
(not just the neighbourhood in $K$),
the number of ordered open pairs $v_{\ell j} v_{\ell j'}$ 
with $j > j_\ell$, $j' \le j_\ell$ is 
$\sum_{v \in S_\ell} 2Y_{vx_\ell} = (1 \pm \dD_Y) 2ys_\ell$. 
 
We also note that any vertex has at most $L^4$ neighbours 
in $S_\ell$ by the codegree bound in $ G(i_{max})$,
which is valid as we assume $ I < i_{max} $.
Then by Lemma \ref{opens}(i) whp 
$Q_{S_\ell} = (1 \pm n^{-\eps/5}) \hat{q}s_\ell^2$ 
if $s_\ell > n^{1/4}$ and $\hat{q}s_\ell \ge n^{2\eps/5} L^{14}$. 
Since $\hat{q} \ge n^{-1/2+\eps}$ 
this holds for $s_\ell > n^{1/2 - \eps/2}$, 
so we can write
 $Q_{S_\ell} \ge (1 - n^{-\eps/5}) \hat{q} 
 s_\ell(s_\ell - n^{1/2 - \eps/2})$,
as this bound is trivial for  $s_\ell \le n^{1/2 - \eps/2}$.
  The bound on codegrees also implies that 
 the number of open pairs that can be counted by 
 more than one $\ell \in Z_A$ is at most 
 $(|Z_A|L^4)^2 = \wt{O}(1)$ by \eqref{eq:ZAz}, 
 which is negligible. Thus
\begin{align} \label{eq:NiAi}
 N_{iAi} & \ge (1 - n^{-\eps/5}) \sum_{\ell \in Z_A} 
\brac{ 2ys_\ell + \hat{q} s_\ell (s_\ell - n^{1/2 - \eps/2}) }
- O( \hat{q} n^{1 - \eps/5} ) \nonumber \\
& = \sum_{\ell \in Z_A} 
 \brac{ 2ys_\ell + \hat{q} s_\ell^2 } 
 - O( \hat{q} n^{1 - \eps/5} ). 
\end{align}
To estimate $S_{Ai} =  \sum_{i=1}^m N_{iAi}q^{-1}$,
it is convenient to use the bound \eqref{eq:NiAi} 
for all $i$, even selection steps (where $N_i=0$);
this is valid as the resulting correction
is $ \wt{O} (n^{-1/2})$, which is negligible.
We write $S_{Ai}=S_{Ai1}+S_{Ai2} 
+ \wt{O}( n^{1/2 - \eps/5} )$ 
according to the contributions 
of the first and second terms in  \eqref{eq:NiAi}. Then
\begin{align*}
S_{Ai1} & = \sum_{i=1}^m \sum_{\ell \in Z_A} 2ys_\ell q^{-1}
 = \sum_{\ell \in Z_A} \sum_{j=1}^{d_\ell} 
\sum_{i=i_{\ell (j-1)}}^{i_{\ell j}-1} 4tn^{-3/2} (d_\ell+1 - j) 
 = \sum_{\ell \in Z_A} \sum_{j=1}^{d_\ell} 
\sum_{i=1}^{i_{\ell j}} 4in^{-3} \\
& = \sum_{\ell \in Z_A} \sum_{j=1}^{d_\ell} 2t_{\ell j}^2 
- \sum_{\ell \in Z_A} \sum_{j=1}^{ d_\ell } 2 t_{\ell j} n^{-3/2} 
 = \frac{T_A}{2} - \wt{O}( n^{-1} ).
\end{align*}
Recalling (\ref{eq:recalldegrees}), we note that
\begin{align} \label{eq:TA/2}
\frac{T_A}{2} = \sum_{ \ell \in Z_A} \sum_{j=1}^{d_\ell} 2 t_{\ell j}^2 
& < |Z_A| \sum_{j=1}^{d + n^{1/2-\eps/3}} 
 2 \brac{ j n^{-1/2} /2 + n^{- \eps/3}}^2  \nonumber \\
& < |Z_A| \sum_{j=1}^d j^2 (2n)^{-1} 
+ \wt{O}( n^{1/2 - \eps/3}).
\end{align}
We also have
\[ S_{Ai2} = \sum_{i=1}^m  \sum_{\ell \in Z_A} \hat{q} s_\ell^2 q^{-1}
= \sum_{\ell \in Z_A} \sum_{j=1}^{d_\ell} 
\sum_{i=i_{\ell (j-1)}}^{i_{\ell j}} n^{-2}(d_\ell-j)^2, \]
which is minimized when each $ d_\ell$ is as small as possible, 
and then each $ i_{\ell j}$ occurs as early as possible,
so $S_{Ai2}  \ge |Z_A| \sum_{j=1}^d (2n)^{-1} j^2 
- \wt{O}( n^{1/2-\eps/3}) \ge T_A/2 - \wt{O}( n^{1/2-\eps/3})$
by \eqref{eq:TA/2}. The lemma follows.
\end{proof}

\begin{lemma} \label{lem:SB}
$T_B - S_B \le O( L^{-2} n^{1/2} )$. 
\end{lemma}

\begin{proof}
Similarly to the proof of Lemma \ref{lem:SAi},
we start by giving a lower bound on $N_{iB}$
for any $i$ that is not a selection step.
For $ \ell \in Z_B$ let $S_\ell = S_\ell(i) $ 
be the set of $v_{\ell j}$ with $j \in [h_\ell]$ 
such that $x_\ell v_{\ell j}$ is still open.
We write $s_\ell = |S_\ell|$. 
Each $v_{\ell j}$ in $S_\ell$ contributes 
$2Y_{ v_{\ell j} x_\ell} = (1 \pm \dD_Y) 2y$ to $N_{iBi}$ 
and $2Y_{ x_\ell v_{\ell j}} = (1 \pm \dD_Y) 2y$ to $N_{iBo}$; 
however, we need to account for open pairs 
that may be counted by more than one pair $x_\ell v_{\ell j}$. 

We claim that there is no overcounting for inner pairs.
To see this, note that if $v_{\ell j}v_{\ell' j'}$ is counted 
for $x_\ell v_{\ell j}$ and for $x_{\ell'} v_{\ell' j'}$ then 
$x_\ell v_{\ell' j'}$ and $x_{\ell'} v_{\ell j}$ are both edges,
but this cannot occur by the hole construction procedure.
Furthermore, there is no overcounting between $N_{iBi}$ and $N_{iBo}$,
as inner pairs intersect $K$ but outer pairs do not
(as $K$ is independent). 

Thus the claim holds,
and it remains to consider overcounting for outer pairs.
This may occur for $x_\ell v_{\ell j}$ 
and $x_\ell v_{\ell j'}$ with $\ell \in Z_B$ and $j,j' \in S_\ell$. 
The number of such overcounted pairs is at most  
$W_{x_\ell S_\ell}$, which we will estimate by Lemma~\ref{WxH}. 
To see that this lemma applies, we note that
$s_\ell \le h_\ell < L^{-\beta} \sqrt{n}$
as holes $H_\ell$ with $\ell \in Z_B$ are not large.
We also note that $(x_\ell,S_\ell)$ is neighbourly,
as $S_\ell \sub N(x_\ell)$ and 
all pairs $x_\ell y$ with $y \in S_\ell$ are open,
so $G(i)$ has no edges within $H_\ell \cup \{x_\ell\}$
and for any vertex $a \ne x_\ell$
at most $L^4$ edges $ab$ with $b \in H_\ell$ and 
at most $2x$ open pairs $ab$ with $b \in H_\ell$.
If $s_\ell \ge L^{\alpha}$ then  Lemma~\ref{WxH} gives
$W_{x_\ell S_\ell} < L^{-\alpha} s_\ell \hat{q}\sqrt{n}$.
Summing over $\ell \in Z_B$, 
using $|Z_B| \le z \le 4L^{15-2\alpha}k$ from \eqref{eq:ZAz}
and $\sum_{\ell \in Z_B} s_\ell \le k$ we obtain
\[N_{iBo} \ge (1-\dD_Y)2y \sum_{\ell \in Z_B} (s_\ell - L^{\alpha}) - \sum_{\ell \in Z_B} L^{-\alpha} s_\ell \hat{q}\sqrt{n}
\ge  2y \sum_{\ell \in Z_B} s_\ell - L^{17-\alpha} k \hat{q}\sqrt{n}.\]
Including $N_{iBi}$, we deduce
\begin{equation} \label{eq:NiB}
N_{iB} \ge (1 - \dD_Y) 4y \sum_{\ell \in Z_B} s_\ell  - L^{17-\alpha} k \hat{q}\sqrt{n}
= 4y \sum_{\ell \in Z_B} s_\ell - O( L^{-3} \hat{q} n ), 
\end{equation}
as $ \alpha $ is large.
As $S_B =  \sum_{i=1}^m N_{iB}q^{-1}$, we have
\begin{align*}
S_B + O( L^{-2} n^{1/2} )  &=
\sum_{i=1}^m \sum_{\ell \in Z_B} 4ys_\ell q^{-1}
 \ge \sum_{\ell \in Z_B} \sum_{j=1}^{h_\ell} 
 \sum_{i=i_{\ell (j-1)}}^{i_{\ell j}} 8tn^{-3/2} s_\ell \\
& = \sum_{\ell \in Z_B} \sum_{j=1}^{h_\ell} \sum_{i=1}^{i_{\ell j}} 8in^{-3} = T_B - \wt{O}( n^{-1}).  
\end{align*}
Similarly to Lemma \ref{lem:SAi}, there is a negligible
correction due to using the bound \eqref{eq:NiB} 
at selection steps. The lemma follows.
\end{proof}

Lemmas \ref{lem:SAi} and \ref{lem:SB} reduce (\ref{eq:all1}) to 
\begin{equation}
\label{eq:all1'}
- \log \mb{P}(\mc{E}_K) \ge S_{Ao} + S_K - S_O +  \log \frac{n^2}{2} \brac{\sum_{\ell \in Z_A} d_\ell + |B|} -  O( n^{1/2} L^{-2}),
\end{equation}

We continue to estimate the terms in \eqref{eq:all1'}
over the next three lemmas.

\begin{lemma} \label{lem:SAo}
$S_{Ao} \ge 2|Z_A|m/n - \wt{O}( n^{1/2 - \eps/5})$.
\end{lemma}
\begin{proof}
If $i$ is not a selection step then by control of open degrees
\[ N_{iAo} \ge 2 
\sum_{ \ell \in Z_A} ( X_{x_\ell} - d_\ell  - k -z ) 
\ge 2 |Z_A| \hat{q}n - \tilde{O} ( \hat{q} n^{1 -\eps/5}). \]
As $S_{Ao} =  \sum_{i=1}^m N_{iAo}q^{-1}$ the lemma follows.
\end{proof}

For $ N_{iK}$ we will require more precise 
estimates for the contribution
from open pairs with one vertex in the smaller holes, 
and so we need to account for this contribution 
further into the process. Accordingly, we define
the following thresholds for hole sizes. We write
\[h^* = h^*(i) =  \min\{ n^{2/5}, L^{-50}\hat{q}\sqrt{n}\},\]
and let $\ell^* = \ell^*(i) \in [z+1]$ be such that 
$h_\ell \ge h^*$ for $1 \le \ell < \ell^*$ 
and $h_\ell < h^*$ for $\ell^* \le \ell \le z$.  

We also let $z'$ be such that $h_\ell \ge n^{2/5}$ for $\ell \le z'$ 
and $h_\ell < n^{2/5}$ otherwise. Thus $\ell^* \ge z'$ 
and equality holds at the beginning of the process.  
By Lemma \ref{large-degs}(ii) we have
\begin{equation} \label{eq:z'}
z' < 4L^{15} k/n^{2/5} = \wt{O}(n^{1/10}).
\end{equation}

We let $J_1 = J_1(i) = \cup_{\ell \le \ell^*} H_\ell$ 
and $J_2 = J_2(i) = \cup_{\ell > \ell^*} H_\ell$;
thus $(J_1,J_2)$ is a partition of $A \cup B$.

We write $N_{iK} \ge 
\sum_{\ell =1}^{z'} N_{iKH_\ell} + N_{iKJ_2}+ N_{iKC}$, 
where each $N_{iKX}$ counts ordered open pairs
counted by $N_{iK}$ with first vertex in $X$.

\begin{lemma}
\label{lem:NiK}
If $i$ is not a selection step then $N_{iK} \ge 
\sum_{\ell =1}^{z'} N_{iKH_\ell} + N_{iKJ_2}+ N_{iKC}$, where
\begin{enumerate}[(i)]
\item $N_{iKX} \ge \hat{q}k|X|$ for $X \in \{J_2,C\}$, and
\item $N_{iKH_\ell} > (1-L^{-5}) \hat{q} h_\ell k/2$
if $\ell \le z'$ and $\hat{q} \ge n^{-1/6}$.
\end{enumerate}
\end{lemma}

\begin{proof}
We write $N_{iKJ_2} = Q'_{J_2} + Q_{J_1J_2} + Q_{J_2 C}$,
where $Q'_{J_2}$ counts ordered open pairs in $J_2$ 
that are not contained within any hole.
To estimate $Q_{J_2}$ we note that
any vertex has degree at most $h^*$ in $J_2$
by the hole construction procedure.
By Lemma \ref{opens}(i) whp $Q_{J_2} = (1 \pm L^{-5}) \hat{q}|J_2|^2$ 
if $\hat{q}|J_2| \ge L^{20} h^*$, so we can write 
$Q_{J_2} \ge (1 - L^{-5}) \hat{q} |J_2| (|J_2| - L^{-30}\sqrt{n})$. 
Then \[Q'_{J_2} \ge Q_{J_2} - h^*|J_2| 
\ge (1-L^{-5}) \hat{q} |J_2| (|J_2| - 2L^{-30}\sqrt{n}).\]
For the second term we consider $Q_{J_1 J_2} \ge Q_{J_1 J'_2}$
where $ J_2' = J_2 \sm N(T)$ and $T$ is the set of vertices 
with at least $L^{20} h^*$ neighbours in $J_1$. 
We can assume $|T| < 4L^{-5}|J_1|/h^* < 6L^{-4}\sqrt{n}/h^*$ 
by Lemma~\ref{large-degs}, so $|N(T) \cap J_2| < 6L^{-4}\sqrt{n}$. 
We apply Lemma \ref{opens-bip} with $R=J_1$ 
and $S = J_2' = J_2 \sm N(T)$, noting that 
if a vertex $x$ has a neighbour in $S$ then $x \notin T$, 
so $x$ has at most $L^{20} h^*$ neighbours in $J_1$.
If $\hat{q}\min\{|J_1|,|J_2'|\} \ge L^{40} h^*$ this gives
whp $Q_{J_1J_2'} = (1 \pm L^{-5}) \hat{q}|J_1||J_2'|$, 
so as $h^* \le L^{-50}\hat{q}\sqrt{n}$ we have
\[Q_{J_1J_2'} \ge (1 - L^{-5}) \hat{q} 
(|J_1| - L^{-4}\sqrt{n}) (|J_2| - 7L^{-4}\sqrt{n}).\]
We can apply the same argument to estimate
$Q_{J_2 C} \ge Q_{J_2 C'}$ where $C' = C \sm N(T')$
and $T'$ is the set of vertices with 
at least $L^{20+2\alpha}$ neighbours in $J_2$. We can assume 
$|T'| < 4L^{-5-2\alpha}|J_2| < 6L^{-4-2\alpha}\sqrt{n}$ 
by Lemma~\ref{large-degs}, so $|N(T') \cap C| < 6L^{-4}\sqrt{n}$
as any vertex has at most $L^{2\alpha}$ neighbours in $C$.
Applying Lemma \ref{opens-bip} with $R=J_2$ and $S = C' = C \sm N(T')$, 
whp $Q_{J_2 C'} = (1 \pm L^{-5}) \hat{q}|J_2||C'|$ if 
$\hat{q}\min\{|J_2|,|C'|\} \ge L^{40+2\alpha}$, so we can write
$Q_{J_2 C'} \ge (1 - L^{-5}) \hat{q}
 (|J_2| - L^{-4}\sqrt{n}) (|C| - 7L^{-4}\sqrt{n})$.
In total, as $|J_1|+|J_2|+|C|=k$
and $\hat{q} k L^{-4} \sqrt{n} = O( L^{-3} \hat{q}n )$
we obtain
\[N_{iKJ_2} \ge Q'_{J_2} + Q_{J_1J_2'} + Q_{J_2 C'} 
\ge \hat{q}k|J_2| - O( L^{-3} \hat{q}n ).\]

We now turn to $ N_{iKC} \ge Q_C + Q_{A \cup B,C}$.
As any vertex has at most $L^{2\alpha}$ neighbours in $C$,
by Lemma \ref{opens}(i) whp 
$Q_C \ge (1 - L^{-5}) \hat{q} |C| (|C| - L^{-4} \sqrt{n})$.
Next we estimate $Q_{A \cup B,C} \ge Q_{A \cup B,C''}$ 
where $ C'' = C \sm N(T'')$ and $T''$ is the set of vertices 
with at least $L^{20 + 2 \alpha}$ neighbours in $A \cup B$.  
As in the argument for $ Q_{ J_2 C'}$, we have
$Q_{A \cup B,  C''} = (1 \pm L^{-5}) \hat{q}|A \cup B||C''|$ 
if $\hat{q}\min\{|A \cup B|,|C''|\} \ge L^{40+2\alpha}$,  so 
\[N_{iKC} \ge Q_{A \cup B, C''} + Q_C \ge \hat{q}k|C| - O(L^{-3}\hat{q}n).\]

This completes the proof of (i). For (ii) we need to estimate
$N_{iKH_\ell}$ when $\hat{q} \ge n^{-1/6}$ 
and $\ell \le z'$ (i.e.\ $h_\ell \ge n^{2/5}$).
We write $X = \{ \ell' \ne \ell: h_{\ell'} \ge 2n^{1/4} \}$ and
$N_{iKH_\ell} = \sum_{\ell' \in X} Q_{H_\ell H_{\ell'} }
 + Q_{H_\ell K'}$, where $K' = K \sm \bigcup_{\ell' \in X} H_{\ell'}$.
We first apply Lemma~\ref{opens-bip} for each $\ell' \in X$ 
to $R = H_\ell \sm N(x_{\ell'})$ 
and $S = H_{\ell'} \sm N(x_\ell)$. 
This is valid by the codegree bound, 
which implies $|R|, |S| \ge n^{1/4}$ and also that
any vertex with a neighbour in one of $R$ or $S$ has at most 
$L^4 < L^{-20} \hat{q} (2n^{1/4})$ neighbours in the other,
as $\hat{q} \ge n^{-1/6}$.
Thus $Q_{H_\ell H_{\ell'}} = (1 \pm L^{-5}) \hat{q}h_\ell h_{\ell'}$.

Now we estimate $Q_{H_\ell K'} \ge Q_{RK'}$
where $R = H_\ell \sm N(U)$ and $U$ is the
set of $x \ne x_\ell$ 
with at least $n^{1/5}$ neighbours in $K$.
We have $|U| < 8L^{16}n^{3/10}$
by Lemma \ref{large-degs}(ii),
so $ | N(U) \cap H_\ell | < L^{21} n^{3/10}$
by the codegree bound.
Next we note  that if a vertex $x$ has a neighbour in $K'$
then $x \ne x_\ell$ by the hole construction procedure, 
so by the codegree bound $x$ has 
at most $L^4 < n^{1/5}$ neighbours in $R \sub H_\ell$.
On the other hand, if $x$ has a neighbour in $R$
then $x \notin U$, so $x$ has at most $n^{1/5}$
neighbours in $K' \sub K$. By Lemma~\ref{opens-bip}, 
as $\hat{q} \ge n^{-1/6}$ we have
$Q_{H_\ell K'} \ge  (1 - L^{-5}) \hat{q}
 ( h_\ell -  L^{21} n^{3/10}) (|K'| - n^{2/5})$. 
As $h_\ell \le d_\ell < (1-\eps)k/2$
we have $k-h_\ell - n^{2/5}>k/2$,
and (ii) follows.
\end{proof}

\begin{lemma}
\label{lem:SO}
The overcount at step $i$ is
$N_{iO} = O( L^{-3} \hat{q} n )$,
so $S_O = \sum_i N_{iO} q^{-1} = O(L^{-2} n^{-1/2})$.
\end{lemma}

\begin{proof}
Let us consider the possible pairwise overcounting
between $N_{iAo}$, $N_{iAi}$, $N_{iBo}$, $N_{iBi}$ and $N_{iK}$. 
Note that by excluding $y \in K \cup \{ x_1, \dots, x_z\}$
in the definition of $ N_{iAo}$ we ensured that it does not 
intersect any of the other collections.
There is no overcounting between $N_{iBo}$ and $N_{iBi}+N_{iK}$,
as pairs counted by the former do not intersect $K$ 
while pairs counted by the latter do intersect $K$.  
There is no overcounting between $ N_{iBi} $ and $ N_{iAi} $,
as the hole construction procedure ensures that no vertex 
in a hole $ H_\ell $ with $ \ell \in Z_B$ is also a neighbour 
of some vertex $x_{\ell'}$ such that $\ell' \in Z_A$. 
It remains to consider the following 
possible overcounting of pairs:

(i) $N_{iAi}$ with $N_{iK}$,
(ii) $ N_{iAi}$ with $ N_{iBo}$, 
(iii)  $ N_{iK} $ with $N_{iBi}$. 

For (i), we note that a pair counted by $N_{iAi}$ and $N_{iK}$
has the form $yy'$ where $y,y'$ are both neighbours
of some $x_\ell$ with $\ell \in Z_A$, and are both in $K$ 
but not in the same hole. By the hole construction 
procedure at least one is also adjacent
to some other $x_{\ell'}$, so by the codegree bound
there are $\wt{O}(k) = \wt{O}( n^{1/2} )$ such pairs. 
For (ii), the overcount between $N_{iAi}$ and $N_{iBo}$ 
is determined by  naming a vertex $b \in B$, 
a vertex $x_\ell$ such that $ \ell \in Z_A$, 
and a vertex $c$ that is in the (final) 
common neighbourhood of $x_\ell$ and $b$;
this overcount is at most
$k|Z_A|L^4 = \wt{O}( n^{1/2})$. 

To bound the most significant overcount (iii), 
namely that between $ N_{iK} $ and $N_{iBi}$,
we introduce the following definition. 
We say that a hole $H_\ell$ with $\ell \in Z_B$ is {\em black} 
if $x_\ell$ has more than $L^{30} h_\ell$ neighbours in $K$.
We let $XH$ be the set of such $x_\ell$ and 
$BH$ be the set of vertices that belong to black holes.
By Lemma~\ref{density}(ii) applied to $S = K \cup XH$ we have
$L^{15}|S| > \eta_S \ge \sum_{x_\ell \in XH} L^{30} h_\ell
= L^{30}|BH|$, so $|BH| \le L^{-14} k$. 
The contribution to $N_{iBi}$ of pairs 
that would close pairs $x_\ell v_{\ell j}$ with $v_{\ell j} \in BH$
is at most $3y|BH| \le 3L^{-14} yk \le 3L^{-13} \hat{q} k n^{1/2}$.
  
Now consider overcounted pairs that would close pairs 
that are not incident to black holes.  
Such a pair has the form $v_{\ell j} v_{\ell' j'}$ 
where $x_\ell v_{\ell'j'}$ is an edge, 
so $\ell'<\ell$ by the hole construction procedure.  
It suffices to show for any fixed $ x_\ell$ 
that at most $L^{-10}h_\ell \hat{q} \sqrt{n}$
such pairs are also counted by $N_{iBi}$.
Suppose first that $ h_\ell \ge n^{2/5}$, so that 
$\ell' < \ell \le z' = \wt{O}(n^{1/10})$ by \eqref{eq:z'}.
By the codegree bound there are at most $z'\cdot L^4 < n^{1/5}$ 
such edges $ x_\ell v_{\ell'j'}$, which are only counted 
in our estimate for $ N_{iK} $ in Lemma \ref{lem:NiK}
while $ \hat{q} > n^{-1/6} $, so the overcount for such a 
hole is at most $ h_\ell n^{1/5} < h_\ell \hat{q} n^{2/5} $. 
Now suppose $ h_\ell < n^{2/5} $. We recall that open pairs 
between $ H_\ell$ and $ H_{\ell'}$ are 
only counted in our estimate for $N_{iK}$ 
in Lemma \ref{lem:NiK} if $H_\ell \sub J_2$,
i.e.\ if $h_\ell < h^* \le L^{-50}\hat{q}\sqrt{n}$.  
Since $H_\ell$ is not black, 
the number of choices for $v_{\ell' j'}$ is at most 
$L^{30} h_\ell < L^{-10}\hat{q}\sqrt{n}$,
so such pairs contribute at most 
$L^{-10}h_\ell \hat{q} \sqrt{n}$.
Summing over all holes gives the desired bound.
\end{proof}

We are now ready for the union bound bound calculation 
that bounds $\mb{P} ( \mc{E})$.
Recall that we have fixed the initial data 
that defines the event $ \mc{E}$; that is, 
we have specified $z$, the vertices $ x_1, x_2, \dots, x_z $, 
the hole sizes $ h_1, \dots, h_z $ and
the degrees $d_\ell$ of vertices $ x_\ell $ for $ \ell \in Z_A $.
We then partition $\mc{E}$ into events $\mc{E}_K$ as analysed above,
defined by choices of neighbourhoods of $ x_\ell $ for $ \ell \in Z_A $,
vertices in $A \cup B$ (which are named by specifying
the vertices in holes), selection steps $i_{\ell j}$,
and vertices in $C$. The number of choices for the data 
that defines $\mc{E}_K$ is at most
\[ \left( \prod_{\ell \in Z_A} \binom{n}{ d_\ell} \binom{ d_\ell}{ h_\ell} m^{d_\ell} \right)
\left( \prod_{ \ell \in Z_B} \binom{n}{ h_\ell} m^{h_\ell} \right) \binom{n}{ |C|}. \]
To estimate $\mb{P}( \mc{E})$ we apply (\ref{eq:all1'}) 
to each such choice of $\mc{E}_K$, substituting
$S_O = O(L^{-2} n^{-1/2})$ from Lemma \ref{lem:SO} and 
$S_{Ao} \ge 2|Z_A|m/n - \wt{O}( n^{1/2 - \eps/5})$
from Lemma \ref{lem:SAo} (the latter acounts for
the $\exp(-2m/n)$ term in the calculation below).
Recalling $|B| = \sum_{\ell \in Z_B} h_\ell$
and $d_\ell = 2m/n \pm n^{1/2 - \eps/3} $, using 
$\binom{ d_\ell}{ h_\ell} < \exp\{ O( \log \log n ) h_\ell \} $ 
for $\ell \in Z_A$ and
$\log \tbinom{n}{|C|} < |C| \log n/2 + O( \log \log n)k$,
we have
\begin{align} \label{eq:PE}
\mb{P}( \mc{E})
& \le \prod_{ \ell \in Z_A} 
\left[ \Big( \tfrac{ ne}{ d_\ell} \cdot \tfrac{2 m}{ n^2}  \Big)^{d_\ell}
\exp \left\{ -2m/n + O( \log \log n ) h_\ell \right\} \right] 
\nonumber \\
& \hskip1cm \cdot \left( \prod_{ \ell \in Z_B}
\left( \tfrac{ ne}{ h_\ell} \cdot \tfrac{2 m}{ n^2}  \right)^{h_\ell}  \right) \binom{n}{|C|}
e^{ - S_K + O( L^{-2} n^{1/2}) }
\nonumber \\
& \le \exp \left\{ \sum_{\ell \in Z_B} h_\ell \log( \sqrt{n}/h_\ell) + |C| \log n/2 - S_K
+ O( \log \log n)k \right\}
\end{align}

It remains to show that $S_K$ is sufficiently large
to make the above probability expression small enough
for the union bound over the initial data defining $\mc{E}$.
We first note for $Z_A$ that the counting terms
$\Big( \tfrac{ ne}{ d_\ell} \cdot \tfrac{2 m}{ n^2}  
\Big)^{d_\ell} = (e \pm O(n^{-\eps/5}))^{d_\ell}$
are cancelled to highest order by the probability term
$\exp(-2m/n)$ from Lemma \ref{lem:SAo},
so we require $S_K$ to dominate the counting terms
from the choice of $B$ and $C$. For $B$ we consider the 
contributions from each hole as follows.
  
The contributions corresponding to the hole $ H_\ell $ 
depends on time when the hole moves out of the set $J_1$
defined before Lemma \ref{lem:NiK}.
If $ h_\ell \ge n^{2/5}$ (i.e.\ $\ell \le z'$) 
we obtain a term $\hat{q}kh_\ell/2$ in the bound from Lemma~\ref{lem:NiK}  while $\hat{q} > n^{-1/6}$, 
i.e.\ up to time $\tfrac{1}{2} \sqrt{\tfrac{1}{6}\log n}$. 
If $h_\ell < n^{2/5}$ we obtain a term $\hat{q}kh_\ell$ 
from Lemma~\ref{lem:NiK} while $\hat{q} > L^{50} h_\ell/\sqrt{n}$, 
i.e.\ up to time 
$ t_\ell =  \tfrac{1}{2} \sqrt{\log \tfrac{\sqrt{n}}{L^{50} h_\ell}}$ 
if this time is less than $ i_{max}$ and up to time $i_{max}$ otherwise. 
Let $ z''$ be the smallest index $\ell$ such that 
$ t_\ell < t_{max} $ (this corresponds to a threshold for 
hole sizes that is about $L^{-50} n^\eps$).
As $S_K = \sum_i N_{iK} q^{-1}$, we have
\begin{align} \label{eq:SK}
S_K & \ge  |C|  \frac{m k }{n^2}
+  \left( \sum_{ \ell =1}^{z'} \tfrac{  h_\ell}{2}  \right) 
 n^{3/2} \cdot \tfrac{1}{2} \sqrt{\tfrac{1}{6}\log n} 
 \cdot \frac{k}{n^2}
+ \left(  \sum_{ \ell = z'+1}^{z''} h_\ell  
\cdot  n^{3/2} \cdot \tfrac{1}{2}
\sqrt{\log \tfrac{\sqrt{n}}{ L^{50} h_\ell}} \right)  \cdot \frac{k}{n^2}
\nonumber \\
& \hskip1cm + \left( \sum_{\ell = z''}^z h_\ell \right) \frac{ m k}{ n^2}
-O( L^{-2} n^{1/2}). 
\end{align}

Finally we substitute \eqref{eq:SK} in \eqref{eq:PE},
grouping terms according to the contribution of each $h_\ell$,
organised into the same summation ranges as in \eqref{eq:SK}.
For each hole $H_\ell$ with $\ell \in Z_B$
included in one of these ranges we have a counting term
$\log \left( \tfrac{ ne}{ h_\ell} \cdot \tfrac{2 m}{ n^2}  \right)^{h_\ell}
= h_\ell (\log \tfrac{\sqrt{n}}{h_\ell} + O(\log\log n))$
from \eqref{eq:PE} which we pair with a probability term from \eqref{eq:SK}.
In the calculations below we also use
(i) $\log \tfrac{\sqrt{n}}{h_\ell} \le \tfrac{1}{10}\log n$
for $\ell \le z'$,
(ii)  $\sqrt{ ( \tfrac{1}{2} \log n ) \cdot \log( \tfrac{\sqrt{n}}{ L^{50} h_\ell} }) > \log \tfrac{\sqrt{n}}{h_\ell}$ for $ z' < \ell \le z''$,
and (iii) $mk/n^2 > (1 + \eps) \tfrac{1}{2} \log n $,
which holds (for small $\eps$)
as $k = (1+3\eps) \sqrt{2n\log n}$ 
and $ m = \sqrt{ (1/2 - \eps) \log n} \cdot n^{3/2}/2$. 
We have
\begin{equation*}
\begin{split}
\log \mb{P}( \mc{E})
& \le
- \sum_{ \ell=1}^{z'}  h_\ell  
\left( \tfrac{1}{4 \sqrt{3}} - \tfrac{1}{10} \right) \log n
- \sum_{ \ell = z' +1}^{z''} 3 \eps h_\ell  \log \tfrac{\sqrt{n}}{h_\ell}  \\
&  \hskip1cm - \sum_{ \ell = z''+	1 }^{z}  \eps  h_\ell \tfrac{1}{2} \log n 
- \eps |C| \tfrac{1}{2}\log n + O( \log \log n)k \\
& \le - \tfrac{\eps}{4} k \log n  + O( \log \log n) k.
\end{split}
\end{equation*}
As the number of choices of the initial data 
that defines $\mc{E}$ is $ O( n^{2z})$
and $ z \le 4k L^{15 - 2\alpha} $, 
where $\alpha$ is large, 
the probability that any such event $\mc{E}$
holds is $o(1)$, which completes the proof.
\qed

\subsection{Proof of the upper bound in Theorem~\ref{edges}.}
\label{sec:upper}

This proof is very similar to that of Theorem~\ref{indep}, but much simpler.
 The lower bound on degrees in $G$ follows from Theorem \ref{good}, 
 so it remains to show the upper bound. 
 We take a union bound over every vertex $x$, 
 potential neighbourhood $A$, and set $C$ such that
$$ |C| = 5\eps \sqrt{ n \log n}$$
of the event that
\begin{enumerate}
\item $A$ is the neighbourhood of $x$ in $ G(i_{max})$,
\item $ A \cup C$ spans no edge in $ G(i_{max}) $, and
\item $ vx$ is open in $ G(i_{max})$ for all $ v \in C$.
\end{enumerate}
We view $C$ as vertices that might be added to the neighbourhood of $v$
 between time $t_{max}$ and the end of the process.  
 We show that whp there is no triple $(x,A,C)$ with these properties.

We fix $x,A,C$, write $ A = \{ v_1, \dots, v_{d'} \}$ for some $d'$
and specify the appearance time $i_j$ for every edge $ xv_j$,
where  $ j < j'$ implies $ i_j < i_{j'}$. 
As in \eqref{eq:recalldegrees}, $I < i_{max}$ implies
\[ i_j = jn/2 \pm n^{3/2 - \eps/3} 
\quad \text{ and } \quad d' = d \pm n^{1/2 - \eps/3},\]
where we recall $d = 2t_{max}\sqrt{n} = 2m/n 
= \sqrt{(1/2 - \eps)n\log n}$.

Let $ \mc{F}$ be the event that
$A \cup C$ is an independent set in $ G(i_{max}) $, 
all pairs joining $x$ and $C$ are open in $ G(i_{max})$,
and all the specified edges appear at the specified steps of the process.  
To estimate the probability of the event $ \mc{F} $, 
for each step $i$ we need to estimate the probability 
that the selected edge is compatible with this event, 
conditional on the history of the process.  
We say $i$ is a {\em selection step} 
if $i$ is one of $i_{j}$ for $j \in [d']$; 
then the selected edge is specified by $\mc{F}$, 
so the required probability 
is simply $2/Q = (1 \pm 2\dD_Q)2q^{-1}$. 
For other $i$, the required probability is $1 - N_i/Q$, 
where $N_i$ is the number of ordered open pairs 
that cannot be selected at step $i$ when $\mc{F}$ occurs. 
If $i=i_{j}$ is a selection step write $N_i=0$.  Then we estimate
\[ \mb{P}(\mc{F}) \le \prod_{j=1}^{d'} (1 \pm 2\dD_Q)2q(t_{ j})^{-1} 
\cdot \prod_{i=1}^m (1 - N_i/Q), \]
where $ t_j = i_j/n^{3/2} $.
We write $ N_i = N_{iA} + N_{iC} $, 
where $ N_{iA} $ counts the ordered open pairs within $A$
and $ N_{iC} $ counts those in $ A \cup C$ with at least one vertex in $C$.
We have
\begin{equation}
\label{eq:Fbound}
- \log \mb{P}(\mc{F}) \ge S_{A} - T_A + S_C 
  +  d' \log \frac{n^2}{2}  - O( n^{1/2}) ,
 \end{equation}
where $S_\mu = \sum_{i=1}^m N_{i\mu}q^{-1}$ for $\mu \in \{A,C\}$ and
$T_A = \sum_{j=1}^{d'} 4t_{j}^2$.

Following the argument in the previous section 
for estimating $ S_{Ai}-T_A$, we have
the following estimate on $S_A- T_A$.  
We include a proof here in the interest of
presenting a complete and self-contained proof 
of the upper bound in Theorem~\ref{edges}.

\begin{lemma} \label{dejavu}
$ S_{A} - T_A = \wt{O}(n^{1/2 - \eps/3})$. 
\end{lemma}

\begin{proof}
We first estimate $N_{iA}$ when $i$ is not a selection step. 
Let $S  = S(i) = \{ v_j \in A : i_j > i \} $ and $s = |S|$;
thus $S(i)$ is the set of vertices $y$ in $A$
such that $yx$ is open and is yet to be joined to $x$. 
The number of ordered open pairs 
$v_{ j} v_{ j'}$ with $j > i $, $j' \le i$ 
is $\sum_{v \in s} 2Y_{vx} = (1 \pm \dD_Y) 2ys$. 
Next note that any vertex has at most $L^4$ neighbours in $S$,
by the bound on codegrees in $ G(i_{max})$,
which applies as $I > i_{max}$.
Then by Lemma~\ref{opens}(i) whp $Q_{s} = (1 \pm n^{-\eps/5}) \hat{q}s^2$ 
if $s > n^{1/4}$ and $\hat{q}s \ge n^{2\eps/5} L^{14}$. Since $\hat{q} \ge n^{-1/2+\eps}$ this holds for $s > n^{1/2 - \eps/2}$, so we can write
 $Q_{s} \ge (1 - n^{-\eps/5}) \hat{q} s(s - n^{1/2 - \eps/2})$. Thus
\[ N_{iA} \ge (1 - n^{-\eps/5}) 
\brac{ 2ys + \hat{q} s (s - n^{1/2 - \eps/2}) }
= 2ys + \hat{q} s^2  - \wt{O}( \hat{q} n^{1 - \eps/5} ).\]

Now we estimate $S_A =  \sum_{i=1}^m N_{iA}q^{-1}$,
which we write as $S_A = S_{A1}+S_{A2} 
+ \wt{O}( n^{1/2 - \eps/5} )$ 
according to the contributions 
of the first and second terms 
in the estimate for $N_{iA}$,
and as before we incur a negligible error by 
using this bound even at selection steps. Thus
\begin{align*}
S_{A1} & = \sum_{i=1}^m  2ys q^{-1}
 = \sum_{j=1}^{d'} 
\sum_{i=i_{j-1}}^{i_j-1} 4tn^{-3/2} (d'+1 - j) 
 = \sum_{j=1}^{d'} \sum_{i=1}^{i_j} 4in^{-3} \\
& = \sum_{j=1}^{d'} 2t_j^2 
- \sum_{j=1}^{ d'} 2 t_j n^{-3/2} 
 = \frac{T_A}{2} - \wt{O}( n^{-1} ), \text{ and } \\
S_{A2} & =  \sum_{i=1}^m   \hat{q} s^2 q^{-1}
= \sum_{j=1}^{d'} 
\sum_{i=i_{j-1}}^{i_j-1} n^{-2}(d'+1-j)^2 \\
& \ge  \sum_{j=1}^d (2n)^{-1} j^2 
- \wt{O}( n^{1/2-\eps/3}) 
\ge T_A/2 - \wt{O}( n^{1/2-\eps/3}).
\end{align*}
The lemma follows.
\end{proof}

To estimate $S_C$ we require the crucial claim that
\begin{equation} \label{claim:SC}
|N(u) \cap C| < L^2 n^\eps
\end{equation}
for any vertex $u$. Indeed, if this failed for some $u$
then at time $t_{max}$ we have $Y_{xu} > 2y$.
However, this would contradict our estimate on $Y$-variables.  
(We can assume $xu$ is a non-edge as $x$ is open to $C$,
and we recall that we track $Y_{xu}$ 
whether $xu$ is open or closed.)
Thus the claim holds.
 
While $\hat{q} |C| > L^{15} n^\eps $, 
which as $|C| = \wt{\TT}(\sqrt{n})$
holds up to time $(1+o(1))t_{max}$, 
we can apply Lemmas \ref{opens}(i) and \ref{opens-bip} to obtain 
$Q_C \ge (1-L^{-1}) \hat{q} |C|^2 $ 
and $Q_{AC} \ge (1-L^{-1}) \hat{q}|A||C|$. 
When $i$ is not a selection step this gives
$N_{iC} = 2Q_{AC} + Q_C \ge (1-L^{-1}) \hat{q} (2|A||C|+|C|^2)$, so
\[ S_C  = \sum_{i=1}^m  N_{iC} q^{-1}
> (1 - o(1)) (2 |A| + |C|) |C| m/n^2
= (1-o(1)) \left(1 - 2\eps + 5\eps\sqrt{ \frac{1}{2} - \eps} \right) |C| \tfrac{1}{2} \log n. \]
Now we substitute Lemma \ref{dejavu} in (\ref{eq:Fbound}),
and take the union over all possible choices of the data 
that specifies an event $\mc{F}$, 
namely the choices of $x$, $d'$, $A$, $C$ 
and the collection of times at which the edges joining $x$ to $A$ appear.
Thus we bound the probability $p_0$ that any triple $(x,A,C)$ as above
exists by \[ p_0 < n \sum_{d'}
\binom{n}{d'} \binom{n}{|C| } m^{d'} 
\left( \tfrac{ 2}{ n^2} \right)^{d'}
\exp \left\{ - (1-o(1))
 \left(1 - 2\eps + 5\eps \sqrt{ \frac{1}{2} - \eps}\right) |C| \tfrac{1}{2} \log n + O(n^{1/2}) \right\}.\]
Here we note that the counting term 
$\tbinom{n}{d'} m^{d'} \left( \tfrac{ 2}{ n^2} \right)^{d'}
= \exp [(1+o(1))d]$ is of  lower order
than the main counting term 
$\tbinom{n}{|C|} = \exp [(1+o(1))|C|\tfrac{1}{2} \log n]$,
and this is more than compensated for by the probability term:
assuming $ \eps< 1/4$, we obtain
\[ p_0 < n \sum_{d'} \exp \left\{ - \eps |C| \tfrac{1}{5} \log n  \right\}. \]
Thus the required bound on degrees holds with high probability.
\qed

\section{Concluding remarks} \label{sec:conclude}

We have determined $R(3,t)$ to within a factor of $4+o(1)$, 
so we should perhaps hazard a guess for its asymptotics:\ 
we are tempted to believe the construction rather than the bound, 
i.e.\ that $R(3,t) \sim t^2/4\log t$.  
We only proved an upper bound 
on the independence number of the graph $G$ 
produced by the triangle-free process,
so in principle it might give a better lower bound on $ R(3,t)$.
However, we believe that this is not the case:\
we conjecture that the bound on the independence number 
in Theorem \ref{indep} is asymptotically best possible. 

Another natural direction for future research is to provide an asymptotically optimal analysis in greater generality for the $H$-free process. No doubt the technical challenges will be formidable, given the difficulties that arise in the case of triangles. But on an optimistic note, it is encouraging that one can build on two different proofs of this case.



\begin{thebibliography}{99}


\bibitem{AKS}
M. Ajtai, J. Koml\'os and E. Szemer\'edi, A note on Ramsey numbers,
{\em J. Combin. Theory Ser. A} {\bf 29} (1980), 354--360.

\bibitem{a} N. Alon, Explicit ramsey graphs and orthonormal labelings, 
{\em Electronic J. Combin.} {\bf 1} (1994), R12.

\bibitem{abk} N. Alon, S. Ben-Shimon and M. Krivelevich, 
A note on regular Ramsey Graphs. 
{\em J. Graph Theory} {\bf 64} (2010), 244--249.   

\bibitem{AS} N. Alon and J. Spencer,
{\em The probabilistic method}, second edition,
Wiley, New York, 2000.

\bibitem{B} T. Bohman, The triangle-free process, 
{\em Adv. Math.} {\bf 221} (2009), 1653--1677.

\bibitem{BFLold} T. Bohman, A. Frieze and E. Lubetzky,
A note on the random greedy triangle-packing algorithm, 
{\em J. Combinatorics} {\bf 1} (2010), 477--488. 

\bibitem{BFL} T. Bohman, A. Frieze and E. Lubetzky,
Random triangle removal, {\em Adv. Math.} 
{\bf 280} (2015), 379–438.

\bibitem{BK} T. Bohman and P. Keevash, 
The early evolution of the H-free process, 
{\em Invent. Math.} {\bf 181} (2010), 291--336. 

\bibitem{BR} B. Bollob\'as and O. Riordan, 
Random graphs and branching processes,
in:\ Handbook of large-scale random networks, 
{\em Bolyai Soc. Math. Stud.} {\bf 18},
Springer, Berlin, 2009, pp. 15--115.

\bibitem{BR2}
B. Bollob\'as and O. Riordan, Constrained graph processes,
{\em Electronic J. Combin.} {\bf 7} (2000), R18.

\bibitem{E} P. Erd\H{o}s,
Graph theory and probability, II,
{\em Canad. J. Math.} {\bf 13} (1961), 346--352.


\bibitem{ESW} P. Erd\H{o}s, S. Suen and P. Winkler,
On the size of a random maximal graph,
{\em Random Structures Algorithms} {\bf 6} (1995), 309--318.

\bibitem{theother} G. Fiz Pontiveros, S. Griffiths, R. Morris
The triangle-free process and R(3,k), arXiv:1302.6279.

\bibitem{F} D. A. Freedman, On tail probabilities for martingales, 
{\em Ann. Probability} {\bf 3} (1975), 100--118.

\bibitem{alan} A. Frieze and M. Karo\'nski, {\em Introduction to Random Graphs}.
Cambridge University Press, 2016.

\bibitem{GM} S. Gerke and T. Makai,
No dense subgraphs appear in the triangle-free graph process,
{\em Electron. J. Combin.} {\bf 18} (2011), R168.

\bibitem{K} J.H. Kim,
The Ramsey number $R(3,t)$ has order of magnitude $t^2/\log t$,
{\em Random Structures Algorithms} {\bf 7} (1995), 173--207.

\bibitem{kimvu} J.H. Kim and V.H. Vu,
Concentration of multivariate polynomials and its applications,
{\em Combinatorica} {\bf 20} (2000) 417-434.

\bibitem{OT} D. Osthus and A. Taraz,
Random maximal $H$-free graphs,
{\em Random Structures Algorithms} {\bf 18} (2001), 61--82.

\bibitem{P1} M. Picollelli, The diamond-free process, {\em Random Structures Algorithms} {\bf 45} (2014), 513-551.

\bibitem{P2} M. Picollelli, The final size of the $C_4$-free process,
{\em Combin. Probab. Comput.} {\bf 20} (2011), 939--955. 

\bibitem{P3} M. Picollelli,
The final size of the $C_{\ell}$-free process", {\em 
SIAM Disc. Math.}  {\bf 28} (2014), 1276–1305.


\bibitem{RW} A. Ruci\'nski and N. Wormald,
Random graph processes with degree restrictions,
{\em Combin. Probab. Comput.} {\bf 1} (1992), 169--180.

\bibitem{Sh} J. Shearer,
A note on the independence number of triangle-free graphs, 
{\em Disc. Math.} {\bf 46} (1983), 83--87. 

\bibitem{Sp0} J. Spencer,
Maximal trianglefree graphs and Ramsey $R(3,k)$.  (Unpublished manuscript.)\\
Available online at:\ {\tt http://www.cs.nyu.edu/spencer/papers/ramsey3k.pdf}.

\bibitem{Sp1} J. Spencer,
Asymptotic lower bounds for Ramsey functions,
{\em Disc. Math.} {\bf 20} (1997), 69--76.

\bibitem{Sp2} J. Spencer, Counting extensions,
{\em J. Combin. Theory Ser. A} {\bf 55} (1990), 247--255.

\bibitem{Wa1} L. Warnke, Dense subgraphs in the H-free process
{\em Disc. Math.} {\bf 333} (2011), 2703--2707. 

\bibitem{Wa2} L. Warnke, The $C_\ell$-free process,
{\em Random Structures Algorithms} {\bf 44} (2014), 490-526.

\bibitem{Wa3} L. Warnke, When does the $K_4$-free process stop?
{\em Random Structures Algorithms} {\bf 44} (2014), 355-397.

\bibitem{Wz1} G. Wolfovitz, 
Lower bounds for the size of random maximal H-free graphs. 
{\em Electronic J. Combin.} {\bf 16}, 2009, R4.

\bibitem{Wz2} G. Wolfovitz,
The K4-free process, arXiv:1008.4044. 

\bibitem{Wz3} G. Wolfovitz,
Triangle-free subgraphs in the triangle-free process,
{\em Random Structures Algorithms} {\bf 39} (2011), 539--543.





\end{thebibliography}
\end{document}